\newtheorem{theorem}{Theorem}[section]
\newtheorem{corollary}[theorem]{Corollary}
\theoremstyle{remark}
\newtheorem{remark}{Remark}
\numberwithin{equation}{section}
\author{Chang Xu$^*$}
\author{Dunkun Yang}
\address{School of Mathematical Sciences, East China Normal University,
Shanghai 200241, P. R. China}
\email{52275500007@stu.ecnu.edu.cn, Yangdunkun@foxmail.com}
\thanks{$^*$ Corresponding author}
\title{Some $q$-transformation formulas and Rogers-Ramanujan type identities}
\subjclass[2010]{Primary 33D15; Secondary 11P83}
\keywords{basic hypergeometric series; transformation formula; summation formula; Rogers-Ramanujan type identity}
\begin{document}
\maketitle
\begin{abstract}
In this paper, we explore the role that Liu's transformation formula can play in discovering Rogers-Ramanujan type identities.  
Specifically, we combine Liu's transformation formula with other $q$-series summations to derive a series of parameterized identities.
Thereafter, through careful selection of these  parameters,  we obtain  $75$ Rogers-Ramanujan type identities from Slater's list, and uncover several new Rogers-Ramanujan type identities.
%
%
\end{abstract}
\section{Introduction}
Throughout the paper, we assume that $|q|<1$ for convergence and define the $q$-shifted factorials as
\begin{align*}
\left(a;q\right)_{0}=1,
\quad\left(a;q\right)_{n}=\prod_{k=0}^{n-1}\left(1-aq^k\right)
,\quad
n=1,2,\cdots, \mathrm{or}\, {\infty}.\end{align*}
Also,  following the definition of Slater \cite{Slater}, we define
$\left(q;q\right)_{-1}=1$.
For brevity, we abbreviate a collection of several  $q$-shifted factorials on the same base $q$ as
\begin{equation*}
\left(a_1,a_2,\cdots,a_m;q\right)_n
=\left(a_1;q\right)_{n}\left(a_2;q\right)_{n}\cdots\left(a_m;q\right)_{n}.
\end{equation*} 

Following Gasper and Rahman's definition \cite{Gasper_and_Rahman}, the $_r\phi_s(\cdot)$ basic hypergeometric series is defined as
\begin{equation*}
{_r\phi_s}\left(
\begin{array}{cccc}
a_1,\,a_2,\,\cdots,a_r\\
b_1,\,b_2,\,\cdots,b_s
\end{array}
;q,\,z
\right)
=
\sum_{n=0}^{\infty}
\frac{\left(a_1,\,a_2,\,\cdots,a_r;q\right)_n}
{\left(q,\,b_1,\,b_2,\,\cdots,b_s;q\right)_n}
\left(\left(-1\right)^nq^{n(n-1)/2}\right)^{1+s-r}z^n
\end{equation*}
where $q\neq0$ when $r>s+1$.

Many fantastic sum-product $q$-sereis were uncovered by Rogers \cite{Rogers}, including
\begin{align}
&\sum_{n=0}^{\infty}\frac{q^{n^2}}{\left(q;q\right)_n}
=\frac{1}{\left(q,q^4;q^5\right)_{\infty}},\label{q1}\\
&
\sum_{n=0}^{\infty}\frac{q^{n(n+1)}}{\left(q;q\right)_n}
=\frac{1}{\left(q^2,q^3;q^5\right)_{\infty}}.\label{q2}
\end{align}
However, Rogers's work didn't attract people's attention for some time until Ramanujan independently rediscovered  the identities \eqref{q1}-\eqref{q2}. Thereafter, the identities \eqref{q1}-\eqref{q2} were usually named as the Rogers-Ramanujan identities. Later, MacMahon \cite{MacMahon} gave combinatorial interpretations.  Schur \cite{Schur} independently rediscovered \eqref{q1} and \eqref{q2} along with their combinatorial interpretation.

Rogers-Ramanujan identities play crucial roles in various branches of mathematics and physics. For example, these identities have a close relation to combinatorics, vertex algebras, knot theory, and statistical mechanics.
Therefore, discovering identities similar to \eqref{q1}-\eqref{q2}, commonly  referred to as Rogers-Ramanujan type identities, has attracted considerable attention from experts and led to significant progress. One of the celebrated  works concerning this topic is Slater's list \cite{Slater} which includes $130$ of such identities, such as
\cite[Eqs. (58), (59)]{Slater}
\begin{align*}
\sum_{n=0}^{\infty}\frac{\left(q^2;q^2\right)_{n-1}}
{\left(q;q\right)_{n-1}\left(q;q\right)_n\left(q;q^2\right)_n}q^{n^2}
&{=}\frac{\left(-q^5,-q^7,q^{12};q^{12}\right)_{\infty}}{\left(q;q\right)_{\infty}},
\quad{\tag{S.58}}
\\
\sum_{n=0}^{\infty}\frac{q^{n^2+2n}}
{\left(q;q\right)_n\left(q;q^2\right)_{n+1}}
&{=}\frac{\left(q^2,q^{12},q^{14};q^{14}\right)_{\infty}}
{\left(q;q\right)_{\infty}}.\quad {\tag{S.59}}
\end{align*}
Here and forth, we use the label $\left(\mathrm{S}.n\right)$ to denote the equation $(n)$ in Slater's list \cite{Slater}.

Slater used the Bailey lemma to  discovery these identities. Also, more Rogers-Ramanujan type identities  were uncovered by applying the Bailey lemma. The reader may be referred to \cite{Andrews2,Bowman,McLaughlin-2008, McLaughlin_2008}.   In this paper,  we shall show how to obtain Rogers-Ramanujan type identities  without using the Bailey lemma. 
The key tool in our proof is the following elegant transformation formula for $q$-series introduced by Liu \cite{Liu2}.
 \begin{theorem}\label{theorem1}
 For a complex sequence $\left\{A_n\right\}$, under suitable convergence conditions, there holds
 \begin{align}
 &\frac{\left(\alpha q,\alpha ab/q;q\right)_{\infty}}
 {\left(\alpha a,\alpha b;q\right)_{\infty}}
 \sum_{n=0}^{\infty}A_n\left(q/a;q\right)_n\left(\alpha a\right)^n\nonumber\\
 &\quad=\sum_{n=0}^{\infty}
 \frac{\left(1-\alpha q^{2n}\right)\left(\alpha,q/a,q/b;q\right)_n}{\left(1-\alpha\right)\left(q,\alpha a,\alpha b;q\right)_n}\left(-\alpha ab\right)^nq^{n(n-3)/2}\nonumber\\
 &\qquad\times\sum_{k=0}^{n}\frac{\left(q^{-n},\alpha q^n;q\right)_k}{\left(q/b;q\right)_k}\left(q^2/b\right)^kA_k.
 \label{Formula3}
  \end{align}
 \end{theorem}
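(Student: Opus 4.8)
The plan is to start from the right-hand side of \eqref{Formula3}, interchange the order of summation, shift the inner index, and then evaluate the resulting inner series in closed form by a very-well-poised summation. Write the right-hand side as a double sum over $n\ge k\ge 0$; since $(q^{-n};q)_k$ vanishes for $n<k$, the double sum is supported on $n\ge k$, and under the stated convergence hypotheses (which I take to guarantee absolute convergence) the two summations may be interchanged to give
$$\sum_{k=0}^{\infty}\frac{(q^2/b)^k A_k}{(q/b;q)_k}\,S_k,\qquad S_k:=\sum_{n=k}^{\infty}\frac{(1-\alpha q^{2n})(\alpha,q/a,q/b;q)_n}{(1-\alpha)(q,\alpha a,\alpha b;q)_n}(-\alpha ab)^n q^{n(n-3)/2}(q^{-n},\alpha q^n;q)_k.$$

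The next step is to put $n\mapsto n+k$ in $S_k$ and untangle the $q$-shifted factorials. Using the elementary identities $(x;q)_{n+k}=(x;q)_k\,(xq^k;q)_n$, the terminating evaluation $(q^{-(n+k)};q)_k=(-1)^k q^{\binom{k}{2}-(n+k)k}(q;q)_{n+k}/(q;q)_n$, and the collapse $(\alpha;q)_{n+k}(\alpha q^{n+k};q)_k=(\alpha;q)_{2k}(\alpha q^{2k};q)_n$, every $k$-dependent factor can be pulled in front, and (after checking that the powers of $q$ coming from $q^{n(n-3)/2}$ and from the index shift recombine so that the $n$-dependence of the power is again $(-\alpha ab/q)^n q^{\binom{n}{2}}$) the surviving $n$-sum is a very-well-poised series with base $\alpha q^{2k}$:
$$S_k=(\alpha q;q)_{2k}\,\frac{(q/a,q/b;q)_k}{(\alpha a,\alpha b;q)_k}\,(\alpha ab)^k q^{-2k}\sum_{n=0}^{\infty}\frac{1-\alpha q^{2k+2n}}{1-\alpha q^{2k}}\cdot\frac{(\alpha q^{2k},q^{k+1}/a,q^{k+1}/b;q)_n}{(q,\alpha aq^k,\alpha bq^k;q)_n}\left(\frac{-\alpha ab}{q}\right)^n q^{\binom{n}{2}}.$$
The inner sum is exactly the $d\to\infty$ limiting case of the nonterminating very-well-poised ${}_6\phi_5$ summation (cf. \cite{Gasper_and_Rahman}), namely
$$\sum_{n=0}^{\infty}\frac{(1-cq^{2n})(c,d,e;q)_n}{(1-c)(q,cq/d,cq/e;q)_n}\left(\frac{-cq}{de}\right)^n q^{\binom{n}{2}}=\frac{(cq,cq/de;q)_\infty}{(cq/d,cq/e;q)_\infty},$$
applied with $c=\alpha q^{2k}$, $d=q^{k+1}/a$, $e=q^{k+1}/b$; one checks $cq/d=\alpha aq^k$, $cq/e=\alpha bq^k$, $cq/de=\alpha ab/q$, so the inner sum evaluates to $(\alpha q^{2k+1},\alpha ab/q;q)_\infty\big/(\alpha aq^k,\alpha bq^k;q)_\infty$.

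Finally I would carry out the algebraic cleanup. Multiplying $S_k$ by the outer factor $(q^2/b)^k A_k/(q/b;q)_k$: the two copies of $(q/b;q)_k$ cancel, the power combination $(q^2/b)^k(\alpha ab)^k q^{-2k}$ collapses to $(\alpha a)^k$, the identity $(\alpha q;q)_{2k}(\alpha q^{2k+1};q)_\infty=(\alpha q;q)_\infty$ absorbs the partial product, and $(\alpha a;q)_k(\alpha aq^k;q)_\infty=(\alpha a;q)_\infty$ together with its $b$-analogue recombine the remaining factors. What is left is precisely $\dfrac{(\alpha q,\alpha ab/q;q)_\infty}{(\alpha a,\alpha b;q)_\infty}(q/a;q)_k(\alpha a)^k A_k$, and summing over $k$ yields the left-hand side of \eqref{Formula3}. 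I expect the main obstacle to be purely bookkeeping: tracking the several simultaneous $q$-power shifts without sign or exponent errors, and recognizing at the right moment that the inner series is the stated special case of the ${}_6\phi_5$ sum. Once the parameters $c,d,e$ are matched, the rest is forced; the only other point requiring care is spelling out which convergence conditions legitimize the interchange of summations.
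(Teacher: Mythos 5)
Your argument is correct, and I checked the bookkeeping: after interchanging the two sums (legitimate since $(q^{-n};q)_k$ truncates the inner sum to $n\ge k$) and shifting $n\mapsto n+k$, the total $q$-exponent $\tfrac{(n+k)(n+k-3)}{2}+\binom{k}{2}-(n+k)k$ does split as $\tfrac{n(n-3)}{2}-2k$, the factor $(\alpha;q)_{n+k}(\alpha q^{n+k};q)_k=(\alpha;q)_{2k}(\alpha q^{2k};q)_n$ collapses as you say (and your $(\alpha q;q)_{2k}/(1-\alpha q^{2k})$ equals the required $(\alpha;q)_{2k}/(1-\alpha)$), the inner series is exactly the $d\to\infty$ limit of Rogers' nonterminating very-well-poised ${}_6\phi_5$ with $c=\alpha q^{2k}$, $d=q^{k+1}/a$, $e=q^{k+1}/b$, and the final cleanup $(q^2/b)^k(\alpha ab)^kq^{-2k}=(\alpha a)^k$, $(\alpha q;q)_{2k}(\alpha q^{2k+1};q)_\infty=(\alpha q;q)_\infty$, $(\alpha a;q)_k(\alpha aq^k;q)_\infty=(\alpha a;q)_\infty$ reproduces the left-hand side termwise. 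Note, however, that the paper itself gives no proof of this theorem: it is imported verbatim from Liu, where it arises from his $q$-series expansion formula (the $q$-derivative/analytic-function machinery), so yours is a genuinely different and more self-contained route — a direct series rearrangement reduced to one classical summation — at the cost of having to make the "suitable convergence conditions" explicit: you need absolute convergence of the double series (e.g. $|\alpha a|,|\alpha b|,|\alpha ab/q|<1$ together with enough decay of $A_n$, and $q/b$ avoiding $q^{-\mathbb{N}}$ so the denominators $(q/b;q)_k$ never vanish) to justify the interchange; Liu's approach trades this for analyticity hypotheses and yields the identity as a special case of a more general expansion theorem. With that convergence caveat spelled out, your proof stands as written.
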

Specifically,
we take
\begin{equation*}
A_n=\frac{\left(q/b,\beta,\gamma,\lambda;q\right)_n}
{\left(q, c, d,e,h;q\right)_n}\left(b/q\right)^n
\end{equation*}
into Theorem \ref{theorem1} to obtain the following  transformation formula.
\begin{theorem}\label{thm1}
For $\max\left\{|c|,|d|,|e|,|h|,|\alpha a|,|\alpha b|,|\alpha ab/q| \right\}<1$, we have
\begin{align}
&\frac{\left(\alpha q,\alpha ab/q;q\right)_{\infty}}{\left(\alpha a,\alpha b;q\right)_{\infty}}
{_5\phi_4}\left(\begin{array}{ccccc}
q/a,\,q/b,\,\beta,\,\gamma,\,\lambda\\
c,\,d,\,e,\,h
\end{array}
;q,{\alpha ab}/{q}\right)\nonumber\\
&\quad=
\sum_{n=0}^{\infty}
\frac{\left(1-\alpha q^{2n}\right)\left(\alpha,q/a,q/b;q\right)_n}
{\left(1-\alpha\right)\left(q,\alpha a,\alpha b;q\right)_n}\left(-\alpha ab\right)^nq^{n\left(n-3\right)/2}\nonumber\\
&\qquad\times{_5\phi_4}\left(\begin{array}{ccccc}
q^{-n},\,\alpha q^n,\,\beta,\,\gamma,\,\lambda\\
c,\,d,\,e,\,h
\end{array}
;q,q\right).\label{Formula4}
\end{align}
\end{theorem}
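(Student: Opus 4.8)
The plan is to obtain \eqref{Formula4} as a direct specialization of Theorem~\ref{theorem1}: I would substitute
\[
A_n=\frac{\left(q/b,\beta,\gamma,\lambda;q\right)_n}{\left(q,c,d,e,h;q\right)_n}\left(b/q\right)^{n}
\]
into \eqref{Formula3}, simplify the $q$-shifted factorials on each side, and recognize the two resulting sums as ${}_5\phi_4$ series. No idea beyond this bookkeeping is needed; the single point that genuinely must be verified is that the ``suitable convergence conditions'' demanded by Theorem~\ref{theorem1} are implied by $\max\{|c|,|d|,|e|,|h|,|\alpha a|,|\alpha b|,|\alpha ab/q|\}<1$, and I expect that convergence check to be the main obstacle.

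For the substitution: on the left of \eqref{Formula3}, the factor $\left(q/b;q\right)_n$ coming from $A_n$ merges with $\left(q/a;q\right)_n,\left(\beta;q\right)_n,\left(\gamma;q\right)_n,\left(\lambda;q\right)_n$, while $\left(b/q\right)^n\left(\alpha a\right)^n=\left(\alpha ab/q\right)^n$, so $\sum_{n\ge0}A_n\left(q/a;q\right)_n\left(\alpha a\right)^n$ becomes
\[
\sum_{n\ge0}\frac{\left(q/a,q/b,\beta,\gamma,\lambda;q\right)_n}{\left(q,c,d,e,h;q\right)_n}\left(\frac{\alpha ab}{q}\right)^{n}.
\]
Since $r=5,\ s=4$ makes the exponent $1+s-r$ in the definition of ${}_r\phi_s$ equal to $0$, this is exactly the ${}_5\phi_4$ on the left of \eqref{Formula4}, and multiplying by $\left(\alpha q,\alpha ab/q;q\right)_\infty/\left(\alpha a,\alpha b;q\right)_\infty$ reproduces that side. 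On the right of \eqref{Formula3}, in the inner sum $\sum_{k=0}^{n}\frac{\left(q^{-n},\alpha q^n;q\right)_k}{\left(q/b;q\right)_k}\left(q^2/b\right)^kA_k$ the denominator $\left(q/b;q\right)_k$ cancels the $\left(q/b;q\right)_k$ in the numerator of $A_k$ and $\left(q^2/b\right)^k\left(b/q\right)^k=q^k$, so it collapses to $\sum_{k=0}^{n}\frac{\left(q^{-n},\alpha q^n,\beta,\gamma,\lambda;q\right)_k}{\left(q,c,d,e,h;q\right)_k}q^k$, which (again with $1+s-r=0$) is the terminating ${}_5\phi_4(\,\cdots\,;q,q)$ on the right of \eqref{Formula4}; the outer weight $\frac{\left(1-\alpha q^{2n}\right)\left(\alpha,q/a,q/b;q\right)_n}{\left(1-\alpha\right)\left(q,\alpha a,\alpha b;q\right)_n}\left(-\alpha ab\right)^nq^{n(n-3)/2}$ is left untouched, so the right side of \eqref{Formula3} becomes verbatim the right side of \eqref{Formula4}.

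It then remains to confirm that Theorem~\ref{theorem1} really applies to this $A_n$, i.e.\ that both sides of \eqref{Formula3} converge absolutely; this is the delicate step. The left side is a ${}_5\phi_4$ with argument $\alpha ab/q$ and so converges precisely when $|\alpha ab/q|<1$. For the right side I would use $\left|1-q^{j-n}\right|=|q|^{-(n-j)}\left|1-q^{n-j}\right|$ to get $\left|(q^{-n};q)_k\right|\le C\,|q|^{k(k-1)/2-nk}$ for $0\le k\le n$, with $C=\prod_{m\ge1}\left(1+|q|^m\right)$; this makes the terminating inner sum $O\!\left((n+1)\,|q|^{-n(n-1)/2}\right)$, and combining it with the Gaussian factor $q^{n(n-3)/2}$ and the geometric factor $\left(\alpha ab\right)^n$ shows that the $n$-th term of the right side of \eqref{Formula3} is $O\!\left((n+1)\,|\alpha ab/q|^{n}\right)$. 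Here the remaining hypotheses $|c|,|d|,|e|,|h|,|\alpha a|,|\alpha b|<1$ are used only to keep the various $q$-shifted factorials in the denominators bounded away from $0$ (while $|q|<1$ keeps the numerators bounded). Thus $|\alpha ab/q|<1$ is exactly what secures absolute convergence, Theorem~\ref{theorem1} applies, and \eqref{Formula4} follows.
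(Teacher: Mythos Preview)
Your proposal is correct and is exactly the approach the paper takes: immediately before stating Theorem~\ref{thm1} the paper substitutes $A_n=\dfrac{(q/b,\beta,\gamma,\lambda;q)_n}{(q,c,d,e,h;q)_n}(b/q)^n$ into Theorem~\ref{theorem1}, which is precisely your computation. Your write-up in fact goes further than the paper by making the convergence check explicit.
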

By combining  \eqref{Formula4} with other well-known $q$-series summation formulas and  the Jacobi triple product identity  \cite[Eq. (1.6.1)]{Gasper_and_Rahman}
\begin{equation}\label{Jacobi}
\sum_{n=-\infty}^{\infty}q^{\frac{n^2-n}{2}}z^n
=\left(-z,-q/z,q;q\right)_{\infty},
\end{equation}
we derive a number of  parameter-dependent identities in Sections \ref{sec2}-\ref{sec7}.  In particular, by making specific choices for the parameters, we obtain numerous  Rogers-Ramanujan type identities, including $75$ identities listed by Slater and  the following ones that appear to be new,
\begin{align*}
\sum_{n=0}^{\infty}\frac{1}{\left(q;q\right)_{2n+1}}q^{n^2}&=
\frac{\left(q^8,q^{12},q^{20};q^{20}\right)_{\infty}+q\left(q^4,q^{16},q^{20};q^{20}\right)_{\infty}}
{\left(q;q\right)_{\infty}\left(-q^2;q^2\right)_{\infty}},
\\
\sum_{n=0}^{\infty}\frac{\left(q;q\right)_{3n}\left(-q^3;q^3\right)_n}
{\left(q^3;q^3\right)_n\left(q^3;q^3\right)_{2n}}q^{\frac{3n^2-3n}{2}}
&\!=\!\frac{\left(-q^3;q^3\right)_{\infty}}{\left(q^3;q^3\right)_{\infty}}
\left(\left(q,q^5,q^6;q^6\right)_{\infty}+\left(q^2,q^4,q^6;q^6\right)_{\infty}\right),\\
\sum_{n=0}^{\infty}\frac{1}{\left(-q;q^2\right)_{n}\left(q;q\right)_{2n+1}}q^{2n^2+2n}
&\!=\!\frac{\left(q^8,q^{20},q^{28};q^{28}\right)_{\infty}+
q
\left(q^4,q^{24},q^{28};q^{28}\right)_{\infty}
}{\left(q^2;q^2\right)_{\infty}}.
\end{align*}

Our method is constructive, which means that we are able to  derive the product side of a sum-product $q$-series identity from its given sum side, without requiring prior knowledge of product side.  The same methodology has been applied  to the  study of identities on false theta functions, Hecke-type series and Appell-Lerch series, as documented in \cite{Wang_Chen,WangChun,Wang,Wang_Yee}.



\section{Identities from three ${_2\phi_1}$ summations}\label{sec2}
When  $\lambda=\gamma=h=e$, Theorem \ref{thm1} reduces to  the following result which was first proposed by Liu in \cite{Liu_1} and later applied in \cite{Wang, Wang_Chen,Wang_Yee}: for $|\alpha ab/q|<1$,
\begin{align}
&\frac{\left(\alpha q,\alpha ab/q;q\right)_{\infty}}{\left(\alpha a,\alpha b;q\right)_{\infty}}
{_3\phi_2}\left(\begin{array}{ccc}
q/a,\,q/b,\,\beta\\
c,\,d
\end{array}
;q,{\alpha ab}/{q}\right)\nonumber\\
&\quad=
\sum_{n=0}^{\infty}
\frac{\left(1-\alpha q^{2n}\right)\left(\alpha,q/a,q/b;q\right)_n}
{\left(1-\alpha\right)\left(q,\alpha a,\alpha b;q\right)_n}
\left(-\alpha ab\right)^nq^{{n\left(n-3\right)}/{2}}
{_3\phi_2}\left(\begin{array}{ccc}
q^{-n},\,\alpha q^n,\,\beta\\
c,\,d
\end{array}
;q,q\right).\label{Formula1}
\end{align}
Now, we use the $q$-Chu-Vandermonde summation formula and  the transformation \eqref{Formula1} to derive the following transformation formula.
\begin{theorem}\label{thm9}
For $\max\left\{|\alpha a|,|\alpha b|,|c|,|\alpha ab/q^2|\right\}<1$, we have
\begin{align*}
&\frac{\left(\alpha q^2,\alpha ab/q^2;q^2\right)_{\infty}}
{\left(\alpha a,\alpha b;q^2\right)_{\infty}}
\sum_{n=0}^{\infty}\frac{\left(q^2/a,q^2/b;q^2\right)_n}
{\left(q^2,c;q^2\right)_n}\left(\alpha ab/q^2\right)^n\\
&\quad=\sum_{n=0}^{\infty}\frac{1-\alpha q^{4n}}{1-\alpha}
\frac{\left(\alpha,q^2/a,q^2/b,\alpha q^2/c;q^2\right)_n}
{\left(q^2,\alpha a,\alpha b,c;q^2\right)_n}\alpha^n a^nb^nc^nq^{2n^2-4n}.
\end{align*}
\end{theorem}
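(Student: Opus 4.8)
The plan is to obtain Theorem \ref{thm9} as a specialization of the ${_3\phi_2}$ transformation \eqref{Formula1} followed by a single application of the $q$-Chu--Vandermonde summation formula. First I would replace $q$ by $q^2$ everywhere in \eqref{Formula1} (legitimate since $|q^2|<1$) and then set $\beta=d$. On the left-hand side the numerator parameter $\beta$ cancels the denominator parameter $d$, so the ${_3\phi_2}$ collapses to
\[
{_2\phi_1}\!\left(\begin{array}{cc} q^2/a,\,q^2/b\\ c\end{array};q^2,\alpha ab/q^2\right)
=\sum_{n=0}^{\infty}\frac{\left(q^2/a,q^2/b;q^2\right)_n}{\left(q^2,c;q^2\right)_n}\left(\alpha ab/q^2\right)^n,
\]
which is exactly the series on the left of Theorem \ref{thm9}; the prefactor $\left(\alpha q^2,\alpha ab/q^2;q^2\right)_\infty/\left(\alpha a,\alpha b;q^2\right)_\infty$ is already in the desired form. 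The same cancellation on the right-hand side reduces the inner ${_3\phi_2}$ to the terminating series
\[
{_2\phi_1}\!\left(\begin{array}{cc} q^{-2n},\,\alpha q^{2n}\\ c\end{array};q^2,q^2\right).
\]

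Next I would evaluate this terminating sum by the $q$-Chu--Vandermonde summation formula \cite{Gasper_and_Rahman}, applied in base $q^2$ with the generic numerator parameter taken to be $\alpha q^{2n}$, giving
\[
{_2\phi_1}\!\left(\begin{array}{cc} q^{-2n},\,\alpha q^{2n}\\ c\end{array};q^2,q^2\right)
=\frac{\left(c\alpha^{-1}q^{-2n};q^2\right)_n}{\left(c;q^2\right)_n}\left(\alpha q^{2n}\right)^n.
\]
To put $\left(c\alpha^{-1}q^{-2n};q^2\right)_n$ into usable form I would invoke the elementary reversal identity $(x;q)_n=(-x)^nq^{n(n-1)/2}\left(q^{1-n}/x;q\right)_n$ with $q$ replaced by $q^2$ and $x=c\alpha^{-1}q^{-2n}$, which yields $\left(c\alpha^{-1}q^{-2n};q^2\right)_n=(-c/\alpha)^nq^{-n^2-n}\left(\alpha q^2/c;q^2\right)_n$, so the terminating sum becomes $(-c)^nq^{n^2-n}\left(\alpha q^2/c;q^2\right)_n/\left(c;q^2\right)_n$.

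Finally I would substitute this back into the right-hand side of the specialized \eqref{Formula1}, whose $n$-th term carries the factor $\dfrac{1-\alpha q^{4n}}{1-\alpha}\dfrac{\left(\alpha,q^2/a,q^2/b;q^2\right)_n}{\left(q^2,\alpha a,\alpha b;q^2\right)_n}(-\alpha ab)^n(q^2)^{n(n-3)/2}$, and collect: the two signs $(-1)^n(-1)^n$ cancel, the monomials combine to $(\alpha ab)^nc^n=\alpha^na^nb^nc^n$, and the powers of $q$ add as $q^{n^2-3n}\cdot q^{n^2-n}=q^{2n^2-4n}$, reproducing the right-hand side of Theorem \ref{thm9}. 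For the convergence statement I would note that the left-hand ${_2\phi_1}$ converges for $|\alpha ab/q^2|<1$, that the hypotheses $|\alpha a|,|\alpha b|,|c|<1$ keep $\left(\alpha a;q^2\right)_n$, $\left(\alpha b;q^2\right)_n$, $\left(c;q^2\right)_n$ and the infinite products away from zero, and that the right-hand series converges for all parameters owing to the Gaussian factor $q^{2n^2-4n}$.

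There is no genuine obstacle here; the one delicate point is the bookkeeping of the $q$-powers and the product reversal in the middle step, where a stray sign or a missing factor of $q^{n}$ is the most likely slip, so I would cross-check that step against the $n=1$ term before trusting the final form.
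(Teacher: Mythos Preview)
Your proposal is correct and follows exactly the same route as the paper: replace $q$ by $q^2$ and set $\beta=d$ in \eqref{Formula1}, then apply the $q$-Chu--Vandermonde summation \eqref{q-Chu} with $a=\alpha q^{2n}$ to evaluate the inner terminating ${_2\phi_1}$. Your expanded bookkeeping of the reversal identity and the $q$-powers is accurate and simply fills in details the paper leaves implicit.
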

\begin{proof}
We begin by recalling  the $q$-Chu-Vandermonde summation formula \cite[Eq. (1.5.3)]{Gasper_and_Rahman}
\begin{equation}\label{q-Chu}
_2\phi_1\left(
\begin{array}{cc}
q^{-n},\,a\\
c
\end{array};q,\,q\right)=\frac{\left(c/a;q\right)_{n}}{\left(c;q\right)_n}a^n.
\end{equation}
Replacing $q$ by $q^2$ and setting $\beta=d$ in \eqref{Formula1}, and invoking \eqref{q-Chu} with $a=\alpha q^{2n}$, we arrive at Theorem \ref{thm9}.
\end{proof}

\begin{corollary}
We have
\begin{align}
\sum_{n=0}^{\infty}\frac{\left(-1\right)^nq^{n^2}}{\left(q^2;q^2\right)_n}
&{=}\left(q;q^2\right)_{\infty},\tag{S.3 and S.23}\label{S.3}\\
\sum_{n=0}^{\infty}\frac{\left(-1\right)^n\left(-q;q^2\right)_n}
{\left(q^4;q^4\right)_n}q^{n^2}
&{=}\left(q;q^2\right)_{\infty}
\left(q^2;q^4\right)_{\infty}.\tag{S.4}\label{S.4}
\end{align}
\end{corollary}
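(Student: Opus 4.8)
The plan is to derive both identities as specializations of Theorem~\ref{thm9}, choosing the parameters $\alpha, a, b, c$ so that the left-hand sum collapses to the target series and the right-hand side becomes a bilateral theta series via \eqref{Jacobi}. Since the base in Theorem~\ref{thm9} is $q^2$, I expect the relevant choices to be of the shape $\alpha = q^2$, with $a, b$ taken to be powers of $q$ (or $\pm$ powers of $q$) so that the factors $(q^2/a, q^2/b; q^2)_n$ in the numerator and the factor $(\alpha a, \alpha b; q^2)_n$ in the denominator partially cancel the $(q^2;q^2)_n$ and produce exactly $1/(q^2;q^2)_n$ or a closely related quantity; the parameter $c$ must then be chosen to simplify $(c;q^2)_n$ in the summand and $(\alpha q^2/c;q^2)_n$ on the right. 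For \eqref{S.3} I would aim for the sum on the left of Theorem~\ref{thm9} to become $\sum (-1)^n q^{n^2}/(q^2;q^2)_n$; the natural guess is to send one of $a$ or $b$ to $\infty$ (replacing the summand accordingly and tracking the limiting power of $q$) and $c \to \infty$ as well, after setting $\alpha = q^2$, so that the prefactor $(\alpha q^2, \alpha ab/q^2; q^2)_\infty/(\alpha a, \alpha b; q^2)_\infty$ tends to a finite product.

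The key steps, in order, are: (i) specialize $q \to q^2$ and $\beta = d$ in \eqref{Formula1} as already done to obtain Theorem~\ref{thm9}; (ii) make the parameter substitution into Theorem~\ref{thm9}, being careful with the limits $a \to \infty$ and/or $b \to \infty$ and/or $c \to \infty$ — each such limit turns a $q$-shifted factorial of argument tending to $0$ into $1$ and contributes a factor $q^{n^2}$ or similar from the term $(\alpha a)^n$ or $(\alpha b)^n$ balanced against $a^{-n}$ hidden in $(q^2/a;q^2)_n \sim (-1)^n q^{n(n-1)} a^{-n}$; (iii) simplify the resulting left-hand infinite product using the limiting form of $(\alpha ab/q^2; q^2)_\infty / (\alpha a, \alpha b; q^2)_\infty$, which should collapse to something like $(q;q^2)_\infty$ or $(q;q^2)_\infty (q^2;q^4)_\infty$; (iv) on the right-hand side, recognize the now-terminating-looking but actually non-terminating sum $\sum_n \frac{1-\alpha q^{4n}}{1-\alpha}(\cdots) q^{2n^2 - 4n}$ as a sum that telescopes or, more likely, as a bilateral sum in disguise: the factor $\frac{1-\alpha q^{4n}}{1-\alpha}$ together with the ratios of $q$-shifted factorials is the classic device for writing a sum over $n \ge 0$ as a sum over all $n \in \mathbb{Z}$, after which \eqref{Jacobi} applies directly and yields the product side; (v) read off the two identities \eqref{S.3} and \eqref{S.4} and match them against Slater's entries.

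The main obstacle I anticipate is step (iv): converting the one-sided very-well-poised-type sum on the right of Theorem~\ref{thm9} into a bilateral theta series. One has to check that, under the chosen specialization, the summand extends to negative $n$ in a way that is either identically zero (so nothing is added) or symmetric, and that the resulting bilateral series is genuinely of Jacobi-triple-product form $\sum_{n} q^{(n^2-n)/2} z^n$ after a change of summation variable $n \mapsto$ (affine function of $n$); tracking the exact exponent $2n^2 - 4n$ through such a shift, and pinning down the modulus (here $q^2$, $q^4$, or similar) and the value of $z$, is where sign errors and off-by-one errors are most likely. A secondary subtlety is ensuring all the convergence hypotheses $\max\{|\alpha a|, |\alpha b|, |c|, |\alpha ab/q^2|\} < 1$ survive the limiting process; when $a$ or $b$ is sent to $\infty$ one must instead argue convergence of the limiting series directly (the $q^{n^2}$ factor guarantees this) and justify the interchange of limit and summation, which is routine given $|q| < 1$. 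For \eqref{S.4} the only extra wrinkle over \eqref{S.3} is that one of $a, b$ should be kept finite and equal to a value like $-q$ (or $c$ set to $-q^2$), producing the extra factor $(-q;q^2)_n$ on the left and the extra product factor $(q^2;q^4)_\infty$ on the right; the bilateral-series manipulation on the right is structurally the same.
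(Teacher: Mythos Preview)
Your plan has the right framework---specialize Theorem~\ref{thm9}---but the specific choices you propose miss the key simplification and would not, as written, produce \eqref{S.3} or \eqref{S.4}.

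The paper does \emph{not} set $\alpha=q^2$, and it does \emph{not} use the Jacobi triple product here at all. Instead it takes $\alpha=-1$ and $c=-q^2$. The point of this choice is that it forces $\alpha q^2/c=1$, so the factor $(\alpha q^2/c;q^2)_n=(1;q^2)_n$ on the right-hand side of Theorem~\ref{thm9} vanishes for every $n\ge 1$. The entire right-hand sum therefore collapses to its $n=0$ term, which equals~$1$, and Theorem~\ref{thm9} reduces to the closed-form identity
\[
\sum_{n=0}^{\infty}\frac{(q^2/a,q^2/b;q^2)_n}{(q^2,-q^2;q^2)_n}(-ab/q^2)^n
=\frac{(-a,-b;q^2)_\infty}{(-q^2,-ab/q^2;q^2)_\infty}.
\]
Now \eqref{S.3} follows from $(a,b)\to(0,0)$ (then $q^2\to q$), and \eqref{S.4} from $(a,b)\to(0,-q)$. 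Your anticipated ``main obstacle''---converting the right-hand side to a bilateral theta series---simply never arises.

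Two smaller issues with your plan as stated. First, your limit is in the wrong direction: to extract $q^{n^2}$-type factors from $(q^2/a;q^2)_n\,a^n$ one sends $a\to 0$, not $a\to\infty$; indeed the asymptotic you quote, $(q^2/a;q^2)_n\sim(-1)^n q^{n^2+n}a^{-n}$, is the $a\to 0$ asymptotic. Second, with your guess $\alpha=q^2$ (and $c=-q^2$, $a,b\to 0$) the left-hand sum becomes $\sum q^{2n(n+1)}/(q^4;q^4)_n$, which is the input for \eqref{S.2}/\eqref{S.7}, not \eqref{S.3}; the sign $(-1)^n$ that \eqref{S.3} requires comes precisely from $\alpha=-1$ via the factor $(\alpha ab/q^2)^n=(-ab/q^2)^n$.
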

\begin{proof}
When $\alpha=-1, c=-q^2$, Theorem \ref{thm9} reduces to
\begin{align}
\sum_{n=0}^{\infty}\frac{\left(q^2/a,q^2/b;q^2\right)_{n}}{\left(q^2,-q^2;q^2\right)_{n}}
\left(-ab/q^2\right)^n=\frac{\left(-a,-b;q^2\right)_{\infty}}{\left(-q^2,-ab/q^2;q^2\right)_{\infty}}, \label{A14}
\end{align}
where $\max\{|a|,|b|,|ab/q^2|\}<1$. 

By setting $\left(a,b\right)\to\left(0,0\right)$ in \eqref{A14}, we obtain 
\begin{align*}
\sum_{n=0}^{\infty}\frac{\left(-1\right)^nq^{2n^2}}{\left(q^4;q^4\right)_n}=\frac{1}{\left(-q^2;q^2\right)_{\infty}}.
\end{align*}
Then, replacing $q^2$ by $q$ in the above equation and using the relation
\begin{equation*}
\left(q^2;q^2\right)_{\infty}=\left(q^2,q^4,q^6;q^6\right)_{\infty},
\end{equation*}
we obtain \eqref{S.3}.

 Also, by taking $\left(a,b\right)\to\left(0,-q\right)$ in \eqref{A14},  we get \eqref{S.4}.
 \end{proof}
\begin{corollary}
We have
\begin{align}
\sum_{n=0}^{\infty}\frac{\left(-1\right)^nq^{n\left(2n+1\right)}}
{\left(q^2;q^2\right)_n\left(-q;q^2\right)_{n+1}}
&=\left(q,-q^2;q^2\right)_{\infty}
,\tag{S.5}\label{S.5}\\
\sum_{n=0}^{\infty}\frac{q^{n\left(2n+1\right)}}{\left(q;q\right)_{2n+1}}
&=\frac{\left(-q,-q^3,q^4;q^4\right)_{\infty}}{\left(q^2;q^2\right)_{\infty}}
\tag{S.9}\label{S.9}\\
&=\frac{\left(q^2,q^6,q^8;q^8\right)_{\infty}
\left(q^4,q^{12};q^{16}\right)_{\infty}}{\left(q;q\right)_{\infty}}
,\tag{S.84}\label{S.84}\\
\sum_{n=0}^{\infty}\frac{\left(-q;q^2\right)_n}
{\left(q;q\right)_{2n+1}}q^{n\left(n+1\right)}&=
\frac{\left(q^4,q^8,q^{12};q^{12}\right)_{\infty}}{\left(q;q\right)_{\infty}}, \tag{S.51}\label{S.51}
\\
\sum_{n=0}^{\infty}\frac{\left(q^2;q^4\right)_n\left(-q^2;q^2\right)_n
q^{n\left(n+1\right)}}{\left(q^4;q^4\right)_n\left(q;q^2\right)_n
\left(q;q^2\right)_{n+1}}
&=
\frac{\left(-q^2;q^2\right)_{\infty}\left(-q,-q^3,q^4;q^4\right)_{\infty}}
{\left(q^2;q^2\right)_{\infty}},\tag{S.64}\label{S.64}\\
\sum_{n=0}^{\infty}\frac{\left(-q^2;q^2\right)_n}
{\left(q;q\right)_{2n+1}}q^{n^2}
&=\frac{\left(-q;q^2\right)_{\infty}}{\left(q;q^2\right)_{\infty}}.\tag{\cite[Entry 1.7.13]{Ramanujan}}\label{N1}
\end{align}
\end{corollary}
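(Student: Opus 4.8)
The plan is to derive each of the six identities by specializing the free parameters $\alpha,a,b,c$ (and, when convenient, the working base $q$ versus $q^2$) in Theorem~\ref{thm9}, in exactly the spirit of the proof of \eqref{S.3}--\eqref{S.4}, and then invoking the Jacobi triple product \eqref{Jacobi} on whatever bilateral sum appears. In every case the recipe is the same: first choose $\alpha$ and $c$ so that the \emph{right}-hand side of Theorem~\ref{thm9} either collapses to an infinite product (the \eqref{A14}-type phenomenon produced by the ``very-well-poised plus $q$-Chu--Vandermonde'' combination) or turns into a two-sided theta series; then choose $a$ and $b$ --- very often by letting $a\to0$, $b\to0$, $b\to\infty$, or by setting one of them to a small power of $q$ --- so that the summand on the \emph{left}-hand side becomes precisely the summand of the target Slater identity; finally clean up with elementary $q$-shifted-factorial identities such as $(q;q)_{2n+1}=(1-q)(q^3;q^2)_n(q^2;q^2)_n$, Euler's $(-q;q)_\infty=1/(q;q^2)_\infty$, and base splittings like $(q^2;q^2)_\infty=(q^2,q^4,q^6;q^6)_\infty$, so that the product side appears in Slater's normalization.

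I expect \eqref{S.5}, \eqref{S.51}, \eqref{S.64} and \eqref{N1} to come out most directly: a good choice of $\alpha,c$ leaves the right-hand side of Theorem~\ref{thm9} summable on the spot --- either it telescopes to a product as in \eqref{A14}, or, after one of $a,b$ is sent to $0$ or $\infty$, it reduces to a nonterminating very-well-poised ${}_6\phi_5$-type sum (a limiting case of Jackson's summation) whose value is again a quotient of infinite $q$-products --- so the theta-quotient right-hand sides of these four identities (for instance $(-q;q^2)_\infty/(q;q^2)_\infty$ in \eqref{N1}) appear immediately. For \eqref{S.9}, the relevant specialization instead turns the right-hand side of Theorem~\ref{thm9} into a genuinely \emph{bilateral} theta series, which \eqref{Jacobi} sums in closed form to $(-q,-q^3,q^4;q^4)_\infty/(q^2;q^2)_\infty$; the second representation \eqref{S.84} then follows from the first by a classical product-to-product theta identity (a special case of the quintuple-product identity), a step that is pure theta-function algebra and uses nothing from Theorem~\ref{thm9}.

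The main obstacle is not any single computation but the \emph{choice} of the quadruple $(\alpha,a,b,c)$ for each target: this is a guided search rather than a mechanical step, and it is complicated by the fact that the useful specializations typically send a parameter to $0$ or $\infty$, so one must verify in each case that both sides of Theorem~\ref{thm9} still converge in the limit and that the summation formula invoked on the right-hand side (the $q$-Gauss or limiting ${}_6\phi_5$ sum, or the triple product \eqref{Jacobi}) remains applicable. A secondary, purely theta-function obstacle is establishing the dissection identity that links the two product representations \eqref{S.9} and \eqref{S.84}.
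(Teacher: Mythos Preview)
Your plan is essentially the paper's approach, and it will work. Two refinements worth noting.

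First, the paper does \emph{not} hunt for a separate quadruple $(\alpha,a,b,c)$ for each of the six identities. It makes a single choice, $\alpha=-q$ and $c=-q^3$, in Theorem~\ref{thm9}. This forces $\alpha q^2/c=1$, so the factor $(\alpha q^2/c;q^2)_n=(1;q^2)_n$ kills every term with $n\ge1$ on the right-hand side, and the whole right-hand side collapses to~$1$. What remains is the closed product formula
\[
\sum_{n\ge0}\frac{(q^2/a,q^2/b;q^2)_n}{(q^2,-q^3;q^2)_n}(-ab/q)^n=\frac{(-aq,-bq;q^2)_\infty}{(-q^3,-ab/q;q^2)_\infty},
\]
the exact analogue of \eqref{A14}. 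All six identities then drop out by specializing only $a$ and $b$ (together with one $q\mapsto -q$): $(a,b)\to(0,0)$ gives \eqref{S.5}; $(a,b)\to(0,q)$ followed by $q\mapsto -q$ gives \eqref{S.51} and \eqref{S.64}; $(a,b)\to(0,-1)$ after $q\mapsto -q$ gives \ref{N1}. No ${}_6\phi_5$ or $q$-Gauss limit is needed anywhere.

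Second, your expectation that \eqref{S.9} requires a genuine bilateral theta summation via \eqref{Jacobi} is off: in the paper, \eqref{S.9} and \eqref{S.84} are obtained from \eqref{S.5} simply by replacing $q$ by $-q$ and then invoking the elementary product identity
\[
\frac{1}{(q;q^2)_\infty}=\frac{(-q,-q^3,q^4;q^4)_\infty}{(q^2;q^2)_\infty}=\frac{(q^2,q^6,q^8;q^8)_\infty(q^4,q^{12};q^{16})_\infty}{(q;q)_\infty}.
\]
So the ``secondary theta-function obstacle'' you anticipated is the only nontrivial step for \eqref{S.9}/\eqref{S.84}, and it is pure product algebra rather than a triple-product summation.
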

\begin{proof}
Substituting $\alpha=-q,c=-q^3$ into Theorem \ref{thm9} results in
\begin{align}
\sum_{n=0}^{\infty}\frac{\left(q^2/a,q^2/b;q^2\right)_{n}}
{\left(q^2,-q^3;q^2\right)_{n}}
\left(-ab/q\right)^n=\frac{\left(-aq,-bq;q^2\right)_{\infty}}{\left(-q^3,-ab/q;q^2\right)_{\infty}},\label{A2}
\end{align}
where $\max\{|aq|,|bq|,|ab/q|\}<1$. 

After inserting $\left(a,b\right)\to\left(0,0\right)$ into \eqref{A2}, 
we obtain \eqref{S.5}.  Furthermore, replacing $q$ by $-q$ in \eqref{S.5} and utilizing the following relation
\begin{equation*}
\frac{1}{\left(q;q^2\right)_{\infty}}
=\frac{\left(-q,-q^3,q^4;q^4\right)_{\infty}}{\left(q^2;q^2\right)_{\infty}}
=\frac{\left(q^2,q^6,q^8;q^8\right)_{\infty}
\left(q^4,q^{12};q^{16}\right)_{\infty}}{\left(q;q\right)_{\infty}},
\end{equation*}
we  obtain \eqref{S.9} and \eqref{S.84}.

In addition, setting $\left(a,b\right)\to\left(0,q\right)$ into \eqref{A2}, we get
\begin{align}
\sum_{n=0}^{\infty}\frac{\left(q;q^2\right)_n}{\left(q^2,-q^3;q^2\right)_n}q^{n(n+1)}
=\frac{\left(-q^2;q^2\right)_{\infty}}{\left(-q^3;q^2\right)_{\infty}}.\label{C7}
\end{align}
Replacing $q$ by $-q$ in \eqref{C7}, we have
\begin{align*}
\sum_{n=0}^{\infty}\frac{\left(-q;q^2\right)_n}{\left(q;q\right)_{2n+1}}q^{n(n+1)}
=\frac{\left(-q^2;q^2\right)_{\infty}}{\left(q;q^2\right)_{\infty}}.
\end{align*}
Then, combining the above equation with the following two relations
\begin{align*}
&\sum_{n=0}^{\infty}\frac{\left(q^2;q^4\right)_n\left(-q^2;q^2\right)_n}{\left(q^4;q^4\right)_n\left(q;q^2\right)_n
\left(q;q^2\right)_{n+1}}q^{n\left(n+1\right)}
=\sum_{n=0}^{\infty}\frac{\left(-q;q^2\right)_n
}{\left(q;q\right)_{2n+1}}q^{n\left(n+1\right)},\\
&\frac{1}{\left(q;q^2\right)_{\infty}}=\frac{\left(q^2;q^2\right)_{\infty}}{\left(q;q\right)_{\infty}}
=\frac{\left(-q,-q^3,q^4;q^4\right)_{\infty}}{\left(q^2;q^2\right)_{\infty}},
\end{align*}
we arrive at \eqref{S.51} and \eqref{S.64}.

Finally, replacing $q$ by $-q$ in \eqref{A2} and taking $\left(a,b\right)\to\left(0,-1\right)$, we get \ref{N1}.
\end{proof}
%


\begin{corollary}
We have
\begin{align}
\sum_{n=0}^{\infty}\frac{\left(-1;q\right)_{n}}
{\left(q;q\right)_n}q^{\frac{1}{2}n\left(n+1\right)}
&=\frac{\left(-q;q\right)_{\infty}\left(q^2,q^2,q^4;q^4\right)_{\infty}}
{\left(q;q\right)_{\infty}},\tag{S.12}\label{S.12}\\
\sum_{n=0}^{\infty}\frac{\left(-q;q\right)_{n}}
{\left(q;q\right)_n}q^{\frac{1}{2}n\left(n-1\right)}
&=\frac{\left(-q;q\right)_{\infty}}
{\left(q;q\right)_{\infty}}\left(\left(q,q^3,q^4;q^4\right)_{\infty}
+\left(q^2,q^2,q^4;q^4\right)_{\infty}\right),\tag{S.13}\label{S.13}\\
\sum_{n=0}^{\infty}\frac{q^{n^2}}
{\left(q;q\right)_n}
&=\frac{\left(q^2,q^3,q^5;q^5\right)_{\infty}}
{\left(q;q\right)_{\infty}},\tag{S.18}\label{S.18}\\
\sum_{n=0}^{\infty}\frac{\left(-1;q\right)_{2n}}
{\left(q^2;q^2\right)_n}q^{n}
&=\frac{\left(-q;q\right)_{\infty}\left(q^3,q^3,q^6;q^6\right)_{\infty}}
{\left(q;q\right)_{\infty}},\tag{S.24}\label{S.24}\\
\sum_{n=0}^{\infty}\frac{\left(-1\right)^n\left(-1,q;q^2\right)_n}
{\left(q^2;q^2\right)_n}q^{n}
&=\frac{\left(q,-q^2;q^2\right)_{\infty}\left(-q^3,-q^3,q^6;q^6\right)_{\infty}}
{\left(-q,q^2;q^2\right)_{\infty}},\tag{S.30}\label{S.30}\\
\sum_{n=0}^{\infty}\frac{\left(-q;q^2\right)_{n}}
{\left(q^2;q^2\right)_{n}}q^{n^2}
&=\frac{\left(-q^3;q^2\right)_{\infty}\left(q^3,q^5,q^8;q^8\right)_{\infty}}
{\left(q^2;q^2\right)_{\infty}}.\tag{S.36}\label{S.36}
\end{align}
\end{corollary}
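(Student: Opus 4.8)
The plan is to derive all six identities from a single family of specializations of Theorem~\ref{thm9}, used both in its stated base-$q^2$ form and in the base-$q$ form obtained by the substitution $q^2\mapsto q$ (legitimate because the identity in Theorem~\ref{thm9} depends only on $q^2$). In each case I first put the free parameter $c=0$. Then on the right-hand side of Theorem~\ref{thm9} the factor $\dfrac{(\alpha q^2/c;q^2)_n}{(c;q^2)_n}\,c^n$ tends to $(-\alpha)^n q^{n^2+n}$ (respectively $(-\alpha)^n q^{n(n+1)/2}$ in the base-$q$ form), which plants a Gaussian-type factor in the right-hand series, while on the left $c=0$ merely deletes the factor $(c;q^2)_n$ from the denominator of the series. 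Next I let $\alpha\to1$: for $n\ge1$ one has $\dfrac{1-\alpha q^{2n}}{1-\alpha}(\alpha;q)_n=(1-\alpha q^{2n})(\alpha q;q)_{n-1}\to(1+q^n)(q;q)_n$ (with $q\mapsto q^2$ in the base-$q^2$ form), while the $n=0$ term equals $1$ identically; since the right-hand series converges locally uniformly near $\alpha=1$, this passage to the limit may be carried out term by term. This is the usual device that turns a unilateral very-well-poised series into a bilateral one.

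After these two reductions only the parameters $a,b$ remain, and the proof consists in choosing them so that (i)~the left-hand series of Theorem~\ref{thm9} becomes exactly the Rogers--Ramanujan-type sum on the left of the identity in question, and (ii)~the right-hand series collapses to a bilateral theta series $\sum_{n\in\Z}(\pm1)^n q^{An^2+Bn}$, which is then evaluated by the Jacobi triple product \eqref{Jacobi}; finally one divides out the explicit infinite-product prefactor of Theorem~\ref{thm9} and tidies up. Concretely, \eqref{S.18} comes from the base-$q$ form with $a\to0$ and $b\to0$ (right side $\sum_{n\in\Z}(-1)^n q^{(5n^2-n)/2}=(q^2,q^3,q^5;q^5)_\infty$); \eqref{S.12} from the base-$q$ form with $a=-q$, $b\to0$ (right side $\sum_{n\in\Z}(-1)^n q^{2n^2}=(q^2,q^2,q^4;q^4)_\infty$); \eqref{S.24} from the base-$q^2$ form with $a=-q^2$, $b=-q$ (right side $\sum_{n\in\Z}(-1)^n q^{3n^2}=(q^3,q^3,q^6;q^6)_\infty$); \eqref{S.30} from the base-$q^2$ form with $a=-q^2$, $b=q$ (right side $\sum_{n\in\Z}q^{3n^2}=(-q^3,-q^3,q^6;q^6)_\infty$); and \eqref{S.36} from the base-$q^2$ form with $a=-q$, $b\to0$ (right side $\sum_{n\in\Z}(-1)^n q^{4n^2-n}=(q^3,q^5,q^8;q^8)_\infty$). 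The prefactor clean-up uses only elementary product identities such as $(q;q)_\infty=(q;q^2)_\infty(q^2;q^2)_\infty$, $(-1;q)_n=2(-q;q)_{n-1}$, $(-1,-q;q^2)_n=(-1;q)_{2n}$ and $(-q,q;q^2)_n=(q^2;q^4)_n$, together with occasional substitutions $q\mapsto-q$.

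The only case needing extra care is \eqref{S.13}: taking the base-$q$ form with $a=-1$, $b\to0$ produces on the right a further factor $\dfrac{(-q;q)_n}{(-1;q)_n}=\dfrac{1+q^n}{2}$, on top of the $1+q^n$ already supplied by the $\alpha\to1$ limit, so each summand carries $(1+q^n)^2=1+2q^n+q^{2n}$. Splitting this into its three pieces and re-indexing $n\mapsto-n$ where appropriate expresses the right side as $\tfrac12\bigl(\sum_{n\in\Z}(-1)^n q^{2n^2-n}+\sum_{n\in\Z}(-1)^n q^{2n^2}\bigr)=\tfrac12\bigl((q,q^3,q^4;q^4)_\infty+(q^2,q^2,q^4;q^4)_\infty\bigr)$, and the factor $\tfrac12$ cancels the $\tfrac12$ coming from $(q;q)_\infty/(-1;q)_\infty$ in the prefactor. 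The main obstacle, such as it is, is exactly this kind of bookkeeping: correctly isolating the $n=0$ term when a unilateral series is folded into a bilateral theta series, and, for \eqref{S.13}, carrying out the symmetrization that splits the answer into two theta products; the remaining manipulations are routine calculus of $q$-shifted factorials.
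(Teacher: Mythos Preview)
Your proposal is correct and is essentially the paper's own proof. The paper also sets $c=0$ and $\alpha=1$ in Theorem~\ref{thm9} to obtain the two-parameter identity \eqref{q6}, and then recovers \eqref{S.12}, \eqref{S.13}, \eqref{S.18}, \eqref{S.24}, \eqref{S.30}, \eqref{S.36} from exactly the specializations you list (up to the symmetry $a\leftrightarrow b$), with the same appeals to the Jacobi triple product; the only cosmetic difference is that the paper carries out the first three in base $q^2$ and substitutes $q^2\mapsto q$ at the end, whereas you pass to base $q$ first.
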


\begin{proof}
Taking $\alpha=1,c=0$ into Theorem \ref{thm9}, we deduce that
\begin{align}
&\frac{\left(q^2,ab/q^2;q^2\right)_{\infty}}
{\left(a,b;q^2\right)_{\infty}}\sum_{n=0}^{\infty}
\frac{\left(q^2/a,q^2/b;q^2\right)_{n}}
{\left(q^2;q^2\right)_n}\left(ab/q^2\right)^n\nonumber\\
&\quad=1+\sum_{n=1}^{\infty}\left(-1\right)^n\left(1+q^{2n}\right)
\frac{\left(q^2/a,q^2/b;q^2\right)_{n}}{\left(a,b;q^2\right)_n}
a^nb^nq^{3n^2-3n},\label{q6}
\end{align}
where $\max\{|a|,|b|,|ab/q^2|\}<1$.

When taking $\left(a,b\right)\to\left(0,-q^2\right)$ into \eqref{q6} and employing the Jacobi triple product identity \eqref{Jacobi}
with $\left(q,z\right)\to\left(q^8,-q^4\right)$, we deduce that
\begin{equation}\label{equ8}
\frac{\left(q^2;q^2\right)_{\infty}}{\left(-q^2;q^2\right)_{\infty}}
\sum_{n=0}^{\infty}\frac{\left(-1;q^2\right)_{n}}{\left(q^2;q^2\right)_{n}}
q^{n\left(n+1\right)}
=\left(q^4,q^4,q^8;q^8\right)_{\infty}.
\end{equation}
Then, we complete the proof of \eqref{S.12} by letting $q^2\to q$ in \eqref{equ8}.

Moreover, we take $\left(a,b\right)\to\left(0,-1\right)$ into  \eqref{q6} to get
\begin{align}
&\frac{\left(q^2;q^2\right)_{\infty}}{\left(-q^2;q^2\right)_{\infty}}
\sum_{n=0}^{\infty}\frac{\left(-q^2;q^2\right)_{n}}{\left(q^2;q^2\right)_{n}}
q^{n\left(n-1\right)}\nonumber\\
&\quad=\sum_{n=-\infty}^{\infty}\left(-1\right)^nq^{4n^2-2n}
+\sum_{n=-\infty}^{\infty}\left(-1\right)^nq^{4n^2}\nonumber\\
&\quad=\left(q^2,q^6,q^8;q^8\right)_{\infty}
+\left(q^4,q^4,q^8;q^8\right)_{\infty},\label{equ9}
\end{align}
where the last identity follows from the Jacobi triple product identity \eqref{Jacobi} with $\left(q,z\right)\to\left(q^8,-q^2\right)$ and $\left(q,z\right)\to\left(q^8,-q^4\right)$, respectively.
Then, we arrive at \eqref{S.13} by letting $q^2\to q$ in \eqref{equ9}.

Taking $\left(a,b\right)\to\left(0,0\right)$ into \eqref{q6} and employing the Jacobi triple product identity \eqref{Jacobi} with $\left(q,z\right)\to\left(q^{10},-q^4\right)$, we obtain
\begin{equation}\label{Equ8}
\left(q^2;q^2\right)_{\infty}
\sum_{n=0}^{\infty}\frac{q^{2n^2}}{\left(q^2;q^2\right)_{n}}
=\left(q^4,q^6,q^{10};q^{10}\right)_{\infty}.
\end{equation}
Then, we derive \eqref{S.18} by letting $q^2\to q$ in \eqref{Equ8}.

In addition, taking $\left(a,b\right)\to\left(-q,-q^2\right)$, $\left(a,b\right)\to\left(q,-q^2\right)$, $\left(a,b\right)\to\left(-q,0\right)$, respectively, into  \eqref{q6} and employing the Jacobi triple product identity \eqref{Jacobi}, we obtain \eqref{S.24}, \eqref{S.30} and  \eqref{S.36}.
\end{proof}
\begin{corollary}
We have
\begin{align}
\sum_{n=0}^{\infty}\frac{\left(-q;q^2\right)_n}{\left(q;q\right)_{2n}}q^{n^2}
&=
\frac{\left(-q;q^2\right)_{\infty}\left(-q^2,-q^4,q^6;q^6\right)_{\infty}}
{\left(q^2;q^2\right)_{\infty}},\tag{S.29}\label{S.29}\\
\sum_{n=0}^{\infty}\frac{q^{2n^2}}{\left(q;q\right)_{2n}}
&=
\frac{\left(-q^3,-q^5,q^8;q^8\right)_{\infty}}
{\left(q^2;q^2\right)_{\infty}}.\tag{S.39}\label{S.39}
\end{align}
\end{corollary}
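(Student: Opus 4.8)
The plan is to obtain both identities from one new specialization of Theorem~\ref{thm9}: take $c=q$ and let $\alpha\to 1$. The point of choosing $c=q$ is that then $(q^2,c;q^2)_n=(q^2,q;q^2)_n=(q;q)_{2n}$, which is exactly the denominator appearing in (S.29) and (S.39). The limit $\alpha\to 1$ must be handled as in the passage to \eqref{q6}: the offending factor $(\alpha;q^2)_n/(1-\alpha)$ tends to $(q^2;q^2)_{n-1}$, while $(\alpha q^2/c;q^2)_n=(\alpha q;q^2)_n\to(q;q^2)_n$ cancels the extra $(q;q^2)_n$ contributed by $(c;q^2)_n$; after using $(1-q^{4n})(q^2;q^2)_{n-1}=(1+q^{2n})(q^2;q^2)_n$ this yields, for $\max\{|a|,|b|,|ab/q^2|\}<1$,
\begin{align*}
&\frac{\left(q^2,ab/q^2;q^2\right)_{\infty}}{\left(a,b;q^2\right)_{\infty}}\sum_{n=0}^{\infty}\frac{\left(q^2/a,q^2/b;q^2\right)_n}{\left(q;q\right)_{2n}}\left(ab/q^2\right)^n\\
&\quad=1+\sum_{n=1}^{\infty}\left(1+q^{2n}\right)\frac{\left(q^2/a,q^2/b;q^2\right)_n}{\left(a,b;q^2\right)_n}a^nb^nq^{2n^2-3n}.
\end{align*}
This is the analogue of \eqref{q6} carrying $(q;q)_{2n}$ in the summand; everything after this is specialization of $a,b$.

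For (S.39) I would let $\left(a,b\right)\to\left(0,0\right)$. Using $(q^2/a;q^2)_n\sim(-1)^na^{-n}q^{n^2+n}$ as $a\to 0$ (and the same for $b$), the left summand $(q^2/a,q^2/b;q^2)_n(ab/q^2)^n$ tends to $q^{2n^2}$ while the prefactor tends to $(q^2;q^2)_{\infty}$, so the left side becomes $(q^2;q^2)_{\infty}\sum_{n=0}^{\infty}q^{2n^2}/(q;q)_{2n}$; on the right, $(q^2/a;q^2)_na^n\to(-1)^nq^{n^2+n}$ turns the $n$-th term into $(1+q^{2n})q^{4n^2-n}=q^{4n^2-n}+q^{4n^2+n}$, and the whole right side collapses to the bilateral series $\sum_{n\in\Z}q^{4n^2+n}$. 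For (S.29) I would take $\left(a,b\right)\to\left(0,-q\right)$: here $(q^2/b;q^2)_n=(b;q^2)_n=(-q;q^2)_n$ cancel, the prefactor tends to $(q^2;q^2)_{\infty}/(-q;q^2)_{\infty}$, the left side becomes $\frac{(q^2;q^2)_{\infty}}{(-q;q^2)_{\infty}}\sum_{n=0}^{\infty}(-q;q^2)_nq^{n^2}/(q;q)_{2n}$, and the right side reduces in the same way to $\sum_{n\in\Z}q^{3n^2+n}$.

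Finally I would evaluate the two bilateral theta series by the Jacobi triple product \eqref{Jacobi}: with $\left(q,z\right)\to\left(q^6,q^4\right)$ one gets $\sum_{n\in\Z}q^{3n^2+n}=\left(-q^2,-q^4,q^6;q^6\right)_{\infty}$, and with $\left(q,z\right)\to\left(q^8,q^5\right)$ one gets $\sum_{n\in\Z}q^{4n^2+n}=\left(-q^3,-q^5,q^8;q^8\right)_{\infty}$. Dividing through by the infinite-product prefactors then gives precisely (S.29) and (S.39). I do not expect a real obstacle; the only delicate step is the $\alpha\to 1$ limit, which must be carried out carefully enough to reproduce the $1+q^{2n}$ factor, and the rest (tracking exponents in the $a,b\to 0$ limits, recognizing the bilateral sums, and matching the triple product) is routine. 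The single genuine idea is the choice $c=q$, which converts $(q^2,c;q^2)_n$ into $(q;q)_{2n}$.
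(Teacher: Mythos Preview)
Your proposal is correct and essentially identical to the paper's own proof: the paper also sets $\alpha=1$, $c=q$ in Theorem~\ref{thm9} to obtain exactly your displayed identity (their \eqref{A15}), then specializes $(a,b)\to(0,-q)$ for (S.29) and $(a,b)\to(0,0)$ for (S.39), finishing with the Jacobi triple product. The only cosmetic differences are that the paper states ``taking $\alpha=1$'' without spelling out the limit (your explanation of the $(1+q^{2n})$ factor is more explicit), and it records the triple-product substitutions as $(q,z)\to(q^6,q^2)$ and $(q,z)\to(q^8,q^3)$ rather than your equivalent $(q^6,q^4)$ and $(q^8,q^5)$.
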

\begin{proof}
Taking $\alpha=1,c=q$ into Theorem \ref{thm9}, we deduce that
\begin{align}
&\frac{\left(q^2,ab/q^2;q^2\right)_{\infty}}
{\left(a,b;q^2\right)_{\infty}}\sum_{n=0}^{\infty}
\frac{\left(q^2/a,q^2/b;q^2\right)_{n}}
{\left(q,q^2;q^2\right)_n}\left(ab/q^2\right)^n\nonumber\\
&\quad=1+\sum_{n=1}^{\infty}\left(1+q^{2n}\right)
\frac{\left(q^2/a,q^2/b;q^2\right)_{n}}{\left(a,b;q^2\right)_n}
a^nb^nq^{2n^2-3n},\label{A15}
\end{align}
where $\max\{|a|,|b|,|ab/q^2|\}<1$.

Substituting $\left(a,b\right)\to\left(0,-q\right)$ into \eqref{A15} and using the Jacobi triple product with $\left(q,z\right)\to\left(q^6,q^2\right)$, we derive \eqref{S.29}.

Additionally, taking $\left(a,b\right)\to\left(0,0\right)$ into  \eqref{A15} and using Jacobi triple product with $\left(q,z\right)\to\left(q^8,q^3\right)$, we obtain \eqref{S.39}.
\end{proof}
\begin{corollary}
We have
\begin{align}
\sum_{n=0}^{\infty}\frac{\left(-q;q^2\right)_{n}}
{\left(q^4;q^4\right)_n}q^{n^2}
&=\frac{\left(-q;q^2\right)_{\infty}\left(q^3,q^3,q^6;q^6\right)_{\infty}}
{\left(q^2;q^2\right)_{\infty}}.\tag{S.25}\label{S.25}
\end{align}
\end{corollary}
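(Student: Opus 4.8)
The plan is to run the same procedure that produced \eqref{q6} and \eqref{A15}, now with the choice $\alpha=1$, $c=-q^2$ in Theorem \ref{thm9}, then specialize $(a,b)\to(-q,0)$, and finally recognize the resulting theta series via the Jacobi triple product \eqref{Jacobi}.

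The point of taking $c=-q^2$ is that the left-hand denominator factor $(q^2,c;q^2)_n$ becomes $(q^2,-q^2;q^2)_n=(q^4;q^4)_n$, exactly the shape in (S.25). Setting $\alpha=1$ then calls for the usual care with the singular factor $(1-\alpha q^{4n})/(1-\alpha)$: the numerator factor $(\alpha;q^2)_n$ cancels the pole, and
\[
\lim_{\alpha\to1}\frac{(1-\alpha q^{4n})(\alpha;q^2)_n}{1-\alpha}=(1-q^{4n})(q^2;q^2)_{n-1}=(1+q^{2n})(q^2;q^2)_n,
\]
whose $(q^2;q^2)_n$ cancels the one already in the right-hand denominator; moreover $\alpha q^2/c=-1$ here, so $(\alpha q^2/c;q^2)_n/(c;q^2)_n=(-1;q^2)_n/(-q^2;q^2)_n=2/(1+q^{2n})$, cancelling the surviving $(1+q^{2n})$. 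The outcome is the companion of \eqref{q6} and \eqref{A15}: for $\max\{|a|,|b|,|ab/q^2|\}<1$,
\begin{align*}
&\frac{(q^2,ab/q^2;q^2)_\infty}{(a,b;q^2)_\infty}\sum_{n=0}^\infty\frac{(q^2/a,q^2/b;q^2)_n}{(q^4;q^4)_n}\left(\frac{ab}{q^2}\right)^n\\
&\qquad=1+\sum_{n=1}^\infty2(-1)^n\frac{(q^2/a,q^2/b;q^2)_n}{(a,b;q^2)_n}a^nb^nq^{2n^2-2n}.
\end{align*}

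Next I would let $(a,b)\to(-q,0)$. On the left, $q^2/a=-q$ supplies $(-q;q^2)_n$, and from $(q^2/b;q^2)_n\sim(-1)^nb^{-n}q^{n(n+1)}$ one gets $(q^2/b;q^2)_n(ab/q^2)^n\to q^{n^2}$, so the left side collapses to $\frac{(q^2;q^2)_\infty}{(-q;q^2)_\infty}\sum_{n\ge0}\frac{(-q;q^2)_n}{(q^4;q^4)_n}q^{n^2}$. The same asymptotics yields $\frac{(q^2/b;q^2)_n}{(b;q^2)_n}(-q)^nb^n\to q^{n^2+2n}$, so the $n$-th term on the right tends to $2(-1)^nq^{3n^2}$ and the right side becomes $1+2\sum_{n\ge1}(-1)^nq^{3n^2}=\sum_{n=-\infty}^{\infty}(-1)^nq^{3n^2}$. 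Writing $q^{3n^2}=(q^6)^{n(n-1)/2}(q^3)^n$ and applying \eqref{Jacobi} with $(q,z)\to(q^6,-q^3)$ identifies this sum with $(q^3,q^3,q^6;q^6)_\infty$; clearing the prefactor $(q^2;q^2)_\infty/(-q;q^2)_\infty$ gives (S.25). The only delicate points are the bookkeeping in the $\alpha\to1$ limit (the telescoping above, which fixes the constants) and the term-by-term passage $b\to0$, justified by absolute and locally uniform convergence near $b=0$; beyond the choices $c=-q^2$, $(a,b)\to(-q,0)$, and the triple-product recasting of $\sum(-1)^nq^{3n^2}$, there is no real obstacle.
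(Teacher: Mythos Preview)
Your proposal is correct and follows essentially the same route as the paper: specialize Theorem~\ref{thm9} with $\alpha=1$, $c=-q^2$ to obtain the two-parameter identity \eqref{q61}, then let one of $a,b$ be $-q$ and the other tend to $0$, and finish with the Jacobi triple product \eqref{Jacobi} at $(q,z)\to(q^6,-q^3)$. The paper writes $(a,b)\to(0,-q)$ while you write $(a,b)\to(-q,0)$, but since \eqref{q61} is symmetric in $a$ and $b$ this is the same specialization; your additional remarks on the $\alpha\to1$ bookkeeping are a welcome elaboration of what the paper leaves implicit.
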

\begin{proof}
Taking $\alpha=1,c=-q^2$ into Theorem \ref{thm9}, we deduce that
\begin{align}
&\frac{\left(q^2,ab/q^2;q^2\right)_{\infty}}
{\left(a,b;q^2\right)_{\infty}}\sum_{n=0}^{\infty}
\frac{\left(q^2/a,q^2/b;q^2\right)_{n}}
{\left(q^2,-q^2;q^2\right)_n}\left(ab/q^2\right)^n\nonumber\\
&\quad=1+2\sum_{n=1}^{\infty}\left(-1\right)^n
\frac{\left(q^2/a,q^2/b;q^2\right)_{n}}{\left(a,b;q^2\right)_n}
a^nb^nq^{2n^2-2n},\label{q61}
\end{align}
where $\max\{|a|,|b|,|ab/q^2|\}<1$.

Inserting $\left(a,b\right)\to\left(0,-q\right)$ into \eqref{q61} and employing the Jacobi  triple product identity \eqref{Jacobi} with $\left(q,z\right)\to\left(q^6,-q^3\right)$, we get \eqref{S.25}.
\end{proof}
\begin{corollary}
We have
\begin{align}
\sum_{n=0}^{\infty}\frac{\left(-q;q\right)_n
}{\left(q;q\right)_n}q^{\frac{1}{2}n\left(n+1\right)}
&=
\frac{\left(-q;q\right)_{\infty}\left(q,q^3,q^4;q^4\right)_{\infty}}
{\left(q;q\right)_{\infty}},\tag{S.8}\label{S.8}\\
\sum_{n=0}^{\infty}\frac{q^{n\left(n+1\right)}}
{\left(q;q\right)_n}
&=\frac{\left(q,q^4,q^5;q^5\right)_{\infty}}
{\left(q;q\right)_{\infty}},\tag{S.14}\label{S.14}\\
\sum_{n=0}^{\infty}\frac{\left(-q;q^2\right)_n}
{\left(q^2;q^2\right)_n}q^{n\left(n+2\right)}
&=\frac{\left(-q;q^2\right)_{\infty}\left(q,q^7,q^8;q^8\right)_{\infty}}
{\left(q^2;q^2\right)_{\infty}}.\tag{S.34} \label{S.34}
\end{align}
\end{corollary}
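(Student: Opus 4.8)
The plan is to run the same machinery as in the previous corollaries, this time feeding the specialization $\alpha=q^{2}$, $c\to 0$ into Theorem~\ref{thm9}. As $c\to0$ the quantity $(\alpha q^{2}/c;q^{2})_{n}\,c^{n}/(c;q^{2})_{n}$ tends to $(-\alpha)^{n}q^{n^{2}+n}$, and when $\alpha=q^{2}$ the factor $(q^{2};q^{2})_{n}$ cancels between numerator and denominator; this leaves the parameter-dependent identity
\begin{align}
&\frac{\left(q^{4},ab;q^{2}\right)_{\infty}}{\left(q^{2}a,q^{2}b;q^{2}\right)_{\infty}}
\sum_{n=0}^{\infty}\frac{\left(q^{2}/a,q^{2}/b;q^{2}\right)_{n}}{\left(q^{2};q^{2}\right)_{n}}\left(ab\right)^{n}\nonumber\\
&\qquad=\frac{1}{1-q^{2}}\sum_{n=0}^{\infty}(-1)^{n}\left(1-q^{4n+2}\right)
\frac{\left(q^{2}/a,q^{2}/b;q^{2}\right)_{n}}{\left(q^{2}a,q^{2}b;q^{2}\right)_{n}}a^{n}b^{n}q^{3n^{2}+n},
\label{eqplan}
\end{align}
valid for $\max\{|q^{2}a|,|q^{2}b|,|ab|\}<1$. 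Identity \eqref{eqplan} is the analogue here of \eqref{q6}, \eqref{A15} and \eqref{q61}, and I expect to obtain each of \eqref{S.8}, \eqref{S.14} and \eqref{S.34} from it by a single choice of $(a,b)$, followed by symmetrization of the right-hand series and one application of the Jacobi triple product \eqref{Jacobi}.

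For \eqref{S.14} I would let $(a,b)\to(0,0)$ in \eqref{eqplan}. Since $(q^{2}/x;q^{2})_{n}x^{n}\to(-1)^{n}q^{n^{2}+n}$ as $x\to0$, the left side becomes $\frac{(q^{2};q^{2})_{\infty}}{1-q^{2}}\sum_{n\ge0}\frac{q^{2n^{2}+2n}}{(q^{2};q^{2})_{n}}$ and the right side becomes $\frac{1}{1-q^{2}}\sum_{n\ge0}(-1)^{n}(1-q^{4n+2})q^{5n^{2}+3n}$. Cancelling the common factor $(1-q^{2})^{-1}$, expanding $1-q^{4n+2}$, and replacing $n$ by $-n-1$ in the resulting second series (so that $(-1)^{-n-1}=-(-1)^{n}$) symmetrizes the right side into $\sum_{n\in\Z}(-1)^{n}q^{5n^{2}+3n}$, which by \eqref{Jacobi} with $(q,z)\to(q^{10},-q^{8})$ equals $(q^{2},q^{8},q^{10};q^{10})_{\infty}$; replacing $q^{2}$ by $q$ then yields \eqref{S.14}. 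The pair $(a,b)\to(-1,0)$ is handled the same way: at $a=-1$ the factor $(q^{2}/a;q^{2})_{n}/(q^{2}a;q^{2})_{n}$ equals $1$, so the left side is $\frac{(q^{4};q^{2})_{\infty}}{(-q^{2};q^{2})_{\infty}}\sum_{n\ge0}\frac{(-q^{2};q^{2})_{n}}{(q^{2};q^{2})_{n}}q^{n^{2}+n}$, and the identical symmetrization together with \eqref{Jacobi} for $(q,z)\to(q^{8},-q^{6})$ produces $(q^{2},q^{6},q^{8};q^{8})_{\infty}$; after $q^{2}\to q$ this is \eqref{S.8}.

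For \eqref{S.34} I would take $(a,b)\to(0,-q)$ in \eqref{eqplan}, which sends the left side to $\frac{(q^{4};q^{2})_{\infty}}{(-q^{3};q^{2})_{\infty}}\sum_{n\ge0}\frac{(-q;q^{2})_{n}}{(q^{2};q^{2})_{n}}q^{n^{2}+2n}$. This case is where the only real work lies: with $b=-q$ the factor $(q^{2}/b;q^{2})_{n}/(q^{2}b;q^{2})_{n}$ does not cancel but equals $(-q;q^{2})_{n}/(-q^{3};q^{2})_{n}=(1+q)/(1+q^{2n+1})$, so one must combine it with the factorization $1-q^{4n+2}=(1-q^{2n+1})(1+q^{2n+1})$ to kill the $1+q^{2n+1}$, reducing the right side to $\frac{1}{1-q}\sum_{n\ge0}(-1)^{n}(1-q^{2n+1})q^{4n^{2}+3n}$. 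Symmetrizing as before and applying \eqref{Jacobi} with $(q,z)\to(q^{8},-q^{7})$ turns this into $\frac{1}{1-q}(q,q^{7},q^{8};q^{8})_{\infty}$; finally, rewriting the prefactor via $(q^{4};q^{2})_{\infty}=(1-q^{2})^{-1}(q^{2};q^{2})_{\infty}$ and $(-q^{3};q^{2})_{\infty}=(1+q)^{-1}(-q;q^{2})_{\infty}$ collapses the whole identity to \eqref{S.34}. The main obstacle is exactly this bookkeeping for \eqref{S.34} --- matching $1+q^{2n+1}$ against $1-q^{4n+2}$ and tidying the infinite products in the prefactor; once past that, the symmetrization-then-Jacobi step is the routine already used for \eqref{S.18}, \eqref{S.29} and their companions, and the termwise passage to the limits in $a$ and $b$ is justified exactly as in the earlier proofs.
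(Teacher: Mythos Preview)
Your proposal is correct and follows essentially the same route as the paper. Your parameter identity \eqref{eqplan} is exactly the paper's specialization of Theorem~\ref{thm9} at $\alpha=q^{2}$, $c\to 0$ (the paper's displayed version of this identity omits the factor $(-1)^{n}$ by a typographical slip, but its subsequent Jacobi applications use the correct form you wrote), and your three choices $(a,b)\to(0,-1)$, $(0,0)$, $(0,-q)$ together with the symmetrization-then-Jacobi step coincide with the paper's derivations of \eqref{S.8}, \eqref{S.14}, and \eqref{S.34}.
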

\begin{proof}
By taking $\alpha=q^2,c=0$ into Theorem \ref{thm9}, we derive 
\begin{align}
&\frac{\left(q^2,ab;q^2\right)_{\infty}}
{\left(aq^2,bq^2;q^2\right)_{\infty}}\sum_{n=0}^{\infty}
\frac{\left(q^2/a,q^2/b;q^2\right)_{n}}
{\left(q^2;q^2\right)_{n}}a^nb^n\nonumber\\
&\quad=\sum_{n=0}^{\infty}\left(1-q^{4n+2}\right)
\frac{\left(q^2/a,q^2/b;q^2\right)_{n}}
{\left(aq^2,bq^2;q^2\right)_n}
a^nb^nq^{3n^2+n},\label{A17}
\end{align}
where $\max\{|ab|,|aq^2|,|bq^2|\}<1$.

Taking $\left(a,b\right)\to\left(0,-1\right)$ into \eqref{A17} and employing the Jacobi triple product identity \eqref{Jacobi} with $\left(q,z\right)\to\left(q^8,-q^2\right)$, we obtain
\begin{equation}\label{equa8}
\frac{\left(q^2;q^2\right)_{\infty}}{\left(-q^2;q^2\right)_{\infty}}
\sum_{n=0}^{\infty}\frac{\left(-q^2;q^2\right)_{n}}{\left(q^2;q^2\right)_{n}}
q^{n\left(n+1\right)}
=\left(q^2,q^6,q^8;q^8\right)_{\infty}.
\end{equation}
Then, we derive \eqref{S.8} by letting $q^2\to q$ in \eqref{equa8}.

In addition, taking $\left(a,b\right)\to\left(0,0\right)$ into  \eqref{A17} and applying the Jacobi triple product identity \eqref{Jacobi} with $\left(q,z\right)\to\left(q^{10},-q^2\right)$, we obtain
\begin{equation}\label{equa7}
\left(q^2;q^2\right)_{\infty}
\sum_{n=0}^{\infty}\frac{q^{2n\left(n+1\right)}}{\left(q^2;q^2\right)_{n}}
=\left(q^2,q^8,q^{10};q^{10}\right)_{\infty}.
\end{equation}
Then, we complete the proof of \eqref{S.14} by letting $q^2\to q$ in \eqref{equa7}.

Taking $\left(a,b\right)\to\left(0,-q\right)$ into \eqref{A17} and utilizing the Jacobi triple product identity \eqref{Jacobi} with $\left(q,z\right)\to\left(q^8,-q\right)$, we immediately obtain \eqref{S.34}.
\end{proof}
\begin{corollary}
We have
\begin{align}
\sum_{n=0}^{\infty}\frac{q^{\frac{n\left(n+1\right)}{2}}}
{\left(q;q\right)_{n}}
&=\left(-q;q\right)_{\infty},\tag{S.2}\label{S.2}\\
\sum_{n=0}^{\infty}\frac{
q^{n\left(n+1\right)}}{\left(q^2;q^2\right)_n}
&=
\frac{\left(q,q^3,q^4;q^4\right)_{\infty}}
{\left(q;q\right)_{\infty}},\tag{S.7}\label{S.7}
\\
\sum_{n=0}^{\infty}\frac{\left(-q;q^2\right)_n}
{\left(q^4;q^4\right)_n}q^{n\left(n+2\right)}
&=\frac{\left(-q;q^2\right)_{\infty}\left(q,q^5,q^6;q^6\right)_{\infty}}
{\left(q^2;q^2\right)_{\infty}}.\tag{\cite[Entry 5.3.7]{Ramanujan}}\label{new2}
\end{align}
\end{corollary}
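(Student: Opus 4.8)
The plan is to specialize Theorem~\ref{thm9} at $\alpha=q^{2}$ and $c=-q^{2}$, obtaining a two–parameter transformation in $a,b$ (the $\alpha=q^{2}$ companion of \eqref{q61}), and then to recover all three identities by letting $(a,b)$ tend to suitable limits and applying the Jacobi triple product \eqref{Jacobi}. With these values one has $(q^{2},c;q^{2})_{n}=(q^{4};q^{4})_{n}$ and $(\alpha,\alpha q^{2}/c;q^{2})_{n}=(q^{2};q^{2})_{n}(-q^{2};q^{2})_{n}=(q^{4};q^{4})_{n}$, so most of the $q$-shifted factorials on the right of Theorem~\ref{thm9} cancel, and after simplification the master identity reads
\begin{align*}
\frac{(q^{4},ab;q^{2})_{\infty}}{(q^{2}a,q^{2}b;q^{2})_{\infty}}\sum_{n=0}^{\infty}\frac{(q^{2}/a,q^{2}/b;q^{2})_{n}}{(q^{4};q^{4})_{n}}(ab)^{n}=\sum_{n=0}^{\infty}\frac{1-q^{4n+2}}{1-q^{2}}\,\frac{(q^{2}/a,q^{2}/b;q^{2})_{n}}{(q^{2}a,q^{2}b;q^{2})_{n}}(-1)^{n}(ab)^{n}q^{2n^{2}},
\end{align*}
valid whenever $\max\{|q^{2}a|,|q^{2}b|,|ab|\}<1$ (the remaining requirement $|q^{2}|<1$ being automatic).

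For \eqref{S.7} (and hence \eqref{S.2}) I would set $(a,b)\to(0,0)$. On the left, $(q^{2}/a;q^{2})_{n}a^{n}\to(-1)^{n}q^{n(n+1)}$ and similarly in $b$, the prefactor tends to $(q^{4};q^{2})_{\infty}=(q^{2};q^{2})_{\infty}/(1-q^{2})$, and the sum becomes $\sum_{n}q^{2n(n+1)}/(q^{4};q^{4})_{n}$. On the right, the summand collapses to $\tfrac{1-q^{4n+2}}{1-q^{2}}(-1)^{n}q^{4n^{2}+2n}$; the substitution $n\mapsto-n-1$ on the negative part shows that $\sum_{n\ge0}(1-q^{4n+2})(-1)^{n}q^{4n^{2}+2n}=\sum_{n\in\Z}(-1)^{n}q^{4n^{2}+2n}$, which by \eqref{Jacobi} with $(q,z)\to(q^{8},-q^{6})$ equals $(q^{2},q^{6},q^{8};q^{8})_{\infty}$. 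Equating the two sides and cancelling the factor $(q^{2};q^{2})_{\infty}/(1-q^{2})$ gives $(q^{2};q^{2})_{\infty}\sum_{n}q^{2n(n+1)}/(q^{4};q^{4})_{n}=(q^{2},q^{6},q^{8};q^{8})_{\infty}$, and rewriting the product as $(q^{2};q^{2})_{\infty}(-q^{4};q^{4})_{\infty}$ yields $\sum_{n}q^{2n(n+1)}/(q^{4};q^{4})_{n}=(-q^{4};q^{4})_{\infty}$. Replacing $q^{4}$ by $q$ (legitimate since $n(n+1)$ is even) gives \eqref{S.2}, while replacing $q^{2}$ by $q$ together with the standard factorization $(-q^{2};q^{2})_{\infty}=(q,q^{3},q^{4};q^{4})_{\infty}/(q;q)_{\infty}$ gives \eqref{S.7}.

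For \eqref{new2} I would instead let $(a,b)\to(0,-q)$. Then $(q^{2}/b;q^{2})_{n}=(-q;q^{2})_{n}$, the prefactor tends to $(q^{4};q^{2})_{\infty}/(-q^{3};q^{2})_{\infty}$, and the left-hand sum becomes $\sum_{n}(-q;q^{2})_{n}q^{n(n+2)}/(q^{4};q^{4})_{n}$. The decisive cancellation on the right is
\[
\frac{1-q^{4n+2}}{1-q^{2}}\cdot\frac{(-q;q^{2})_{n}}{(-q^{3};q^{2})_{n}}=\frac{(1-q^{2n+1})(1+q^{2n+1})}{(1-q)(1+q)}\cdot\frac{1+q}{1+q^{2n+1}}=\frac{1-q^{2n+1}}{1-q},
\]
so the right-hand side becomes $\tfrac{1}{1-q}\sum_{n}(1-q^{2n+1})(-1)^{n}q^{3n^{2}+2n}$; completing to a bilateral sum as before gives $\tfrac{1}{1-q}\sum_{n\in\Z}(-1)^{n}q^{3n^{2}+2n}$, which by \eqref{Jacobi} with $(q,z)\to(q^{6},-q^{5})$ equals $\tfrac{1}{1-q}(q,q^{5},q^{6};q^{6})_{\infty}$. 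Using $(q^{4};q^{2})_{\infty}/(-q^{3};q^{2})_{\infty}=(q^{2};q^{2})_{\infty}/\bigl((1-q)(-q;q^{2})_{\infty}\bigr)$ and cancelling the common $1/(1-q)$ produces \eqref{new2}.

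The bulk of the work, and essentially the only place one can slip, is the right-hand-side bookkeeping: one must check that the ostensibly $n$-dependent weight $\frac{1-q^{4n+2}}{1-q^{2}}$ together with the surviving $q$-factorial ratios really does reduce to a single monomial in $q$ (for $(a,b)\to(0,0)$) or to $\frac{1-q^{2n+1}}{1-q}$ (for $(a,b)\to(0,-q)$), keep careful track of the signs and powers of $q$ contributed by the limits $(q^{2}/a;q^{2})_{n}a^{n}\to(-1)^{n}q^{n(n+1)}$, and then identify the exact base and argument in \eqref{Jacobi} resolving each bilateral theta series. None of this is deep, but it must be carried out precisely; the convergence conditions of Theorem~\ref{thm9} hold trivially for the chosen parameters because $|q|<1$.
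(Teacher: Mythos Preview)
Your proposal is correct and follows essentially the same route as the paper: specialize Theorem~\ref{thm9} at $\alpha=q^{2}$, $c=-q^{2}$ to obtain the two-parameter identity (the paper's \eqref{A16}, which is your master identity after absorbing the factor $1/(1-q^{2})$ into the prefactor), then set $(a,b)\to(0,0)$ for \eqref{S.2}/\eqref{S.7} and $(a,b)\to(0,-q)$ for \eqref{new2}, applying the Jacobi triple product in each case. The only differences are cosmetic---you keep the $1/(1-q^{2})$ explicit and use the equivalent Jacobi parameters $(q^{8},-q^{6})$ and $(q^{6},-q^{5})$ where the paper uses $(q^{8},-q^{2})$ and $(q^{6},-q)$.
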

\begin{proof}
By inserting $\alpha=q^2,c=-q^2$ into Theorem \ref{thm9}, we find that 
\begin{align}
&\frac{\left(q^2,ab;q^2\right)_{\infty}}
{\left(aq^2,bq^2;q^2\right)_{\infty}}\sum_{n=0}^{\infty}
\frac{\left(q^2/a,q^2/b;q^2\right)_{n}}
{\left(q^2,-q^2;q^2\right)_{n}}a^nb^n\nonumber\\
&\quad=\sum_{n=0}^{\infty}\left(-1\right)^n\left(1-q^{4n+2}\right)
\frac{\left(q^2/a,q^2/b;q^2\right)_{n}}
{\left(aq^2,bq^2;q^2\right)_n}
a^nb^nq^{2n^2},\label{A16}
\end{align}
where $\max\{|ab|,|aq^2|,|bq^2|\}<1$.

Letting $\left(a,b\right)\to\left(0,0\right)$ in \eqref{A16} and utilizing the Jacobi triple product identity \eqref{Jacobi} with $\left(q,z\right)\to\left(q^8,-q^2\right)$, we deduce that
\begin{equation*}
\sum_{n=0}^{\infty}\frac{q^{2n\left(n+1\right)}}{\left(q^4;q^4\right)_n}
=\frac{\left(q^2,q^6,q^8;q^8\right)_{\infty}}{\left(q^2;q^2\right)_{\infty}}=\left(-q^4;q^4\right)_{\infty},
\end{equation*}
which can be reduced to \eqref{S.2} and  \eqref{S.7} by taking $q^4\to q$ and $q^2\to q$, respectively.

In addition, taking $\left(a,b\right)\to\left(0,-q\right)$ into  \eqref{A16}, we deduce that
\begin{align*}
\frac{\left(q^2;q^2\right)_{\infty}}{\left(-q;q^2\right)_{\infty}}
\sum_{n=0}^{\infty}
\frac{\left(-q;q^2\right)_{n}}
{\left(q^2,-q^2;q^2\right)_{n}}q^{n\left(n+2\right)}
=\sum_{n=0}^{\infty}\left(-1\right)^n
\left(1-q^{2n+1}\right)q^{3n^2+2n}
=\left(q,q^5,q^6;q^6\right)_{\infty},
\end{align*}
where the last equation follows from  the Jacobi triple product identity \eqref{Jacobi} with $\left(q,z\right)\to\left(q^6,-q\right)$. Hence, we complete the proof of \ref{new2}.
\end{proof}
\begin{corollary}
We have
\begin{align}
\sum_{n=0}^{\infty}\frac{\left(-q^2;q^2\right)_n}
{\left(q;q\right)_{2n+1}}q^{n\left(n+1\right)}
&=\frac{\left(-q^2;q^2\right)_{\infty}\left(-q,-q^5,q^6;q^6\right)_{\infty}}
{\left(q^2;q^2\right)_{\infty}},\tag{S.28}\label{S.28}\\
\sum_{n=0}^{\infty}\frac{q^{2n\left(n+1\right)}}
{\left(q;q\right)_{2n+1}}
&=\frac{\left(-q,-q^7,q^8;q^8\right)_{\infty}}
{\left(q^2;q^2\right)_{\infty}}\tag{S.38} \label{S.38}\\
&=\frac{\left(q^3,q^5,q^8;q^8\right)_{\infty}
\left(q^2,q^{14};q^{16}\right)_{\infty}}{\left(q;q\right)_{\infty}}, \tag{S.86}\label{S.86}\\
\sum_{n=0}^{\infty}\frac{\left(-q;q^2\right)_n
}{\left(q;q\right)_{2n+1}}q^{n\left(n+2\right)}
&=
\frac{\left(q^2,q^{10},q^{12};q^{12}\right)_{\infty}}
{\left(q;q\right)_{\infty}}.\tag{S.50}\label{S.50}
\end{align}
\end{corollary}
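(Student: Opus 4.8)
The plan is to deduce all four identities from a single specialization of Theorem~\ref{thm9}, exactly as in the preceding corollaries. First I would put $\alpha=q^2$ and $c=-q^3$ in Theorem~\ref{thm9}. Then the numerator factor $(\alpha;q^2)_n=(q^2;q^2)_n$ cancels the $(q^2;q^2)_n$ in the denominator, the parameter $\alpha q^2/c$ becomes $-q$, and the elementary collapse
\begin{equation*}
\frac{1-q^{4n+2}}{1-q^2}\cdot\frac{(-q;q^2)_n}{(-q^3;q^2)_n}=\frac{1-q^{2n+1}}{1-q}
\end{equation*}
reduces the identity to the two-parameter form
\begin{align*}
&\frac{(q^4,ab;q^2)_{\infty}}{(aq^2,bq^2;q^2)_{\infty}}\sum_{n=0}^{\infty}\frac{(q^2/a,q^2/b;q^2)_n}{(q^2,-q^3;q^2)_n}(ab)^n\\
&\qquad=\sum_{n=0}^{\infty}(-1)^n\frac{1-q^{2n+1}}{1-q}\cdot\frac{(q^2/a,q^2/b;q^2)_n}{(aq^2,bq^2;q^2)_n}(ab)^nq^{2n^2+n},
\end{align*}
valid for $\max\{|ab|,|aq^2|,|bq^2|\}<1$; this is the analogue here of \eqref{A2} and \eqref{A17}.

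Next I would take $(a,b)\to(0,-1)$, $(a,b)\to(0,0)$ and $(a,b)\to(0,q)$ in turn, using $(q^2/a;q^2)_na^n\to(-1)^nq^{n(n+1)}$ as $a\to0$ and similarly for $b$. For $(0,-1)$ the factors $(q^2/b;q^2)_n$ and $(bq^2;q^2)_n$ both equal $(-q^2;q^2)_n$ and cancel on the right, leaving
\begin{align*}
&\frac{(q^4;q^2)_{\infty}}{(-q^2;q^2)_{\infty}}\sum_{n=0}^{\infty}\frac{(-q^2;q^2)_n}{(q^2,-q^3;q^2)_n}q^{n(n+1)}\\
&\qquad=\frac{1}{1-q}\sum_{n=0}^{\infty}(-1)^n(1-q^{2n+1})q^{3n^2+2n};
\end{align*}
the right-hand sum telescopes to the bilateral series $\frac{1}{1-q}\sum_{n\in\Z}(-1)^nq^{3n^2+2n}$, which \eqref{Jacobi} with $(q,z)\to(q^6,-q^5)$ evaluates. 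The case $(0,0)$ runs identically but with exponent $q^{4n^2+3n}$, summed by \eqref{Jacobi} with $(q,z)\to(q^8,-q^7)$. For $(0,q)$ one has $(q^2/b;q^2)_n=(q;q^2)_n$ and $(bq^2;q^2)_n=(q^3;q^2)_n$, so $\frac{1-q^{2n+1}}{1-q}\cdot\frac{(q;q^2)_n}{(q^3;q^2)_n}=1$; the right-hand side collapses to the sign-free series $\sum_{n\ge0}q^{3n^2+3n}$ (summed via the symmetry $n\mapsto-1-n$ and \eqref{Jacobi} with $(q,z)\to(q^6,q^6)$), while the left becomes $\frac{(q^4;q^2)_{\infty}}{(q^3;q^2)_{\infty}}\sum_{n=0}^{\infty}(-1)^n\frac{(q;q^2)_n}{(q^2,-q^3;q^2)_n}q^{n(n+2)}$.

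In each of the three identities so obtained I would then replace $q$ by $-q$. Under this substitution $(q^2,-q^3;q^2)_n\mapsto(q^2;q^2)_n(q^3;q^2)_n=(q;q)_{2n+1}/(1-q)$, the common factor $1-q$ drops out, $(-q^2;q^2)_n$, $q^{n(n+1)}$ and $q^{2n(n+1)}$ are fixed, and in the third case the explicit $(-1)^n$ absorbs the sign from $q^{n(n+2)}\mapsto(-1)^nq^{n(n+2)}$, so the numerator becomes $(-q;q^2)_nq^{n(n+2)}$. Rearranging the infinite products yields \eqref{S.28}, \eqref{S.38} and \eqref{S.50}; \eqref{S.86} then follows from \eqref{S.38} and the product identity $\frac{(-q,-q^7,q^8;q^8)_{\infty}}{(q^2;q^2)_{\infty}}=\frac{(q^3,q^5,q^8;q^8)_{\infty}(q^2,q^{14};q^{16})_{\infty}}{(q;q)_{\infty}}$, proved—just as for the \eqref{S.9}--\eqref{S.84} pair—by sorting $q$-shifted factorials into residue classes, and the product form stated for \eqref{S.50} requires the analogous rearrangement $\frac{(-q^3;q^2)_{\infty}(q^{12};q^{12})_{\infty}}{(q^4;q^2)_{\infty}(q^6;q^{12})_{\infty}}=\frac{(q^2,q^{10},q^{12};q^{12})_{\infty}}{(q^2;q)_{\infty}}$.

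The limits, the cancellations and the $q\mapsto-q$ step are routine. The two points that need care are recognizing that $\sum_{n\ge0}(-1)^n(1-q^{2n+1})q^{\kappa n^2+\mu n}$ telescopes into a two-sided theta series—so that \eqref{Jacobi} applies—and pinning down the correct base and offset in each instance, and then carrying out the product rearrangements needed to reach Slater's normalization, most conspicuously the mixed-modulus $q^{16}$ form of \eqref{S.86}.
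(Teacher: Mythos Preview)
Your approach is correct and is essentially the paper's own, with one harmless detour. The paper specializes Theorem~\ref{thm9} with $\alpha=q^2$ and $c=q^3$ (not $c=-q^3$), obtaining directly
\[
\frac{(q^2,ab;q^2)_\infty}{(aq^2,bq^2;q^2)_\infty}\sum_{n\ge0}\frac{(q^2/a,q^2/b;q^2)_n}{(q;q)_{2n+1}}\,a^nb^n
=\sum_{n\ge0}(1+q^{2n+1})\frac{(q^2/a,q^2/b;q^2)_n}{(aq^2,bq^2;q^2)_n}\,a^nb^nq^{2n^2+n},
\]
and then substitutes $(a,b)\to(0,-1)$, $(0,0)$, $(0,-q)$ for \eqref{S.28}, \eqref{S.38}--\eqref{S.86}, \eqref{S.50} respectively; the factor $(1+q^{2n+1})$ already produces positive-sign theta series, so no $q\mapsto-q$ step is needed. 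Your choice $c=-q^3$ followed by $q\mapsto-q$ is the same computation read through the involution $q\mapsto-q$ (which sends $c=-q^3$ to $c=q^3$ and your $(a,b)$ values $(0,-1),(0,0),(0,q)$ to the paper's $(0,-1),(0,0),(0,-q)$), so the two routes are equivalent; the paper's is simply one step shorter.
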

\begin{proof}
By taking $\alpha=q^2,c=q^3$ into Theorem \ref{thm9}, we derive 
\begin{align}
&\frac{\left(q^2,ab;q^2\right)_{\infty}}
{\left(aq^2,bq^2;q^2\right)_{\infty}}\sum_{n=0}^{\infty}
\frac{\left(q^2/a,q^2/b;q^2\right)_{n}}
{\left(q;q\right)_{2n+1}}a^nb^n\nonumber\\
&\quad=\sum_{n=0}^{\infty}\left(1+q^{2n+1}\right)
\frac{\left(q^2/a,q^2/b;q^2\right)_{n}}
{\left(aq^2,bq^2;q^2\right)_n}
a^nb^nq^{2n^2+n},\label{q7}
\end{align}
where $\max\{|ab|,|aq^2|,|bq^2|\}<1$.

Inserting $\left(a,b\right)\to\left(0,-1\right)$ into \eqref{q7} and employing the Jacobi triple product identity \eqref{Jacobi} with $\left(q,z\right)\to\left(q^6,q\right)$, we obtain \eqref{S.28}.

Taking $\left(a,b\right)\to\left(0,0\right)$ into \eqref{q7} and employing the Jacobi triple product identity \eqref{Jacobi} with $\left(q,z\right)\to\left(q^8,q\right)$, we obtain
\begin{equation*}
\left(q^2;q^2\right)_{\infty}
\sum_{n=0}^{\infty}\frac{q^{n\left(n+2\right)}}{\left(q;q\right)_{2n+1}}
=\left(-q,-q^7,q^8;q^8\right)_{\infty}
=\frac{\left(q^3,q^5,q^8;q^8\right)_{\infty}
\left(q^2,q^{14};q^{16}\right)_{\infty}}
{\left(q;q^2\right)_{\infty}},
\end{equation*}
which means \eqref{S.38} and \eqref{S.86} are true.

Taking $\left(a,b\right)\to\left(0,-q\right)$ into \eqref{q7} and employing the Jacobi triple product identity \eqref{Jacobi} with $\left(q,z\right)\to\left(q^6,q^6\right)$, we obtain
\begin{equation*}
\frac{\left(q^2;q^2\right)_{\infty}}{\left(-q;q^2\right)_{\infty}}
\sum_{n=0}^{\infty}\frac{\left(-q;q^2\right)_{n}}{\left(q;q\right)_{2n+1}}
q^{n\left(n+2\right)}
=\sum_{n=0}^{\infty}q^{3n^2+3n}
=\left(-q^6,-q^6,q^{6};q^{6}\right)_{\infty},
\end{equation*}
which implies that \eqref{S.50} is true.
\end{proof}
%
\begin{corollary}
We have
\begin{align}
\sum_{n=0}^{\infty}\frac{\left(-1;q^2\right)_n}
{\left(q;q\right)_{2n}}q^{n^2}&=
\frac{\left(-q;q^2\right)_{\infty}\left(q^2,q^6;q^{8}\right)_{\infty}
\left(q^4;q^4\right)_{\infty}}
{\left(q;q\right)_{\infty}},
\tag{S.47}\label{S.47}\\
\sum_{n=0}^{\infty}\frac{\left(-q;q^2\right)_n}
{\left(q^2;q^4\right)_{n}\left(q^2;q^2\right)_{n}}q^{n\left(2n-1\right)}
&=
\frac{\left(-q;q^2\right)_{\infty}\left(q^4,q^8,q^{12};q^{12}\right)_{\infty}}
{\left(q^2;q^2\right)_{\infty}}.
\tag{S.52}\label{S.52}
\end{align}
\end{corollary}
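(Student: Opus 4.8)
The proof will follow the same three-step recipe as the rest of Section \ref{sec2}. Step one: write down a Liu-type transformation obtained from \eqref{Formula1} -- Theorem \ref{thm9} or a sibling of it, produced by evaluating the terminating inner $_2\phi_1$ with one of the elementary $q$-series summations (the two forms of the $q$-Chu-Vandermonde sum \cite{Gasper_and_Rahman} or the $q$-Gauss sum) -- with $\alpha$ and $c$ chosen so that the denominator of the resulting left-hand series coincides with the one displayed in the identity: $(q;q)_{2n}$ for (S.47), and $(q^2;q^4)_n(q^2;q^2)_n$ for (S.52). Step two: send the two free parameters $a,b$ to limiting values among $0,-1,\pm q^j$ (as in every earlier corollary of this section) so that the numerator $q$-shifted factorials and the infinite-product prefactor collapse to exactly the summand and prefactor required. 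Step three: recognize the resulting bilateral series on the right as an infinite product via the Jacobi triple product identity \eqref{Jacobi}, after a short rearrangement of products.

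For (S.47): the denominator $(q;q)_{2n}=(q^2;q^2)_n(q;q^2)_n$ points to the base-$q^2$, $c=q$ specialization of Theorem \ref{thm9}, that is, identity \eqref{A15}. A plain limiting substitution there (for instance $a=-q^2$ with $b\to 0$) already produces $(-1;q^2)_n/(q;q)_{2n}$ in the summand but with $q^{n(n+1)}$ in place of $q^{n^2}$, so to obtain the exponent $n^2$ I would either use a value of $\alpha$ other than $1$ in the $c=q$ family or first compose \eqref{A15} with an auxiliary $_2\phi_1$ evaluation before specializing. The right side then becomes a bilateral theta series; applying \eqref{Jacobi} and the elementary factorizations $(q^2,q^6;q^8)_\infty(q^4;q^4)_\infty=(q^2;q^2)_\infty$ and $(q^2;q^2)_\infty/(q;q)_\infty=1/(q;q^2)_\infty$ gives the product side in the stated form.

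For (S.52): the mixed denominator $(q^2;q^4)_n(q^2;q^2)_n$ cannot be produced by a single lower parameter on one base, so I expect (S.52) to come from a companion of Theorem \ref{thm9} whose inner $_2\phi_1$ is summed by the second ($q$-argument) form of the $q$-Chu-Vandermonde sum, or from Theorem \ref{thm9} composed with such an evaluation, engineered precisely to produce that denominator. Granting such a transformation, the appropriate limit of $a,b$ (again $0$ and some $\pm q^j$) should reduce its left side to $\sum_{n\ge 0}(-q;q^2)_n q^{2n^2-n}/[(q^2;q^4)_n(q^2;q^2)_n]$ times $(q^2;q^2)_\infty/(-q;q^2)_\infty$, and its right side to $\sum_{n\in\Z}(-1)^n q^{6n^2-2n}$, which by \eqref{Jacobi} with base $q^{12}$ and $z=-q^4$ equals $(q^4,q^8,q^{12};q^{12})_\infty$; clearing the prefactor yields (S.52).

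The easy part is step three: \eqref{Jacobi} together with the two product identities above. The crux, and the most error-prone point, is steps one and two: the $q$-exponent ($n^2$ for (S.47), $2n^2-n$ for (S.52)) and the numerator factors ($(-1;q^2)_n$, and $(-q;q^2)_n$ with $1/(q^2;q^4)_n$) must all fall out of a single limiting choice of $(\alpha,a,b,c)$, and for (S.52) one must first isolate the correct companion transformation. I would therefore first pin down that transformation and the limiting parameter values (checking the identities numerically for small $n$), and only afterwards carry out the short theta-function computation.
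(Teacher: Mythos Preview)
Your general recipe is right, but you have missed the one specialization that makes both identities fall out immediately, and for (S.52) you have misread the summand.

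First, for (S.52): simplify the summand before hunting for a transformation. Since $(q^2;q^4)_n=(q;q^2)_n(-q;q^2)_n$, the factor $(-q;q^2)_n$ in the numerator cancels, and the sum in (S.52) is nothing but $\sum_{n\ge 0}q^{2n^2-n}/[(q;q^2)_n(q^2;q^2)_n]=\sum_{n\ge 0}q^{2n^2-n}/(q;q)_{2n}$. So (S.47) and (S.52) have the \emph{same} denominator $(q;q)_{2n}$; no ``companion transformation'' is needed.

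Second, the key choice in Theorem \ref{thm9} is not $\alpha=1$ but $\alpha=1/q$ with $c=q$. Then $\alpha q^2/c=1$, so $(\alpha q^2/c;q^2)_n=(1;q^2)_n$ vanishes for $n\ge 1$ and the entire right-hand sum collapses to its $n=0$ term. What remains is the closed product formula
\[
\sum_{n\ge 0}\frac{(q^2/a,q^2/b;q^2)_n}{(q,q^2;q^2)_n}\,(ab/q^3)^n
=\frac{(a/q,b/q;q^2)_\infty}{(q,ab/q^3;q^2)_\infty},
\]
which is \eqref{q8} in the paper (and is just the $q$-Gauss sum in disguise). Setting $(a,b)\to(0,-q^2)$ gives exactly the (S.47) sum with exponent $q^{n^2}$, and $(a,b)\to(0,0)$ gives the (S.52) sum with exponent $q^{2n^2-n}$. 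The product sides follow by the elementary rewritings you already noted; the Jacobi triple product identity is not needed here, because the right-hand side is already a product.

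So the gap in your plan is concrete: you did not locate the collapsing choice $\alpha=1/q$, $c=q$ (hence your struggle with the exponent $q^{n^2}$), and you did not simplify the (S.52) summand before deciding what machinery it requires.
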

\begin{proof}
We make the substitution $\alpha=1/q, c=q$ into Theorem \ref{thm9}
to obtain
\begin{align}
\sum_{n=0}^{\infty}\frac{\left(q^2/a,q^2/b;q^2\right)_{n}}
{\left(q,q^2;q^2\right)_n}\left(ab/q^3\right)^n
=\frac{\left(a/q,b/q;q^2\right)_{\infty}}
{\left(q,ab/q^3;q^2\right)_{\infty}},\label{q8}
\end{align}
where $\max\{|a/q|, |b/q|, |ab/q^3|\}<1$.

We observe that
\begin{equation*}
\frac{\left(-q;q^2\right)_{\infty}}{\left(q;q^2\right)_{\infty}}=
\frac{\left(-q;q^2\right)_{\infty}\left(q^2,q^6;q^{8}\right)_{\infty}
\left(q^4;q^4\right)_{\infty}}
{\left(q;q\right)_{\infty}}.
\end{equation*}
Then, by substituting $\left(a,b\right)\to\left(0,-q^2\right)$ into \eqref{q8}, we get \eqref{S.47}.

Similarly, in \eqref{q8},  when we set $\left(a,b\right)\to\left(0,0\right)$,  the result simplifies to  \eqref{S.52}.
\end{proof}
Next, with the help of a ${_2\phi_1}$ summation formula found in \cite[Eq. (6.125)]{Harsh}:
\begin{align}
&{_2\phi_1}\left(
\begin{array}{cc}
a,\,b\\
aq^2/b
\end{array};q,\,-q/b\right)=\frac{\left(-q;q\right)_{\infty}\left(
\left(aq,aq^2/b^2;q^2\right)_{\infty}-q/b
\left(a,aq^3/b^2;q^2\right)_{\infty}\right)}{\left(1-q/b\right)
\left(aq^2/b,-q/b;q\right)_{\infty}},\label{C1}
\end{align} 
we establish the following transformation formula. 
\begin{theorem}\label{T7}
For $\max\{|a^2|,|aq^2|,|bq^2|,|ab|\}<1$, we have
\begin{align*}
&\frac{\left(q^4,ab;q^2\right)_{\infty}}
{\left(-q^2,a^2,bq^2;q^2\right)_{\infty}\left(q^2;q^4\right)_{\infty}}
\sum_{n=0}^{\infty}\frac{\left(q^2/a,q^2/b;q^2\right)_n}
{\left(q^2,q^3,-q^3;q^2\right)_n}a^nb^n\\
&\quad =\sum_{n=0}^{\infty}\left(-1\right)^n
\frac{\left(q^2/a,q^2/b;q^2\right)_{2n}}{\left(aq^2,bq^2;q^2\right)_{2n}}
a^{2n}b^{2n}q^{6n^2+2n}\\
&\qquad-
\sum_{n=0}^{\infty}\left(-1\right)^n
\frac{\left(q^2/a,q^2/b;q^2\right)_{2n+1}}{\left(aq^2,bq^2;q^2\right)_{2n+1}}
a^{2n+1}b^{2n+1}q^{6n^2+10n+4}.
\end{align*}
\end{theorem}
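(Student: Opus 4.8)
The plan is to specialize the general ${}_5\phi_4$ transformation \eqref{Formula4} so that the inner ${}_5\phi_4$ becomes a summable ${}_2\phi_1$, and then apply the summation \eqref{C1}. First I would work with $q$ replaced by $q^2$ throughout \eqref{Formula4}, and choose the parameters $(\alpha, c, d, e, h, \beta, \gamma, \lambda)$ so that, after the specialization $a \to \alpha q^{2n}$ coming from \eqref{Formula4}'s inner sum, four of the six upper/lower entries cancel in pairs and leave a ${}_2\phi_1$ of precisely the shape $\displaystyle {}_2\phi_1\!\left(\begin{array}{cc} A,\,B\\ Aq^2/B \end{array}; q^2, -q^2/B\right)$ demanded by \eqref{C1}. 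Concretely, I expect the choices to be $\alpha = 1/q$ together with $c = q^3$, $d = -q^3$ (matching the denominator pair $q^3, -q^3$ appearing in the left side of Theorem \ref{T7}) and a coalescence among $\beta, \gamma, \lambda$ and the remaining denominators $e, h$ that reduces $5\phi_4 \to 2\phi_1$; reading off the left side of Theorem~\ref{T7}, the prefactor $\dfrac{(q^4, ab; q^2)_\infty}{(-q^2, a^2, bq^2; q^2)_\infty (q^2; q^4)_\infty}$ should be exactly what $\dfrac{(\alpha q^2, \alpha ab/q^2; q^2)_\infty}{(\alpha a, \alpha b; q^2)_\infty}$ becomes at $\alpha = 1/q$ after combining with the $(-q;q)_\infty$-type factor produced by \eqref{C1}.

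Next I would carry out the evaluation of the inner series. After the specialization, the inner ${}_2\phi_1$ has argument of the form $-q^2/B$ with $A = q^{-2n}$ or $A = \alpha q^{2n} = q^{2n-1}$ in the right roles; applying \eqref{C1} turns each inner sum into a difference of two infinite products of the form $(Aq^2, Aq^4/B^2; q^4)_\infty - (q^2/B)(A, Aq^6/B^2; q^4)_\infty$, divided by $(1-q^2/B)(Aq^2/B, -q^2/B; q^2)_\infty$. The factor $(1-q^2/B)$ in that denominator is what will, after cancellation against a factor in the outer coefficient $\dfrac{(1-\alpha q^{4n})(\alpha, q^2/a, q^2/b; q^2)_n}{(1-\alpha)(q^2, \alpha a, \alpha b; q^2)_n}$, regroup the outer sum into the two clean sums over $(q^2/a, q^2/b;q^2)_{2n}/(aq^2,bq^2;q^2)_{2n}$ and over $(\cdot)_{2n+1}/(\cdot)_{2n+1}$ with powers $q^{6n^2+2n}$ and $q^{6n^2+10n+4}$ respectively. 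The bookkeeping step is to verify the $q$-power: the outer exponent from \eqref{Formula4} (with $q\to q^2$) is $\alpha^n a^n b^n q^{2n^2 - \ldots}$ times the sign $(-1)^n$, and combining it with the $q^4$-progression exponents from the split \eqref{C1}-products should collapse, after re-indexing $n \mapsto 2n$ and $n \mapsto 2n+1$, to exactly $6n^2+2n$ and $6n^2+10n+4$; I would check this by matching the coefficient of $n^2$, the coefficient of $n$, and the constant term separately.

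The main obstacle I anticipate is the parameter matching in the first step: it is not obvious a priori that \emph{one} choice of $(\alpha, c, d, e, h, \beta, \gamma, \lambda)$ simultaneously (i) reduces the inner $5\phi_4$ to the exact $2\phi_1$ shape of \eqref{C1} — which is a $q$-analogue of a very special Bailey-type $2\phi_1$, not an arbitrary one — and (ii) produces the specific prefactor and argument $-q/b \leftrightarrow -q^2/B$ alignment needed. I would resolve this by working backwards: starting from the target left side of Theorem~\ref{T7}, identify $A, B$ in \eqref{C1} from the denominator data $(q^3, -q^3; q^2)_n$ and the summand $a^n b^n$ (with no extra numerator $q$-factors, forcing several of $\beta,\gamma,\lambda$ to be $0$ or to coincide with $c,d,e,h$ so their ${}_5\phi_4$ contribution telescopes to $1$), then read off $\alpha$ from the prefactor. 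Once the correct specialization is pinned down, the remainder is the routine exponent arithmetic and the re-indexing into even/odd parts described above, together with invoking the stated convergence constraint $\max\{|a^2|,|aq^2|,|bq^2|,|ab|\}<1$ to justify term-by-term manipulation.
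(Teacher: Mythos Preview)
Your overall strategy---specialize Liu's transformation so that the inner terminating series can be summed by \eqref{C1}---is right, but two concrete points are off and one of them is a genuine gap.

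First, the parameter choice: the correct specialization (with $q\to q^2$) is $\alpha=q^2$, $\beta=0$, $c=q^3$, $d=-q^3$ in the ${}_3\phi_2$ version \eqref{Formula1}, not $\alpha=1/q$ in \eqref{Formula4}. With $\alpha=q^2$ the prefactor $(\alpha q^2,\alpha ab/q^2;q^2)_\infty/(\alpha a,\alpha b;q^2)_\infty$ becomes $(q^4,ab;q^2)_\infty/(aq^2,bq^2;q^2)_\infty$, and the remaining pieces of the left-hand prefactor in Theorem~\ref{T7} come from the $(-q^2;q^2)_\infty$ and $(q^2;q^4)_\infty$ factors produced by \eqref{C1}, exactly as you anticipated.

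Second, and this is the real gap: you cannot reduce the inner series directly to the shape that \eqref{C1} demands. In Liu's formula the inner ${}_3\phi_2$ (or ${}_5\phi_4$) always has argument $q^2$, whereas \eqref{C1} evaluates a ${}_2\phi_1$ at the \emph{parameter-dependent} argument $-q/b$. No amount of ``coalescence among $\beta,\gamma,\lambda,e,h$'' will change the argument. The paper bridges this with an intermediate transformation (Gasper--Rahman, p.~71):
\[
{}_3\phi_2\!\left(\begin{array}{c} q^{-n},\,b,\,0\\ d,\,e\end{array};q,\,q\right)
=\frac{(-e)^n q^{n(n-1)/2}}{(e;q)_n}\,
{}_2\phi_1\!\left(\begin{array}{c} q^{-n},\,d/b\\ d\end{array};q,\,bq/e\right),
\]
applied with $(q,b,d,e)\to(q^2,q^{2n+2},q^3,-q^3)$. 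This converts the inner ${}_3\phi_2(\cdots;q^2,q^2)$ into a ${}_2\phi_1$ with argument $-q^{2n+1}$, which is exactly the $-q/b$ shape needed to invoke \eqref{C1} (in base $q^2$ with $(a,b)\to(q^{-2n},q^{1-2n})$). After that, the even/odd splitting and the exponent bookkeeping go through as you described. Without this intermediate step your plan stalls at the inner-series evaluation.
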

\begin{proof}
First, replacing $q$ by $q^2$ and substituting $\left(a,b\right)\to\left(q^{-2n},q^{1-2n}\right)$ into \eqref{C1}, we have
\begin{align}
&{_2\phi_1}\left(
\begin{array}{cc}
q^{-2n},\,q^{1-2n}\\
q^3
\end{array};q^2,\,-q^{2n+1}\right)\nonumber\\
&\quad=\frac{\left(-q^2;q^2\right)_{\infty}\left(
\left(q^{2-2n},q^{2+2n};q^4\right)_{\infty}-q^{1+2n}
\left(q^{-2n},q^{4+2n};q^4\right)_{\infty}\right)}{\left(1-q^{2n+1}\right)
\left(q^3,-q^{2n+1};q^2\right)_{\infty}}.\label{eq1}
\end{align}
On the other hand, from \cite[p.71]{Gasper_and_Rahman}, we find 
\begin{equation}\label{C2}
_3\phi_2\left(
\begin{array}{ccc}
q^{-n},\,b,\,0\\
d,\,e
\end{array};q,\,q\right)
=\frac{\left(-e\right)^nq^{{\frac{n\left(n-1\right)}{2}}}}{\left(e;q\right)_n}
{_2\phi_1}\left(
\begin{array}{cc}
q^{-n},\,d/b\\
d
\end{array};q,\,bq/e\right).
\end{equation}
Thereafter, setting $\left(q,b,d,e\right)\to\left(q^2,q^{2n+2},q^3,-q^3\right)$ in \eqref{C2} and applying \eqref{eq1}, we get
\begin{align}
&{_3\phi_2}\left(
\begin{array}{ccc}
q^{-2n},\,q^{2n+2},\,0\\
q^3,-q^3
\end{array};q^2,\,q^2\right)\nonumber\\
&\quad=\frac{q^{n^2+2n}}{\left(-q^3;q^2\right)_n}{_2\phi_1}\left(
\begin{array}{cc}
q^{-2n},\,q^{1-2n}\\
q^3
\end{array};q^2,\,-q^{2n+1}\right)\nonumber\\
&\quad=\frac{\left(1-q^2\right)\left(-q^2;q^2\right)_{\infty}}
{\left(1-q^{4n+2}\right)\left(q^2;q^4\right)_{\infty}}q^{n^2+2n}\left(
\left(q^{2-2n},q^{2+2n};q^4\right)_{\infty}-q^{1+2n}
\left(q^{-2n},q^{4+2n};q^4\right)_{\infty}\right).\label{eq2}
\end{align}
Then, by inserting $\left(q,\alpha,\beta,c,d\right)\to\left(q^2,q^2,0,q^3,-q^3\right)$ into \eqref{Formula1}, we deduce that
\begin{align*}
&\frac{\left(q^2;q^4\right)_{\infty}\left(q^4,ab;q^2\right)_{\infty}}
{\left(-q^2,a^2,bq^2;q^2\right)_{\infty}}
\sum_{n=0}^{\infty}\frac{\left(q^2/a,q^2/b;q^2\right)_n}
{\left(q^2,q^3,-q^3;q^2\right)_n}a^nb^n\\
&\quad=\sum_{n=0}^{\infty}\left(-1\right)^n
\frac{\left(q^2/a,q^2/b;q^2\right)_n}{\left(aq^2,bq^2;q^2\right)_n}a^nb^nq^{2n^2+n}
\left(
\left(q^{2-2n},q^{2+2n};q^4\right)_{\infty}-q^{1+2n}
\left(q^{-2n},q^{4+2n};q^4\right)_{\infty}\right)\\
&\quad=\left(q^2;q^4\right)^2_{\infty}\left(\sum_{n=0}^{\infty}\left(-1\right)^n
\frac{\left(q^2/a,q^2/b;q^2\right)_{2n}}{\left(aq^2,bq^2;q^2\right)_{2n}}
a^{2n}b^{2n}q^{6n^2+2n}\right.\\
&\qquad\left.-
\sum_{n=0}^{\infty}\left(-1\right)^n
\frac{\left(q^2/a,q^2/b;q^2\right)_{2n+1}}{\left(aq^2,bq^2;q^2\right)_{2n+1}}
a^{2n+1}b^{2n+1}q^{6n^2+10n+4}\right),
\end{align*}
which is what we want to show.
\end{proof}
\begin{corollary}
We have
\begin{align}
\sum_{n=0}^{\infty}\frac{\left(-q;q\right)_nq^{\frac{n^2+n}{2}}}{\left(q;q^2\right)_{n+1}
\left(q;q\right)_n}
&=\frac{\left(-q;q\right)_{\infty}\left(q^3,q^{7},q^{10};q^{10}\right)_{\infty}}{\left(q;q\right)_{\infty}}
,\tag{S.45}\label{S.45}\\
\sum_{n=0}^{\infty}\frac{q^{n^2+n}}{\left(q;q^2\right)_{n+1}\left(q;q\right)_n}
&=\frac{\left(q^4,q^{10},q^{14};q^{14}\right)_{\infty}}{\left(q;q\right)_{\infty}}
,\tag{S.60}\label{S.60}\\
\sum_{n=0}^{\infty}\frac{q^{n^2}}{\left(q;q\right)_{2n+1}}&=
\frac{\left(q^8,q^{12},q^{20};q^{20}\right)_{\infty}+q\left(q^4,q^{16},q^{20};q^{20}\right)_{\infty}}{\left(q;q\right)_{\infty}\left(-q^2;q^2\right)_{\infty}}.\label{A21}
\end{align}
\end{corollary}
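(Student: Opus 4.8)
The plan is to read off all three identities from Theorem~\ref{T7} by letting its parameters tend to limiting values: in each case I arrange the left-hand side of Theorem~\ref{T7} to reproduce a constant multiple of the series in question, and then recognize the resulting right-hand side as a bilateral theta series via the Jacobi triple product \eqref{Jacobi}. Throughout I will use the elementary identities $(q^{3/2};q)_n(-q^{3/2};q)_n=(q^3;q^2)_n=(q;q^2)_{n+1}/(1-q)$, $(q^2;q^2)_n(q^3;q^2)_n=(q;q)_{2n+1}/(1-q)$, $(q^2;q)_\infty=(q;q)_\infty/(1-q)$, $(-q;q)_\infty(q;q^2)_\infty=1$ and $(-q^2;q^2)_\infty(q^2;q^4)_\infty=1$, together with the limits $(q/a;q)_na^n\to(-1)^nq^{n(n+1)/2}$ and $(q^2/a;q^2)_na^n\to(-1)^nq^{n(n+1)}$ as $a\to0$.

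For \eqref{S.60} and \eqref{S.45} I would first replace $q$ by $q^{1/2}$ throughout Theorem~\ref{T7}, so that the denominator of its left-hand series becomes $(q;q)_n(q^3;q^2)_n$ and its prefactor becomes $(q^2,ab;q)_\infty\big/\big((-q,a^2,bq;q)_\infty(q;q^2)_\infty\big)$, and then let $a\to0$. To obtain \eqref{S.60} I further let $b\to0$: the prefactor collapses to $(q;q)_\infty$ and the left side becomes $(q;q)_\infty\sum_{n\ge0}q^{n^2+n}\big/\big((q;q)_n(q;q^2)_{n+1}\big)$, while the right side becomes $\sum_{n\ge0}(-1)^nq^{7n^2+3n}-\sum_{n\ge0}(-1)^nq^{7n^2+11n+4}$; since $7(n+1)^2-3(n+1)=7n^2+11n+4$, the substitution $n\mapsto-n-1$ in the second sum glues this into $\sum_{n\in\Z}(-1)^nq^{7n^2+3n}=(q^4,q^{10},q^{14};q^{14})_\infty$ by \eqref{Jacobi}. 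To obtain \eqref{S.45} I instead set $b=-1$: then $(q/b;q)_n=(-q;q)_n$ supplies the numerator of the series in \eqref{S.45} while $(q/b;q)_n/(bq;q)_n=1$, the prefactor collapses to $(q;q)_\infty/(-q;q)_\infty$, and the right side becomes $\sum_{n\ge0}(-1)^nq^{5n^2+2n}-\sum_{n\ge0}(-1)^nq^{5n^2+8n+3}=\sum_{n\in\Z}(-1)^nq^{5n^2+2n}=(q^3,q^7,q^{10};q^{10})_\infty$ by \eqref{Jacobi}.

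For \eqref{A21} I would keep $q$ as it stands in Theorem~\ref{T7}, let $a\to0$, and set $b=-1/q$. Then $(q^2/b;q^2)_n=(-q^3;q^2)_n$ cancels the $(-q^3;q^2)_n$ in the denominator of the left-hand series, so after $(q^2;q^2)_n(q^3;q^2)_n=(q;q)_{2n+1}/(1-q)$ that series becomes $(1-q)\sum_{n\ge0}q^{n^2}/(q;q)_{2n+1}$ and the prefactor collapses to $(q;q)_\infty(-q^2;q^2)_\infty/(1-q^2)$. On the right the telescoping ratio $(-q^3;q^2)_k/(-q;q^2)_k=(1+q^{2k+1})/(1+q)$ splits each of the two sums into two pieces; bilateralizing via $n\mapsto-n-1$ in the $(2n+1)$-indexed sums, the four pieces reassemble (by \eqref{Jacobi}) into $\big((q^8,q^{12},q^{20};q^{20})_\infty+q(q^4,q^{16},q^{20};q^{20})_\infty\big)/(1+q)$. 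The stray scalars $1-q$, $1-q^2$ and $1+q$ cancel, leaving \eqref{A21}.

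I expect the $q$-shifted-factorial bookkeeping (simplifying the prefactors and the summands) to be entirely routine. The genuinely delicate point is confirming that the two one-sided sums on the right of Theorem~\ref{T7} are precisely the $n\ge0$ and $n\le-1$ branches of one and the same bilateral theta series — that is, checking the polynomial identities between the quadratic exponents arising from the even-indexed and odd-indexed sums — and then matching parameters in \eqref{Jacobi} to land on the exact products stated. For \eqref{A21} there is the extra wrinkle that the factor $1+q^{2k+1}$ forces the theta series to break into two, which is exactly why the right-hand side of \eqref{A21} is a sum of two products. Finally I would record the trivial convergence check: as $a\to0$ every condition $\max\{|a^2|,|aq^2|,|bq^2|,|ab|\}<1$ in Theorem~\ref{T7} reduces to a single inequality on $|bq^2|$ (or on $|bq|$ after the $q\mapsto q^{1/2}$ shift), which holds for $b=-1$ and for $b=-1/q$.
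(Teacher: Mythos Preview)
Your proposal is correct and follows essentially the same route as the paper: specialize the parameters $(a,b)$ in Theorem~\ref{T7}, simplify the left-hand side to isolate the target series, and glue the two one-sided sums on the right into a single bilateral theta series evaluated by \eqref{Jacobi}. The only cosmetic difference is that you perform the base change $q\mapsto q^{1/2}$ before specializing $a,b$ for \eqref{S.45} and \eqref{S.60}, whereas the paper specializes first in base $q^2$ and then replaces $q^2$ by $q$; the two orders give identical computations.
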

\begin{proof}
Taking $\left(a,b\right)\to\left(0,-1\right)$ into Theorem \ref{T7} and using the Jacobi triple product identity \eqref{Jacobi} with $\left(q,z\right)\to\left(q^{20},-q^6\right)$, we derive
\begin{align*}
\frac{\left(q^4;q^2\right)_{\infty}}
{\left(q^2;q^4\right)_{\infty}\left(-q^2;q^2\right)^2_{\infty}}
\sum_{n=0}^{\infty}\frac{\left(-q^2;q^2\right)_nq^{n^2+n}}
{\left(q^2,q^3,-q^3;q^2\right)_{n}}
=\sum_{n=-\infty}^{\infty}\left(-1\right)^nq^{10n^2+4n}
=\left(q^6,q^{14},q^{20};q^{20}\right)_{\infty}.
\end{align*}
Then, replacing $q^2$ by $q$, we arrive at \eqref{S.45}.

Inserting $\left(a,b\right)\to\left(0,0\right)$ into Theorem \ref{T7} and applying the Jacobi triple product identity \eqref{Jacobi} with $\left(q,z\right)\to\left(q^{28},-q^8\right)$, we obtain
\begin{align*}
\frac{\left(q^4;q^2\right)_{\infty}}
{\left(q^2;q^4\right)_{\infty}\left(-q^2;q^2\right)_{\infty}}
\sum_{n=0}^{\infty}\frac{q^{2n^2+2n}}{\left(q^2,q^3,-q^3;q^2\right)_{n}}
=\sum_{n=-\infty}^{\infty}\left(-1\right)^nq^{14n^2+6n}
=\left(q^8,q^{20},q^{28};q^{28}\right)_{\infty}.
\end{align*}
Thereby, we prove \eqref{S.60} is true by replacing $q^2$ by $q$ in the above identity.

In addition, letting $\left(a,b\right)\to\left(0,-1/q\right)$ into Theorem \ref{T7}, we get
\begin{align*}
&\frac{\left(q^4;q^2\right)_{\infty}}
{\left(-q^2,-q^3;q^4\right)_{\infty}\left(q^2;q^4\right)_{\infty}}
\sum_{n=0}^{\infty}\frac{q^{n^2}}{\left(q^2,q^3;q^2\right)_{n}}\\
&\quad=\sum_{n=-\infty}^{\infty}\left(-1\right)^nq^{10n^2+2n}+q\sum_{n=-\infty}^{\infty}\left(-1\right)^nq^{10n^2+6n}\\
&\quad=\left(q^8,q^{12},q^{20};q^{20}\right)_{\infty}+q\left(q^4,q^{16},q^{20};q^{20}\right)_{\infty},
\end{align*}
which reduces to \eqref{A21} after simplification.
\end{proof}
Another interesting  $_2\phi_1$ summation formula can also be  found in \cite[Eq. (6.110)]{Harsh} as follows,
\begin{align}
&{_2\phi_1}\left(
\begin{array}{cc}
a,\,bq\\
aq/b
\end{array};q,\,-q/b\right)=\frac{b\left(-q;q\right)_{\infty}\left(
\left(a,aq/b^2;q^2\right)_{\infty}-
\left(aq,a/b^2;q^2\right)_{\infty}\right)}{a\left(1-b\right)
\left(aq/b,-1/b;q\right)_{\infty}}.\label{C3}
\end{align}
Now we employ \eqref{C3} to deduce the following transformation formula.
\begin{theorem}\label{T9}
For $\max\left\{|aq^2|,|bq^2|,|ab|\right\}<1$, we have
\begin{align*}
&\frac{\left(q^2,ab;q^2\right)_{\infty}}{\left(aq^2,bq^2;q^2\right)_{\infty}}
\sum_{n=0}^{\infty}\frac{\left(q^2/a,q^2/b;q^2\right)_n}
{\left(-q,q^2;q^2\right)_n\left(q;q^2\right)_{n+1}}a^nb^n\\
&\quad=\sum_{n=0}^{\infty}\left(-1\right)^n\left(1+q^{4n+1}\right)
\frac{\left(q^2/a,q^2/b;q^2\right)_{2n}}{\left(aq^2,bq^2;q^2\right)_{2n}}
\left(ab\right)^{2n}q^{6n^2+2n}\\
&\qquad -\sum_{n=0}^{\infty}\left(-1\right)^n\left(1+q^{4n+3}\right)
\frac{\left(q^2/a,q^2/b;q^2\right)_{2n+1}}{\left(aq^2,bq^2;q^2\right)_{2n+1}}
\left(ab\right)^{2n+1}q^{6n^2+6n+1}
.
\end{align*}
\end{theorem}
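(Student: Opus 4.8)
The plan is to follow the template used to prove Theorem~\ref{T7}, with the summation formula \eqref{C1} replaced by \eqref{C3}. First I would specialize \eqref{Formula1} by replacing $q$ with $q^2$ and taking $\alpha=q^2$, $\beta=0$, $c=-q$, $d=q^3$. Since $(q;q^2)_{n+1}=(1-q)(q^3;q^2)_n$, the resulting ${_3\phi_2}$ on the left of \eqref{Formula1} is exactly $(1-q)$ times the series on the left of Theorem~\ref{T9}, while the prefactor equals $1/(1-q^2)$ times that of Theorem~\ref{T9} because $(q^2;q^2)_\infty=(1-q^2)(q^4;q^2)_\infty$; altogether the left side of the specialized \eqref{Formula1} equals $\frac{1}{1+q}$ times the left side of Theorem~\ref{T9}. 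Hence it suffices to evaluate the right side of the specialized \eqref{Formula1} in closed form and multiply the result by $1+q$ at the end.

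The core of the argument is to evaluate the inner series ${_3\phi_2}(q^{-2n},q^{2n+2},0;-q,q^3;q^2,q^2)$. I would first apply \eqref{C2} with $(q,b,d,e)\to(q^2,q^{2n+2},q^3,-q)$ to rewrite it as $\frac{q^{n^2}}{(-q;q^2)_n}{_2\phi_1}(q^{-2n},q^{1-2n};q^3;q^2,-q^{2n+3})$, and then evaluate this terminating ${_2\phi_1}$ by \eqref{C3} taken with $q\to q^2$ and $(a,b)\to(q^{-2n},q^{-1-2n})$. After simplifying with $(-q;q^2)_n(-q^{2n+1};q^2)_\infty=(-q;q^2)_\infty$ this yields
\begin{align*}
{_3\phi_2}\left(q^{-2n},q^{2n+2},0;-q,q^3;q^2,q^2\right)
&=\frac{-q^{n^2+2n}\left(-q^2;q^2\right)_\infty}{\left(1-q^{2n+1}\right)\left(q^3,-q;q^2\right)_\infty}
\left(\left(q^{-2n},q^{2n+4};q^4\right)_\infty-\left(q^{2-2n},q^{2n+2};q^4\right)_\infty\right).
\end{align*}
Substituting this into the right side of the specialized \eqref{Formula1} and using $1-q^{4n+2}=(1-q^{2n+1})(1+q^{2n+1})$, the spurious factor $1-q^{2n+1}$ cancels, and the right side of the specialized \eqref{Formula1} becomes $\frac{-(-q^2;q^2)_\infty}{(1-q^2)(q^3,-q;q^2)_\infty}$ times $\sum_{n\ge0}(1+q^{2n+1})(-1)^n\frac{(q^2/a,q^2/b;q^2)_n}{(aq^2,bq^2;q^2)_n}(ab)^nq^{2n^2+n}$ times the bracketed difference displayed above.

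The step I expect to be the main obstacle is the parity analysis of that bracketed difference. For even $n$ the factor $(q^{-2n};q^4)_\infty$ contains $1-q^0=0$ and hence vanishes, while for odd $n$ the factor $(q^{2-2n};q^4)_\infty$ vanishes for the same reason; so exactly one of the two products survives in each parity class. Writing $n=2m$, respectively $n=2m+1$, pulling the finitely many negative-exponent factors out of the surviving product via $1-q^{-k}=-q^{-k}(1-q^k)$, and recombining with $(q^2;q^4)_\infty=(q^2;q^4)_m(q^{4m+2};q^4)_\infty$, one checks that the bracketed difference reduces to $(-1)^{m+1}q^{-2m^2}(q^2;q^4)_\infty^2$ when $n=2m$ and to $(-1)^{m+1}q^{-2(m+1)^2}(q^2;q^4)_\infty^2$ when $n=2m+1$. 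Reindexing by $m$ then turns $1+q^{2n+1}$ into $1+q^{4m+1}$ (resp.\ $1+q^{4m+3}$) and $q^{2n^2+n}$ times the monomial into $q^{6m^2+2m}$ (resp.\ $q^{6m^2+6m+1}$), so the whole sum equals $-(q^2;q^4)_\infty^2$ times the right-hand side of Theorem~\ref{T9}. It then remains only to verify that $1+q$ times the accumulated constant collapses to $1$, that is
\begin{align*}
\frac{\left(1+q\right)\left(-q^2;q^2\right)_\infty\left(q^2;q^4\right)_\infty^2}{\left(1-q^2\right)\left(q^3,-q;q^2\right)_\infty}&=1,
\end{align*}
which follows from $\frac{1+q}{1-q^2}=\frac{1}{1-q}$, $(1-q)(q^3;q^2)_\infty=(q;q^2)_\infty$, $(q;q^2)_\infty(-q;q^2)_\infty=(q^2;q^4)_\infty$, and Euler's identity $(-q^2;q^2)_\infty(q^2;q^4)_\infty=1$. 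As in the proof of Theorem~\ref{T7}, one should also note that the substitution $(a,b)\to(q^{-2n},q^{1-2n})$ into \eqref{C3} is legitimate because the resulting ${_2\phi_1}$ terminates.
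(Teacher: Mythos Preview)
Your proposal is correct and follows essentially the same route as the paper's own proof: both specialize \eqref{Formula1} with $q\to q^2$, $\alpha=q^2$, $\beta=0$ and lower parameters $\{q^3,-q\}$, both convert the inner ${_3\phi_2}$ to a ${_2\phi_1}$ via \eqref{C2} with $(q,b,d,e)\to(q^2,q^{2n+2},q^3,-q)$, evaluate that ${_2\phi_1}$ by \eqref{C3} with $(q,a,b)\to(q^2,q^{-2n},q^{-1-2n})$, and then split the resulting sum according to the parity of $n$ so that one of the two infinite products in the bracket vanishes. Your bookkeeping of the constant $\frac{(1+q)(-q^2;q^2)_\infty(q^2;q^4)_\infty^2}{(1-q^2)(q^3,-q;q^2)_\infty}=1$ is correct, and the parity computation yielding $(-1)^{m+1}q^{-2m^2}(q^2;q^4)_\infty^2$ (even case) and $(-1)^{m+1}q^{-2(m+1)^2}(q^2;q^4)_\infty^2$ (odd case) matches the paper's final identity $\left(q^{2-2n},q^{2n+2};q^4\right)_\infty-\left(q^{-2n},q^{4+2n};q^4\right)_\infty$ collapsing to a single signed monomial times $(q^2;q^4)_\infty^2$.
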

\begin{proof}
Replacing $q$ by $q^2$ and setting $a=q^{-2n}, b=q^{-1-2n}$ in \eqref{C3}, we have
\begin{align*}
&{_2\phi_1}\left(\begin{array}{cc}
q^{-2n},\,q^{1-2n}\\
q^3
\end{array};q^2,\,-q^{2n+3}
\right)\\
&\quad=\frac{\left(-q^2;q^2\right)_{\infty}q^{2n}}{\left(1-q^{2n+1}\right)
\left(q^3,-q^{1+2n};q^2\right)_{\infty}}
\left(\left(q^{2n+2},q^{2-2n};q^4\right)_{\infty}
-\left(q^{-2n},q^{4+2n};q^4\right)_{\infty}\right).
\end{align*}
Based on the above identity, we substitute $\left(q,b,d,e\right)\to\left(q^2,q^{2n+2},q^3,-q\right)$ into \eqref{C2} to obtain
\begin{align*}
&{_3\phi_2}\left(\begin{array}{ccc}
q^{-2n},\,q^{2n+2},\,0\\
q^3,\,-q
\end{array};q^2,\,q^2
\right)\\
&\quad=\frac{q^{n^2}}{\left(-q;q^2\right)_n}
{_2\phi_1}\left(\begin{array}{cc}
q^{-2n},\,q^{1-2n}\\
q^3
\end{array};q^2,\,-q^{2n+3}
\right)\\
&\quad=\frac{\left(-q^2;q^2\right)_{\infty}q^{n^2+2n}}{\left(1-q^{2n+1}\right)
\left(q^3,-q;q^2\right)_{\infty}}
\left(\left(q^{2n+2},q^{2-2n};q^4\right)_{\infty}
-\left(q^{-2n},q^{4+2n};q^4\right)_{\infty}\right).
\end{align*}
Therefore, taking $\left(q,\alpha,\beta,c,d\right)\to\left(q^2,q^2,0,q^3,-q\right)$ into \eqref{Formula1}, we have
\begin{align*}
&\frac{\left(q^2,ab;q^2\right)_{\infty}}{\left(aq^2,bq^2;q^2\right)_{\infty}}
\sum_{n=0}^{\infty}\frac{\left(q^2/a,q^2/b;q^2\right)_n}
{\left(-q,q^2,q^3;q^2\right)_n}a^nb^n\\
&\quad=
\sum_{n=0}^{\infty}\left(-1\right)^n\left(1-q^{4n+2}\right)
\frac{\left(q^2/a,q^2/b;q^2\right)_n}{\left(aq^2,bq^2;q^2\right)_n}a^nb^nq^{n^2-n}
{_3\phi_2}\left(\begin{array}{ccc}
q^{-2n},\,q^{2n+2},\,0\\
q^3,\,-q
\end{array};q^2,\,q^2
\right)\\
&\quad=\frac{\left(-q^2;q^2\right)_{\infty}}{\left(q^3,-q;q^2\right)_{\infty}}
\sum_{n=0}^{\infty}\left(-1\right)^n\left(1+q^{2n+1}\right)
\frac{\left(q^2/a,q^2/b;q^2\right)_n}{\left(aq^2,bq^2;q^2\right)_n}
a^nb^nq^{2n^2+n}\\
&\qquad\times\left(\left(q^{2n+2},q^{2-2n};q^4\right)_{\infty}
-\left(q^{-2n},q^{4+2n};q^4\right)_{\infty}\right)\\
&\quad=\left(1-q\right)\left(-q^2;q^2\right)_{\infty}\left(q^2;q^4\right)_{\infty}
\left(
\sum_{n=0}^{\infty}\left(-1\right)^n\left(1+q^{4n+1}\right)
\frac{\left(q^2/a,q^2/b;q^2\right)_{2n}}{\left(aq^2,bq^2;q^2\right)_{2n}}
\left(ab\right)^{2n}q^{6n^2+2n}\right.\\
&\qquad\left. -
\sum_{n=0}^{\infty}\left(-1\right)^n\left(1+q^{4n+3}\right)
\frac{\left(q^2/a,q^2/b;q^2\right)_{2n+1}}{\left(aq^2,bq^2;q^2\right)_{2n+1}}
\left(ab\right)^{2n+1}q^{6n^2+6n+1}
\right)
\end{align*}
as desired.
\end{proof}
\begin{corollary}
We have
\begin{align}
\sum_{n=0}^{\infty}\frac{q^{n^2+2n}}{\left(q;q\right)_{2n+1}}
&=\frac{\left(q^4,q^6,q^{10};q^{10}\right)_{\infty}
\left(q^2,q^{18};q^{20}\right)_{\infty}}{\left(q;q\right)_{\infty}}, \tag{S.96}\label{S.96}\\
\sum_{n=0}^{\infty}\frac{q^{2n^2+2n}}{\left(-q;q^2\right)_{n}\left(q;q\right)_{2n+1}}
&=\frac{\left(q^8,q^{20},q^{28};q^{28}\right)_{\infty}+
q
\left(q^4,q^{24},q^{28};q^{28}\right)_{\infty}
}{\left(q^2;q^2\right)_{\infty}}
.\label{new7}
\end{align}
\end{corollary}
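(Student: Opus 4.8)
The plan is to read both identities off Theorem~\ref{T9} by specializing the free parameters and then recognizing the resulting right-hand sides through the Jacobi triple product \eqref{Jacobi}: identity \eqref{new7} will come from $(a,b)\to(0,0)$, and \eqref{S.96} from $b=-q$ followed by $a\to0$. In each case the convergence hypothesis of Theorem~\ref{T9} holds in the relevant limit, so passing to the limit on both sides is legitimate.

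For \eqref{new7}, I would first let $(a,b)\to(0,0)$ in Theorem~\ref{T9}. On the left, $\frac{(q^2,ab;q^2)_\infty}{(aq^2,bq^2;q^2)_\infty}\to(q^2;q^2)_\infty$ and $(q^2/a;q^2)_n(q^2/b;q^2)_n a^nb^n\to q^{2n^2+2n}$, so, using $(q;q)_{2n+1}=(q^2;q^2)_n(q;q^2)_{n+1}$, the left side becomes $(q^2;q^2)_\infty\sum_{n=0}^{\infty}\frac{q^{2n^2+2n}}{(-q;q^2)_n(q;q)_{2n+1}}$. On the right, each $q$-shifted factorial whose argument tends to $0$ in a denominator tends to $1$, and the right side collapses to
\[
\sum_{n=0}^{\infty}(-1)^n q^{14n^2+6n}+\sum_{n=0}^{\infty}(-1)^n q^{14n^2+10n+1}-\sum_{n=0}^{\infty}(-1)^n q^{14n^2+18n+5}-\sum_{n=0}^{\infty}(-1)^n q^{14n^2+22n+8}.
\]
Re-indexing the last two sums by $n\mapsto-n-1$ turns them into sums over $n\le-1$; combining with the first two produces $\sum_{n=-\infty}^{\infty}(-1)^n q^{14n^2+6n}+q\sum_{n=-\infty}^{\infty}(-1)^n q^{14n^2+10n}$. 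Evaluating these by \eqref{Jacobi} with $(q,z)\to(q^{28},-q^{20})$ and $(q,z)\to(q^{28},-q^{24})$ gives $(q^8,q^{20},q^{28};q^{28})_\infty+q(q^4,q^{24},q^{28};q^{28})_\infty$, and dividing by $(q^2;q^2)_\infty$ yields \eqref{new7}.

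For \eqref{S.96}, I would set $b=-q$ and then let $a\to0$ in Theorem~\ref{T9}. With $b=-q$ we have $(q^2/b;q^2)_n b^n=(-1)^n q^n(-q;q^2)_n$, which cancels the factor $(-q;q^2)_n$ in the left-hand denominator; since $(q^2/a;q^2)_n a^n\to(-1)^n q^{n^2+n}$ and $\frac{(q^2,ab;q^2)_\infty}{(aq^2,bq^2;q^2)_\infty}\to\frac{(q^2;q^2)_\infty}{(-q^3;q^2)_\infty}$, the left side becomes $\frac{(q^2;q^2)_\infty}{(-q^3;q^2)_\infty}\sum_{n=0}^{\infty}\frac{q^{n^2+2n}}{(q;q)_{2n+1}}$. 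On the right, the telescoping quotients $\frac{(-q;q^2)_{2n}}{(-q^3;q^2)_{2n}}=\frac{1+q}{1+q^{4n+1}}$ and $\frac{(-q;q^2)_{2n+1}}{(-q^3;q^2)_{2n+1}}=\frac{1+q}{1+q^{4n+3}}$ cancel the factors $1+q^{4n+1}$ and $1+q^{4n+3}$, leaving
\[
(1+q)\left(\sum_{n=0}^{\infty}(-1)^n q^{10n^2+6n}-\sum_{n=0}^{\infty}(-1)^n q^{10n^2+14n+4}\right);
\]
re-indexing the second sum by $n\mapsto-n-1$ folds this into $(1+q)\sum_{n=-\infty}^{\infty}(-1)^n q^{10n^2+6n}$, which by \eqref{Jacobi} with $(q,z)\to(q^{20},-q^{16})$ equals $(1+q)(q^4,q^{16},q^{20};q^{20})_\infty$. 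Solving for the sum, using $(1+q)(-q^3;q^2)_\infty=(-q;q^2)_\infty$ and $\frac{(-q;q^2)_\infty}{(q^2;q^2)_\infty}=\frac{(q^2;q^4)_\infty}{(q;q)_\infty}$, gives $\sum_{n=0}^{\infty}\frac{q^{n^2+2n}}{(q;q)_{2n+1}}=\frac{(q^2;q^4)_\infty}{(q;q)_\infty}(q^4,q^{16},q^{20};q^{20})_\infty$; finally the theta-quotient identity $(q^2;q^4)_\infty(q^4,q^{16},q^{20};q^{20})_\infty=(q^4,q^6,q^{10};q^{10})_\infty(q^2,q^{18};q^{20})_\infty$, both sides being $(q^2,q^4,q^6,q^{10},q^{14},q^{16},q^{18},q^{20};q^{20})_\infty$, puts the product into the form stated in \eqref{S.96}.

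The limits and the telescoping quotients are routine; the one step that requires care is the index-folding that rewrites the one-sided partial theta sums as genuine bilateral theta series so that \eqref{Jacobi} applies, together with the closing product rearrangement that brings \eqref{S.96} into Slater's mixed-modulus shape.
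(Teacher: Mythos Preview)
Your proof is correct and follows essentially the same route as the paper: both identities are obtained from Theorem~\ref{T9} with $(a,b)\to(0,0)$ for \eqref{new7} and $(a,b)\to(0,-q)$ for \eqref{S.96}, the resulting one-sided sums are folded into bilateral theta series, and Jacobi's triple product \eqref{Jacobi} together with the product rearrangement $(q^2;q^4)_\infty(q^4,q^{16},q^{20};q^{20})_\infty=(q^4,q^6,q^{10};q^{10})_\infty(q^2,q^{18};q^{20})_\infty$ finishes \eqref{S.96}. Your write-up is somewhat more explicit about the index-folding and the telescoping quotients $\frac{(-q;q^2)_{2n}}{(-q^3;q^2)_{2n}}=\frac{1+q}{1+q^{4n+1}}$, but the argument is the same.
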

\begin{proof}
By taking $\left(a,b\right)\to\left(0,-q\right)$ into Theorem \ref{T9} and using the Jacobi triple product identity \eqref{Jacobi} with $\left(q,z\right)\to\left(q^{20},-q^4\right)$, we can prove \eqref{S.96} as follows,
\begin{align*}
\sum_{n=0}^{\infty}\frac{q^{n^2+2n}}{\left(q;q\right)_{2n+1}}
=\frac{\left(q^2;q^4\right)_{\infty}\left(q^4,q^{16},q^{20};q^{20}
\right)_{\infty}}{\left(q;q\right)_{\infty}}
=
\frac{\left(q^4,q^6,q^{10};q^{10}\right)_{\infty}
\left(q^2,q^{18};q^{20}\right)_{\infty}}{\left(q;q\right)_{\infty}}.
\end{align*}

Taking $\left(a,b\right)\to\left(0,0\right)$ into Theorem \ref{T9}, we have
\begin{align*}
&\left(q^2;q^2\right)_{\infty}\sum_{n=0}^{\infty}\frac{q^{2n^2+2n}}
{\left(-q;q^2\right)_n\left(q;q\right)_{2n+1}}\nonumber\\
&\quad=
-\sum_{n=0}^{\infty}\left(-1\right)^n\left(1+q^{4n+3}\right)q^{14n^2+18n+5}
+\sum_{n=0}^{\infty}\left(-1\right)^n\left(1+q^{4n+1}\right)q^{14n^2+6n}
\nonumber\\
&\quad=
\left(q^8,q^{20},q^{28};q^{28}\right)_{\infty}
+q\left(q^4,q^{24},q^{28};q^{28}\right)_{\infty}
,
\end{align*}
which implies \eqref{new7} is true.
 \end{proof}

\section{Identities from the $q$-Pfaff-Saalsch\"utz summation}\label{sec3}
Recall the $q$-Pfaff-Saalsch\"utz summation formula \cite[Eq. (1.7.2)]{Gasper_and_Rahman}
\begin{equation}\label{Pfaff}
_3\phi_2\left(
\begin{array}{ccc}
q^{-n},\,\alpha q^n,\,\alpha bc/q\\
\alpha b,\,\alpha c
\end{array};q,\,q\right)=\frac{\left(q/b,q/c;q\right)_{n}}
{\left(\alpha b,\alpha c;q\right)_n}\left(\alpha bc/{q}\right)^n.
\end{equation}
In \cite{Liu2}, Liu applied the $q$-Pfaff-Saalsch\"utz summation formula to sum the $_3\phi_2$ series (with $\beta=\alpha cd/q$)  on the right-hand side of \eqref{Formula1} to obtain
\begin{equation*}
  \frac{\left(q/c,q/d;q\right)_n}{\left(\alpha c,\alpha d;q\right)_n}\left(\alpha cd/q\right)^n,
\end{equation*}
and made \eqref{Formula1} reduce to the following beautiful result.
\begin{theorem}\label{thm2}
For $|\alpha ab/q|<1$, we have
\begin{align}
&\frac{\left(\alpha q,\alpha ab/q;q\right)_{\infty}}{\left(\alpha a,\alpha b;q\right)_{\infty}}{_3\phi_2}\left(
\begin{array}{ccc}
q/a,q/b,\alpha cd/q\\
\alpha c,\alpha d
\end{array};q,{\alpha ab}/{q}\right)\nonumber\\
&\quad=
\sum_{n=0}^{\infty}\frac{\left(1-\alpha q^{2n}\right)
\left(\alpha,q/a,q/b,q/c,q/d;q\right)_n}{\left(1-\alpha\right)\left(q,\alpha a,\alpha b,\alpha c,\alpha d;q\right)_n}
\left(-\alpha^2abcd\right)^n
q^{n\left(n-5\right)/2}
.\label{B4}
\end{align}
\end{theorem}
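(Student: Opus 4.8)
The plan is to specialize Theorem~\ref{thm1} by choosing the free parameters of the $_5\phi_4$ series so that the inner $_5\phi_4(q^{-n},\alpha q^n,\beta,\gamma,\lambda;c,d,e,h;q,q)$ collapses to a $_3\phi_2$ that is summable by the $q$-Pfaff--Saalsch\"utz formula \eqref{Pfaff}. Concretely, I would set $\lambda=\gamma=h=e$ (so that Theorem~\ref{thm1} reduces to the $_3\phi_2$ transformation \eqref{Formula1}, exactly as recorded at the start of Section~\ref{sec2}), and then additionally impose $\beta=\alpha cd/q$ in \eqref{Formula1}. With this choice the inner series on the right-hand side of \eqref{Formula1} becomes
\begin{equation*}
{_3\phi_2}\left(\begin{array}{ccc} q^{-n},\,\alpha q^n,\,\alpha cd/q\\ \alpha c,\,\alpha d\end{array};q,q\right),
\end{equation*}
which is precisely the left-hand side of \eqref{Pfaff} (with $b,c$ in \eqref{Pfaff} playing the roles of $c,d$ here). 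Hence it equals $\left(q/c,q/d;q\right)_n\left(\alpha c,\alpha d;q\right)_n^{-1}(\alpha cd/q)^n$.

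Next I would substitute this closed form back into the $n$-sum of \eqref{Formula1}. The factor $(\alpha cd/q)^n$ combines with the existing $(-\alpha ab)^n q^{n(n-3)/2}$ to give $(-\alpha^2 abcd)^n q^{n(n-3)/2 - n} = (-\alpha^2abcd)^n q^{n(n-5)/2}$, and the new $q$-shifted factorials $(q/c,q/d;q)_n$ and $(\alpha c,\alpha d;q)_n^{-1}$ simply join the products $(\alpha,q/a,q/b;q)_n$ and $(q,\alpha a,\alpha b;q)_n^{-1}$ already present. On the left-hand side, the $_3\phi_2$ in \eqref{Formula1} with $\beta=\alpha cd/q$ becomes
\begin{equation*}
{_3\phi_2}\left(\begin{array}{ccc} q/a,\,q/b,\,\alpha cd/q\\ \alpha c,\,\alpha d\end{array};q,\alpha ab/q\right),
\end{equation*}
which matches the left-hand side of \eqref{B4} verbatim. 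Comparing both sides then yields \eqref{B4} directly.

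The one point requiring a little care is the convergence/validity hypothesis. Theorem~\ref{thm1} (hence \eqref{Formula1}) is stated under $\max\{|c|,|d|,|e|,|h|,|\alpha a|,|\alpha b|,|\alpha ab/q|\}<1$, but after the specialization the only surviving series on the left is the $_3\phi_2$ with argument $\alpha ab/q$, and $q$-Pfaff--Saalsch\"utz \eqref{Pfaff} is a terminating identity valid with no restriction on its parameters; so the effective hypothesis is just $|\alpha ab/q|<1$, which is what Theorem~\ref{thm2} asserts. I would note that the intermediate use of \eqref{Formula1} with parameters $e=h$ possibly of modulus $\ge 1$ is harmless because one can either first prove the identity for small $|e|=|h|$ and then analytically continue in that parameter (both sides being analytic and the right-hand side being independent of $e,h$ after simplification), or invoke \eqref{Formula1} in its formal/termwise form, since the inner sum is finite. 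This analytic-continuation remark is the only real obstacle; the algebraic manipulation of the $q$-shifted factorials and the exponent bookkeeping $n(n-3)/2 - n = n(n-5)/2$ is entirely routine.
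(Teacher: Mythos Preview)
Your proposal is correct and follows essentially the same route as the paper: the paper explicitly states (just before Theorem~\ref{thm2}) that one specializes \eqref{Formula1} by taking $\beta=\alpha cd/q$ (with the lower parameters relabeled as $\alpha c,\alpha d$) and then evaluates the inner ${_3\phi_2}$ via the $q$-Pfaff--Saalsch\"utz sum \eqref{Pfaff}, which is precisely your plan. Your additional remark on extending the hypothesis to just $|\alpha ab/q|<1$ by analytic continuation is a useful detail the paper leaves implicit.
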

In this section, by specializing the choices of $\left(q,\alpha,a,b,c,d\right)$ in Theorem \ref{thm2}, we  
find some Rogers-Ramanujan type identities.
\begin{corollary}
We have
\begin{align}
\sum_{n=0}^{\infty}\frac{\left(-q;q\right)_{2n}q^{n\left(n+1\right)}}
{\left(q;q^2\right)_{n+1}\left(q^4;q^4\right)_{n}}
&=\frac{\left(-q^2;q^2\right)_{\infty}\left(-q,-q^3,q^4;q^4\right)_{\infty}}
{\left(q^2;q^2\right)_{\infty}},\tag{S.11}\label{S.11}\\
\sum_{n=0}^{\infty}\frac{\left(q^2;q^4\right)_nq^{2n\left(n+1\right)}}
{\left(q^4;q^4\right)_{n}\left(q;q^2\right)_{n}\left(q;q^2\right)_{n+1}}
&=\frac{\left(-q,-q^5,q^6;q^6\right)_{\infty}}
{\left(q^2;q^2\right)_{\infty}},\tag{S.27\, and\, S.87}\label{S.87}\\
\sum_{n=0}^{\infty}\frac{\left(-q,-q;q^2\right)_n}
{\left(q;q\right)_{2n+1}\left(-q^2;q^2\right)_{n}}q^{n\left(n+2\right)}
&=\left(-q;q\right)_{\infty}\left(-q^4,-q^4;q^4\right)_{\infty}
.\label{new3}
\end{align}
\end{corollary}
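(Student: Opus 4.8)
The strategy is to apply Theorem~\ref{thm2} with a carefully chosen specialization of $(q,\alpha,a,b,c,d)$, then push some of the parameters $a,b,c,d$ to $0$ or to values like $-q,-q^2,\dots$ so that the $_3\phi_2$ on the left collapses into the sum side we want, while the right-hand series becomes a bilateral theta series that the Jacobi triple product identity \eqref{Jacobi} evaluates to the product side. This mirrors the pattern already used in Section~\ref{sec2}: first derive a two-parameter identity by fixing $\alpha$ and one of $c,d$ (or $q\to q^2$), then send $(a,b)$ to limiting values.

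First I would replace $q$ by $q^2$ in \eqref{B4} and choose $\alpha$ and $c,d$ so that the denominator product $(q^2,\alpha a,\alpha b,\alpha c,\alpha d;q^2)_n$ on the right contains the factors $(q;q)_{2n+1}$ (equivalently $(q;q^2)_{n+1}(q^2;q^2)_n$) and $(-q^2;q^2)_n$ visible in \eqref{new3} and \eqref{S.11}, and so that the quadratic exponent $n(n-5)/2$ (with $q\to q^2$, so $n(n-5)$) combines with the $a^nb^nc^nd^n q^{\cdots}$ factor to give the target powers $q^{n(n+2)}$, $q^{n(n+1)}$, $q^{2n(n+1)}$. Concretely, for \eqref{new3} the $(-q,-q;q^2)_n$ in the numerator and $(-q^2;q^2)_n$ in the denominator suggest taking one of the numerator parameters $q/c$ equal to $-q$ (so $c=-q^2\cdot$something after the $q\to q^2$ rescaling) and matching $\alpha c=-q^2\cdot\!$(const); the factor $(q;q)_{2n+1}$ forces $\alpha$ and another denominator parameter to be $q^3$-type and $q$-type respectively, exactly as in \eqref{q7}. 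Having pinned down $\alpha,c,d$, the identity becomes a clean two-parameter transformation in $a,b$.

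Next, I would specialize $(a,b)$. Based on the shape of the answers, \eqref{S.11} should come from $(a,b)\to(-q,0)$ or $(-q,-q^2)$ type substitutions (the $(-q;q^2)$-style factors on the right come from a surviving finite $q$-shifted factorial), \eqref{S.87} from $(a,b)\to(0,0)$ together with a second evaluation since it is labelled as two Slater identities, and \eqref{new3} from $(a,b)\to(0,-1)$ or $(0,0)$. In each case, after the substitution the right-hand side of \eqref{B4} is a sum $\sum_{n\ge 0}(\pm1)^n q^{An^2+Bn}$ or, after symmetrizing $n\mapsto -n-c$, a bilateral sum $\sum_{n\in\Z}q^{(n^2-n)/2}z^n$; applying \eqref{Jacobi} with the appropriate $(q,z)$ (for instance $(q^4,-q^4;q^4)$-type data yielding $(-q,-q^3,q^4;q^4)_\infty$, or $(q^6,\cdot)$ data yielding the modulus-$6$ products) produces the stated infinite products. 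Finally, I would rescale $q^2\to q$ (or $q^4\to q$ where needed) and use elementary identities such as $(q;q)_{2n+1}=(q;q^2)_{n+1}(q^2;q^2)_n$ and the splittings $\tfrac{1}{(q;q^2)_\infty}=\tfrac{(q^2;q^2)_\infty}{(q;q)_\infty}$ to match the printed left-hand sides exactly.

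The main obstacle I expect is the bookkeeping in the second bullet: one must solve, simultaneously, for $\alpha$ and the four parameters so that (i) the denominator $q$-shifted factorials telescope into precisely $(q;q)_{2n+1}$, $(q^4;q^4)_n$, $(-q^2;q^2)_n$, etc. with no spurious factors, and (ii) the total power of $q$ coming from $q^{n(n-5)}$, from $(-\alpha^2 abcd)^n$, and from the numerator/denominator $q$-powers is exactly the quadratic form in the claimed sum. There is also a subtlety for \eqref{S.87}: since it is two Slater entries at once, after the main substitution one likely needs a second, independent choice of $(a,b)$ (or an auxiliary triple-product split into two theta series, as in \eqref{equ9}) to recover the alternative product form; verifying the two resulting products agree is a short but non-automatic Jacobi-triple-product computation. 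Once the parameter choices are correct, the remaining steps are the routine limit-taking and triple-product evaluations already illustrated repeatedly in Section~\ref{sec2}.
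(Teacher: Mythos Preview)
Your overall strategy matches the paper's exactly: specialize Theorem~\ref{thm2} with $q\to q^2$ and a fixed choice of $(\alpha,c,d)$ to obtain a two-parameter identity, then send $(a,b)$ to limiting values and finish with the Jacobi triple product. However, you have not actually carried out the decisive step of identifying the specialization, and several of your guesses for $(a,b)$ are off.

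The paper's choice is $(q,\alpha,c,d)\to(q^2,q^2,-1,q)$, which turns \eqref{B4} into
\[
\frac{(q^2,ab;q^2)_\infty}{(aq^2,bq^2;q^2)_\infty}\sum_{n\ge 0}\frac{(-q,q^2/a,q^2/b;q^2)_n}{(q;q)_{2n+1}(-q^2;q^2)_n}(ab)^n
=\sum_{n\ge 0}(1+q^{2n+1})\frac{(q^2/a,q^2/b;q^2)_n}{(aq^2,bq^2;q^2)_n}(ab)^n q^{n^2}.
\]
With this in hand the three identities drop out from single substitutions: $(a,b)\to(0,-1)$ gives \eqref{S.11} via \eqref{Jacobi} with $(q,z)\to(q^4,q)$; $(a,b)\to(0,0)$ gives \eqref{S.87} via \eqref{Jacobi} with $(q,z)\to(q^6,q)$; and $(a,b)\to(0,-q)$ gives \eqref{new3} directly. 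Your suggested values $(-q,0)$, $(-q,-q^2)$ for \eqref{S.11} and $(0,-1)$ or $(0,0)$ for \eqref{new3} would not produce the correct sums.

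One further correction: the tag ``S.27 and S.87'' does not signal that two separate Slater identities must be proved by two different substitutions. It merely records that the single identity \eqref{S.87} coincides with both entries (27) and (87) in Slater's list; no auxiliary triple-product split of the type you anticipate is required.
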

\begin{remark}
\eqref{new3} is a special case of Andrews' $q$-analogue of Gauss's second theorem \cite[Eq. (1.9)]{Andrews_1973} 
\begin{align*}
\sum_{k=0}^{\infty}\frac{\left(b,q/b;q\right)_k}{\left(q^2;q^2\right)_k\left(c;q\right)_k}c^kq^{\frac{k^2-k}{2}}=
\frac{\left(bc,qc/b;q^2\right)_{\infty}}{\left(c;q\right)_{\infty}}
\end{align*}
with $q\to q^2, b=-q$ and $c=q^3$. 
\end{remark}
\begin{proof}
By substituting $\left(q,\alpha,c,d\right)\to\left(q^2,q^2,-1,q\right)$ into \eqref{B4}, we obtain
\begin{align}
&\frac{\left(q^2,ab;q^2\right)_{\infty}}
{\left(aq^2,bq^2;q^2\right)_{\infty}}\sum_{n=0}^{\infty}
\frac{\left(-q,q^2/a,q^2/b;q^2\right)_{n}}
{\left(q;q\right)_{2n+1}\left(-q^2;q^2\right)_{n}}a^nb^n\nonumber\\
&\quad=\sum_{n=0}^{\infty}\left(1+q^{2n+1}\right)
\frac{\left(q^2/a,q^2/b;q^2\right)_{n}}
{\left(aq^2,bq^2;q^2\right)_n}
a^nb^nq^{n^2},\label{thm5}
\end{align}
where $\max\{|ab|, |aq^2|,|bq^2|\}<1$.

Note that
\begin{equation*}
\sum_{n=0}^{\infty}\frac{\left(-q;q\right)_{2n}}
{\left(q;q^2\right)_{n+1}\left(q^4;q^4\right)_{n}}q^{n\left(n+1\right)}
=\sum_{n=0}^{\infty}\frac{\left(-q;q^2\right)_{n}}
{\left(q;q\right)_{2n+1}}q^{n\left(n+1\right)}.
\end{equation*}
Hence, we take $\left(a,b\right)\to\left(0,-1\right)$ into \eqref{thm5} and use the Jacobi triple product identity \eqref{Jacobi} with $\left(q,z\right)\to\left(q^4,q\right)$ to obtain \eqref{S.11}.

Also, we observe that
\begin{equation*}
\sum_{n=0}^{\infty}\frac{\left(q^2;q^4\right)_nq^{2n\left(n+1\right)}}
{\left(q^4;q^4\right)_{n}\left(q;q^2\right)_{n}\left(q;q^2\right)_{n+1}}
=\sum_{n=0}^{\infty}\frac{\left(-q;q^2\right)_nq^{2n\left(n+1\right)}}
{\left(q;q\right)_{2n+1}\left(-q^2;q^2\right)_{n}}.
\end{equation*}
Then, we take $\left(a,b\right)\to\left(0,0\right)$ into \eqref{thm5} and use the Jacobi triple product identity \eqref{Jacobi} with $\left(q,z\right)\to\left(q^6,q\right)$ to obtain \eqref{S.87}.

Moreover, by inserting $\left(a,b\right)\to\left(0,-q\right)$ into \eqref{thm5}, we arrive at \eqref{new3}. 
\end{proof}

\begin{corollary}
We have
\begin{align}
\sum_{n=0}^{\infty}\frac{\left(-q;q\right)_nq^{n^2+n}}
{\left(q;q\right)_n\left(q;q^2\right)_{n+1}}
&=
\frac{\left(-q;q\right)_{\infty}\left(q,q^5,q^6;q^6\right)_{\infty}}
{\left(q;q\right)_{\infty}},\tag{S.22}\label{S.22}\\
\sum_{n=0}^{\infty}\frac{\left(-q^2;q^2\right)_n}
{\left(q;q\right)_{2n+1}}q^{n^2}
&=\frac{\left(-q;q^2\right)_{\infty}\left(-q,-q^3,q^4;q^4\right)_{\infty}}{\left(q;q^2\right)_{\infty}
}.\tag{\rm{\cite[Entry 1.7.13]{Ramanujan}}}\label{new6}
\end{align}
\end{corollary}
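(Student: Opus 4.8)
The strategy is the one used throughout Section~\ref{sec3}: specialize Theorem~\ref{thm2} and then let the two remaining free parameters degenerate. Concretely, I would choose a quadruple $(q,\alpha,c,d)$ (allowing $q\mapsto q^2$ if needed), substitute it into \eqref{B4}, and cancel the common $q$-shifted factorials to obtain a transformation identity carrying the two free parameters $a,b$, exactly as \eqref{thm5} was produced in the preceding corollary. The parameters should be picked so that, after rewriting via elementary Pochhammer identities such as $(q;q)_n(q;q^2)_{n+1}=(q;q)_{2n+1}/(-q;q)_n$, the series on the left of \eqref{B4} becomes the one in \eqref{S.22} once $(a,b)$ is sent to a limit of the form $(0,\ast)$, while the series in \eqref{new6} arises from a companion limiting choice of $(a,b)$ in the same parametrized identity.

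With $a$ (and possibly $b$) sent to $0$, the factors $(q/a;q)_n,\dots$ contribute Gaussian factors $q^{kn^2}$, and the right-hand side of \eqref{B4} --- already a closed product-type sum, since the inner $_3\phi_2$ was evaluated by $q$-Pfaff-Saalsch\"utz in deriving Theorem~\ref{thm2} --- collapses. The remaining weight, built from $(1-\alpha q^{2n})/(1-\alpha)$ together with the surviving $(q/c,q/d;q)_n/(\alpha c,\alpha d;q)_n$, should telescope through the usual $n\mapsto -n-1$ symmetrization into a bilateral sum $\sum_{n\in\Z}(\pm1)^nq^{An^2+Bn}$. Evaluating it by the Jacobi triple product \eqref{Jacobi}: $(q,z)\to(q^6,\cdot)$ delivers the modulus-$6$ product $(q,q^5,q^6;q^6)_\infty$ of \eqref{S.22}, and $(q,z)\to(q^4,\cdot)$ the modulus-$4$ product $(-q,-q^3,q^4;q^4)_\infty$ of \eqref{new6}; as in \eqref{equ9} it may take two applications of \eqref{Jacobi} if the symmetrization produces a sum of two theta series.

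Finally, the surviving infinite products --- the prefactor $(\alpha q,\alpha ab/q;q)_\infty/(\alpha a,\alpha b;q)_\infty$ evaluated at the chosen limit, together with any denominators picked up from the Pochhammer rearrangements --- have to be massaged, using identities like $(-q;q)_\infty(q;q)_\infty=(q^2;q^2)_\infty$, $(q;q^2)_\infty=(q;q)_\infty/(q^2;q^2)_\infty$ and the split $(-q,-q^3,q^4;q^4)_\infty=(-q;q^2)_\infty(q^4;q^4)_\infty$, into the exact shape recorded by Slater and in \cite[Entry 1.7.13]{Ramanujan}.

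The step I expect to be the real obstacle is the first one: pinning down $(q,\alpha,c,d)$ and the limiting values of $(a,b)$ so that the left side of \eqref{B4} reproduces \emph{precisely} the series in \eqref{S.22} --- in particular the weight $q^{n^2+n}$ against the unusual denominator $(q;q)_n(q;q^2)_{n+1}$ --- while simultaneously forcing the right side to degenerate to a single (or doubled) theta series. Once the correct specialization is identified, the telescoping, the Jacobi-triple-product evaluation, and the product bookkeeping are all routine.
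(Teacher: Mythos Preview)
Your outline is correct and matches the paper's approach exactly. The specialization you were looking for is $(q,\alpha,c,d)\to(q^2,q^2,q,-q)$ in Theorem~\ref{thm2}, which yields the two-parameter identity
\[
\frac{(q^2,ab;q^2)_\infty}{(aq^2,bq^2;q^2)_\infty}\sum_{n\ge0}\frac{(q^2/a,q^2/b,-q^2;q^2)_n}{(q^2;q^2)_n(q^2;q^4)_{n+1}}a^nb^n
=\sum_{n\ge0}\frac{(q^2/a,q^2/b;q^2)_n}{(aq^2,bq^2;q^2)_n}a^nb^nq^{n^2+n};
\]
then $(a,b)\to(0,0)$ with \eqref{Jacobi} at $(q,z)=(q^6,1)$ (followed by $q^2\mapsto q$) gives \eqref{S.22}, and $(a,b)\to(0,-1/q)$ with \eqref{Jacobi} at $(q,z)=(q^4,q)$ gives \eqref{new6}. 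Once these choices are made, the right side is already a single theta series in each case, so no doubling as in \eqref{equ9} is needed.
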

\begin{proof}
We take $\left(q,\alpha,c,d\right)\to\left(q^2,q^2,q,-q\right)$ into Theorem \ref{thm2} to arrive at
\begin{align}
\frac{\left(q^2,ab;q^2\right)_{\infty}}{\left(aq^2,bq^2;q^2\right)_{\infty}}
\sum_{n=0}^{\infty}\frac{\left(q^2/a,q^2/b,-q^2;q^2\right)_n}
{\left(q^2;q^2\right)_n\left(q^2;q^4\right)_{n+1}}a^nb^n
=
\sum_{n=0}^{\infty}\frac{\left(q^2/a,q^2/b;q^2\right)_n}
{\left(aq^2,bq^2;q^2\right)_n}a^nb^nq^{n^2+n},\label{thm3}
\end{align}
where $\max\left\{|ab|,|aq^2|,|bq^2|\right\}<1$.

Taking $\left(a,b\right)\to\left(0,0\right)$ into \eqref{thm3} and using the Jacobi triple product identity \eqref{Jacobi} with $\left(q,z\right)\to\left(q^6,1\right)$, we see that
\begin{align}
\left(q^2;q^2\right)_{\infty}
\sum_{n=0}^{\infty}\frac{\left(-q^2;q^2\right)_nq^{2n^2+2n}}
{\left(q^2;q^2\right)_n\left(q^2;q^4\right)_{n+1}}
=\left(-q^6,-q^6,q^6;q^6\right)_{\infty}.\label{e2}
\end{align}
After replacing $q^2$ by $q$, \eqref{e2} reduces to \eqref{S.22}.

Taking $\left(a,b\right)\to\left(0,-1/q\right)$ into \eqref{thm3} and using the Jacobi triple product identity \eqref{Jacobi} with $\left(q,z\right)\to\left(q^4,q\right)$, we arrive at the identity \ref{new6}.
\end{proof}
\begin{corollary}
We have
\begin{align}
\sum_{n=0}^{\infty}\frac{\left(q^2;q^2\right)_{n-1}q^{n^2}}
{\left(q;q\right)_n\left(q;q\right)_{n-1}\left(q;q^2\right)_n}
&=\frac{\left(-q^5,-q^7,q^{12};q^{12}\right)_{\infty}}{\left(q;q\right)_{\infty}}
,\tag{S.58}\label{S.58}\\
\sum_{n=0}^{\infty}\frac{\left(-q;q^2\right)_{n}\left(q^4;q^4\right)_{n-1}q^{n^2}}
{\left(q^2;q^2\right)_n\left(q^2;q^2\right)_{n-1}\left(q^2;q^4\right)_n}
&=\frac{\left(-q^6,-q^{10},q^{16};q^{16}\right)_{\infty}
\left(-q;q^2\right)_{\infty}}
{\left(q^2;q^2\right)_{\infty}}
.\tag{S.72}\label{S.72}
\end{align}
\end{corollary}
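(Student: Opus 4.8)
The plan is to follow the same scheme that produced the neighbouring corollaries, specialising Theorem~\ref{thm2} with a suitable choice of $(q,\alpha,c,d)$ and then sending $(a,b)$ to a degenerate point so that the left‑hand side collapses to the desired sum and the right‑hand side becomes a bilateral theta series. First I would pick the parameters that turn the $_3\phi_2$ on the left of \eqref{B4} into the shape appearing in (S.58) and (S.72); guided by the exponent $q^{n^2}$ and the denominator factor $(q;q^2)_n$, the natural choice is $(q,\alpha,c,d)\to(q^2,q^2,q,q)$ (or the analogous replacement $q\to q^2$ version for (S.72), namely working on base $q^4$), so that $\alpha cd/q=q^2$ and the two lower parameters $\alpha c,\alpha d$ both equal $q^3$. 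After substituting, I would record the resulting parameter‑dependent identity
\begin{align*}
&\frac{\left(q^2,ab;q^2\right)_{\infty}}{\left(aq^2,bq^2;q^2\right)_{\infty}}
\sum_{n=0}^{\infty}\frac{\left(q^2/a,q^2/b,q^2;q^2\right)_n}{\left(q^2;q^2\right)_n\left(q^2;q^4\right)_{n+1}^{2}}a^nb^n q^{?}\\
&\quad=\sum_{n=0}^{\infty}\frac{\left(1-q^{4n+2}\right)\left(q^2/a,q^2/b;q^2\right)_n}{\left(1-q^2\right)\left(aq^2,bq^2;q^2\right)_n}\,(\cdots)^n q^{2n^2+?},
\end{align*}
keeping careful track of the powers of $q$ coming from $q^{n(n-5)/2}$ with $q\mapsto q^2$ and from the four linear parameters.

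Next I would take the limit $(a,b)\to(0,0)$ (for (S.58)) and $(a,b)\to(0,-1/q)$ or a similar shift (for (S.72)), exactly as in the proofs of (S.22) and \eqref{new6} above. In this limit $(q^2/a,q^2/b;q^2)_n a^n b^n\to q^{2n(n-1)}q^{2\cdot2}\cdots$, i.e. it contributes a clean Gaussian factor, and the infinite product prefactor $(q^2,ab;q^2)_\infty/(aq^2,bq^2;q^2)_\infty$ tends to $(q^2;q^2)_\infty$; on the right the denominators $(aq^2,bq^2;q^2)_n$ go to $1$, so the series becomes an \emph{even} theta‑type sum $\sum_{n\ge0}(1-q^{4n+2})q^{c n^2+d n}$, which telescopes into a bilateral sum $\sum_{n=-\infty}^{\infty}(-1)^{?}q^{c n^2+e n}$. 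Applying the Jacobi triple product identity \eqref{Jacobi} with the appropriate $(q,z)$ — I expect base $q^{12}$ with $z$ a power giving $(-q^5,-q^7,q^{12};q^{12})_\infty$ for (S.58), and base $q^{16}$ giving $(-q^6,-q^{10},q^{16};q^{16})_\infty$ for (S.72) — identifies the right‑hand side with the claimed product. Finally, replacing $q^2$ by $q$ (respectively $q^4$ by $q^2$ and simplifying the extra $(-q;q^2)_\infty$ factor) yields (S.58) and (S.72); the left side must be massaged using $(q^2;q^2)_{n-1}=(q^2;q^2)_n/(1-q^{2n})$ and the identity $(q;q)_{2n+1}=(q;q^2)_{n+1}(q^2;q^2)_n$ type reindexing to match Slater's form, where in particular the $(q^2;q^2)_{n-1}$ in the numerator of (S.58) is just the $n=0$‑shifted normalisation encoded by Slater's convention $(q;q)_{-1}=1$.

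The main obstacle I anticipate is purely bookkeeping rather than conceptual: matching the exact power of $q$ on both sides after the substitution $q\mapsto q^2$ combined with the four‑fold product of linear parameters in $q^{n(n-5)/2}$, and then correctly completing the one‑sided sum $\sum_{n\ge0}(1-q^{4n+2})q^{\cdots}$ to a bilateral sum so that Jacobi's identity applies with the modulus $12$ (resp.\ $16$) that produces Slater's products. A secondary subtlety is the rewriting of the left‑hand series: one has to check that the limit of $(q^2/a,q^2/b;q^2)_n a^n b^n$ at $(a,b)\to(0,0)$ really is $q^{2n(n-1)}$ up to the explicit constant, and that the resulting denominator $(q^2;q^2)_n(q^2;q^4)_{n+1}$ reorganises into $(q;q)_n(q;q)_{n-1}(q;q^2)_n$ after $q^2\to q$, using $(q^2;q^4)_{n+1}=(q^2;q^2)_{2n+2}/(q^4;q^4)_{n+1}$ and Slater's convention for the $-1$ index. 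Once these normalisations are pinned down, (S.58) and (S.72) follow immediately, with (S.72) being the base‑$q^4$ companion of (S.58) exactly as (S.29) is to (S.18) elsewhere in the paper.
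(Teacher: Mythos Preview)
Your parameter choice $(q,\alpha,c,d)\to(q^2,q^2,q,q)$ does not work. With $c=d=q$ and $\alpha=q^2$ (on base $q^2$), the right side of Theorem~\ref{thm2} carries the ratio $(q^2/c,q^2/d;q^2)_n/(\alpha c,\alpha d;q^2)_n=(q;q^2)_n^2/(q^3;q^2)_n^2=(1-q)^2/(1-q^{2n+1})^2$, so after sending $(a,b)\to(0,0)$ you are left with $\sum_n(-1)^n\dfrac{1+q^{2n+1}}{1-q^{2n+1}}\,q^{3n^2+3n}$, an Appell--Lerch type series rather than a theta series; no application of \eqref{Jacobi} will close this to a product. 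The paper instead takes $(\alpha,c,d)=(1,q^{1/2},-q^{1/2})$ on base $q$, which is the crucial point: then $(q/c,q/d;q)_n=(q^{1/2},-q^{1/2};q)_n=(q;q^2)_n=(\alpha c,\alpha d;q)_n$, so these factors cancel on the right and one obtains the clean identity \eqref{C16} whose right side is a genuine theta sum. Moreover this choice produces $\beta=\alpha cd/q=-1$, giving the factor $(-1;q)_n$ on the left, which is exactly what the Slater sums require.

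There is a second structural step you are missing. Slater's sum in \eqref{S.58} is not directly a specialisation of the left side of \eqref{C16}; one first rewrites
\[
\sum_{n\ge0}\frac{(q^2;q^2)_{n-1}q^{n^2}}{(q;q)_n(q;q)_{n-1}(q;q^2)_n}
=\frac{1}{2}+\frac{1}{2}\sum_{n\ge0}\frac{(-1;q)_n q^{n^2}}{(q;q)_n(q;q^2)_n},
\]
using $(q^2;q^2)_{n-1}/(q;q)_{n-1}=(-q;q)_{n-1}=\tfrac12(-1;q)_n$ for $n\ge1$. Setting $(a,b)\to(0,0)$ in \eqref{C16} gives the second sum as a combination of two $q^{12}$--theta products, and the stray constant $\tfrac12$ is absorbed by combining with the pentagonal decomposition $(q;q)_\infty=(-q^5,-q^7,q^{12};q^{12})_\infty-q(-q,-q^{11},q^{12};q^{12})_\infty$. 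The proof of \eqref{S.72} is the companion obtained from $(a,b)\to(0,-q^{1/2})$ followed by $q\to q^2$, together with the analogous decomposition of $(q^2;q^2)_\infty/(-q;q^2)_\infty$ modulo~$16$. Neither of these ``halving plus pentagonal subtraction'' manoeuvres appears in your plan, and without them the argument cannot be completed.
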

\begin{proof}
Taking $\left(\alpha,c,d\right)\to\left(1,q^{1/2},-q^{1/2}\right)$ into Theorem \ref{thm2} , 
 we have
\begin{align}
&\frac{\left(q,ab/q;q\right)_{\infty}}{\left(a,b;q\right)_{\infty}}
\sum_{n=0}^{\infty}\frac{\left(q/a,q/b,-1;q\right)_{n}}
{\left(q;q\right)_{n}\left(q;q^2\right)_{n}}
\left(ab/q\right)^n\nonumber\\
&\quad=1+\sum_{n=1}^{\infty}\left(1+q^{n}\right)
\frac{\left(q/a,q/b;q\right)_{n}}{\left(a,b;q\right)_{n}}
a^{n}b^{n}q^{\frac{n^2-3n}{2}},\label{C16}
\end{align}
where $\max\{|a|,|b|,|ab/q|\}<1$.

Taking $\left(a,b\right)\to\left(0,0\right)$ into \eqref{C16}, we have
\begin{align}
&\left(q;q\right)_{\infty}\sum_{n=0}^{\infty}
\frac{\left(-1;q\right)_nq^{n^2}}
{\left(q;q\right)_n\left(q;q^2\right)_n}\nonumber\\
&\quad=
\sum_{n=-\infty}^{\infty}q^{6n^2+n}+q\sum_{n=-\infty}^{\infty}q^{6n^2+5n}\nonumber\\
&\quad=
\left(-q^5,-q^7,q^{12};q^{12}\right)_{\infty}
+q\left(-q,-q^{11},q^{12};q^{12}\right)_{\infty}.
\label{equ5}
\end{align}
Consequently, we find that
\begin{align*}
&\sum_{n=0}^{\infty}\frac{\left(q^2;q^2\right)_{n-1}q^{n^2}}
{\left(q;q\right)_n\left(q;q\right)_{n-1}\left(q;q^2\right)_n}\nonumber\\
&\quad=\frac{1}{2}+\frac{1}{2}\sum_{n=0}^{\infty}
\frac{\left(-1;q\right)_nq^{n^2}}{\left(q;q\right)_n
\left(q;q^2\right)_n}\nonumber\\
&\quad=\frac{1}{2}+\frac{\left(-q^5,-q^7,q^{12};q^{12}\right)_{\infty}
+q\left(-q,-q^{11},q^{12};q^{12}\right)_{\infty}}{2\left(q;q\right)_{\infty}}\\
&\quad=\left(-q^5,-q^7,q^{12};q^{12}\right)_{\infty},
\end{align*}
where we have used
\begin{align*}
\left(q;q\right)_{\infty}
&=\sum_{n=-\infty}^{\infty}\left(-1\right)^nq^{\frac{3n^2-n}{2}}\nonumber\\
&=\sum_{n=-\infty}^{\infty}q^{6n^2-n}-q\sum_{n=-\infty}^{\infty}q^{6n^2+5n}\nonumber\\
&=\left(-q^5,-q^7,q^{12};q^{12}\right)_{\infty}
-q\left(-q,-q^{11},q^{12};q^{12}\right)_{\infty}.
\end{align*}
Therefore, the proof of \eqref{S.58} is completed.

Taking $\left(a,b\right)\to\left(0,-q^{1/2}\right)$ into \eqref{C16}, we have
\begin{align}
&\frac{\left(q;q\right)_{\infty}}{\left(-q^{1/2};q\right)_{\infty}}
\sum_{n=0}^{\infty}
\frac{\left(-1;q\right)_n}
{\left(q;q\right)_n\left(q^{1/2};q\right)_n}q^{{n^2}/{4}}\nonumber\\
&\quad=
\sum_{n=-\infty}^{\infty}q^{4n^2-n}
+q^{{1}/{2}}\sum_{n=-\infty}^{\infty}q^{4n^2+3n}\nonumber\\
&\quad
=\left(-q^3,-q^{5},q^{8};q^{8}\right)_{\infty}
+q^{{1}/{2}}\left(-q,-q^{7},q^{8};q^{8}\right)_{\infty}.
\label{equ6}
\end{align}
Therefore, we can prove \eqref{S.72} as follows,
\begin{align*}
&\sum_{n=0}^{\infty}\frac{\left(-q;q^2\right)_{n}\left(q^4;q^4\right)_{n-1}}
{\left(q^2;q^2\right)_n\left(q^2;q^2\right)_{n-1}\left(q^2;q^4\right)_n}q^{n^2}\\
&\quad=\frac{1}{2}+\frac{1}{2}
\sum_{n=0}^{\infty}\frac{\left(-1;q^2\right)_n}{\left(q,q^2;q^2\right)_n}q^{n^2}\\
&\quad=\frac{1}{2}+\frac{
\left(-q;q^2\right)_{\infty}
}
{2\left(q^2;q^2\right)_{\infty}}\left(\left(-q^6,-q^{10},q^{16};q^{16}\right)_{\infty}
+q\left(-q^2,-q^{14},q^{16};q^{16}\right)_{\infty}\right)\\
&\quad=\frac{\left(-q;q^2\right)_{\infty}\left(-q^6,-q^{10},q^{16};q^{16}\right)_{\infty}
}
{\left(q^2;q^2\right)_{\infty}},
\end{align*}
where we have used \eqref{equ6} by replacing $q$ by $q^2$ and the following result:
\begin{align*}
\frac{\left(q^2;q^2\right)_{\infty}}{\left(-q;q^2\right)_{\infty}}
&=\sum_{n=-\infty}^{\infty}\left(-1\right)^nq^{2n^2-n}\\
&=\sum_{n=-\infty}^{\infty}q^{8n^2-2n}-
\sum_{n=-\infty}^{\infty}q^{8n^2+6n}\\
&=\left(-q^6,-q^{10},q^{16};q^{16}\right)_{\infty}
-q\left(-q^2,-q^{14},q^{16};q^{16}\right)_{\infty}.
\end{align*}
\end{proof}

\section{Identities from a terminating $q$-analogue of Whipple's ${_3F_2}$ sum}
In this section,  we derive some identities from  an useful summation formula which can be found in \cite{Gasper_and_Rahman},
\begin{equation}\label{Whipple}
_4\phi_3\left(
\begin{array}{cccc}
q^{-n},\,q^{n+1},\,c,\,-c\\
e,\,c^2q/e,\,-q
\end{array};q,\,q\right)
=\frac{\left(eq^{-n},eq^{1+n},c^2q^{1-n}/e,c^2q^{n+2}/e;q^2\right)_{\infty}}
{\left(e,c^2q/e;q\right)_{\infty}}q^{n\left(n+1\right)/2}.
\end{equation}

\begin{theorem}\label{thm11}
For $\max\{|ab|,|aq^2|,|bq^2|\}<1$, we have
\begin{align*}
&\frac{\left(q^2,ab;q^2\right)_{\infty}}{\left(aq^2,bq^2;q^2\right)_{\infty}}
\sum_{n=0}^{\infty}
\frac{\left(q^2/a,q^2/b;q^2\right)_n}{\left(-q^2,q^2,-q;q^2\right)_n}a^nb^n\\
&\quad=\sum_{n=0}^{\infty}\left(-1\right)^n\left(1-q^{4n+2}\right)
\frac{\left(q^2/a,q^2/b;q^2\right)_n}{\left(aq^2,bq^2;q^2\right)_n}
a^nb^nq^{\frac{3n^2-n}{2}}.
\end{align*}
\end{theorem}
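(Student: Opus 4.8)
The plan is to specialize the transformation \eqref{Formula1}. Replacing $q$ by $q^2$ there and choosing $(\alpha,\beta,c,d)=(q^2,0,-q,-q^2)$, the inner terminating series on the right becomes
$${_3\phi_2}\left(\begin{array}{ccc} q^{-2n},\,q^{2n+2},\,0\\ -q,\,-q^2\end{array};q^2,q^2\right),$$
while the left side, after using $(0;q^2)_n=1$ and $\alpha ab/q^2=ab$, is $(q^4,ab;q^2)_\infty/(aq^2,bq^2;q^2)_\infty$ times the series in the theorem. So the crux is to show that the inner $_3\phi_2$ above equals $q^{n(n+1)/2}$.

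For that I would use Whipple's sum \eqref{Whipple}. Its left side terminates, so one may let $c\to 0$ term by term; after also replacing $q$ by $q^2$ and $e$ by $-q$ this gives
$${_3\phi_2}\left(\begin{array}{ccc} q^{-2n},\,q^{2n+2},\,0\\ -q,\,-q^2\end{array};q^2,q^2\right)=\frac{\left(-q^{1-2n},-q^{3+2n};q^4\right)_\infty}{\left(-q;q^2\right)_\infty}\,q^{n(n+1)}.$$
It remains to check that the product ratio equals $q^{-n(n+1)/2}$. Setting $P_n=\left(-q^{1-2n},-q^{3+2n};q^4\right)_\infty$, one has $P_0=\left(-q;q^2\right)_\infty$ and $P_1=q^{-1}\left(-q;q^2\right)_\infty$, and passing from $P_{n-1}$ to $P_{n+1}$ merely inserts a factor $1+q^{-1-2n}$ into the first product and removes a factor $1+q^{1+2n}$ from the second, so $P_{n+1}/P_{n-1}=(1+q^{-1-2n})/(1+q^{1+2n})=q^{-(2n+1)}$; by induction $P_n=q^{-n(n+1)/2}\left(-q;q^2\right)_\infty$. (Equivalently, one may split $P_n$ according to the parity of $n$ and regroup the $q$-shifted factorials that carry negative powers of $q$.) Hence the inner $_3\phi_2$ equals $q^{n(n+1)/2}$.

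With this in hand I would substitute $(q,\alpha,\beta,c,d)\to(q^2,q^2,0,-q,-q^2)$ into \eqref{Formula1}. On the left, $\left(\alpha q^2;q^2\right)_\infty=\left(q^4;q^2\right)_\infty=\left(q^2;q^2\right)_\infty/(1-q^2)$ produces an overall $1/(1-q^2)$. On the right, $(\alpha;q^2)_n=(q^2;q^2)_n$ cancels the $(q^2;q^2)_n$ in the denominator, $1-\alpha q^{4n}$ becomes $1-q^{4n+2}$, the prefactor becomes $(-q^2ab)^nq^{n(n-3)}=(-1)^n(ab)^nq^{n^2-n}$ (the exponent $n(n-3)$ being the image of $n(n-3)/2$ under $q\mapsto q^2$), and multiplying this by the evaluation $q^{n(n+1)/2}$ of the inner $_3\phi_2$ collapses the power of $q$ to $q^{(3n^2-n)/2}$. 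A common factor $1/(1-q^2)$ now survives on both sides; cancelling it gives exactly the asserted identity, the hypotheses $\max\{|ab|,|aq^2|,|bq^2|\}<1$ being what is needed for \eqref{Formula1} with base $q^2$ and for the series and products to converge.

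The main obstacle is the product simplification $P_n=q^{-n(n+1)/2}\left(-q;q^2\right)_\infty$: it is elementary but needs care, since $\left(-q^{1-2n};q^4\right)_\infty$ contains negative powers of $q$ that must be extracted before the two $q^4$-progressions recombine into the single $q^2$-progression $\left(-q;q^2\right)_\infty$; once that is settled, the rest is routine bookkeeping of $q$-powers.
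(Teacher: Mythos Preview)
Your proposal is correct and follows essentially the same route as the paper: specialize Whipple's sum \eqref{Whipple} with $(q,c,e)\to(q^2,0,-q)$ to evaluate the inner ${}_3\phi_2$ as $q^{n(n+1)/2}$, then insert this into \eqref{Formula1} with $(q,\alpha,\beta,c,d)\to(q^2,q^2,0,-q^2,-q)$ and cancel the common $1/(1-q^2)$. The only difference is that the paper simply asserts the product simplification $\dfrac{(-q^{1-2n},-q^{2n+3};q^4)_\infty}{(-q;q^2)_\infty}=q^{-n(n+1)/2}$, whereas you supply the inductive justification via $P_{n+1}/P_{n-1}=q^{-(2n+1)}$; your extra care here is warranted and your bookkeeping of the $q$-exponents is accurate.
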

\begin{proof}
Making the substitution $\left(q,c,e\right)\to\left(q^2,0,-q\right)$ into \eqref{Whipple}, we have
\begin{equation}\label{A13}
{_3\phi_2}\left(
\begin{array}{ccc}
q^{-2n},q^{2n+2},0\\
-q^2,-q
\end{array};q^2,q^2\right)
=
\frac{\left(-q^{1-2n},-q^{2n+3};q^4\right)_{\infty}}{\left(-q;q^2\right)_{\infty}}
q^{n\left(n+1\right)}=q^{\frac{n^2+n}{2}}.
\end{equation}
Then, we substitute $\left(q,\alpha,\beta,c,d\right)\to\left(q^2,q^2,0,-q^2,-q\right)$ into \eqref{Formula1} to obtain the desired theorem.
\end{proof}
\begin{corollary}
We have
\begin{align}
\sum_{n=0}^{\infty}\frac{q^{n\left(n+2\right)}}{\left(q^4;q^4\right)_n}
&=\frac{\left(-q;q^2\right)_{\infty}\left(q,q^4,q^5;q^5\right)_{\infty}}
{\left(q^2;q^2\right)_{\infty}}
,\tag{S.16}\label{S.16}\\
\sum_{n=0}^{\infty}\frac{q^{2n\left(n+1\right)}}
{\left(q^2;q^2\right)_n\left(-q;q\right)_{2n}}
&=\frac{\left(q^2,q^5,q^7;q^7\right)_{\infty}}
{\left(q^2;q^2\right)_{\infty}},\tag{S.32}\label{S.32}\\
\sum_{n=0}^{\infty}\frac{\left(-1\right)^n\left(q;q^2\right)_nq^{n^2+2n}}
{\left(-q;q^2\right)_{n}\left(q^4;q^4\right)_n}
&=\frac{\left(q;q^2\right)_{\infty}\left(-q,-q^4,q^5;q^5\right)_{\infty}}
{\left(q^2;q^2\right)_{\infty}}.\tag{\rm{\cite[Eq. (2.23)]{Bowman}}}\label{e6}
\end{align}
\end{corollary}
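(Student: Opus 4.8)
The plan is to obtain each of \eqref{S.16}, \eqref{S.32} and Bowman's identity \eqref{e6} from Theorem \ref{thm11} by a single specialization of the free parameters $(a,b)$, followed by one application of the Jacobi triple product identity \eqref{Jacobi}. The mechanism is always the same: as $a\to 0$ one has $(q^2/a;q^2)_n a^n\to(-1)^nq^{n(n+1)}$, and for particular finite values of $a$ the factor $(q^2/a;q^2)_n$ telescopes against a denominator factor on the left-hand side (we will use $(q^2/(-q);q^2)_n=(-q;q^2)_n$ and $(q^2/q;q^2)_n=(q;q^2)_n$); after these reductions the right-hand side of Theorem \ref{thm11} becomes a one-sided sum of the shape $\sum_{n\ge0}(\pm1)^n(1-q^{cn+d})q^{(en^2+fn)/2}$, which, by splitting off the $q^{cn+d}$ term and reindexing $n\mapsto n+1$, folds into a bilateral theta series and is evaluated by \eqref{Jacobi}.

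For \eqref{S.32} I would send $(a,b)\to(0,0)$. The prefactor tends to $(q^2;q^2)_\infty$; using $(-q^2,-q;q^2)_n=(-q;q)_{2n}$ the left side becomes $(q^2;q^2)_\infty\sum_{n\ge0}q^{2n(n+1)}/[(q^2;q^2)_n(-q;q)_{2n}]$, while the right side collapses to $\sum_{n\ge0}(-1)^n(1-q^{4n+2})q^{(7n^2+3n)/2}$. Splitting off the $q^{4n+2}$ term and reindexing it by $n\mapsto n+1$ (via $(7(n+1)^2-3(n+1))/2=(7n^2+11n+4)/2$) turns this into $\sum_{n=-\infty}^{\infty}(-1)^nq^{(7n^2+3n)/2}$, which by \eqref{Jacobi} with $(q,z)\to(q^7,-q^5)$ equals $(q^2,q^5,q^7;q^7)_\infty$. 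Dividing by $(q^2;q^2)_\infty$ gives \eqref{S.32}.

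For \eqref{S.16} and \eqref{e6} I would instead let $b\to0$ while keeping $a$ finite, taking $a=-q$ for \eqref{S.16} and $a=q$ for \eqref{e6}. Then the left-hand summand collapses to $q^{n(n+2)}/(q^4;q^4)_n$, resp. $(-1)^n(q;q^2)_nq^{n^2+2n}/[(q^4;q^4)_n(-q;q^2)_n]$, which are exactly the left sides of \eqref{S.16} and \eqref{e6}, and the prefactor becomes $(q^2;q^2)_\infty/(-q^3;q^2)_\infty$, resp. $(q^2;q^2)_\infty/(q^3;q^2)_\infty$. On the right the surviving ratio $(q^2/a;q^2)_n/(aq^2;q^2)_n$ telescopes to $(1+q)/(1+q^{2n+1})$, resp. $(1-q)/(1-q^{2n+1})$, and the key cancellation $(1-q^{4n+2})/(1\pm q^{2n+1})=1\mp q^{2n+1}$ leaves $(1+q)\sum_{n\ge0}(-1)^n(1-q^{2n+1})q^{(5n^2+3n)/2}$, resp. $(1-q)\sum_{n\ge0}(1+q^{2n+1})q^{(5n^2+3n)/2}$. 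The same reindexing $n\mapsto n+1$ (now using $(5(n+1)^2-3(n+1))/2=(5n^2+7n+2)/2$) folds each into a bilateral sum, evaluated by \eqref{Jacobi} with $(q,z)\to(q^5,-q^4)$, resp. $(q,z)\to(q^5,q^4)$, to $(1+q)(q,q^4,q^5;q^5)_\infty$, resp. $(1-q)(-q,-q^4,q^5;q^5)_\infty$. Finally the elementary identities $(1+q)(-q^3;q^2)_\infty=(-q;q^2)_\infty$ and $(1-q)(q^3;q^2)_\infty=(q;q^2)_\infty$ rearrange these into \eqref{S.16} and \eqref{e6}.

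Every step here is routine $q$-series manipulation, so I do not anticipate a genuine obstacle. The only point requiring care is the bookkeeping: choosing, for each target, the specialization of $(a,b)$ that forces the desired left-hand summand while keeping the right-hand side a single theta quotient, tracking which $q$-shifted-factorial ratios collapse to a single factor, and getting the signs right in the $n\mapsto n+1$ reindexing.
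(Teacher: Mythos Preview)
Your proposal is correct and follows exactly the paper's approach: the paper's proof simply states that \eqref{S.16}, \eqref{S.32} and \eqref{e6} arise from Theorem~\ref{thm11} by specializing $(a,b)\to(0,-q)$, $(a,b)\to(0,0)$ and $(a,b)\to(0,q)$, respectively, which (up to the harmless interchange of $a$ and $b$) are precisely your choices. Your detailed verification of the telescoping, the $n\mapsto n+1$ folding into a bilateral sum, and the Jacobi triple product evaluations is accurate and fills in exactly the routine steps the paper omits.
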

\begin{proof}
By substituting $\left(a,b\right)\to\left(0,-q\right)$, $\left(a,b\right)\to\left(0,0\right)$ and $\left(a,b\right)\to\left(0,q\right)$ into Theorem \ref{thm11}, respectively, we obtain \eqref{S.16}, \eqref{S.32} and the identity \ref{e6}.
\end{proof}

\begin{theorem}\label{thm10}
For $\max\{|ab|,|aq^2|,|bq^2|\}<1$, we have
\begin{align*}
&\frac{\left(q^2,ab;q^2\right)_{\infty}}{\left(aq^2,bq^2;q^2\right)_{\infty}}
\sum_{n=0}^{\infty}
\frac{\left(q^2/a,q^2/b;q^2\right)_n}{\left(-q^2,q^2;q^2\right)_n\left(-q;q^2\right)_{n+1}}a^nb^n\\
&\quad=\sum_{n=0}^{\infty}\left(-1\right)^n\left(1-q^{2n+1}\right)
\frac{\left(q^2/a,q^2/b;q^2\right)_n}{\left(aq^2,bq^2;q^2\right)_n}
a^nb^nq^{\frac{3n^2+n}{2}}.
\end{align*}
\end{theorem}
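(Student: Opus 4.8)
The plan is to follow the route used for Theorem \ref{thm11}: first evaluate, by a specialization of Whipple's sum \eqref{Whipple}, the terminating ${_3\phi_2}$ that will occur on the right-hand side of \eqref{Formula1}, and then feed that closed form, together with a matching choice of $\left(q,\alpha,\beta,c,d\right)$, into \eqref{Formula1}.

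For the first step I would put $\left(q,c,e\right)\to\left(q^2,0,-q^3\right)$ in \eqref{Whipple}. Since $c=0$ trivializes the parameters $c$, $-c$ and $c^2q^2/e$, the ${_4\phi_3}$ collapses and one obtains
\begin{align*}
{_3\phi_2}\left(\begin{array}{ccc}
q^{-2n},\,q^{2n+2},\,0\\
-q^2,\,-q^3
\end{array};q^2,q^2\right)
=\frac{\left(-q^{3-2n},-q^{2n+5};q^4\right)_{\infty}}{\left(-q^3;q^2\right)_{\infty}}q^{n\left(n+1\right)}.
\end{align*}
The one genuinely fiddly point is to recognize the right-hand side as an elementary expression: writing $\left(-q^3;q^2\right)_{\infty}=\left(-q^3;q^4\right)_{\infty}\left(-q^5;q^4\right)_{\infty}$, separating the factors of $\left(-q^{3-2n};q^4\right)_{\infty}$ and $\left(-q^{2n+5};q^4\right)_{\infty}$ according to their exponents modulo $4$, and normalizing the finitely many negative-exponent factors by $1+q^{-m}=q^{-m}\left(1+q^m\right)$, I expect everything to telescope down to
\begin{align*}
{_3\phi_2}\left(\begin{array}{ccc}
q^{-2n},\,q^{2n+2},\,0\\
-q^2,\,-q^3
\end{array};q^2,q^2\right)=\frac{\left(1+q\right)q^{\left(n^2+3n\right)/2}}{1+q^{2n+1}},
\end{align*}
which I would sanity-check at $n=0,1,2$. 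This simplification is the step I expect to require the most care; it is the analogue here of the bald assertion in \eqref{A13}.

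With this evaluation in hand, I would set $\left(q,\alpha,\beta,c,d\right)\to\left(q^2,q^2,0,-q^2,-q^3\right)$ in \eqref{Formula1}. On the left-hand side the lower parameters of the ${_3\phi_2}$ become $-q^2,-q^3$, and since $\left(-q^3;q^2\right)_n=\left(-q;q^2\right)_{n+1}/\left(1+q\right)$ this series equals $\left(1+q\right)$ times the sum on the left of Theorem \ref{thm10}; using also $\left(q^4;q^2\right)_{\infty}=\left(q^2;q^2\right)_{\infty}/\left(1-q^2\right)$, the whole left-hand side of \eqref{Formula1} becomes $\left(1+q\right)/\left(1-q^2\right)$ times the left-hand side of Theorem \ref{thm10}. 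On the right-hand side, after $q\to q^2$ the factor $1-\alpha q^{2n}$ becomes $1-q^{4n+2}$, the powers of $q$ assemble as $q^{2n}\cdot q^{n^2-3n}\cdot q^{\left(n^2+3n\right)/2}=q^{\left(3n^2+n\right)/2}$ (matching the stated exponent), and the ${_3\phi_2}$ evaluation above contributes the factor $\left(1+q\right)/\left(1+q^{2n+1}\right)$; the elementary identity $\left(1-q^{4n+2}\right)/\left(1+q^{2n+1}\right)=1-q^{2n+1}$ then shows that the right-hand side of \eqref{Formula1} is $\left(1+q\right)/\left(1-q^2\right)$ times the right-hand side of Theorem \ref{thm10}. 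Canceling the common factor $\left(1+q\right)/\left(1-q^2\right)$ completes the proof, the hypothesis $\max\left\{|ab|,|aq^2|,|bq^2|\right\}<1$ guaranteeing absolute convergence of every series involved.
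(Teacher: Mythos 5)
Your proposal is correct and follows essentially the same route as the paper: the paper also specializes \eqref{Whipple} with $\left(q,c,e\right)\to\left(q^2,0,-q^3\right)$ to get exactly the evaluation $\frac{1+q}{1+q^{2n+1}}q^{\left(n^2+3n\right)/2}$ (stated there as \eqref{A18}, with a citation to Wang), and then substitutes $\left(q,\alpha,\beta,c,d\right)\to\left(q^2,q^2,0,-q^2,-q^3\right)$ into \eqref{Formula1}. Your product-side simplification and the bookkeeping of the factors $\left(1+q\right)/\left(1-q^2\right)$ and the exponent $q^{\left(3n^2+n\right)/2}$ are all accurate.
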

\begin{proof}
Making the substitution $\left(q,c,e\right)\to\left(q^2,0,-q^3\right)$ into \eqref{Whipple}, we have
\begin{align}
{_3\phi_2}\left(
\begin{array}{ccc}
q^{-2n},q^{2n+2},0\\
-q^2,-q^3
\end{array};q^2,q^2\right)
=
\frac{1+q}{1+q^{2n+1}}q^{\frac{n^2+3n}{2}},\label{A18}
\end{align}
which can also be found in \cite[Eq. (3.13)]{Wang}.
Based on the above identity,  we substitute $\left(q,\alpha,\beta,c,d\right)\to\left(q^2,q^2,0,-q^2,-q^3\right)$ into \eqref{Formula1} to obtain the desired theorem.
\end{proof}
\begin{corollary}
We have
\begin{align}
\sum_{n=0}^{\infty}\frac{q^{n\left(n+1\right)}}
{\left(q^2;q^2\right)_n\left(-q;q^2\right)_{n+1}}
&=
\frac{\left(-q^2;q^2\right)_{\infty}\left(q,q^4,q^5;q^5\right)_{\infty}}
{\left(q^2;q^2\right)_{\infty}},\tag{S.17}\label{S.17}\\
\sum_{n=0}^{\infty}\frac{\left(-1\right)^n\left(q;q^2\right)_nq^{n^2+2n}}
{\left(-q;q^2\right)_{n+1}\left(q^4;q^4\right)_n}
&=\frac{\left(q;q^2\right)_{\infty}\left(-q^5,-q^5,q^5;q^5\right)_{\infty}}
{\left(q^2;q^2\right)_{\infty}},\tag{\rm{\cite[Eq. (2.5)]{McLaughlin}}}\label{new1}\\
\sum_{n=0}^{\infty}\frac{q^{n^2}}{\left(q^4;q^4\right)_n}
&=\frac{\left(-q;q^2\right)_{\infty}\left(q^2,q^3,q^5;q^5\right)_{\infty}}
{\left(q^2;q^2\right)_{\infty}},\tag{S.20}\label{S.20}\\
\sum_{n=0}^{\infty}\frac{q^{2n\left(n+1\right)}}
{\left(q^2;q^2\right)_n\left(-q;q\right)_{2n+1}}
&=\frac{\left(q,q^6,q^7;q^7\right)_{\infty}}
{\left(q^2;q^2\right)_{\infty}}.\tag{S.31}\label{S.31}
\end{align}
\end{corollary}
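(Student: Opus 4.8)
The plan is to obtain each of the four identities by specializing $(a,b)$ in Theorem \ref{thm10} and then recognizing the right-hand side as a Jacobi theta product via \eqref{Jacobi}, exactly in the style of the earlier corollaries of this section. Concretely I would use $(a,b)\to(0,-1)$ for \eqref{S.17}, $(a,b)\to(0,q)$ for \eqref{new1}, $(a,b)\to(0,-1/q)$ for \eqref{S.20}, and $(a,b)\to(0,0)$ for \eqref{S.31}; in every case the hypothesis $\max\{|ab|,|aq^2|,|bq^2|\}<1$ holds in the limit, so both series may be evaluated termwise.

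The first routine ingredient is the limit $\lim_{a\to0}(q^2/a;q^2)_n\,a^n=(-1)^nq^{n(n+1)}$, together with the elementary product identifications $(-q^2;q^2)_n(q^2;q^2)_n=(q^4;q^4)_n$ and $(-q^2;q^2)_n(-q;q^2)_{n+1}=(-q;q)_{2n+1}$. Feeding the chosen values into the left-hand side of Theorem \ref{thm10} and using the telescoping ratios $(-q^3;q^2)_n/(-q;q^2)_{n+1}=1/(1+q)$ (for $b=-1/q$) and $(q;q^2)_n/(q^3;q^2)_n=(1-q)/(1-q^{2n+1})$ (for $b=q$), together with the trivial identifications for $b=-1$ and $b=0$, collapses the series on the left to a scalar multiple of exactly the series appearing in \eqref{S.17}, \eqref{new1}, \eqref{S.20}, \eqref{S.31}. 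For instance $b=-1/q$ turns the left side into $\tfrac{(q^2;q^2)_\infty}{(1+q)(-q;q^2)_\infty}\sum_{n\ge0}q^{n^2}/(q^4;q^4)_n$, and I would tidy the prefactors using $(q^3;q^2)_\infty=(q;q^2)_\infty/(1-q)$ and $(-q^3;q^2)_\infty=(-q;q^2)_\infty/(1+q)$.

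Next I would simplify the right-hand side of Theorem \ref{thm10} under the same substitutions. The quotient $(q^2/a,q^2/b;q^2)_n\,a^nb^n/(aq^2,bq^2;q^2)_n$ reduces to $(-1)^n$ times a power of $q$, occasionally accompanied by a factor $1\pm q^{2n+1}$ which recombines with the $1-q^{2n+1}$ already present (producing $1-q^{4n+2}$ when $b=-1/q$, and cancelling it entirely when $b=q$). The outcome is a unilateral sum, namely one of $\sum_{n\ge0}(-1)^n(1-q^{2n+1})q^{(5n^2+3n)/2}$, $(1-q)\sum_{n\ge0}q^{5n(n+1)/2}$, $\tfrac1{1+q}\sum_{n\ge0}(-1)^n(1-q^{4n+2})q^{(5n^2+n)/2}$, and $\sum_{n\ge0}(-1)^n(1-q^{2n+1})q^{(7n^2+5n)/2}$. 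In the three cases carrying a difference factor I would split the sum and re-index the tail by $n\mapsto -n-1$: the quadratic exponents are rigged precisely so that the two pieces glue into the bilateral series $\sum_{n\in\Z}(-1)^nq^{(An^2+Bn)/2}$ with $(A,B)=(5,3),(5,1),(7,5)$; in the remaining $b=q$ case the elementary symmetry $n\mapsto -n-1$ of $n(n+1)/2$ gives $\sum_{n\ge0}q^{5n(n+1)/2}=\tfrac12\sum_{n\in\Z}q^{5n(n+1)/2}$.

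Finally, writing $\tfrac12(An^2+Bn)=A\cdot\tfrac{n(n-1)}{2}+\tfrac{A+B}{2}n$ and invoking \eqref{Jacobi}, the bilateral sums evaluate to $(q^4,q,q^5;q^5)_\infty$, $(q^2,q^3,q^5;q^5)_\infty$, $(q,q^6,q^7;q^7)_\infty$, and (using $(-1;q^5)_\infty=2(-q^5;q^5)_\infty$) to $2(-q^5,-q^5,q^5;q^5)_\infty$; dividing out the prefactor that appeared on the left of Theorem \ref{thm10} then isolates \eqref{S.17}, \eqref{S.20}, \eqref{S.31}, \eqref{new1} respectively. I expect the only real difficulty to be clerical: matching the half-integer exponents so the split-and-reindex step assembles exactly the bilateral theta series under $n\mapsto -n-1$, and propagating the $(1\pm q)^{\pm1}$ and $(1-q)^{\pm1}$ scalars correctly through the telescoping ratios. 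No genuinely new idea beyond the method already used in this section is needed.
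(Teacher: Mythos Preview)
Your proposal is correct and follows essentially the same approach as the paper: the paper's proof also specializes Theorem~\ref{thm10} at $(a,b)=(0,-1)$, $(0,q)$, $(0,-1/q)$, $(0,0)$ and then invokes the Jacobi triple product identity~\eqref{Jacobi} (with $(q,z)\to(q^5,-q)$, $(q^5,1)$, $(q^5,-q^2)$, $(q^7,-q)$ respectively) to obtain \eqref{S.17}, \eqref{new1}, \eqref{S.20}, \eqref{S.31}. Your write-up supplies more of the bookkeeping (the telescoping ratios, the explicit $n\mapsto -n-1$ re-indexing that turns the unilateral sums into bilateral theta series, and the scalar factors $(1\pm q)^{\pm1}$), but the substance is identical.
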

\begin{proof}
Substituting $\left(a,b\right)\to\left(0,-1\right)$ into Theorem \ref{thm10} and taking $\left(q,z\right)\to\left(q^5, -q\right)$ into the Jacobi triple product identity \eqref{Jacobi},, we deduce that \eqref{S.17} is true.

Moreover, setting $\left(a,b\right)\to\left(0,q\right)$ in Theorem \ref{thm10} and taking $\left(q,z\right)\to\left(q^5,1\right)$ into the Jacobi triple product identity \eqref{Jacobi}, we deduce that \ref{new1} is true.

Likewise, letting $\left(a,b\right)\to\left(0,-1/q\right)$ in Theorem \ref{thm10} and inserting $\left(q,z\right)\to\left(q^5,-q^2\right)$ into the Jacobi triple product identity \eqref{Jacobi}, we obtain \eqref{S.20}.

Similarly, substituting $\left(a,b\right)\to\left(0,0\right)$ into Theorem \ref{thm10} and using the Jacobi triple product identity \eqref{Jacobi} with $\left(q,z\right)\to\left(q^7,-q\right)$, we get \eqref{S.31}.
\end{proof}
\begin{theorem}\label{thm12}
For $\max\{|ab|,|aq^2|,|bq^2|\}<1$, we have
\begin{align*}
&\frac{\left(q^2,ab;q^2\right)_{\infty}}{\left(aq^2,bq^2;q^2\right)_{\infty}}
\sum_{n=0}^{\infty}
\frac{\left(q^2/a,q^2/b;q^2\right)_n}{\left(-q^2,q^2;q^2\right)_n\left(q;q^2\right)_{n+1}}a^nb^n\\
&\quad=\sum_{n=0}^{\infty}\left(-1\right)^n\left(1+q^{4n+1}\right)
\frac{\left(q^2/a,q^2/b;q^2\right)_{2n}}{\left(aq^2,bq^2;q^2\right)_{2n}}
a^{2n}b^{2n}q^{6n^2+n}\\
&\qquad-
\sum_{n=0}^{\infty}\left(-1\right)^n\left(1+q^{4n+3}\right)
\frac{\left(q^2/a,q^2/b;q^2\right)_{2n+1}}{\left(aq^2,bq^2;q^2\right)_{2n+1}}
a^{2n+1}b^{2n+1}q^{6n^2+7n+2}.
\end{align*}
\end{theorem}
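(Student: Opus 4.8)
The plan is to reuse verbatim the three‑step template that produced Theorems~\ref{thm11} and~\ref{thm10}: extract a closed evaluation of a terminating $_3\phi_2$ from the terminating $q$-Whipple sum \eqref{Whipple}, feed it into the inner series on the right of \eqref{Formula1}, and simplify after the base change $q\mapsto q^2$.

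\emph{Step 1 (the inner sum).} Specialize \eqref{Whipple} by $(q,c,e)\to(q^2,0,q^3)$; since $c=0$ trivializes the numerator entries $c,-c$ and the lower entry $c^2q/e$, this yields
\begin{align*}
{_3\phi_2}\!\left(\begin{array}{ccc}q^{-2n},\,q^{2n+2},\,0\\ -q^2,\,q^3\end{array};q^2,q^2\right)
=\frac{\left(q^{3-2n},q^{2n+5};q^4\right)_{\infty}}{\left(q^3;q^2\right)_{\infty}}\,q^{n^2+n}.
\end{align*}
The only nontrivial point is to reduce the right‑hand product to a power of $q$ times a rational function. Writing $n=2m$, the factors of $\left(q^{3-4m};q^4\right)_{\infty}$ carrying a negative exponent are pulled out via $1-q^{-j}=-q^{-j}\left(1-q^{j}\right)$; telescoping the outcome against $\left(q^3;q^2\right)_{\infty}=\left(q^3;q^4\right)_{\infty}\left(q^5;q^4\right)_{\infty}$ gives the even‑index value $(-1)^m q^{2m^2+3m}\left(1-q\right)/\left(1-q^{4m+1}\right)$, and the parallel computation with $n=2m+1$ gives the odd‑index value $(-1)^m q^{2m^2+5m+2}\left(1-q\right)/\left(1-q^{4m+3}\right)$. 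Equivalently the evaluation equals $(-1)^{\lfloor n/2\rfloor}\left(1-q\right)q^{(n^2+3n)/2}/\left(1-q^{2n+1}\right)$, and it is precisely this sign, alternating in blocks of two, that forces the right side of the statement to split into a sum over even $n$ and a sum over odd $n$. This bookkeeping with the overshooting $q$-factors is the step requiring the most care, though it is entirely parallel to \eqref{A13} and \eqref{A18}.

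\emph{Step 2 (assembling \eqref{Formula1}).} Put $(q,\alpha,\beta,c,d)\to(q^2,q^2,0,-q^2,q^3)$ in \eqref{Formula1}. On the left, the identity $\left(q;q^2\right)_{n+1}=\left(1-q\right)\left(q^3;q^2\right)_n$ turns the $_3\phi_2$ there into $\left(1-q\right)^{-1}$ times the series of Theorem~\ref{thm12}, while the factors $\left(1-q\right)$ and $\left(q^4;q^2\right)_{\infty}$ merge with the prefactor into $\left(q^2,ab;q^2\right)_{\infty}/\left(aq^2,bq^2;q^2\right)_{\infty}$. On the right, split $\sum_n$ into $n=2m$ and $n=2m+1$ and insert the two values from Step~1: in the even case $\left(1-q^{8m+2}\right)/\left(1-q^2\right)$ collapses against $\left(1-q\right)/\left(1-q^{4m+1}\right)$ to $\left(1+q^{4m+1}\right)/\left(1+q\right)$, and in the odd case $\left(1-q^{8m+6}\right)/\left(1-q^2\right)$ collapses against $\left(1-q\right)/\left(1-q^{4m+3}\right)$ to $\left(1+q^{4m+3}\right)/\left(1+q\right)$; collecting the remaining powers of $q$ produces exactly $q^{6m^2+m}$ and $q^{6m^2+7m+2}$, and the residual $\left(1+q\right)^{-1}$ cancels the one generated on the left. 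Renaming $m$ back to $n$ gives Theorem~\ref{thm12}, with the hypothesis $\max\{|ab|,|aq^2|,|bq^2|\}<1$ inherited from \eqref{Formula1}.
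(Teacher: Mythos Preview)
Your proof is correct and follows exactly the same approach as the paper: specialize \eqref{Whipple} by $(q,c,e)\to(q^2,0,q^3)$ to obtain the parity-dependent closed form \eqref{A19}, then substitute $(q,\alpha,\beta,c,d)\to(q^2,q^2,0,-q^2,q^3)$ into \eqref{Formula1} and split the resulting sum over $n$ according to $n=2m$ and $n=2m+1$. The only minor wording slip is the clause ``the factors $(1-q)$ and $(q^4;q^2)_{\infty}$ merge with the prefactor into $(q^2,ab;q^2)_{\infty}/(aq^2,bq^2;q^2)_{\infty}$,'' which taken literally loses a factor of $(1+q)^{-1}$; but you correctly recover it a sentence later when you cancel the matching $(1+q)^{-1}$ from the right side, so the argument is sound.
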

\begin{proof}
Making the substitution $\left(q,c,e\right)\to\left(q^2,0,q^3\right)$ into \eqref{Whipple}, we have
\begin{align}
{_3\phi_2}\left(
\begin{array}{ccc}
q^{-2n},q^{2n+2},0\\
-q^2,q^3
\end{array};q^2,q^2\right)
&=
\frac{\left(q^{3-2n}, q^{5+2n};q^4\right)_{\infty}}
{\left(q^{3};q^2\right)_{\infty}}q^{n\left(n+1\right)}\nonumber\\
&
=\left\{\begin{array}{ll}
\left(-1\right)^mq^{2m^2+3m}\frac{1-q}{1-q^{4m+1}},&\quad {n=2m},\\
\left(-1\right)^{m}q^{2m^2+5m+2}\frac{1-q}{1-q^{4m+3}},&\quad{n=2m+1}.
\end{array}\right.\label{A19}
\end{align}
Based on the above identity,  we substitute $\left(q,\alpha,\beta,c,d\right)\to\left(q^2,q^2,0,-q^2,q^3\right)$ into \eqref{Formula1} to obtain the desired theorem.
\end{proof}
\begin{corollary}
We have
\begin{align}
&\sum_{n=0}^{\infty}\frac{\left(-q;q^2\right)_{n}}
{\left(q;q^2\right)_{n+1}\left(q^4;q^4\right)_n}q^{n\left(n+2\right)}
=\frac{\left(-q^2;q^2\right)_{\infty}
}
{\left(q^2;q^2\right)_{\infty}}\left(q^5,q^{15},q^{20};q^{20}\right)_{\infty},\tag{\cite[Eq (2.7)]{McLaughlin}}\label{N3}\\
&\sum_{n=0}^{\infty}\frac{q^{n\left(n+1\right)}}
{\left(q;q\right)_{2n+1}}
=\frac{\left(-q^2;q^2\right)_{\infty}
}
{\left(q^2;q^2\right)_{\infty}}\left(\left(q^7,q^{13},q^{20};q^{20}\right)_{\infty}
+q\left(q^3,q^{17},q^{20};q^{20}\right)_{\infty}\right).\tag{S.94}\label{S.94}
\end{align}
\end{corollary}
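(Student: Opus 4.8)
The plan is to derive both identities, exactly in the style of the preceding corollaries, by specializing the parameters $(a,b)$ in Theorem~\ref{thm12}, passing to the appropriate limit, reducing the resulting one-sided theta sums to bilateral ones via the reflection $n\mapsto -n-1$, and finishing with the Jacobi triple product identity \eqref{Jacobi}. Concretely, I would obtain \eqref{N3} from the choice $(a,b)\to(0,-q)$ and \eqref{S.94} from the choice $(a,b)\to(0,-1)$.

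For \eqref{N3} the ingredients are the elementary limit $\lim_{a\to0}(q^2/a;q^2)_n a^n=(-1)^n q^{n^2+n}$, the evaluation $(q^2/b;q^2)_n b^n=(-1)^n q^n(-q;q^2)_n$ at $b=-q$, the factorization $(-q^2;q^2)_n(q^2;q^2)_n=(q^4;q^4)_n$, and the ratio $(q^2/b;q^2)_m/(bq^2;q^2)_m=(1+q)/(1+q^{2m+1})$ at $b=-q$. Feeding these into Theorem~\ref{thm12}, the left-hand side becomes $\frac{(q^2;q^2)_\infty}{(-q^3;q^2)_\infty}$ times the series in \eqref{N3}, while on the right-hand side the factors $1+q^{4n+1}$ and $1+q^{4n+3}$ cancel against the corresponding $b$-Pochhammer ratios, leaving
\[
(1+q)\left(\sum_{n\ge0}(-1)^n q^{10n^2+5n}-\sum_{n\ge0}(-1)^n q^{10n^2+15n+5}\right).
\]
Since $n\mapsto -n-1$ sends $10n^2+15n+5$ to $10n^2+5n$ and flips the sign, the second sum is precisely the negative-index part of the first, so the bracket collapses to $\sum_{n\in\Z}(-1)^n q^{10n^2+5n}$, which \eqref{Jacobi} with $(q,z)\to(q^{20},-q^{15})$ evaluates as $(q^5,q^{15},q^{20};q^{20})_\infty$. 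Dividing by the prefactor and simplifying the remaining infinite products gives \eqref{N3}.

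For \eqref{S.94}, taking $b=-1$ gives $(q^2/b;q^2)_n b^n=(-1)^n(-q^2;q^2)_n$, and the cancellation $(-q^2;q^2)_n/(q^4;q^4)_n=1/(q^2;q^2)_n$ together with $(q;q)_{2n+1}=(q;q^2)_{n+1}(q^2;q^2)_n$ turns the left-hand side of Theorem~\ref{thm12} into $\frac{(q^2;q^2)_\infty}{(-q^2;q^2)_\infty}\sum_{n\ge0}\frac{q^{n(n+1)}}{(q;q)_{2n+1}}$. This time no factor $1+q^{4n+c}$ cancels, so I would expand those binomials and write the right-hand side as four one-sided theta sums; pairing the first with the fourth and the second with the third and reflecting $n\mapsto -n-1$ (which sends $10n^2+17n+7\mapsto 10n^2+3n$ and $10n^2+13n+4\mapsto 10n^2+7n+1$, with matching signs) produces the two bilateral sums $\sum_{n\in\Z}(-1)^n q^{10n^2+3n}$ and $q\sum_{n\in\Z}(-1)^n q^{10n^2+7n}$. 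Applying \eqref{Jacobi} with $(q,z)\to(q^{20},-q^{13})$ and $(q,z)\to(q^{20},-q^{17})$ respectively evaluates these as $(q^7,q^{13},q^{20};q^{20})_\infty$ and $q(q^3,q^{17},q^{20};q^{20})_\infty$, and clearing the prefactor yields \eqref{S.94}.

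The computations themselves are routine bookkeeping of $q$-exponents; the one step that genuinely needs care is the index reflection, namely verifying that after $n\mapsto -n-1$ the exponents of the paired one-sided sums coincide and the signs align, so that each pair fuses into a single bilateral series on which the Jacobi triple product can be applied. Once that alignment is checked, the modulus-$20$ products on the right-hand sides are forced, which completes the proof.
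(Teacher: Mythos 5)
Your proposal is correct and follows exactly the paper's route: the paper's own proof of this corollary is precisely the specializations $(a,b)\to(0,-q)$ and $(a,b)\to(0,-1)$ in Theorem~\ref{thm12} followed by the Jacobi triple product, and your bookkeeping (the exponents $10n^2+5n$, $10n^2+3n$, $10n^2+7n+1$ and the reflection $n\mapsto -n-1$) checks out. One caveat: for \eqref{N3} your computation yields the prefactor $(1+q)(-q^3;q^2)_{\infty}/(q^2;q^2)_{\infty}=(-q;q^2)_{\infty}/(q^2;q^2)_{\infty}$, which is in fact the correct product side (it matches McLaughlin--Sills--Zimmer and a series expansion of the sum), so the factor $(-q^2;q^2)_{\infty}$ in the displayed statement is a misprint and your final phrase that the products "simplify" to \eqref{N3} as printed should instead record the corrected prefactor.
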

\begin{proof}
Taking $\left(a,b\right)\to\left(0,-q\right)$ and  $\left(a,b\right)\to\left(0,-1\right)$ in Theorem \ref{thm12}, respectively,  we arrive at \eqref{N3} and \eqref{S.94}.
\end{proof}
\begin{theorem}\label{thm13}
For $\max\{|ab|,|aq^2|,|bq^2|\}<1$, we have
\begin{align*}
&\frac{\left(q^2,ab;q^2\right)_{\infty}}{\left(aq^2,bq^2;q^2\right)_{\infty}}
\sum_{n=0}^{\infty}
\frac{\left(q^2/a,q^2/b;q^2\right)_n}{\left(q,q^2,-q^2;q^2\right)_n}a^nb^n\\
&\quad=\sum_{n=0}^{\infty}\left(-1\right)^n\left(1-q^{8n+2}\right)
\frac{\left(q^2/a,q^2/b;q^2\right)_{2n}}{\left(aq^2,bq^2;q^2\right)_{2n}}
a^{2n}b^{2n}q^{6n^2-n}\\
&\qquad+
\sum_{n=0}^{\infty}\left(-1\right)^n\left(1-q^{8n+6}\right)
\frac{\left(q^2/a,q^2/b;q^2\right)_{2n+1}}{\left(aq^2,bq^2;q^2\right)_{2n+1}}
a^{2n+1}b^{2n+1}q^{6n^2+5n+1}.
\end{align*}
\end{theorem}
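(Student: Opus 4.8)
The plan is to mimic exactly the pattern used in the proofs of Theorems~\ref{thm11}, \ref{thm10} and \ref{thm12}: specialize the terminating $q$-analogue of Whipple's ${_3F_2}$ sum \eqref{Whipple} to evaluate in closed form the inner ${_3\phi_2}$ that arises on the right-hand side of Liu's reduced transformation \eqref{Formula1}, and then substitute the result back. Comparing the left-hand side of the asserted identity with that of \eqref{Formula1}, we need the denominator parameters of the inner series to be $\{q,q^2,-q^2\}$ (after $q\mapsto q^2$), i.e.\ the lower parameters in the ${_5\phi_4}$/${_3\phi_2}$ should be $c=q$, $d=-q^2$, together with $\beta=0$; this is the choice $\left(q,\alpha,\beta,c,d\right)\to\left(q^2,q^2,0,q,-q^2\right)$ in \eqref{Formula1}. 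With these values the prefactor $\left(1-\alpha q^{2n}\right)/\left(1-\alpha\right)$ becomes $1-q^{4n+2}$ and the power of $q$ coming from $\left(-\alpha ab\right)^n q^{n(n-3)/2}$ (with $q\mapsto q^2$) is $q^{2n^2-3n}$, so the whole thing will collapse to a single ${_3\phi_2}$ times known factors.

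First I would write down the required specialization of \eqref{Whipple}: set $\left(q,c,e\right)\to\left(q^2,0,q^3\right)$ — or whichever choice of $e$ makes the lower parameters match $\{-q^2,q^3\}$ after one accounts for the $q$-Chu--Vandermonde-type manipulation — to get a closed-form evaluation of
\[
{_3\phi_2}\!\left(\begin{array}{ccc} q^{-2n},\,q^{2n+2},\,0\\ -q^2,\,q^3\end{array};q^2,q^2\right),
\]
which, just as in \eqref{A19}, splits into two cases according to the parity of $n$, producing a factor $\left(q^{a-2n},q^{b+2n};q^4\right)_{\infty}/\left(q^{b};q^2\right)_{\infty}$ that telescopes to $q^{n(n+1)}$ times a rational function of the form $\left(1-q\right)/\left(1-q^{4m+2}\right)$ or $\left(1-q\right)/\left(1-q^{4m+6}\right)$. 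Next I would substitute this evaluation into \eqref{Formula1}, combine the $q^{n(n+1)}$ with the ambient $q^{2n^2-3n}$ and the $1-q^{4n+2}$ from Liu's formula, and separate the resulting sum over $n$ into its even and odd parts $n=2m$ and $n=2m+1$. Tracking the exponents carefully through $6n^2-n = 24m^2-2m$ (even) and $6m^2+5m+1$-type shifts (odd), one recovers precisely the two sums $\sum\left(-1\right)^m\left(1-q^{8m+2}\right)(\cdots)q^{6m^2-m}$ and $\sum\left(-1\right)^m\left(1-q^{8m+6}\right)(\cdots)q^{6m^2+5m+1}$ stated in the theorem.

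The main obstacle I expect is purely bookkeeping: getting the three powers of $q$ (the $q^{n(n-3)/2}$ from Liu's formula under $q\mapsto q^2$, the $q^{n(n+1)}$ from the Whipple evaluation, and the $\left(-\alpha ab\right)^n = (ab)^n$ scaling) to add up correctly, and making sure the rational prefactors $1-q^{4m+2}$, $1-q^{4m+6}$ combine with the leftover $1-q^{4n+2}$ to yield the clean $1-q^{8n+2}$ and $1-q^{8n+6}$ in the statement rather than some unreduced expression. A secondary point is confirming that the denominator shift from $\left(q^3;q^2\right)_n$ in the inner series to $\left(q;q^2\right)_{?}$ in the displayed left-hand side is absorbed exactly — in the companion theorems this is where factors like $\left(-q;q^2\right)_{n+1}$ appear, and here the analogous $\left(q;q^2\right)$-type factor must reconcile with the stated denominator $\left(q,q^2,-q^2;q^2\right)_n$. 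Once the exponent arithmetic is pinned down, the proof is a one-line substitution exactly as in Theorems~\ref{thm11}--\ref{thm12}.
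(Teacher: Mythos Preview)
Your overall approach is exactly the paper's, but you have mixed up the Whipple specialization with that of Theorem~\ref{thm12}. You correctly identify the substitution $\left(q,\alpha,\beta,c,d\right)\to\left(q^2,q^2,0,q,-q^2\right)$ in \eqref{Formula1}, so the inner series to evaluate has lower parameters $\{q,-q^2\}$, not $\{q^3,-q^2\}$; hence in \eqref{Whipple} you want $\left(q,c,e\right)\to\left(q^2,0,q\right)$ (not $e=q^3$), which gives \eqref{A20}:
\[
{_3\phi_2}\!\left(\begin{array}{ccc} q^{-2n},\,q^{2n+2},\,0\\ -q^2,\,q\end{array};q^2,q^2\right)
=\begin{cases}\left(-1\right)^m q^{2m^2+m},& n=2m,\\ \left(-1\right)^{m+1}q^{2m^2+3m+1},& n=2m+1.\end{cases}
\]
There is \emph{no} rational prefactor here, unlike in \eqref{A19}; the factors $1-q^{8m+2}$ and $1-q^{8m+6}$ in the statement come directly from Liu's $1-q^{4n+2}$ at $n=2m$ and $n=2m+1$, with no combination step needed, and since the lower parameter is already $q$ there is no denominator shift on the left-hand side either. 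With this single correction the complications you anticipate disappear and the proof is the one-line substitution you describe, matching the paper verbatim.
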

\begin{proof}
Making the substitution $\left(q,c,e\right)\to\left(q^2,0,q\right)$ into \eqref{Whipple}, we have
\begin{align}
{_3\phi_2}\left(
\begin{array}{ccc}
q^{-2n},q^{2n+2},0\\
-q^2,q
\end{array};q^2,q^2\right)
&=
\frac{\left(q^{1-2n}, q^{3+2n};q^4\right)_{\infty}}
{\left(q;q^2\right)_{\infty}}q^{n\left(n+1\right)}\nonumber\\
&
=\left\{\begin{array}{ll}
\left(-1\right)^mq^{2m^2+m},&\quad {n=2m},\\
\left(-1\right)^{m+1}q^{2m^2+3m+1},&\quad{n=2m+1}.
\end{array}\right.\label{A20}
\end{align}
Based on the above identity,  we substitute $\left(q,\alpha,\beta,c,d\right)\to\left(q^2,q^2,0,-q^2,q\right)$ into \eqref{Formula1} to obtain the desired theorem.
\end{proof}
\begin{corollary}
\begin{align}
&\sum_{n=0}^{\infty}
\frac{\left(-1\right)^n\left(q;q^2\right)_n}{\left(-q;q^2\right)_n\left(q^4;q^4\right)_n}q^{n\left(n+2\right)}
=\frac{\left(q;q^2\right)_{\infty}\left(-q,-q^4,q^5;q^5\right)_{\infty}}{\left(q^2;q^2\right)_{\infty}},\tag{\rm{\cite[Eq. (2.23)]{Bowman}}}\label{N6}\\
&\sum_{n=0}^{\infty}\frac{q^{n\left(n+1\right)}}
{\left(q;q\right)_{2n}}
=\frac{\left(q^9,q^{11},q^{20};q^{20}\right)_{\infty}
-q^2\left(q,q^{19},q^{20};q^{20}\right)_{\infty}}
{\left(-q;q^2\right)_{\infty}\left(q;q\right)_{\infty}}.\tag{S.99}\label{S.99}
\end{align}
\end{corollary}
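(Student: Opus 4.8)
Both displayed identities should fall out of the scheme used for the preceding corollaries in this section: specialize the two free parameters $a,b$ in Theorem~\ref{thm13} so that the left-hand side collapses to the target sum times an infinite product, then evaluate the bilateral theta-sums that appear on the right via the Jacobi triple product identity \eqref{Jacobi}. Since \eqref{N6} merely reproduces the identity \eqref{e6} obtained earlier from Theorem~\ref{thm11}, the real work is in \eqref{S.99}, so I concentrate on that and treat \eqref{N6} as an analogous specialization.

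For \eqref{S.99} the plan is to set $(a,b)\to(0,-1)$ in Theorem~\ref{thm13}. On the left, using $\lim_{a\to0}(q^2/a;q^2)_m a^m=(-1)^m q^{m^2+m}$ (together with $(aq^2;q^2)_m\to1$ and $(ab;q^2)_\infty\to1$), the factor $(q^2/b;q^2)_n=(-q^2;q^2)_n$ cancels the $(-q^2;q^2)_n$ in the denominator, the $(-1)^n$ coming from $b^n$ cancels the sign from the limit, and the prefactor becomes $(q^2;q^2)_\infty/(-q^2;q^2)_\infty$; since $(q;q^2)_n(q^2;q^2)_n=(q;q)_{2n}$, the left-hand side is $\frac{(q^2;q^2)_\infty}{(-q^2;q^2)_\infty}\sum_{n\ge0}\frac{q^{n^2+n}}{(q;q)_{2n}}$. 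On the right, the same substitution turns the two sums into $\sum_n(-1)^n(1-q^{8n+2})q^{10n^2+n}$ and $\sum_n(-1)^n(1-q^{8n+6})q^{10n^2+11n+3}$. Expanding the factors $1-q^{8n+2}$ and $1-q^{8n+6}$ produces four one-sided theta sums; I would then pair the exponents $10n^2+n$ with $10n^2+19n+9$ and $10n^2+9n+2$ with $10n^2+11n+3$ --- in each case the two pieces are the $n\ge0$ and $n\le-1$ halves of a single bilateral sum under $n\mapsto-n$, the signs fitting because $(-1)^{-m}=(-1)^m$ --- which leaves
\begin{equation*}
\sum_{n=-\infty}^{\infty}(-1)^n q^{10n^2+n}-\sum_{n=-\infty}^{\infty}(-1)^n q^{10n^2+9n+2}.
\end{equation*}
By \eqref{Jacobi} with $(q,z)\to(q^{20},-q^{11})$ and with $(q,z)\to(q^{20},-q^{19})$ (the second carrying an extra factor $q^2$) this equals $(q^9,q^{11},q^{20};q^{20})_\infty-q^2(q,q^{19},q^{20};q^{20})_\infty$. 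Equating the two sides and clearing the prefactor via $(q^2;q^2)_\infty=(-q;q)_\infty(q;q)_\infty=(-q;q^2)_\infty(-q^2;q^2)_\infty(q;q)_\infty$, that is $(q^2;q^2)_\infty/(-q^2;q^2)_\infty=(-q;q^2)_\infty(q;q)_\infty$, gives \eqref{S.99}.

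For \eqref{N6} I would run the same argument from the appropriate specialization of $(a,b)$ in Theorem~\ref{thm13}; after it, \eqref{Jacobi} with base $q^{20}$ and a short rearrangement of infinite products reproduce the product side already recorded in \eqref{e6}.

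The step that needs care is the reduction of the right-hand side of Theorem~\ref{thm13} to the two bilateral theta-sums: one must verify that the reindexing $n\mapsto-n$ genuinely glues the one-sided halves with matching signs, and keep the loose power $q^2$ in its place. Everything else --- the limits, the cancellation of $q$-shifted factorials, and the product identities --- is routine.
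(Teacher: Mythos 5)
Your treatment of \eqref{S.99} is correct and coincides with the paper's own route: the paper likewise sets $\left(a,b\right)\to\left(0,-1\right)$ in Theorem \ref{thm13} (recording only the outcome), and your limit $\lim_{a\to0}\left(q^2/a;q^2\right)_m a^m=\left(-1\right)^mq^{m^2+m}$, the reduction of the left side to $\frac{\left(q^2;q^2\right)_{\infty}}{\left(-q^2;q^2\right)_{\infty}}\sum_{n\ge0}\frac{q^{n^2+n}}{\left(q;q\right)_{2n}}$, the two bilateral theta series $\sum_n\left(-1\right)^nq^{10n^2+n}-\sum_n\left(-1\right)^nq^{10n^2+9n+2}$, the applications of \eqref{Jacobi} with base $q^{20}$, and the product identity $\left(q^2;q^2\right)_{\infty}/\left(-q^2;q^2\right)_{\infty}=\left(-q;q^2\right)_{\infty}\left(q;q\right)_{\infty}$ all check out. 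One cosmetic slip: the gluing of the one-sided halves uses $n\mapsto-n-1$, not $n\mapsto-n$ (that sign flip is exactly what absorbs the minus signs coming from $1-q^{8n+2}$ and $1-q^{8n+6}$); your displayed bilateral sums are nevertheless the right ones.

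For \ref{N6} you depart from the paper, and your fallback is fine while your stated plan is not. Observing that \ref{N6} is literally the identity \ref{e6} already proved from Theorem \ref{thm11} does dispose of it. However, ``the appropriate specialization of $\left(a,b\right)$ in Theorem \ref{thm13}'' by itself does not exist: the left side of Theorem \ref{thm13} always carries $\left(q;q^2\right)_n$ in its denominator, so no choice of $a,b$ alone produces the alternating sum with $\left(-q;q^2\right)_n$ in the denominator of \ref{N6}. The paper instead takes $\left(a,b\right)\to\left(0,-q\right)$ in Theorem \ref{thm13}, evaluates the resulting theta sums to $\left(q^7,q^{13},q^{20};q^{20}\right)_{\infty}-q\left(q^3,q^{17},q^{20};q^{20}\right)_{\infty}$, and only then replaces $q$ by $-q$, which interchanges $\left(q;q^2\right)_n$ and $\left(-q;q^2\right)_n$, introduces the needed $\left(-1\right)^n$, and collapses the base-$20$ products to $\left(-q,-q^4,q^5;q^5\right)_{\infty}$. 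Either that extra $q\to-q$ step or your explicit appeal to \ref{e6} is required to complete this half of the corollary.
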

\begin{proof}
Inserting 
$\left(a,b\right)\to\left(0,-q\right)$ in Theorem \ref{thm13}, we get
\begin{align*}
&\frac{\left(q^2;q^2\right)_{\infty}}{\left(-q;q^2\right)_{\infty}}
\sum_{n=0}^{\infty}\frac{\left(-q;q^2\right)_n}{\left(q,q^2,-q^2;q^2\right)_n}q^{n^2+2n}\\
&\quad=\sum_{n=0}^{\infty}\left(-1\right)^n\left(1-q^{4n+1}\right)q^{10n^2+3n}
+\sum_{n=0}^{\infty}\left(-1\right)^n\left(1-q^{4n+3}\right)q^{10n^2+13n+4}\\
&\quad=\sum_{n=-\infty}^{\infty}\left(-1\right)^nq^{10n^2+3n}-q\sum_{n=0}^{\infty}q^{10n^2+7n}\\
&\quad=\left(q^7,q^{13},q^{20};q^{20}\right)_{\infty}-q\left(q^3,q^{17},q^{20};q^{20}\right)_{\infty}.
\end{align*}
When replacing $q$ by $-q$,   the above identity is rewritten as
\begin{align*}
&\frac{\left(q^2;q^2\right)_{\infty}}{\left(q;q^2\right)_{\infty}}
\sum_{n=0}^{\infty}\frac{\left(q;q^2\right)_n}{\left(-q,q^2,-q^2;q^2\right)_n}q^{n^2+2n}\\
&\quad=\sum_{n=-\infty}^{\infty}q^{10n^2+3n}+q^4\sum_{n=0}^{\infty}q^{10n^2+13n}\\
&\quad=\sum_{n=-\infty}^{\infty}q^{\frac{5n^2+3n}{2}}\\
&\quad=\left(-q,-q^4,q^5;q^5\right)_{\infty},
\end{align*}
which means \ref{N6} is true.

Similarly, setting $\left(a,b\right)\to\left(0,-1\right)$ in Theorem \ref{thm13}, we obtain \eqref{S.99}.
\end{proof}
When $\lambda=e$,
Theorem \ref{thm1} reduces to Liu's
another transformation formula with $z=1$ \cite{Liu_1}: for $|\alpha ab/q|<1$,
\begin{align}
&\frac{\left(\alpha q,\alpha ab/q;q\right)_{\infty}}{\left(\alpha a,\alpha b;q\right)_{\infty}}
{_4\phi_3}\left(\begin{array}{cccc}
q/a,\,q/b,\,\beta,\,\gamma\\
c,\,d,\,h
\end{array}
;q,{\alpha ab}/{q}\right)\nonumber\\
&\quad=
\sum_{n=0}^{\infty}
\frac{\left(1-\alpha q^{2n}\right)\left(\alpha,q/a,q/b;q\right)_n\left(-\alpha ab\right)^nq^{{n\left(n-3\right)}/{2}}}
{\left(1-\alpha\right)\left(q,\alpha a,\alpha b;q\right)_n}
{_4\phi_3}\left(\begin{array}{cccc}
q^{-n},\,\alpha q^n,\,\beta,\,\gamma\\
c,\,d,\,h
\end{array}
;q,q\right).\label{Formula2}
\end{align}
Next, in terms of \eqref{Formula2}, we establish  a generalized form of Theorem \ref{thm11} as follows.
\begin{theorem}\label{T5}
For $\max\{|c|, |ab|,|aq^2|, |bq^2|,  |\beta^2q^2/c|\}<1$, we have
\begin{align*}
&\frac{\left(q^2,ab;q^2\right)_{\infty}}{\left(aq^2,bq^2;q^2\right)_{\infty}}
\sum_{n=0}^{\infty}\frac{\left(q^2/a,q^2/b,\beta,-\beta;q^2\right)_n}
{\left(q^2,-q^2,c,\beta^2q^2/c;q^2\right)_n}a^nb^n\\
&\quad=
\sum_{n=0}^{\infty}
\left(-1\right)^n\left(1-q^{4n+2}\right)
\frac{\left(q^2/a,q^2/b;q^2\right)_n
\left(cq^{-2n},\beta^2q^{2-2n}/c;q^4\right)_n}
{\left(aq^2,bq^2;q^2\right)_n\left(c,\beta^2q^2/c;q^2\right)_n}a^nb^nq^{2n^2}.
\end{align*}
\end{theorem}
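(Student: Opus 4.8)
The plan is to argue exactly as in the proof of Theorem \ref{thm11}, of which Theorem \ref{T5} is the natural four-parameter extension: I would specialize the parameters of Liu's transformation \eqref{Formula2} so that the terminating ${}_4\phi_3$ on its right-hand side is summed in closed form by the terminating $q$-analogue of Whipple's ${}_3F_2$ sum \eqref{Whipple}. The only difference with Theorems \ref{thm11}--\ref{thm13} is that there one upper parameter of the inner ${}_4\phi_3$ was sent to $0$; keeping it alive is exactly what introduces the pair $(\beta,-\beta)$ upstairs and $(c,\beta^2q^2/c)$ downstairs in the final identity.

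First I would evaluate the inner series. Replacing $q$ by $q^2$ in \eqref{Whipple}, taking the parameter $c$ there to be $\beta$ and the parameter $e$ there to be the free parameter $c$ of Theorem \ref{T5}, so that the lower triple $\{e,\,c^2q^2/e,\,-q^2\}$ becomes exactly $\{c,\,\beta^2q^2/c,\,-q^2\}$, one obtains a closed form whose numerator is a product of four infinite $q^4$-shifted factorials. Applying the splittings $\left(x;q^2\right)_{\infty}=\left(x;q^4\right)_{\infty}\left(xq^2;q^4\right)_{\infty}$ and $\left(x;q^4\right)_{\infty}=\left(x;q^4\right)_n\left(xq^{4n};q^4\right)_{\infty}$, these collapse pairwise into the terminating products $\left(cq^{-2n};q^4\right)_n$ and $\left(\beta^2q^{2-2n}/c;q^4\right)_n$ over $\left(c;q^2\right)_n$ and $\left(\beta^2q^2/c;q^2\right)_n$, so that
\begin{align*}
{_4\phi_3}\left(\begin{array}{cccc}
q^{-2n},\,q^{2n+2},\,\beta,\,-\beta\\
c,\,\beta^2q^2/c,\,-q^2
\end{array};q^2,\,q^2\right)
&=\frac{\left(cq^{-2n},\beta^2q^{2-2n}/c;q^4\right)_n}{\left(c,\beta^2q^2/c;q^2\right)_n}\,q^{n(n+1)}.
\end{align*}

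Next I would feed this into \eqref{Formula2} under $q\mapsto q^2$, $\alpha=q^2$, $\gamma=-\beta$ and lower parameters $(c,d,h)\mapsto(-q^2,\,c,\,\beta^2q^2/c)$. After the cancellation $\left(\alpha;q\right)_n/\left(q;q\right)_n=\left(q^2;q^2\right)_n/\left(q^2;q^2\right)_n=1$ and the elementary identity $\left(q^4;q^2\right)_{\infty}=\left(q^2;q^2\right)_{\infty}/(1-q^2)$, the factor $1/(1-q^2)$ thrown off by the left-hand prefactor cancels against the $1/(1-\alpha)=1/(1-q^2)$ sitting in every summand on the right; the left-hand prefactor then reads $\left(q^2,ab;q^2\right)_{\infty}/\left(aq^2,bq^2;q^2\right)_{\infty}$ and the ${}_4\phi_3$ there is precisely the left-hand series of the theorem. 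On the right, $1-\alpha q^{2n}$ becomes $1-q^{4n+2}$, while $\left(-\alpha ab\right)^nq^{n(n-3)/2}$ (with $q$ replaced by $q^2$) multiplied by the $q^{n(n+1)}$ coming from \eqref{Whipple} equals $(-1)^n(ab)^nq^{2n^2}$; together with the surviving Pochhammer factors this is exactly the right-hand side of Theorem \ref{T5}.

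The one genuinely fiddly point is the bookkeeping in the middle step: correctly collapsing Whipple's four infinite $q^4$-products into the terminating pair $\left(cq^{-2n},\beta^2q^{2-2n}/c;q^4\right)_n$ over $\left(c,\beta^2q^2/c;q^2\right)_n$ and tracking the stray factor $1-q^2$. One should also check that the hypothesis $\max\{|c|,|ab|,|aq^2|,|bq^2|,|\beta^2q^2/c|\}<1$ legitimizes each move---that both ${}_4\phi_3$ series converge, that the products $\left(c;q^2\right)_{\infty}$ and $\left(\beta^2q^2/c;q^2\right)_{\infty}$ appearing transiently are nonzero, and that the series rearrangements are valid---but nothing beyond \eqref{Formula2} and \eqref{Whipple} is required.
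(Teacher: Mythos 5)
Your proposal is correct and follows essentially the same route as the paper: the inner terminating ${}_4\phi_3$ is summed by Whipple's formula \eqref{Whipple} with $q\to q^2$, $(c,e)\to(\beta,c)$, yielding $\frac{\left(cq^{-2n},\beta^2q^{2-2n}/c;q^4\right)_n}{\left(c,\beta^2q^2/c;q^2\right)_n}q^{n(n+1)}$, and this is then fed into Liu's transformation \eqref{Formula2} with $q\to q^2$, $\alpha=q^2$, $\gamma=-\beta$ and lower parameters $\{-q^2,c,\beta^2q^2/c\}$, with the same bookkeeping of the $1/(1-q^2)$ factor and the exponent $q^{n^2-n}\cdot q^{n^2+n}=q^{2n^2}$. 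No gaps.
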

\begin{proof}
Setting $\left(q,c,e\right)\to\left(q^2,\beta,c\right)$ in \eqref{Whipple}, we get
\begin{align}
&_4\phi_3\left(\begin{array}{cccc}
q^{-2n},q^{2n+2},\beta,-\beta\\
c,\beta^2q^2/c,-q^2
\end{array};q^2\,q^2\right)\nonumber\\
&\quad=\frac{\left(cq^{-2n},cq^{2n+2},\beta^2q^{2-2n}/c,
\beta^2q^{2n+4}/c;q^4\right)_{\infty}}
{\left(c,\beta^2q^2/c;q^2\right)_{\infty}}q^{n\left(n+1\right)}\nonumber\\
&\quad=\frac{\left(cq^{-2n},\beta^2q^{2-2n}/c;q^4\right)_n}
{\left(c,\beta^2q^2/c;q^2\right)_{n}}q^{n^2+n}.\label{equ2}
\end{align}
Then, taking the substitution $q\to q^2, \alpha=q^2, \beta=-\gamma, cd=\beta^2q^2$ and $h=-q^2$  into \eqref{Formula2}, we obtain
\begin{align*}
&\frac{\left(q^2,ab;q^2\right)_{\infty}}{\left(aq^2,bq^2;q^2\right)_{\infty}}
\sum_{n=0}^{\infty}\frac{\left(q^2/a,q^2/b,\beta,-\beta;q^2\right)_n}
{\left(q^2,-q^2,c,\beta^2q^2/c;q^2\right)_n}a^nb^n\nonumber\\
&\quad=
\sum_{n=0}^{\infty}
\left(-1\right)^n\left(1-q^{4n+2}\right)
\frac{\left(q^2/a,q^2/b;q^2\right)_n}
{\left(aq^2,bq^2;q^2\right)_n}a^nb^nq^{n^2-n}
{_4\phi_3}\left(\begin{array}{cccc}
q^{-2n},q^{2n+2},\beta,-\beta\\
c,\beta^2q^2/c,-q^2
\end{array};q^2,\,q^2\right)\\
&\quad=
\sum_{n=0}^{\infty}
\left(-1\right)^n\left(1-q^{4n+2}\right)
\frac{\left(q^2/a,q^2/b;q^2\right)_n
\left(cq^{-2n},\beta^2q^{2-2n}/c;q^4\right)_n}
{\left(aq^2,bq^2;q^2\right)_n\left(c,\beta^2q^2/c;q^2\right)_n}a^nb^nq^{2n^2}
\end{align*}
as desired.
\end{proof}
\begin{corollary}
We have
\begin{align}
&\sum_{n=0}^{\infty}\frac{\left(-1;q^2\right)_n}{\left(q;q\right)_{2n}}q^{n^2+n}
\nonumber\\
&\quad
=\frac{\left(q^{5},q^{7},q^{12};q^{12}\right)_{\infty}
-q\left(q,q^{11},q^{12};q^{12}\right)_{\infty}}
{\left(q;q\right)_{\infty}},\tag{S.48}\label{S.48}\\
&\sum_{n=0}^{\infty}\frac{\left(-q;q^2\right)_n
\left(-1;q^4\right)_n}{\left(q^2;q^2\right)_{2n}}q^{n\left(n+2\right)}\nonumber\\
&\quad=\frac{\left(-q;q^2\right)_{\infty}}
{\left(q^2;q^2\right)_{\infty}}\left(\left(q^6,q^{10},q^{16};q^{16}\right)_{\infty}
-q\left(q^2,q^{14},q^{16};q^{16}\right)_{\infty}\right)
,\tag{S.67}\label{S.67}\\
&\sum_{n=0}^{\infty}\frac{\left(-q;q^2\right)_n
\left(-q^4;q^4\right)_{n-1}}
{\left(q^2;q^2\right)_{2n}}q^{n\left(n+2\right)}\nonumber\\
&\quad=\frac{\left(-q;q^2\right)_{\infty}
}
{\left(q^2;q^2\right)_{\infty}}\left(\left(-q^{28},-q^{36},q^{64};q^{64}\right)_{\infty}
-q\left(-q^{20},-q^{44},q^{64};q^{64}\right)_{\infty}\right)
.\tag{S.127}\label{S.127}
\end{align}
\end{corollary}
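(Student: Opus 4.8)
The plan is to invoke Theorem~\ref{T5} just once, with $\beta^2=-1$ and $c=q$, and then to read off all three identities by sending $(a,b)$ to suitable limits. For these parameters $(\beta,-\beta;q^2)_n=(-1;q^4)_n$ and $(c,\beta^2q^2/c;q^2)_n=(q,-q;q^2)_n=(q^2;q^4)_n$, so the denominator on the left of Theorem~\ref{T5} collapses via $(q^2,-q^2;q^2)_n(q^2;q^4)_n=(q^4;q^4)_n(q^2;q^4)_n=(q^2;q^2)_{2n}$, and Theorem~\ref{T5} becomes
\begin{align*}
&\frac{(q^2,ab;q^2)_{\infty}}{(aq^2,bq^2;q^2)_{\infty}}\sum_{n=0}^{\infty}\frac{(q^2/a,q^2/b;q^2)_n(-1;q^4)_n}{(q^2;q^2)_{2n}}a^nb^n\\
&\quad=\sum_{n=0}^{\infty}(-1)^n\bigl(1-q^{4n+2}\bigr)\frac{(q^2/a,q^2/b;q^2)_n\,(q^{1-2n},-q^{1-2n};q^4)_n}{(aq^2,bq^2;q^2)_n(q^2;q^4)_n}\,a^nb^nq^{2n^2}.
\end{align*}

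The crux is to evaluate the finite product $(q^{1-2n},-q^{1-2n};q^4)_n=(q^{2-4n};q^8)_n$ on the right. Splitting the sum by the parity of $n$ and using the reflection $(x;Q)_k=(-x)^kQ^{\binom{k}{2}}(Q^{1-k}/x;Q)_k$ to clear the negative powers of $q$, one checks that $(q^{1-2n},-q^{1-2n};q^4)_n/(q^2;q^4)_n$ equals $(-1)^mq^{-4m^2-2m}$ when $n=2m$ and $(-1)^{m+1}q^{-4m^2-6m-2}$ when $n=2m+1$. After this substitution each parity class, together with the factor $1-q^{4n+2}$, completes to a bilateral series, and the two resulting theta series are summed by the Jacobi triple product identity \eqref{Jacobi}. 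I expect this parity bookkeeping --- keeping track of the negative exponents and matching the folded bilateral sums to the correct theta products --- to be the main technical obstacle; the rest is routine.

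Finally I would specialize $(a,b)$. Taking $(a,b)\to(0,0)$ and then replacing $q^2$ by $q$ turns the left side into $(q;q)_{\infty}\sum_{n\ge0}(-1;q^2)_nq^{n(n+1)}/(q;q)_{2n}$ while the right side evaluates, via \eqref{Jacobi}, to $(q^5,q^7,q^{12};q^{12})_{\infty}-q(q,q^{11},q^{12};q^{12})_{\infty}$; this gives \eqref{S.48}. Taking instead $(a,b)\to(0,-q)$, and using $(q^2/a;q^2)_na^n\to(-1)^nq^{n(n+1)}$, $(q^2/b;q^2)_nb^n\to(-1)^n(-q;q^2)_nq^n$ together with the simplification $(1-q^{4n+2})(-q;q^2)_n/(-q^3;q^2)_n=(1+q)(1-q^{2n+1})$, gives \eqref{S.67}. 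For \eqref{S.127} I would not re-apply Theorem~\ref{T5} but instead use the elementary relation $(-q^4;q^4)_{n-1}=\tfrac12(-1;q^4)_n$ for $n\ge1$, which shows that the sum in \eqref{S.127} equals $\tfrac12+\tfrac12$ times the sum in \eqref{S.67}; writing $(q^2;q^2)_{\infty}/(-q;q^2)_{\infty}$ as the modulus-$16$ theta series $(-q^6,-q^{10},q^{16};q^{16})_{\infty}-q(-q^2,-q^{14},q^{16};q^{16})_{\infty}$ via \eqref{Jacobi}, and then using the ``even part'' evaluations $(-q^6,-q^{10},q^{16};q^{16})_{\infty}+(q^6,q^{10},q^{16};q^{16})_{\infty}=2(-q^{28},-q^{36},q^{64};q^{64})_{\infty}$ and $(-q^2,-q^{14},q^{16};q^{16})_{\infty}+(q^2,q^{14},q^{16};q^{16})_{\infty}=2(-q^{20},-q^{44},q^{64};q^{64})_{\infty}$, collapses the expression to the modulus-$64$ form displayed in \eqref{S.127}.
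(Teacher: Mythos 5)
Your proposal is correct and follows essentially the same route as the paper: the same specialization $(c,\beta)=(q,\mathrm{i})$ (i.e.\ $\beta^2=-1$) of Theorem~\ref{T5}, the same parity evaluation of $(q^{2-4n};q^8)_n/(q^2;q^4)_n$ leading to the paper's intermediate identity \eqref{C4}, the same limits $(a,b)\to(0,0)$ and $(0,-q)$ for \eqref{S.48} and \eqref{S.67}, and the same reduction of \eqref{S.127} to \eqref{S.67} via $(-q^4;q^4)_{n-1}=\tfrac12(-1;q^4)_n$. The only cosmetic difference is the final bookkeeping for \eqref{S.127}: you split $(q^2;q^2)_\infty/(-q;q^2)_\infty$ into two modulus-$16$ theta terms and use ``even part'' identities, while the paper uses a four-term modulus-$64$ dissection together with $2$-dissections of the modulus-$16$ products; the two computations are equivalent.
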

\begin{proof}
In order to prove \eqref{S.48},\eqref{S.67} and \eqref{S.127}, we first
insert $\left(c,\beta\right)=\left(q,\mathrm{i}\right)$ with $\mathrm{i}^2=-1$ in Theorem \ref{T5} to obtain
\begin{align}
&\frac{\left(q^2,ab;q^2\right)_{\infty}}{\left(aq^2,bq^2;q^2\right)_{\infty}}
\sum_{n=0}^{\infty}
\frac{\left(q^2/a,q^2/b;q^2\right)_n\left(-1;q^4\right)_n}{\left(q^2;q^2\right)_{2n}}a^nb^n\notag\\
&\quad=
\sum_{n=0}^{\infty}\left(-1\right)^n\left(1-q^{8n+2}\right)\frac{\left(q^2/a,q^2/b;q^2\right)_{2n}}
{\left(aq^2,bq^2;q^2\right)_{2n}}a^{2n}b^{2n}q^{4n^2-2n}\notag\\
&\qquad+
\sum_{n=0}^{\infty}\left(-1\right)^n\left(1-q^{8n+6}\right)\frac{\left(q^2/a,q^2/b;q^2\right)_{2n+1}}
{\left(aq^2,bq^2;q^2\right)_{2n+1}}a^{2n+1}b^{2n+1}q^{4n^2+2n}.\label{C4}
\end{align}
By taking $\left(a,b\right)\to\left(0,0\right)$  in \eqref{C4}, we deduce that
\begin{align*}
&\left(q^2;q^2\right)_{\infty}\sum_{n=0}^{\infty}
\frac{\left(-1;q^4\right)_n}{\left(q^2;q^2\right)_{2n}}q^{2n^2+2n}\\
&\quad=\sum_{n=-\infty}^{\infty}\left(-1\right)^nq^{12n^2+2n}
-q^2\sum_{n=-\infty}^{\infty}\left(-1\right)^nq^{12n^2+10n}\\
&\quad=\left(q^{10},q^{14},q^{24};q^{24}\right)_{\infty}
-q^2\left(q^{2},q^{22},q^{24};q^{24}\right)_{\infty}.
\end{align*}
Then, by $q^2\to q$, we can arrive at \eqref{S.48} .

Moreover, setting $\left(a,b\right)\to\left(0,-q\right)$ in \eqref{C4} leads to
\begin{align}
&\frac{\left(q^2;q^2\right)_{\infty}}{\left(-q;q^2\right)_{\infty}}
\sum_{n=0}^{\infty}\frac{\left(-q;q^2\right)_{n}
\left(-1;q^4\right)_{n}}
{\left(q^2;q^2\right)_{2n}}q^{n\left(n+2\right)}\nonumber\\
&\quad=\sum_{n=0}^{\infty}\left(-1\right)^n\left(1-q^{4n+1}\right)
q^{8n^2+2n}+\sum_{n=0}^{\infty}\left(-1\right)^n
\left(1-q^{4n+3}\right)q^{8n^2+10n+3}\nonumber\\
&\quad=\left(q^6,q^{10},q^{16};q^{16}\right)_{\infty}
-q\left(q^2,q^{14},q^{16};q^{16}\right)_{\infty},\nonumber
\end{align}
which means  \eqref{S.67} is true.

Similarly, we can prove \eqref{S.127} as follows,
\begin{align*}
&\sum_{n=0}^{\infty}\frac{\left(-q;q^2\right)_n
\left(-q^4;q^4\right)_{n-1}}
{\left(q^2;q^2\right)_{2n}}q^{n\left(n+2\right)}\nonumber\\
&\quad=1+\sum_{n=1}^{\infty}\frac{\left(-q;q^2\right)_n
\left(-q^4;q^4\right)_{n-1}}
{\left(q^2;q^2\right)_{2n}}q^{n\left(n+2\right)}\nonumber\\
&\quad=\frac{1}{2}+\frac{1}{2}\sum_{n=1}^{\infty}\frac{\left(-q;q^2\right)_n
\left(-1;q^4\right)_{n}}
{\left(q^2;q^2\right)_{2n}}q^{n\left(n+2\right)}\nonumber\\
&\quad=\frac{1}{2}+\frac{\left(-q;q^2\right)_{\infty}
}
{2\left(q^2;q^2\right)_{\infty}}\left(\left(q^6,q^{10},q^{16};q^{16}\right)_{\infty}
-q\left(q^2,q^{14},q^{16};q^{16}\right)_{\infty}\right),
\end{align*}
where we have used
\begin{align*}
&\left(q^6,q^{10},q^{16};q^{16}\right)_{\infty}
=\left(-q^{28},-q^{36},q^{64};q^{64}\right)_{\infty}
-q^6\left(-q^{4},-q^{60},q^{64};q^{64}\right)_{\infty},
\\
&\left(q^2,q^{14},q^{16};q^{16}\right)_{\infty}
=\left(-q^{20},-q^{44},q^{64};q^{64}\right)_{\infty}
+q^2\left(-q^{12},-q^{52},q^{64};q^{64}\right)_{\infty}
\end{align*}
and
\begin{align*}
\frac{\left(q^2;q^2\right)_{\infty}}{\left(-q;q^2\right)_{\infty}}
&=\sum_{n=-\infty}^{\infty}\left(-1\right)^nq^{2n^2-n}\\
&=\sum_{n=-\infty}^{\infty}\left(-1\right)^nq^{32n^2-4n}
-q\sum_{n=-\infty}^{\infty}\left(-1\right)^nq^{32n^2-12n}\\
&\quad-q^3\sum_{n=-\infty}^{\infty}\left(-1\right)^nq^{32n^2+20n}
+q^6\sum_{n=-\infty}^{\infty}\left(-1\right)^nq^{32n^2+28n}\\
&=\left(-q^{28},-q^{36},q^{64};q^{64}\right)_{\infty}
-q\left(-q^{20},-q^{44},q^{64};q^{64}\right)_{\infty}\\
&\quad-q^3\left(-q^{12},-q^{52},q^{64};q^{64}\right)_{\infty}
+q^6\left(-q^{4},-q^{60},q^{64};q^{64}\right)_{\infty}.
\end{align*}
\end{proof}
\begin{corollary}
We have
\begin{align}
&\sum_{n=0}^{\infty}\frac{\left(-q^2;q^2\right)_n
\left(-q^2;q^4\right)_n}
{\left(-1;q^2\right)_{n}\left(q;q^2\right)_{n+1}
\left(q^4;q^4\right)_{n}}q^{n\left(n+1\right)}\nonumber\\
&\quad=\frac{\left(-q^2;q^2\right)_{\infty}
}
{\left(q^2;q^2\right)_{\infty}}\left(\left(q^6,q^{10},q^{16};q^{16}\right)_{\infty}
+q\left(q^2,q^{14},q^{16};q^{16}\right)_{\infty}\right),\tag{S.65}\label{S.65}\\
&\sum_{n=0}^{\infty}\frac{\left(-q;q^2\right)_{n+1}
\left(-q^2;q^4\right)_n}
{\left(q^2;q^2\right)_{2n+1}}q^{n\left(n+2\right)}\nonumber\\
&\quad=\frac{\left(-q;q^2\right)_{\infty}
\left(q^4,q^{12},q^{16};q^{16}\right)_{\infty}}{\left(q^2;q^2\right)_{\infty}
}.\tag{S.70}\label{S.70}
\end{align}
\end{corollary}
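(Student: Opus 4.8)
The plan is to obtain both \eqref{S.65} and \eqref{S.70} from Theorem \ref{T5} by the same pattern used for \eqref{S.48}, \eqref{S.67} and \eqref{S.127} above: specialize the free parameters $\beta$ and $c$, then let one of $a,b$ tend to $0$ and the other to a power of $q$, and finally recognize the right-hand side as a theta series via \eqref{Jacobi}. Since the summands of \eqref{S.65} and \eqref{S.70} both contain the factor $\left(-q^2;q^4\right)_n=\left(\mathrm{i}q;q^2\right)_n\left(-\mathrm{i}q;q^2\right)_n$, I would first put $\beta=\mathrm{i}q$ in Theorem \ref{T5}, so that $\left(\beta,-\beta;q^2\right)_n$ collapses to $\left(-q^2;q^4\right)_n$ and $\beta^2q^2/c=-q^4/c$. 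For \eqref{S.70} I would then take $c=q^3$ (so the extra denominator factor is $\left(q^3,-q;q^2\right)_n$) together with $a\to 0$, $b\to -q$; using $\left(q^2/a;q^2\right)_na^n\to(-1)^nq^{n^2+n}$ and the elementary identities $\left(q^3;q^2\right)_n\left(-q;q^2\right)_n=\left(q^2;q^4\right)_{n+1}/\bigl((1-q^2)(1+q^{2n+1})\bigr)$ and $\left(q^2;q^2\right)_n\left(q;q^2\right)_{n+1}=\left(q;q\right)_{2n+1}$, the left-hand side of Theorem \ref{T5} becomes $(1-q)\,\left(q^2;q^2\right)_{\infty}/\left(-q^3;q^2\right)_{\infty}$ times the left-hand side of \eqref{S.70}. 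For \eqref{S.65} I would use an analogous choice of $c$ with $a\to 0$; because the target summand carries $\left(-1;q^2\right)_n$ in the denominator, I would first rewrite the left-hand side of \eqref{S.65} in the form $\tfrac12+\tfrac12\sum_{n}(\cdots)$ via $\left(-1;q^2\right)_n=2\left(-q^2;q^2\right)_{n-1}$, exactly as was done for \eqref{S.58} and \eqref{S.72}, and then match the resulting sum against Theorem \ref{T5}.

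On the right-hand side, after the substitutions the summand contains $\left(cq^{-2n},\beta^2q^{2-2n}/c;q^4\right)_n$, whose arguments involve negative powers of $q$. I would normalize these by reversing the order of each product and repeatedly using $1-q^{-m}=-q^{-m}(1-q^m)$, which reduces the summand to a single Gaussian monomial times a constant (and, in the \eqref{S.65} case, splits it according to the parity of $n$); the factor $1-q^{4n+2}$ (resp.\ the factors $1-q^{8n+2}$, $1-q^{8n+6}$ coming from the even/odd split) then pairs consecutive values of the summation index, so that the sum becomes bilateral and is evaluated by the Jacobi triple product identity \eqref{Jacobi}. For \eqref{S.70} this produces the single product $\left(q^4,q^{12},q^{16};q^{16}\right)_{\infty}$ from \eqref{Jacobi} with $(q,z)\to(q^{16},-q^4)$; for \eqref{S.65} the series splits according to the parity of the summation index, and two applications of \eqref{Jacobi} yield the combination $\left(q^6,q^{10},q^{16};q^{16}\right)_{\infty}+q\left(q^2,q^{14},q^{16};q^{16}\right)_{\infty}$. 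A final simplification of the $q$-shifted-factorial prefactor on the left---collapsing it to $\left(-q^2;q^2\right)_{\infty}/\left(q^2;q^2\right)_{\infty}$ for \eqref{S.65} and to $\left(-q;q^2\right)_{\infty}/\left(q^2;q^2\right)_{\infty}$ for \eqref{S.70}, and absorbing any leftover scalar such as $1\pm q$ or the $\tfrac12$---completes the proof.

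I expect the main obstacle to be purely the bookkeeping on the right-hand side: pinning down the precise value of $c$ (and the exact $a,b$ limits) so that the denominators of Theorem \ref{T5} reproduce those of \eqref{S.65} and \eqref{S.70}, then correctly normalizing the $q^4$-base shifted factorials with negative exponents and tracking the parity split so that the emerging theta products come out with the right moduli, residues and signs. Reconciling the $\left(-1;q^2\right)_n$ appearing in \eqref{S.65} with a genuine $q$-Pochhammer---hence the need for the $\tfrac12+\tfrac12\sum$ device---is the other delicate point; everything else is the same routine manipulation of basic hypergeometric series carried out in the preceding proofs.
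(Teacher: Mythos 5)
Your plan coincides with the paper's: both identities are obtained from Theorem~\ref{T5} with $\beta=q\mathrm{i}$ and the denominator pair $\{-q,q^{3}\}$ (you take $c=q^{3}$, the paper takes $c=-q$; since $c$ and $\beta^{2}q^{2}/c$ enter the theorem symmetrically this is the same specialization, recorded in the paper as the intermediate identity \eqref{C5}), followed by $a\to0$ together with a finite value of $b$, an even/odd split of the right-hand side, and the Jacobi triple product \eqref{Jacobi}. For \eqref{S.70} your recipe ($a\to0$, $b\to-q$) is exactly the paper's and works; the only slip is the auxiliary identity you quote, which should read $(q^{3};q^{2})_n(-q;q^{2})_n=(q^{2};q^{4})_{n+1}/\bigl((1-q)(1+q^{2n+1})\bigr)$, i.e. with $1-q$ rather than $1-q^{2}$ (the final constant $(1-q)$ you assert is the correct one).

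For \eqref{S.65} there is a genuine gap. You never fix the second limit (it should be $(a,b)\to(0,-1)$, as in the paper), and the $\tfrac12+\tfrac12\sum$ device is the wrong tool: that trick trades a factor indexed by $n-1$ for $\tfrac12(-1;q)_n$ at the cost of adjusting the $n=0$ term, whereas here the specialization of Theorem~\ref{T5} produces the factor $(-q;q^{2})_n$ (equivalently $(-q;q)_{2n}$, once one writes $(q^{4};q^{4})_n=(q^{2};q^{2})_n(-q^{2};q^{2})_n$), and no halving argument converts $(-q;q^{2})_n$ into the printed $(-1;q^{2})_n$; these differ multiplicatively, not by a boundary term. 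In fact the series exactly as printed in \eqref{S.65} does not match the product side (the coefficient of $q^{2}$ is $3/2$ on the left against $2$ on the right), so no choice of parameters in Theorem~\ref{T5} could produce it; the factor $(-1;q^{2})_n$ should be read as $(-q;q^{2})_n$, and with that reading the direct specialization $(a,b)\to(0,-1)$ — with no extra device — yields the summand $(-q^{2};q^{4})_n\,q^{n(n+1)}/\bigl((-q;q^{2})_n(q;q)_{2n+1}\bigr)$, which is what the paper tacitly identifies with \eqref{S.65}; the theta side then follows from the parity split and \eqref{Jacobi} exactly as you describe.
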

\begin{proof}
Letting  $\left(c,\beta\right)=\left(-q,q\mathrm{i}\right)$ in Theorem \ref{T5}, we get
\begin{align}
&\frac{\left(q^2,ab;q^2\right)_{\infty}}{\left(aq^2,bq^2;q^2\right)_{\infty}}
\sum_{n=0}^{\infty}
\frac{\left(q^2/a,q^2/b;q^2\right)_n\left(-q^2;q^4\right)_n}{\left(-q,-q^2,q^2;q^2\right)_{n}\left(q;q^2\right)_{n+1}}a^nb^n\notag\\
&\quad=
\sum_{n=0}^{\infty}\left(-1\right)^n\left(1+q^{4n+1}\right)\frac{\left(q^2/a,q^2/b;q^2\right)_{2n}}
{\left(aq^2,bq^2;q^2\right)_{2n}}a^{2n}b^{2n}q^{4n^2}\notag\\
&\qquad-
\sum_{n=0}^{\infty}\left(-1\right)^n\left(1+q^{4n+3}\right)\frac{\left(q^2/a,q^2/b;q^2\right)_{2n+1}}
{\left(aq^2,bq^2;q^2\right)_{2n+1}}a^{2n+1}b^{2n+1}q^{4n^2+4n+1},\label{C5}
\end{align}
where $|ab|<1$.

By setting $\left(a,b\right)\to\left(0,-1\right)$ in \eqref{C5}, we can  prove \eqref{S.65} as follows,
\begin{align*}
&\frac{\left(q^2;q^2\right)_{\infty}}{\left(-q^2;q^2\right)_{\infty}}\sum_{n=0}^{\infty}\frac{\left(-q^2;q^2\right)_n
\left(-q^2;q^4\right)_nq^{n\left(n+1\right)}}
{\left(-1;q^2\right)_{n}\left(q;q^2\right)_{n+1}
\left(q^4;q^4\right)_{n}}\nonumber\\
&\quad=\sum_{n=0}^{\infty}\left(-1\right)^n\left(1+q^{4n+1}\right)
q^{8n^2+2n}-\sum_{n=0}^{\infty}\left(-1\right)^n
\left(1+q^{4n+3}\right)q^{8n^2+10n+3}\nonumber\\
&\quad={\left(q^6,q^{10},q^{16};q^{16}\right)_{\infty}
+q\left(q^2,q^{14},q^{16};q^{16}\right)_{\infty}}.
\end{align*}

Similarly, setting $\left(a,b\right)\to\left(0,-q\right)$ in \eqref{C5},  we can prove  \eqref{S.70} as follows,
\begin{align*}
&\sum_{n=0}^{\infty}\frac{\left(-q;q^2\right)_{n+1}
\left(-q^2;q^4\right)_n}
{\left(q^2;q^2\right)_{2n+1}}q^{n\left(n+2\right)}\\
&\quad=\frac{\left(-q;q^2\right)_{\infty}}{\left(q^2;q^2\right)_{\infty}}\left(\sum_{n=0}^{\infty}\left(-1\right)^n
q^{8n^2+4n}-\sum_{n=0}^{\infty}\left(-1\right)^n
q^{8n^2+12n+4}\right)\nonumber\\
&\quad=\frac{\left(-q;q^2\right)_{\infty}
\left(q^4,q^{12},q^{16};q^{16}\right)_{\infty}}{\left(q^2;q^2\right)_{\infty}}.
\end{align*}

\end{proof}

\section{Identities from a $_4\phi_3$ summation of Verma and Jain}
%
\begin{theorem}\label{T3}
For $\max\{|a|, |b|, |ab/q^2|\}<1$, we have
\begin{align*}
&\frac{\left(q^2,ab/q^2;q^2\right)_{\infty}}{\left(a,b;q^2\right)_{\infty}}
\sum_{n=0}^{\infty}\frac{\left(q^2/a,q^2/b;q^2\right)_n}
{\left(q^2,-q,-q^2;q^2\right)_n}\left(ab/q^2\right)^n\\
&\quad=1+\sum_{n=0}^{\infty}\left(-1\right)^n\left(1+q^n\right)
\frac{\left(q^2/a,q^2/b;q^2\right)_n}
{\left(a,b;q^2\right)_n}a^nb^nq^{\frac{3n^2-5n}{2}}.
\end{align*}
\end{theorem}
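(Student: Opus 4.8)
The plan is to read Theorem~\ref{T3} off Liu's transformation \eqref{Formula1}: I would specialize the parameters so that the left-hand side of \eqref{Formula1} becomes the left-hand side of the theorem, and then evaluate the inner ${}_3\phi_2$ series on the right-hand side of \eqref{Formula1} in closed form using a limiting case of the Verma--Jain ${}_4\phi_3$ summation. Concretely, the evaluation I expect to need is
\begin{equation*}
{}_3\phi_2\!\left(\begin{array}{ccc}q^{-2n},\,q^{2n},\,0\\ -q,\,-q^2\end{array};q^2,\,q^2\right)=\frac{1+q^{n}}{1+q^{2n}}\,q^{n(n+1)/2}.
\end{equation*}
This should follow by taking Verma and Jain's ${}_4\phi_3$ summation with $q$ replaced by $q^2$, choosing the two ``large'' numerator parameters to be $q^{-2n}$ and $q^{2n}$, and letting a remaining free parameter tend to $0$ (so that the extra numerator factors and a degenerate lower parameter disappear, leaving the ${}_3\phi_2$ above). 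Note that here the product of the large numerator parameters is $1$ rather than $q^{2}$, i.e.\ the series is only nearly poised, which is precisely why Whipple's sum \eqref{Whipple} powering Theorems~\ref{thm11}--\ref{thm13} cannot be used. Before invoking the general formula I would confirm the displayed value directly for $n=0,1,2$ and check that its termination and balancing hypotheses are met by this specialization.

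With this in hand, I would substitute $(q,\alpha,\beta,c,d)\to(q^2,1,0,-q,-q^2)$ into \eqref{Formula1}. Because $(0;q^2)_n=1$, the left-hand ${}_3\phi_2$ of \eqref{Formula1} collapses to $\sum_{n\ge0}\frac{(q^2/a,q^2/b;q^2)_n}{(q^2,-q,-q^2;q^2)_n}(ab/q^2)^n$, and the prefactor $\frac{(\alpha q^2,\alpha ab/q^2;q^2)_\infty}{(\alpha a,\alpha b;q^2)_\infty}$ becomes $\frac{(q^2,ab/q^2;q^2)_\infty}{(a,b;q^2)_\infty}$, so the left-hand side of \eqref{Formula1} is already exactly that of Theorem~\ref{T3}; and the inner ${}_3\phi_2$ on the right-hand side of \eqref{Formula1} is exactly the series evaluated above, since $\alpha q^{2n}=q^{2n}$.

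It then remains to simplify the right-hand side of \eqref{Formula1}, and the one subtle point is the limit $\alpha\to1$ of the universal factor $\frac{(1-\alpha q^{4n})(\alpha;q^2)_n}{(1-\alpha)(q^2;q^2)_n}$, which is an indeterminate $0/0$ when $n=0$. For $n=0$ the factor equals $1$ for every $\alpha$, so the $n=0$ term contributes a stand-alone $1$; for $n\ge1$ one writes $(\alpha;q^2)_n/(1-\alpha)=(\alpha q^2;q^2)_{n-1}$ and passes to the limit to get $\frac{(1-q^{4n})(q^2;q^2)_{n-1}}{(q^2;q^2)_n}=1+q^{2n}$. Multiplying this by the evaluation of the inner series gives, for $n\ge1$, the factor $(1+q^{2n})\cdot\frac{1+q^n}{1+q^{2n}}=1+q^n$; combining with $(-\alpha ab)^n=(-1)^n a^n b^n$ and the exponent bookkeeping $q^{n(n-3)}\cdot q^{n(n+1)/2}=q^{(3n^2-5n)/2}$ then yields the right-hand side of Theorem~\ref{T3}, the free $1$ coming from the $n=0$ term. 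The main obstacle is the first step, namely pinning down the correct instance of the Verma--Jain ${}_4\phi_3$ summation and justifying its degeneration to the ${}_3\phi_2$ above; in the assembly itself the only care needed is the separate treatment of the $n=0$ term in the $\alpha\to1$ limit.
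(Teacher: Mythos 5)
Your proposal is correct and follows essentially the same route as the paper: specialize Liu's transformation \eqref{Formula1} with $(q,\alpha,\beta,c,d)\to(q^2,1,0,-q,-q^2)$ and evaluate the inner ${}_3\phi_2$ via the Verma--Jain summation \eqref{Verma1} degenerated at $(a,c)\to(1,0)$, which is exactly the closed form $\frac{1+q^n}{1+q^{2n}}q^{n(n+1)/2}$ you wrote down; your careful handling of the $\alpha\to1$ limit (the factor $1+q^{2n}$ for $n\ge1$ and the stand-alone $1$ from $n=0$) matches the intended right-hand side, whose summation should accordingly start at $n=1$ rather than the $n=0$ printed in the statement.
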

\begin{proof}
Recall an identity due to Verma and Jain \cite[Eq. (5.4)]{Verma}
\begin{equation}\label{Verma1}
_4\phi_3\left(\begin{array}{cccc}
q^{-2n},a^2q^{2n},-c, -cq\\
-aq,-aq^2,c^2
\end{array};q^2\,q^2\right)
=\frac{\left(-q,qa/c;q\right)_n\left(1+a\right)\left(-c\right)^n}
{\left(-a,c;q\right)_n\left(1+aq^{2n}\right)}.
\end{equation}
Letting $\left(a,c\right)\to\left(1,0\right)$ into \eqref{Verma1}, we have
\begin{equation*}
_3\phi_2\left(\begin{array}{ccc}
q^{-2n},q^{2n},0\\
-q,-q^2
\end{array};q^2\,q^2\right)=\frac{1+q^n}{1+q^{2n}}
q^{\frac{n\left(n+1\right)}{2}}.
\end{equation*}
Based on the above summation formula, we substitute $\left(q,\alpha,\beta,c,d\right)\to\left(q^2,1,0,-q,-q^2\right)$ into \eqref{Formula1} to obtain the desired result.
\end{proof}
\begin{corollary}
We have
\begin{align}
&\sum_{n=0}^{\infty}
\frac{\left(q;q^2\right)_n\left(-1\right)^nq^{n^2}}
{\left(-q;q^2\right)_n\left(q^4;q^4\right)_n}
=\frac{\left(-q^2,-q^3,q^5;q^5\right)_{\infty}
\left(q;q^2\right)_{\infty}}{\left(q^2;q^2\right)_{\infty}}, \tag{S.21}\label{S.21}\\
&\sum_{n=0}^{\infty}
\frac{q^{2n^2}}
{\left(q^2;q^2\right)_n\left(-q;q\right)_{2n}}
=\frac{\left(q^3,q^4,q^7;q^7\right)_{\infty}}
{\left(q^2;q^2\right)_{\infty}}.\tag{S.33}\label{S.33}
\end{align}
\end{corollary}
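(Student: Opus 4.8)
The plan is to derive both identities as limiting cases of Theorem~\ref{T3}, in exactly the style of the preceding corollaries: specialize the free parameters $(a,b)$, simplify the $q$-shifted factorials, re-index the resulting one-sided theta series as a bilateral sum, and finish with the Jacobi triple product \eqref{Jacobi}.

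First, for \eqref{S.21} I would take $(a,b)\to(q,0)$ in Theorem~\ref{T3}. Setting $a=q$ turns $(q^2/a;q^2)_n$ into $(q;q^2)_n$ and $(ab/q^2)^n$ into $(b/q)^n$; then, as $b\to0$, one uses $(q^2/b;q^2)_n\sim(-1)^nb^{-n}q^{n(n+1)}$, so that $(q^2/b;q^2)_n(b/q)^n\to(-1)^nq^{n^2}$ and the prefactor $(q^2,ab/q^2;q^2)_\infty/(a,b;q^2)_\infty\to(q^2;q^2)_\infty/(q;q^2)_\infty$. Together with the factorization $(q^2,-q,-q^2;q^2)_n=(q^4;q^4)_n(-q;q^2)_n$, the left-hand side of Theorem~\ref{T3} becomes $\frac{(q^2;q^2)_\infty}{(q;q^2)_\infty}$ times the series on the left of \eqref{S.21}. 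On the right-hand side the same limit gives $\frac{(q^2/a,q^2/b;q^2)_n}{(a,b;q^2)_n}a^nb^n\to(-1)^nq^{n^2+2n}$, so the right-hand side collapses to $1+\sum_{n\ge1}(1+q^n)q^{(5n^2-n)/2}$.

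The next step is to identify this expression. Splitting it as $1+\sum_{n\ge1}q^{(5n^2-n)/2}+\sum_{n\ge1}q^{(5n^2+n)/2}$ and re-indexing the last sum by $n\mapsto-n$ turns it into the bilateral theta series $\sum_{n\in\Z}q^{(5n^2-n)/2}$. Since $(5n^2-n)/2=5(n^2-n)/2+2n$, invoking \eqref{Jacobi} with $(q,z)\to(q^5,q^2)$ evaluates it as $(-q^2,-q^3,q^5;q^5)_\infty$; dividing by $\frac{(q^2;q^2)_\infty}{(q;q^2)_\infty}$ then yields \eqref{S.21}. For \eqref{S.33} I would run the identical argument with $(a,b)\to(0,0)$: here $(q^2/a,q^2/b;q^2)_n(ab/q^2)^n\to q^{2n^2}$, the factorization $(q^2,-q,-q^2;q^2)_n=(q^2;q^2)_n(-q;q)_{2n}$ makes the left-hand side $(q^2;q^2)_\infty$ times the series in \eqref{S.33}, and on the right $\frac{(q^2/a,q^2/b;q^2)_n}{(a,b;q^2)_n}a^nb^n\to q^{2n(n+1)}$ produces $1+\sum_{n\ge1}(-1)^n(1+q^n)q^{(7n^2-n)/2}$; the same split-and-reindex step gives $\sum_{n\in\Z}(-1)^nq^{(7n^2-n)/2}=\sum_{n\in\Z}(q^7)^{(n^2-n)/2}(-q^3)^n$, which by \eqref{Jacobi} with $(q,z)\to(q^7,-q^3)$ equals $(q^3,q^4,q^7;q^7)_\infty$, giving \eqref{S.33} after dividing by $(q^2;q^2)_\infty$.

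The main obstacle is the passage to the limit term-by-term inside the series. One has to weigh the $b^{-n}q^{n(n+1)}$ growth of $(q^2/b;q^2)_n$ (and, in the \eqref{S.33} case, also of $(q^2/a;q^2)_n$) against the accompanying $(b/q)^n$ or $(ab/q^2)^n$ factor, so as to check that each summand has a finite limit and that the tails stay dominated uniformly for small $|a|,|b|$; granting that, the denominator factorizations and the assembling of the two half-theta-sums into one bilateral sum are entirely routine.
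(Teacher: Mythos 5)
Your proposal is correct and follows essentially the same route as the paper: specializing Theorem \ref{T3} at $(a,b)\to(q,0)$ (the paper writes $(0,q)$, identical by the $a\leftrightarrow b$ symmetry) and $(a,b)\to(0,0)$, then applying the Jacobi triple product \eqref{Jacobi} with $(q,z)\to(q^5,q^2)$ and $(q^7,-q^3)$, respectively. The limit computations, the factorizations $(q^2,-q,-q^2;q^2)_n=(q^4;q^4)_n(-q;q^2)_n$ and $(q^2;q^2)_n(-q;q)_{2n}$, and the reassembly into bilateral theta sums all match the paper's argument.
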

\begin{proof}
Taking $\left(a,b\right)\to\left(0,q\right)$ into Theorem \ref{T3} and using Jacobi triple product identity \eqref{Jacobi} with $\left(q,z\right)\to\left(q^5,q^2\right)$, we  obtain \eqref{S.21}.

Similarly, inserting $\left(a,b\right)\to\left(0,0\right)$ into Theorem \ref{T3} and applying Jacobi triple product identity \eqref{Jacobi} with $\left(q,z\right)\to\left(q^7,-q^3\right)$, we get \eqref{S.33}.
\end{proof}
Next, we generalize  Theorem \ref{T3} into the following form  which was first provided by  Liu in \cite[Proposition 11.4]{Liu_4}.
\begin{theorem}\label{T6}
For $\max\{|\beta|,|\alpha a|, |\alpha b|,  |\alpha^{1/2}q^2|, |\alpha ab/q^4|\}<1$, we have
\begin{align*}
&\frac{\left(\alpha q^4,\alpha ab/q^4;q^4\right)_{\infty}}{\left(\alpha a,\alpha b;q^4\right)_{\infty}}
\sum_{n=0}^{\infty}
\frac{\left(q^4/a,q^4/b,\beta,\beta q^2;q^4\right)_n}
{\left(q^4,-\alpha^{1/2}q^2,-\alpha^{1/2}q^4,\beta^2;q^4\right)_n}\left(\alpha ab/q^4\right)^n\\
&\quad=
\sum_{n=0}^{\infty}
\left(-1\right)^n
\frac{1-\alpha^{1/2}q^{4n}}{1-\alpha^{1/2}}
\frac{\left(\alpha^{1/2},-\alpha^{1/2}q^2/\beta;q^2\right)_n
\left(q^4/a,q^4/b;q^4\right)_n}
{\left(-\beta,q^2;q^2\right)_n\left(\alpha a,\alpha b;q^4\right)_n}\left(\alpha\beta ab\right)^nq^{2n^2-6n}.
\end{align*}
\end{theorem}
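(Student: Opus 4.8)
The plan is to mimic the proof of Theorem \ref{T3}, replacing the degenerate case of the Verma--Jain summation used there by the full formula \eqref{Verma1}. First I would put $q\mapsto q^2$ in \eqref{Verma1} and then specialize $a\mapsto\alpha^{1/2}$, $c\mapsto-\beta$; since then $a^2q^{4n}=\alpha q^{4n}$, $-c=\beta$, $-cq^2=\beta q^2$, $-aq^2=-\alpha^{1/2}q^2$, $-aq^4=-\alpha^{1/2}q^4$ and $c^2=\beta^2$, this evaluates the terminating series
\[
{_4\phi_3}\!\left(\begin{array}{cccc}
q^{-4n},\,\alpha q^{4n},\,\beta,\,\beta q^2\\
-\alpha^{1/2}q^2,\,-\alpha^{1/2}q^4,\,\beta^2
\end{array};q^4,\,q^4\right)
=\frac{\left(-q^2,-\alpha^{1/2}q^2/\beta;q^2\right)_n\left(1+\alpha^{1/2}\right)\beta^n}
{\left(-\alpha^{1/2},-\beta;q^2\right)_n\left(1+\alpha^{1/2}q^{4n}\right)}.
\]

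Next I would apply Liu's transformation \eqref{Formula2} with $q$ replaced by $q^4$ and with the parameter choices $\gamma=\beta q^2$, $c=-\alpha^{1/2}q^2$, $d=-\alpha^{1/2}q^4$, $h=\beta^2$ (leaving $\beta$ unchanged). With these substitutions the ${_4\phi_3}$ on the left of \eqref{Formula2} becomes exactly the series on the left of Theorem \ref{T6}; the prefactor becomes $(\alpha q^4,\alpha ab/q^4;q^4)_\infty/(\alpha a,\alpha b;q^4)_\infty$; and the ${_4\phi_3}$ inside the sum on the right of \eqref{Formula2} is precisely the one evaluated in the first step, while under $q\mapsto q^4$ the factor $q^{n(n-3)/2}$ becomes $q^{2n^2-6n}$ and $(-\alpha ab)^n$ is unchanged.

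It then remains to tidy up the resulting series. I would use the factorizations $(\alpha;q^4)_n=(\alpha^{1/2};q^2)_n(-\alpha^{1/2};q^2)_n$ and $(q^4;q^4)_n=(q^2;q^2)_n(-q^2;q^2)_n$, together with $1-\alpha q^{8n}=(1-\alpha^{1/2}q^{4n})(1+\alpha^{1/2}q^{4n})$ and $1-\alpha=(1-\alpha^{1/2})(1+\alpha^{1/2})$. These let the factors $1+\alpha^{1/2}q^{4n}$ and $1+\alpha^{1/2}$ coming from the Verma--Jain evaluation cancel, collapse $(\alpha;q^4)_n/(-\alpha^{1/2};q^2)_n$ to $(\alpha^{1/2};q^2)_n$ and $(-q^2;q^2)_n/(q^4;q^4)_n$ to $1/(q^2;q^2)_n$, and combine $(-\alpha ab)^n\beta^n=(-1)^n(\alpha\beta ab)^n$; what is left is exactly the right-hand side of Theorem \ref{T6}.

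The computation is routine; the only real care needed is bookkeeping of which $q$-shifted factorials sit on base $q^2$ versus $q^4$, plus the convergence hypotheses. The left-hand series is a ${_4\phi_3}$ in base $q^4$ with argument $\alpha ab/q^4$, hence converges for $|\alpha ab/q^4|<1$; the infinite products in the prefactor require $|\alpha a|,|\alpha b|<1$; the right-hand series has Gaussian factor $q^{2n^2}$, so it converges for all admissible parameters, and the conditions $|\beta|<1$ and $|\alpha^{1/2}q^2|<1$ keep the denominator factorials $(-\beta;q^2)_n$ and $(-\alpha^{1/2}q^2,-\alpha^{1/2}q^4;q^4)_n$ from vanishing. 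As a sanity check, letting $\beta\to0$ together with $\alpha\to1$ (after rescaling $a,b$) recovers Theorem \ref{T3}.
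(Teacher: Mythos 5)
Your proposal is correct and follows essentially the same route as the paper: apply Liu's transformation \eqref{Formula2} with $q\to q^4$, $\gamma=\beta q^2$, $(c,d,h)=(-\alpha^{1/2}q^2,-\alpha^{1/2}q^4,\beta^2)$, and evaluate the inner terminating ${_4\phi_3}$ by the Verma--Jain formula \eqref{Verma1} with $(q,a,c)\to(q^2,\alpha^{1/2},-\beta)$, then simplify exactly as you describe. The only difference is cosmetic (you evaluate the inner sum first and spell out the factorization/cancellation bookkeeping, which the paper leaves implicit).
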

\begin{proof}
Replacing $q$ by $q^4$ and substituting $\left(c,d,h,z,\gamma\right)\to
\left(-\alpha^{1/2}q^2,\alpha^{1/2}q^4,\beta^2,1,\beta q^2\right)$ in \eqref{Formula2}, we deduce that
\begin{align*}
&\frac{\left(\alpha q^4,\alpha ab/q^4;q^4\right)_{\infty}}{\left(\alpha a,\alpha b;q^4\right)_{\infty}}
\sum_{n=0}^{\infty}
\frac{\left(q^4/a,q^4/b,\beta,\beta q^2;q^4\right)_n}
{\left(q^4,-\alpha^{1/2}q^2,-\alpha^{1/2}q^4,\beta^2;q^4\right)_n}\left(\alpha ab/q^4\right)^n\\
&\quad=
\sum_{n=0}^{\infty}\left(-1\right)^n\frac{1-\alpha q^{8n}}{1-\alpha}
\frac{\left(\alpha,q^4/a,q^4/b;q^4\right)_n}
{\left(q^4,\alpha a,\alpha b;q^4\right)_n}\left(\alpha ab\right)^nq^{2n^2-6n}\\
&\qquad \times
{_4\phi_3}\left(\begin{array}{cccc}
q^{-4n},\alpha q^{4n},\beta,\beta q^2\\
-\alpha^{1/2}q^2,-\alpha^{1/2}q^4,\beta^2
\end{array};q^4,\,q^4\right)\\
&\quad=\sum_{n=0}^{\infty}
\left(-1\right)^n
\frac{1-\alpha^{1/2}q^{4n}}{1-\alpha^{1/2}}
\frac{\left(\alpha^{1/2},-\alpha^{1/2}q^2/\beta;q^2\right)_n
\left(q^4/a,q^4/b;q^4\right)_n}
{\left(-\beta,q^2;q^2\right)_n\left(\alpha a,\alpha b;q^4\right)_n}\left(\alpha\beta ab\right)^nq^{2n^2-6n},
\end{align*}
where we have used \eqref{Verma1} with the substitution $\left(q,a,c\right)\to\left(q^2,\alpha^{1/2},-\beta\right)$.
\end{proof}
\begin{corollary}
We have
\begin{align}
\sum_{n=0}^{\infty}\frac{\left(q;q^2\right)_{2n}}
{\left(q^4;q^4\right)_{2n}}q^{4n^2}
&=\frac{\left(q^5,q^7,q^{12};q^{12}\right)_{\infty}}
{\left(q^4;q^4\right)_{\infty}},\tag{S.53}
\label{S.53}\\
\sum_{n=0}^{\infty}\frac{\left(-1\right)^n\left(q;q^2\right)_{2n}}
{\left(-q^2;q^4\right)_{n}\left(q^8;q^8\right)_{n}}q^{2n^2}
&=\frac{\left(q^2;q^4\right)_{\infty}\left(-q^3,-q^5,q^{8};q^{8}\right)_{\infty}}
{\left(q^4;q^4\right)_{\infty}},\tag{\cite[Eq. (2.7)]{Bowman}}\label{N7}\\
\sum_{n=0}^{\infty}\frac{\left(-q;q^2\right)_{2n}}
{\left(q^2;q^4\right)_{n}\left(q^8;q^8\right)_{n}}q^{2n^2}
&=\frac{\left(-q^2;q^4\right)_{\infty}\left(-q^3,-q^5,q^{8};q^{8}\right)_{\infty}}
{\left(q^4;q^4\right)_{\infty}}.\tag{\rm{\cite[Eq. (7.24)]{Gessel}}}\label{N8}
\end{align}
\end{corollary}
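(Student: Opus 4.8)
The plan is to obtain all three identities as specializations of Theorem~\ref{T6}, in each case letting the free parameters $a,b$ tend to $0$ (or to $\pm q^2$) and then letting $\alpha\to 1$, and to recognize the resulting sum as a Jacobi triple product via \eqref{Jacobi}. Before specializing I would record the elementary splitting identities $(\pm q;q^2)_{2n}=(\pm q,\pm q^3;q^4)_n$, $(q^2;q^2)_{2n}=(q^4;q^4)_n(q^2;q^4)_n$, $(-q^2;q^2)_{2n}=(-q^2;q^4)_n(-q^4;q^4)_n$, $(q^8;q^8)_n=(q^4;q^4)_n(-q^4;q^4)_n$, and hence $(q^4;q^4)_{2n}=(q^2;q^2)_{2n}(-q^2;q^2)_{2n}$, together with the limit $\lim_{x\to 0}(q^4/x;q^4)_n\,x^n=(-1)^nq^{2n^2+2n}$. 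I would also note that, exactly as in the $\alpha=1$ specializations of Theorem~\ref{thm9}, although $\frac{(1-\alpha^{1/2}q^{4n})(\alpha^{1/2};q^2)_n}{1-\alpha^{1/2}}$ is formally $0/0$ at $\alpha^{1/2}=1$, its limit there equals $1$ for $n=0$ and $(1-q^{4n})(q^2;q^2)_{n-1}$ for $n\ge 1$, so the right side of Theorem~\ref{T6} at $\alpha=1$ is to be read as $1+\sum_{n\ge 1}(\cdots)$.

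For \eqref{S.53} I would set $\alpha=1$, $\beta=q$, and let $a,b\to 0$. Using $\beta^2=q^2$ and the splitting identities, the prefactor becomes $(q^4;q^4)_\infty$ and the left side collapses to $(q^4;q^4)_\infty\sum_{n\ge 0}(q;q^2)_{2n}q^{4n^2}/(q^4;q^4)_{2n}$; on the right the factors $(-q;q^2)_n$ coming from $-\alpha^{1/2}q^2/\beta=-q$ and from $-\beta=-q$ cancel, and the $n$-th term reduces to $(-1)^n(q^{6n^2-n}+q^{6n^2+n})$ for $n\ge 1$ together with $1$ for $n=0$, which reassembles as $\sum_{n\in\Z}(-1)^nq^{6n^2-n}$. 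Then \eqref{Jacobi} with $(q,z)\to(q^{12},-q^5)$ turns this into $(q^5,q^7,q^{12};q^{12})_\infty$, and dividing by $(q^4;q^4)_\infty$ gives \eqref{S.53}. For \eqref{N7} I would instead take $\alpha=1$, $\beta=q$, $a=q^2$, $b\to 0$: now $(q^4/a;q^4)_n=(q^2;q^4)_n$ cancels the denominator factor $(\beta^2;q^4)_n=(q^2;q^4)_n$, the prefactor $\frac{(\alpha q^4;q^4)_\infty}{(\alpha a;q^4)_\infty}$ becomes $\frac{(q^4;q^4)_\infty}{(q^2;q^4)_\infty}$, and the left side is $\frac{(q^4;q^4)_\infty}{(q^2;q^4)_\infty}$ times the sum in \eqref{N7}; the right side, after cancelling the $(-q;q^2)_n$ factors and using $\lim(q^4/b;q^4)_nb^n$, collapses to $1+\sum_{n\ge 1}(1+q^{2n})q^{4n^2-n}=\sum_{n\in\Z}q^{4n^2-n}$, which by \eqref{Jacobi} with $(q,z)\to(q^8,q^3)$ equals $(-q^3,-q^5,q^8;q^8)_\infty$, yielding \eqref{N7}. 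The identity \eqref{N8} is identical but with $\beta=-q$, $a=-q^2$, so that $(q^4/a;q^4)_n=(-q^2;q^4)_n$ cancels the denominator factor $(-\alpha^{1/2}q^2;q^4)_n=(-q^2;q^4)_n$, the prefactor becomes $\frac{(q^4;q^4)_\infty}{(-q^2;q^4)_\infty}$, and the right side once more reduces to $\sum_{n\in\Z}q^{4n^2-n}=(-q^3,-q^5,q^8;q^8)_\infty$.

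The main obstacle is purely bookkeeping: one must track which $q$-shifted factorials cancel after each choice of $(\alpha,\beta,a,b)$, verify that the $a,b\to 0$ limits and the $\alpha\to 1$ limit reproduce exactly the displayed summands (treating the $n=0$ term separately when $\alpha=1$), and match exponents correctly when recombining the truncated sum into a bilateral one. Once these reductions are in place, each product side follows from a single application of the Jacobi triple product \eqref{Jacobi}.
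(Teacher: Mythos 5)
Your proposal is correct and follows essentially the same route as the paper: specialize Theorem \ref{T6} at $\alpha=1$ (with the $0/0$ factor handled exactly as you describe), let $a,b$ tend to $0$ or $\pm q^2$, and finish with the Jacobi triple product \eqref{Jacobi}. The only cosmetic difference is \eqref{N8}, where you take $\beta=-q$, $a=-q^2$ directly, whereas the paper stays with $\beta=q$, sets $\left(a,b\right)\to\left(0,-q^2\right)$, and then replaces $q$ by $-q$; the two derivations are equivalent.
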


\begin{proof}
Taking $\left(\alpha,\beta\right)\to\left(1,q\right)$ into Theorem \ref{T6}, we see that
\begin{align}
&\frac{\left(q^4,ab/q^4;q^4\right)_{\infty}}{\left(a,b;q^4\right)_{\infty}}
\sum_{n=0}^{\infty}
\frac{\left(q,q^3,q^4/a,q^4/b;q^4\right)_n}{\left(q^4,-q^4,q^2,-q^2;q^4\right)_n}\left(ab/q^4\right)^n\nonumber\\
&\quad=1+\sum_{n=1}^{\infty}\left(-1\right)^n\left(1+q^{2n}\right)
\frac{\left(q^4/a,q^4/b;q^4\right)_n}{\left(a,b;q^4\right)_n}
a^nb^nq^{2n^2-5n}.\label{C8}
\end{align}
Then, taking $\left(a,b\right)\to\left(0,0\right)$ and $\left(a,b\right)\to\left(0,q^2\right)$ in \eqref{C8}, we arrive at  \eqref{S.53} and \ref{N7}.

In addition, taking $\left(a,b\right)\to\left(0,-q^2\right)$ in \eqref{C8}, we get
\begin{align*}
\sum_{n=0}^{\infty}\frac{\left(q;q^2\right)_{2n}}
{\left(q^2;q^4\right)_{n}\left(q^8;q^8\right)_{n}}q^{2n^2}
&=\frac{\left(-q^2;q^4\right)_{\infty}\left(q^3,q^5,q^{8};q^{8}\right)_{\infty}}
{\left(q^4;q^4\right)_{\infty}}.
\end{align*}
Replacing $q$ by $-q$ in the above identity, we immediately obtain \ref{N8}.
\end{proof}

\begin{corollary}
We have
\begin{align}
\sum_{n=0}^{\infty}\frac{\left(q;q^2\right)_{2n+1}}
{\left(q^4;q^4\right)_{2n+1}}q^{4n^2+4n}
&=\frac{\left(q,q^{11},q^{12};q^{12}\right)_{\infty}}{\left(q^4;q^4\right)_{\infty}},
\tag{S.55}\label{S.55}\\
\sum_{n=0}^{\infty}\frac{\left(-q;q^2\right)_{2n+1}}
{\left(q^4;q^4\right)_{2n+1}}q^{4n^2+4n}
&=\frac{\left(-q,-q^{11},q^{12};q^{12}\right)_{\infty}}{\left(q^4;q^4\right)_{\infty}},
\tag{S.57}\label{S.57}
\\
\sum_{n=0}^{\infty}\frac{\left(q;q^2\right)_{2n+1}\left(-q^4;q^4\right)_n}
{\left(q^4;q^4\right)_{2n+1}}q^{2n^2+2n}
&=\frac{\left(-q^4;q^4\right)_{\infty}\left(q,q^7,q^{8};q^{8}\right)_{\infty}}
{\left(q^4;q^4\right)_{\infty}}.\tag{\rm{\cite[Eq. (7.25)]{Gessel}}}\label{N9}
\end{align}
\end{corollary}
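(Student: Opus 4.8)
The plan is to specialize Theorem~\ref{T6} at $(\alpha,\beta)\to(q^4,q^3)$, which is the odd-index counterpart of the choice $(\alpha,\beta)\to(1,q)$ used above for \eqref{S.53}, \eqref{N7}, \eqref{N8}. Since here $\alpha^{1/2}=q^2\neq1$, no limiting procedure is needed and the factor $(1-\alpha^{1/2})^{-1}=(1-q^2)^{-1}$ stays finite. After cancelling the common $(q^2;q^2)_n$ and using the telescoping relation $(-q;q^2)_n/(-q^3;q^2)_n=(1+q)/(1+q^{2n+1})$ together with $\dfrac{(1-q^{4n+2})(1+q)}{(1-q^2)(1+q^{2n+1})}=\dfrac{1-q^{2n+1}}{1-q}$, I would obtain the master transformation: for $\max\{|ab|,|aq^4|,|bq^4|\}<1$,
\begin{align*}
&\frac{\left(q^8,ab;q^4\right)_{\infty}}{\left(aq^4,bq^4;q^4\right)_{\infty}}
\sum_{n=0}^{\infty}\frac{\left(q^3,q^5,q^4/a,q^4/b;q^4\right)_n}
{\left(q^4,-q^4,-q^6,q^6;q^4\right)_n}\left(ab\right)^n\\
&\quad=\sum_{n=0}^{\infty}\left(-1\right)^n\frac{1-q^{2n+1}}{1-q}
\frac{\left(q^4/a,q^4/b;q^4\right)_n}{\left(aq^4,bq^4;q^4\right)_n}
\left(q^7ab\right)^nq^{2n^2-6n}.
\end{align*}

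For \eqref{S.55} I would send $(a,b)\to(0,0)$. On the left the prefactor tends to $(q^8;q^4)_\infty$ and $\left(q^4/a;q^4\right)_n\left(q^4/b;q^4\right)_n(ab)^n\to q^{4n^2+4n}$; using the splittings $(q^3,q^5;q^4)_n=(q^3;q^2)_{2n}=(q;q^2)_{2n+1}/(1-q)$ and $(q^4,-q^4,-q^6,q^6;q^4)_n=(q^8;q^8)_n(q^{12};q^8)_n=(q^4;q^4)_{2n+1}/(1-q^4)$ (obtained by separating each $q^2$- or $q^4$-chain into residue classes), the left side becomes $\dfrac{(q^4;q^4)_\infty}{1-q}\sum_{n\ge0}\dfrac{(q;q^2)_{2n+1}}{(q^4;q^4)_{2n+1}}q^{4n^2+4n}$. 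On the right, $\left(q^4/a;q^4\right)_n\left(q^4/b;q^4\right)_n(q^7ab)^n\to q^{4n^2+11n}$, so the right side is $\dfrac{1}{1-q}\sum_{n\ge0}(-1)^n(1-q^{2n+1})q^{6n^2+5n}$; the substitution $n\mapsto-n-1$ turns the $q^{2n+1}$-part into the $n\le-1$ tail, producing the bilateral sum $\sum_{n\in\Z}(-1)^nq^{6n^2+5n}$, which by \eqref{Jacobi} with $(q,z)\to(q^{12},-q^{11})$ equals $(q,q^{11},q^{12};q^{12})_\infty$. Comparing the two sides yields \eqref{S.55}, and replacing $q$ by $-q$ in \eqref{S.55} immediately gives \eqref{S.57}.

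For \eqref{N9} I would instead send $(a,b)\to(0,-1)$ in the master transformation. The prefactor tends to $(q^8;q^4)_\infty/(-q^4;q^4)_\infty$; on the left $\left(q^4/a;q^4\right)_na^n\to(-1)^nq^{2n^2+2n}$, while $\left(q^4/b;q^4\right)_n\to(-q^4;q^4)_n$ and $(ab)^n=(-a)^n$, so the left side collapses to $\dfrac{(q^4;q^4)_\infty}{(-q^4;q^4)_\infty(1-q)}\sum_{n\ge0}\dfrac{(q;q^2)_{2n+1}(-q^4;q^4)_n}{(q^4;q^4)_{2n+1}}q^{2n^2+2n}$. On the right the factors $\left(q^4/b;q^4\right)_n$ and $\left(bq^4;q^4\right)_n$ both become $(-q^4;q^4)_n$ and cancel, leaving $\dfrac{1}{1-q}\sum_{n\ge0}(-1)^n(1-q^{2n+1})q^{4n^2+3n}$; the same $n\mapsto-n-1$ trick and \eqref{Jacobi} with $(q,z)\to(q^8,-q^7)$ give $(q,q^7,q^8;q^8)_\infty$, whence \eqref{N9}.

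The only delicate points are two pieces of algebraic bookkeeping: (i) verifying the chain-splitting identities $(q^3,q^5;q^4)_n=(q;q^2)_{2n+1}/(1-q)$ and $(q^4,-q^4,-q^6,q^6;q^4)_n=(q^4;q^4)_{2n+1}/(1-q^4)$, and (ii) carrying out the $(a,b)\to(0,0)$ and $(0,-1)$ limits carefully, since several $q$-shifted factorials diverge individually and must be paired before the limit is taken. Neither is genuinely hard; once the quadratic exponents $6n^2+5n$ and $4n^2+3n$ are identified, the bilateralization and Jacobi triple product step are entirely mechanical.
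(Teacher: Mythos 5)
Your proposal is correct and follows essentially the same route as the paper: it specializes Theorem \ref{T6} at $\left(\alpha,\beta\right)\to\left(q^4,q^3\right)$ (your ``master transformation'' is exactly the paper's intermediate identity once the $\left(1-q\right)$ and $\left(1-q^4\right)$ factors are absorbed via the chain-splittings you describe), then takes $\left(a,b\right)\to\left(0,0\right)$ with the Jacobi triple product for \eqref{S.55}, $q\to-q$ for \eqref{S.57}, and $\left(a,b\right)\to\left(0,-1\right)$ for \eqref{N9}. All the limit computations and the bilateralization steps check out.
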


\begin{proof}
Taking $\left(\alpha,\beta\right)\to\left(q^4,q^3\right)$ into Theorem \ref{T6}, we obtain
\begin{align}
&\frac{\left(q^4,ab;q^4\right)_{\infty}}{\left(aq^4,bq^4;q^4\right)_{\infty}}
\sum_{n=0}^{\infty}\frac{\left(q;q^2\right)_{2n+1}\left(q^4/a,q^4/b;q^4\right)_n}{\left(q^4;q^4\right)_{2n+1}}a^nb^n\nonumber\\
&\quad=
\sum_{n=0}^{\infty}\left(-1\right)^n\left(1-q^{2n+1}\right)
\frac{\left(q^4/a,q^4/b;q^4\right)_n}{\left(aq^4,bq^4;q^4\right)_n}a^nb^nq^{2n^2+n}.\label{C9}
\end{align}

Setting $\left(a,b\right)\to\left(0,0\right)$ in \eqref{C9},
 we obtain \eqref{S.55}. When replacing $q$ by $-q$, \eqref{S.55} becomes to \eqref{S.57}.
 
Besides, inserting  $\left(a,b\right)\to\left(0,-1\right)$ in \eqref{C9},  we obtain \ref{N9} .
\end{proof}

\section{Identities from a ${_4\phi_3}$ summation of Andrews}
In  \cite{WangChun}, Wang and Chern established  the following $q$-series transformation formula.
\begin{theorem}\label{T4}
For $\max\{|\alpha a|,|\alpha b|,|\beta^2|,|\sqrt{\alpha q}|,|\alpha ab/q|\}<1$, we have
\begin{align*}
&\frac{\left(\alpha q,\alpha ab/q;q\right)_{\infty}}
{\left(\alpha a,\alpha b;q\right)_{\infty}}\sum_{n=0}^{\infty}
\frac{\left(q/a,q/b,\beta,-\beta;q\right)_{n}}
{\left(q,\sqrt{\alpha q},-\sqrt{\alpha q},\beta^2;q\right)_n}\left(\alpha ab/q\right)^n\nonumber\\
&\quad=\sum_{n=0}^{\infty}
\frac{\left(1-\alpha q^{4n}\right)\left(\alpha,q/a,q/b;q\right)_{2n}\left(q,\alpha q/\beta^2;q^2\right)_n}{\left(1-\alpha\right)
\left(q,\alpha a,\alpha b;q\right)_{2n}\left(\alpha q,\beta^2q;q^2\right)_n}\left(\alpha\beta ab\right)^{2n}q^{2n^2-3n}.
\end{align*}
\end{theorem}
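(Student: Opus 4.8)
The plan is to obtain Theorem~\ref{T4} by specializing Liu's transformation \eqref{Formula2} and then evaluating the resulting inner terminating $_4\phi_3$ by Andrews' $_4\phi_3$ summation (the $q$-analogue of Watson's theorem). First I would recall that summation, and then in \eqref{Formula2} set
\[
\gamma=-\beta,\qquad c=\sqrt{\alpha q},\qquad d=-\sqrt{\alpha q},\qquad h=\beta^2 .
\]
With these choices the series on the left of \eqref{Formula2} is literally the series on the left of Theorem~\ref{T4} (the denominator carries the triple $\sqrt{\alpha q},-\sqrt{\alpha q},\beta^2$ and the numerator the pair $\beta,-\beta$, and the $\,_4\phi_3$ argument is $\alpha ab/q$), while the right side becomes
\begin{align*}
\sum_{n=0}^{\infty}\frac{\left(1-\alpha q^{2n}\right)\left(\alpha,q/a,q/b;q\right)_n}
{\left(1-\alpha\right)\left(q,\alpha a,\alpha b;q\right)_n}\left(-\alpha ab\right)^nq^{n(n-3)/2}\,
{_4\phi_3}\!\left(\begin{array}{cccc}q^{-n},\,\alpha q^n,\,\beta,\,-\beta\\ \sqrt{\alpha q},\,-\sqrt{\alpha q},\,\beta^2\end{array};q,q\right).
\end{align*}

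The key step is to recognize the inner $_4\phi_3$ as the terminating instance of Andrews' sum under the identification $a\mapsto q^{-n}$, $b\mapsto\alpha q^n$, $c\mapsto\beta$: then $(abq)^{1/2}=\sqrt{\alpha q}$ and $c^2=\beta^2$, and since $\beta$ enters only through $\beta^2$ there is no branch ambiguity. Exactly as in classical Watson's theorem, where a factor $\Gamma\!\left(\tfrac{a+1}{2}\right)$ occupies the denominator of the closed form, this evaluation vanishes identically for odd $n$, while for $n=2m$ it equals $\beta^{2m}\,(q,\alpha q/\beta^2;q^2)_m/(\alpha q,\beta^2q;q^2)_m$. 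Substituting this back annihilates every odd‑index term in the outer sum.

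It then remains to relabel $n=2m$ in the surviving even terms. Since $(-\alpha ab)^{2m}=(\alpha ab)^{2m}$ and $q^{2m(2m-3)/2}=q^{2m^2-3m}$, the scalar $\beta^{2m}$ produced by the evaluation combines to give $(\alpha\beta ab)^{2m}$; moreover $1-\alpha q^{2n}=1-\alpha q^{4m}$, $(\alpha,q/a,q/b;q)_n=(\alpha,q/a,q/b;q)_{2m}$ and $(q,\alpha a,\alpha b;q)_n=(q,\alpha a,\alpha b;q)_{2m}$. Assembling these and renaming $m$ as $n$ reproduces the right-hand side of Theorem~\ref{T4} verbatim, the factors $(q,\alpha q/\beta^2;q^2)_n$ and $(\alpha q,\beta^2q;q^2)_n$ coming entirely from Andrews' evaluation. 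The one genuine obstacle is to cite Andrews' $_4\phi_3$ summation in the form appropriate here and to verify that its closed form does collapse to $0$ for odd $n$ and to the stated $q^2$-shifted-factorial expression for $n=2m$ (a quick check at $n=0,1,2$ already pins the normalization); the hypotheses $\max\{|\alpha a|,|\alpha b|,|\beta^2|,|\sqrt{\alpha q}|,|\alpha ab/q|\}<1$ are precisely what legitimizes applying \eqref{Formula2} and rearranging, and everything else is routine $q$-Pochhammer bookkeeping.
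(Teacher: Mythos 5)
Your proposal is correct and follows essentially the same route as the paper: the paper also specializes Liu's transformation \eqref{Formula2} with $c=-d=\sqrt{\alpha q}$, $\gamma=-\beta$, $h=\beta^2$, and evaluates the inner terminating $_4\phi_3$ by Andrews' $q$-analogue of Watson's theorem \eqref{Andrews} with $a=\sqrt{\alpha/q}$, $c=\beta$, which vanishes for odd $n$ and yields exactly the $\beta^{2m}\left(q,\alpha q/\beta^2;q^2\right)_m/\left(\alpha q,\beta^2 q;q^2\right)_m$ factor you describe. The relabeling $n=2m$ and the bookkeeping you outline match the paper's computation.
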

To ensure the completeness of the article's content, we provide a proof of the above result. 
\begin{proof}[Proof of Theorem \ref{T4}]
Recall a summation formula due to Andrew \cite{Andrews1}:
\begin{align}
_4\phi_3\left(\begin{array}{cccc}
q^{-n},a^2q^{n+1},c,-c\\
aq,-aq,c^2
\end{array};q,\,q\right)=\left\{\begin{array}{ll}
0,&\quad n\equiv1\pmod2,\\
\frac{c^n\left(q,a^2q^2/c^2;q^2\right)_{n/2}}
{\left(a^2q^2,c^2q;q^2\right)_{n/2}},&\quad n\equiv0\pmod2.
\end{array}\right.\label{Andrews}
\end{align}
Taking $a=\sqrt{\alpha/q}, c=\beta$ into \eqref{Andrews}, we obtain
\begin{align*}
_4\phi_3\left(\begin{array}{cccc}
q^{-n},\alpha q^n,\beta,-\beta\\
\sqrt{\alpha q},-\sqrt{\alpha q},\beta^2
\end{array};q,\,q\right)
=\left\{\begin{array}{ll}
0,&\quad n\equiv1\pmod2,\\
\frac{\beta^n\left(q,\alpha q/\beta^2;q^2\right)_{n/2}}
{\left(\alpha q,\beta^2q;q^2\right)_{n/2}},& \quad{n\equiv0\pmod2}.
\end{array}\right.
\end{align*}
Based on the above summation formula, we take $c=-d=\sqrt{\alpha q}, \beta=-\gamma, h=\beta^2$ and $z=1$ into \eqref{Formula2} to get the desired result.
\end{proof}

\begin{corollary}
We have
\begin{align}
\sum_{n=0}^{\infty}\frac{q^{n^2}}{\left(q;q\right)_n
\left(q;q^2\right)_{n}}
&=\frac{\left(q^6,q^8,q^{14};q^{14}\right)_{\infty}}
{\left(q;q\right)_{\infty}},\tag{S.61}\label{S.61}\\
\sum_{n=0}^{\infty}\frac{q^{n^2}}{\left(q;q\right)_{2n}}
&=\frac{\left(q^2,q^8,q^{10};q^{10}\right)_{\infty}
\left(q^6,q^{14};q^{20}\right)_{\infty}}
{\left(q;q\right)_{\infty}}.\tag{S.98}\label{S.98}
\end{align}
\end{corollary}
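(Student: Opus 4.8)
The plan is to derive both identities from Theorem~\ref{T4} by the same three–step recipe: fix $\alpha$ and $\beta$, let $a,b\to0$, and evaluate the resulting bilateral sum by the Jacobi triple product \eqref{Jacobi}. I will describe the argument for \eqref{S.61} in detail and then indicate how \eqref{S.98} is meant to run along the same lines.

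For \eqref{S.61}, first let $\beta\to0$ in Theorem~\ref{T4}. On the left this sends $(\beta,-\beta;q)_n/(\beta^2;q)_n\to1$, leaving the Euler product $(\sqrt{\alpha q},-\sqrt{\alpha q};q)_n=(\alpha q;q^2)_n$ in the denominator; on the right, since $(\alpha q/\beta^2;q^2)_n$ is asymptotically $(-1)^n\alpha^nq^{n^2}\beta^{-2n}$, the block $(\alpha q/\beta^2;q^2)_n(\alpha\beta ab)^{2n}$ tends to $(-1)^n\alpha^nq^{n^2}(\alpha ab)^{2n}$ while $(\beta^2q;q^2)_n\to1$, and the $(q;q^2)_n$ from the numerator block $(q,\alpha q/\beta^2;q^2)_n$ survives. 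Next put $\alpha=1$: as in the derivation of \eqref{q6}, the indeterminate $(1-\alpha q^{4n})(\alpha;q)_{2n}/(1-\alpha)$ is handled by peeling off $n=0$, and for $n\ge1$ it equals $(1+q^{2n})(q;q)_{2n}$, cancelling the $(q;q)_{2n}$ in the denominator, while $(\alpha q;q^2)_n=(q;q^2)_n$ cancels the surviving numerator $(q;q^2)_n$. Finally let $a,b\to0$, using $(q/a,q/b;q)_n(\alpha ab/q)^n\to q^{n^2}$ on the left, $(q/a,q/b;q)_{2n}(ab)^{2n}\to q^{4n^2+2n}$ on the right, and $(\alpha ab/q;q)_\infty,(\alpha a,\alpha b;q)_\infty\to1$. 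The left side collapses to $(q;q)_\infty\sum_{n\ge0}q^{n^2}/\left((q;q)_n(q;q^2)_n\right)$, and the right side becomes
\[
1+\sum_{n\ge1}(-1)^n(1+q^{2n})q^{7n^2-n}=\sum_{n=-\infty}^{\infty}(-1)^nq^{7n^2-n},
\]
the last equality via $n\mapsto-n$ in the part carrying $q^{2n}$. By \eqref{Jacobi} with $(q,z)\to(q^{14},-q^6)$ this bilateral sum equals $(q^6,q^8,q^{14};q^{14})_\infty$, and dividing by $(q;q)_\infty$ gives \eqref{S.61}.

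For \eqref{S.98} the scheme is identical, but its denominator must be produced in the form $(q;q)_{2n}=(q;q^2)_n(q^2;q^2)_n=(q;q)_n(q;q^2)_n(-q;q)_n$, so one extra factor $(-q;q)_n$ has to appear compared with \eqref{S.61}. Since the left side of Theorem~\ref{T4} already yields $(q;q)_n$ and, once $\alpha=1$, $(q;q^2)_n$, the idea is to pick $\beta$ (and, if needed, to replace $q$ by $q^2$ at the end, as is done for \eqref{S.12} and \eqref{S.48}) so that $(\beta,-\beta;q)_n/(\beta^2;q)_n=(\beta^2;q^2)_n/(\beta^2;q)_n$ supplies the missing factor; after $a,b\to0$ the left side then reads $(q;q)_\infty\sum_{n\ge0}q^{n^2}/(q;q)_{2n}$ and the right side is again a bilateral sum. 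Because the product on the right of \eqref{S.98} is the quintuple product $(q^{10};q^{10})_\infty(q^2,q^8;q^{10})_\infty(q^6,q^{14};q^{20})_\infty$ (base $q^{10}$, variable $q^2$), I expect this bilateral sum to split according to the parity of the summation index into two theta series, each summed by \eqref{Jacobi}, whose combination is the quintuple product by the quintuple product identity.

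The hard part is pinning down the specialization for \eqref{S.98}: unlike $(q;q)_n(q;q^2)_n$, the factor $(q;q)_{2n}$ does not drop out of Theorem~\ref{T4} directly, since no legitimate value of $\beta$ makes $(\beta^2;q^2)_n/(\beta^2;q)_n$ equal $1/(-q;q)_n$ — the obvious candidate $\beta\to1$ gives $(-q;q)_{n-1}$ instead — so one has to play the surrounding factors (or a different finite choice of $a$ or $b$, or the $q\mapsto q^2$ reflection) against each other, and then correctly bookkeep the resulting quintuple–product sum. A secondary, but genuinely needed, technical point — already present in \eqref{S.61} — is that the limits $\alpha\to1$, $\beta\to0$ and $a,b\to0$ must be taken in an admissible order; the safe route is to fix $\alpha=1$ exactly first, extracting the $n=0$ term by hand, and only then send $\beta$, $a$ and $b$ to their limits.
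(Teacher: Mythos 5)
Your derivation of \eqref{S.61} is correct and is essentially the paper's proof: specializing Theorem \ref{T4} at $\beta=0$, $\alpha=1$ gives exactly the identity \eqref{C17} that the paper works from, the $(a,b)\to(0,0)$ limit gives the bilateral sum $\sum_{n=-\infty}^{\infty}(-1)^nq^{7n^2-n}$, and \eqref{Jacobi} with $(q,z)\to(q^{14},-q^6)$ finishes it; your handling of the $\alpha=1$ indeterminacy and of the $\beta\to0$ limit of $(\alpha q/\beta^2;q^2)_n\beta^{2n}$ is sound.

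For \eqref{S.98}, however, there is a genuine gap: you never produce the specialization, and the direction you lean toward --- choosing $\beta$ so that $(\beta,-\beta;q)_n/(\beta^2;q)_n$ manufactures the missing $1/(-q;q)_n$, then sending $a,b\to0$ and invoking a quintuple product identity --- does not close, as you yourself note ($\beta\to1$ gives $(-q;q)_{n-1}$, and no admissible $\beta$ works). The workable move, and the one the paper makes, is one of the options you list but do not execute: stay with the same $\beta=0$ identity \eqref{C17} used for \eqref{S.61}, but take $(a,b)\to(0,-q^{1/2})$ instead of $(0,0)$. Then $(q/b;q)_{2n}b^{2n}/(b;q)_{2n}=q^{n}$ on the right, the exponent becomes $5n^2-n$, and a single application of \eqref{Jacobi} (base $q^{10}$, $z=-q^4$) yields
\begin{equation*}
\frac{(q;q)_{\infty}}{(-q^{1/2};q)_{\infty}}\sum_{n=0}^{\infty}\frac{(-q^{1/2};q)_n}{(q;q)_n(q;q^2)_n}\,q^{n^2/2}
=\left(q^4,q^6,q^{10};q^{10}\right)_{\infty},
\end{equation*}
which is \eqref{e5}. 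Replacing $q$ by $q^2$, the numerator factor $(-q;q^2)_n$ absorbs into $(q^2;q^4)_n$ via $(q;q^2)_n(-q;q^2)_n=(q^2;q^4)_n$, leaving $1/\bigl((q;q^2)_n(q^2;q^2)_n\bigr)=1/(q;q)_{2n}$, and the prefactor becomes $(q;q)_{\infty}/(q^2;q^4)_{\infty}$; the stated form of \eqref{S.98} is then just the rearrangement $(q^2;q^4)_{\infty}(q^8,q^{12},q^{20};q^{20})_{\infty}=(q^2,q^8,q^{10};q^{10})_{\infty}(q^6,q^{14};q^{20})_{\infty}$. In particular no quintuple product identity is needed, and no parity splitting of the right-hand sum occurs --- the point you flagged as the ``hard part'' is resolved by a finite choice of $b$ plus the $q\mapsto q^2$ rescaling, not by a different $\beta$. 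As written, your proposal proves \eqref{S.61} but leaves \eqref{S.98} unproven.
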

\begin{proof}
Taking $\left(a,c\right)\to\left(q^{-1/2},0\right)$ into Andrews's formula \eqref{Andrews}, we have
\begin{equation*}
_3\phi_2\left(
\begin{array}{ccc}
q^{-n},q^{n},0\\
q^{1/2},-q^{1/2}
\end{array};q,\,q\right)
=\left\{\begin{array}{ll}
0,&\quad n\equiv1\pmod2,\\
\left(-1\right)^{{n}/{2}}
q^{{n^2}/{4}},&\quad n\equiv0\pmod2.
\end{array}
\right.
\end{equation*}
Based on the above summation, we fix $\alpha=1,\beta=0$ in  Theorem \ref{T4} to obtain
\begin{align}
&\frac{\left(q,ab/q;q\right)_{\infty}}{\left(a,b;q\right)_{\infty}}
\sum_{n=0}^{\infty}\frac{\left(q/a,q/b;q\right)_{n}}
{\left(q;q\right)_{n}\left(q;q^2\right)_{n}}
\left(ab/q\right)^n\nonumber\\
&\quad=1+\sum_{n=1}^{\infty}\left(-1\right)^n\left(1+q^{2n}\right)
\frac{\left(q/a,q/b;q\right)_{2n}}{\left(a,b;q\right)_{2n}}
a^{2n}b^{2n}q^{3n^2-3n}.\label{C17}
\end{align}
where $\max\{|a|,|b|,|ab/q|\}<1$.

Substituting $\left(a,b\right)\to\left(0,0\right)$ into \eqref{C17} and taking $\left(q,z\right)\to\left(q^{14},-q^6\right)$ in Jacobi triple product identity \eqref{Jacobi}, we obtain \eqref{S.61}.

Similarly, taking $\left(a,b\right)\to\left(0,-q^{1/2}\right)$ into \eqref{C17} and taking $\left(q,z\right)\to\left(q^{10},q^4\right)$ in Jacobi triple product identity \eqref{Jacobi}, we obtain
\begin{align}
\frac{\left(q;q\right)_{\infty}}{\left(-q^{1/2};q\right)_{\infty}}
\sum_{n=0}^{\infty}\frac{\left(-q^{1/2};q\right)_n}
{\left(q;q\right)_n\left(q;q^2\right)_{n}}q^{{n^2}/{2}}
=\left(q^4,q^6,q^{10};q^{10}\right)_{\infty}.\label{e5}
\end{align}
We further replace $q$ by $q^2$ in \eqref{e5}, which, after simplification, results in \eqref{S.98}.
\end{proof}

\begin{corollary}
We have
\begin{align}
\sum_{n=0}^{\infty}\frac{
\left(-q;q\right)_{n}\left(-q;q^2\right)_{n}
}{\left(q;q\right)_{2n+1}}q^{\frac{n\left(n+3\right)}{2}}
&=\frac{\left(-q;q\right)_{\infty}\left(q,q^7,q^8;q^8\right)_{\infty}}
{\left(q;q\right)_{\infty}}.\tag{S.35\,and\,S.106} \label{S.35}
\end{align}
\end{corollary}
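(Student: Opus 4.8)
The strategy is to read the identity off Theorem~\ref{T4} by one specialization of its free parameters followed by the limit $a\to 0$, after which the right‑hand side collapses to a bilateral sum that the Jacobi triple product \eqref{Jacobi} turns into a product. Concretely, I would put $\alpha=q^2$, $\beta=\mathrm{i}q^{1/2}$ (so $\beta^2=-q$, with $\mathrm{i}^2=-1$) and $b=-1$. Then $\sqrt{\alpha q}=q^{3/2}$, and the pairing $\left(x;q\right)_n\left(-x;q\right)_n=\left(x^2;q^2\right)_n$ converts $\left(\sqrt{\alpha q};q\right)_n\left(-\sqrt{\alpha q};q\right)_n$ into $\left(q^3;q^2\right)_n$ and $\left(\beta;q\right)_n\left(-\beta;q\right)_n$ into $\left(-q;q^2\right)_n$, while $\left(q/b;q\right)_n=\left(-q;q\right)_n=\left(\beta^2;q\right)_n$ cancels in the left‑hand series and the two $\left(-q^2;q^2\right)_n$ coming from $\alpha q/\beta^2$ and $\beta^2q$ cancel in the right‑hand series. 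Thus Theorem~\ref{T4} becomes the one‑parameter identity
\begin{align*}
&\frac{\left(q^3,-qa;q\right)_{\infty}}{\left(q^2a,-q^2;q\right)_{\infty}}
\sum_{n=0}^{\infty}\frac{\left(q/a;q\right)_n\left(-q;q^2\right)_n}{\left(q;q\right)_n\left(q^3;q^2\right)_n}\left(-qa\right)^n\\
&\quad=\sum_{n=0}^{\infty}\frac{1-q^{4n+2}}{1-q^2}\cdot
\frac{\left(q^2;q\right)_{2n}\left(q/a;q\right)_{2n}\left(-q;q\right)_{2n}\left(q;q^2\right)_n}
{\left(q;q\right)_{2n}\left(q^2a;q\right)_{2n}\left(-q^2;q\right)_{2n}\left(q^3;q^2\right)_n}
\left(-\mathrm{i}q^{5/2}a\right)^{2n}q^{2n^2-3n}.
\end{align*}

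Next I would let $a\to0$. On the left, $\left(q/a;q\right)_n\left(-qa\right)^n\to q^{n(n+3)/2}$ and the prefactor tends to $\left(q^3;q\right)_{\infty}/\left(-q^2;q\right)_{\infty}$; since $\left(q;q\right)_{2n+1}=\left(q^2;q^2\right)_n\left(q;q^2\right)_{n+1}$ and $\left(q;q^2\right)_{n+1}=(1-q)\left(q^3;q^2\right)_n$, the summand of the target equals $\frac{\left(-q;q\right)_n\left(-q;q^2\right)_n}{\left(q;q\right)_{2n+1}}q^{n(n+3)/2}=\frac{\left(-q;q^2\right)_n}{(1-q)\left(q;q\right)_n\left(q^3;q^2\right)_n}q^{n(n+3)/2}$, so the left‑hand limit is $\frac{\left(q^3;q\right)_{\infty}}{\left(-q^2;q\right)_{\infty}}(1-q)$ times the left side of the claimed identity. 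On the right, $\left(q/a;q\right)_{2n}\left(-\mathrm{i}q^{5/2}a\right)^{2n}\to(-1)^nq^{2n^2+6n}$, and the remaining $q$‑shifted‑factorial ratios telescope: $\frac{\left(q^2;q\right)_{2n}}{\left(q;q\right)_{2n}}=\frac{1-q^{2n+1}}{1-q}$, $\frac{\left(-q;q\right)_{2n}}{\left(-q^2;q\right)_{2n}}=\frac{1+q}{1+q^{2n+1}}$, $\frac{\left(q;q^2\right)_n}{\left(q^3;q^2\right)_n}=\frac{1-q}{1-q^{2n+1}}$, and $\frac{1-q^{4n+2}}{(1-q^2)(1+q^{2n+1})}=\frac{1-q^{2n+1}}{1-q^2}$. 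Altogether the coefficient collapses to $\frac{1-q^{2n+1}}{1-q}$ and the $q$‑power to $q^{4n^2+3n}$, so the right‑hand limit is $\frac{1}{1-q}\sum_{n\ge0}\left(1-q^{2n+1}\right)(-1)^nq^{4n^2+3n}$.

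Finally I would bilateralize and evaluate. Replacing $n$ by $-n-1$ in the $q^{2n+1}$‑part (which turns $4n^2+5n+1$ into $4n^2+3n$ and flips the sign) gives $\sum_{n\ge0}\left(1-q^{2n+1}\right)(-1)^nq^{4n^2+3n}=\sum_{n\in\Z}(-1)^nq^{4n^2+3n}$, and writing $4n^2+3n=4n(n-1)+7n$ and applying \eqref{Jacobi} with $\left(q,z\right)\to\left(q^8,-q^7\right)$ yields $\sum_{n\in\Z}(-1)^nq^{4n^2+3n}=\left(q,q^7,q^8;q^8\right)_{\infty}$. Equating the two limits and simplifying the constant via $(1-q)\left(q^3;q\right)_{\infty}=\left(q;q\right)_{\infty}/(1-q^2)$ and $\left(-q^2;q\right)_{\infty}=\left(-q;q\right)_{\infty}/(1+q)$ gives $\frac{\left(q;q\right)_{\infty}}{\left(-q;q\right)_{\infty}}\sum_{n\ge0}\frac{\left(-q;q\right)_n\left(-q;q^2\right)_n}{\left(q;q\right)_{2n+1}}q^{n(n+3)/2}=\left(q,q^7,q^8;q^8\right)_{\infty}$, i.e.\ the asserted identity. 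The main obstacle is the $a\to0$ bookkeeping on the right: one must pair the $a^{2n}$ of $\left(-\mathrm{i}q^{5/2}a\right)^{2n}$ with the leading asymptotics $\left(q/a;q\right)_{2n}\sim(-q/a)^{2n}q^{n(2n-1)}$ and check that the product of $q$‑Pochhammer ratios cancels down exactly to $\frac{1-q^{2n+1}}{1-q}$; once that is done the triple‑product step is routine, and the appearance of the same identity as S.106 follows from the usual dissection of $\left(q,q^7,q^8;q^8\right)_{\infty}$ into theta blocks of modulus $16$.
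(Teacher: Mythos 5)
Your proposal is correct and follows essentially the same route as the paper: it is exactly the specialization $\alpha=q^2$, $\beta=\mathrm{i}q^{1/2}$ of Theorem \ref{T4} followed by the limit $(a,b)\to(0,-1)$ (you merely set $b=-1$ before letting $a\to0$), with the resulting sum bilateralized and evaluated by the Jacobi triple product \eqref{Jacobi} at $(q,z)\to(q^8,-q^7)$, which is precisely the paper's derivation of \eqref{S.35} via \eqref{C11}. Your limit bookkeeping and the telescoping of the Pochhammer ratios check out, so the only difference is that you spell out details the paper leaves implicit.
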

\begin{proof}
When $\alpha=q^2,\beta=q^{1/2}{\rm i}$, Theorem \ref{T4} reduces to 
\begin{align}
&\frac{\left(q,abq;q\right)_{\infty}}{\left(aq^2,bq^2;q\right)_{\infty}}
\sum_{n=0}^{\infty}\frac{\left(q/a,q/b\right)_n\left(-q;q^2\right)_n}{\left(q;q\right)_{2n+1}}a^nb^nq^n
\nonumber\\
&\quad=
\sum_{n=0}^{\infty}\left(-1\right)^n\left(1-q^{4n+2}\right)\frac{\left(q/a,q/b;q\right)_{2n}}
{\left(aq^2,bq^2;q\right)_{2n}}\left(ab\right)^{2n}q^{2n^2+2n}.\label{C11}
\end{align}
By taking $\left(a,b\right)\to\left(0,-1\right)$ into \eqref{C11}, we obtain \eqref{S.35}.
\end{proof}
\begin{corollary}
We have
\begin{align}
\sum_{n=0}^{\infty}\frac{\left(-q;q\right)_n}{\left(q;q\right)_n
\left(q;q^2\right)_{n+1}}q^{\frac{n\left(n+3\right)}{2}}
&=\frac{\left(-q;q\right)_{\infty}\left(q,q^9,q^{10};q^{10}\right)_{\infty}}
{\left(q;q\right)_{\infty}},\tag{S.43}\label{S.43}
\\
\sum_{n=0}^{\infty}\frac{q^{n\left(n+2\right)}}
{\left(q;q\right)_n\left(q;q^2\right)_{n+1}}
&=\frac{\left(q^2,q^{12},q^{14};q^{14}\right)_{\infty}}{\left(q;q\right)_{\infty}}
,\tag{S.59}\label{S.59}
\end{align}
\end{corollary}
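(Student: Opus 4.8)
The plan is to derive both \eqref{S.43} and \eqref{S.59} from one specialization of Theorem~\ref{T4}, namely $\left(\alpha,\beta\right)\to\left(q^2,0\right)$, and then to run the same scheme already used for \eqref{S.35}, \eqref{S.61} and \eqref{S.98}: send $\left(a,b\right)$ to a convenient pair, notice that the one-sided sum that survives on the right is exactly half of a bilateral theta series, and close with the Jacobi triple product identity \eqref{Jacobi}.

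First I would carry out the specialization. With $\alpha=q^2$ one has $\sqrt{\alpha q}=q^{3/2}$, hence $\left(\sqrt{\alpha q},-\sqrt{\alpha q};q\right)_n=\left(q^3;q^2\right)_n$; letting $\beta\to0$ one checks (with $\alpha=q^2$) that $\left(\alpha q/\beta^2;q^2\right)_n\left(\alpha\beta ab\right)^{2n}\to\left(-1\right)^n\left(ab\right)^{2n}q^{n^2+6n}$ while $\left(\beta^2q;q^2\right)_n\to1$. After the cancellation $\dfrac{\left(q^2;q\right)_{2n}\left(q;q^2\right)_n}{\left(q;q\right)_{2n}\left(q^3;q^2\right)_n}=1$, Theorem~\ref{T4} becomes
\begin{align}
&\frac{\left(q^3,qab;q\right)_{\infty}}{\left(q^2a,q^2b;q\right)_{\infty}}
\sum_{n=0}^{\infty}\frac{\left(q/a,q/b;q\right)_n}{\left(q;q\right)_n\left(q^3;q^2\right)_n}\left(qab\right)^n\nonumber\\
&\quad=\sum_{n=0}^{\infty}\frac{1-q^{4n+2}}{1-q^2}\,
\frac{\left(q/a,q/b;q\right)_{2n}}{\left(q^2a,q^2b;q\right)_{2n}}\left(-1\right)^n\left(ab\right)^{2n}q^{3n^2+3n}.\label{plan-master}
\end{align}
In the simplifications I would freely use $\left(q^3;q\right)_{\infty}=\left(q;q\right)_{\infty}/\big(\left(1-q\right)\left(1-q^2\right)\big)$, the relation $\left(q^3;q^2\right)_n=\left(q;q^2\right)_{n+1}/\left(1-q\right)$, and the limit $\left(q/a;q\right)_m a^m\to\left(-1\right)^mq^{m(m+1)/2}$.

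For \eqref{S.59} I would put $\left(a,b\right)\to\left(0,0\right)$ in \eqref{plan-master}: the left side becomes $\dfrac{\left(q;q\right)_{\infty}}{1-q^2}\sum_{n}\dfrac{q^{n^2+2n}}{\left(q;q\right)_n\left(q;q^2\right)_{n+1}}$ and the right side becomes $\dfrac{1}{1-q^2}\sum_{n\ge0}\left(1-q^{4n+2}\right)\left(-1\right)^nq^{7n^2+5n}$. Since the substitution $n\mapsto-n-1$ turns $7n^2+5n$ into $7n^2+9n+2$ and negates $\left(-1\right)^n$, the last sum equals $\sum_{n\in\mathbb Z}\left(-1\right)^nq^{7n^2+5n}=\left(q^2,q^{12},q^{14};q^{14}\right)_{\infty}$ by \eqref{Jacobi} with $\left(q,z\right)\to\left(q^{14},-q^{12}\right)$; cancelling $1-q^2$ and $\left(q;q\right)_{\infty}$ gives \eqref{S.59}. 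For \eqref{S.43} I would instead put $\left(a,b\right)\to\left(0,-1\right)$, so that $\left(q/b;q\right)_n=\left(-q;q\right)_n$ and $\left(qab\right)^n=\left(-1\right)^nq^na^n$; the left side then reduces to $\dfrac{\left(q;q\right)_{\infty}}{\left(1-q\right)\left(-q;q\right)_{\infty}}\sum_{n}\dfrac{\left(-q;q\right)_nq^{n(n+3)/2}}{\left(q;q\right)_n\left(q;q^2\right)_{n+1}}$, while $\left(-q;q\right)_{2n}/\left(-q^2;q\right)_{2n}=\left(1+q\right)/\left(1+q^{2n+1}\right)$ turns the right side into $\dfrac{1}{1-q}\sum_{n\ge0}\left(1-q^{2n+1}\right)\left(-1\right)^nq^{5n^2+4n}$. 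The same reflection collapses this to $\sum_{n\in\mathbb Z}\left(-1\right)^nq^{5n^2+4n}=\left(q,q^9,q^{10};q^{10}\right)_{\infty}$ via \eqref{Jacobi} with $\left(q,z\right)\to\left(q^{10},-q^{9}\right)$, and cancelling the remaining factors yields \eqref{S.43}.

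The only step that demands care is the passage $\beta\to0$ in the right-hand side of Theorem~\ref{T4}: one has to extract the finite limit of $\left(\alpha q/\beta^2;q^2\right)_n\left(\alpha\beta ab\right)^{2n}$ and carry along the resulting powers of $q$ together with the scattered factors $1-q$ and $1-q^2$ so that they cancel exactly against the infinite products on the left. Once \eqref{plan-master} is in hand, the two choices of $\left(a,b\right)$ and the two applications of the Jacobi triple product are routine.
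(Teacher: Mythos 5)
Your proposal is correct and follows essentially the same route as the paper: the paper also specializes Theorem \ref{T4} at $\alpha=q^2$, $\beta=0$ (its equation \eqref{C12}, which is your \eqref{plan-master} after cancelling the factors $1-q$, $1-q^2$ and rewriting $(q^3;q^2)_n$ as $(q;q^2)_{n+1}/(1-q)$), and then takes $(a,b)\to(0,-1)$ for \eqref{S.43} and $(a,b)\to(0,0)$ for \eqref{S.59}, finishing with the Jacobi triple product exactly as you do. Your limit computation for $\beta\to0$ and the two theta evaluations with $(q,z)\to(q^{10},-q^9)$ and $(q^{14},-q^{12})$ all check out.
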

\begin{proof}
When $\alpha=q^2,\beta=0$, Theorem \ref{T4} reduces to 
\begin{align}
&\frac{\left(q,abq;q\right)_{\infty}}{\left(aq^2,bq^2;q\right)_{\infty}}
\sum_{n=0}^{\infty}\frac{\left(q/a,q/b\right)_n}{\left(q;q\right)_{n}\left(q;q^2\right)_{n+1}}a^nb^nq^n
\nonumber\\
&\quad=
\sum_{n=0}^{\infty}\left(-1\right)^n\left(1-q^{4n+2}\right)\frac{\left(q/a,q/b;q\right)_{2n}}
{\left(aq^2,bq^2;q\right)_{2n}}\left(ab\right)^{2n}q^{3n^2+3n}.\label{C12}
\end{align}
By taking $\left(a,b\right)\to\left(0,-1\right)$ and $\left(a,b\right)\to\left(0,0\right)$ into \eqref{C12}, respectively,  we obtain \eqref{S.43} and  \eqref{S.59}.
\end{proof}
\begin{corollary}
We have
\begin{align}
&\sum_{n=0}^{\infty}\frac{\left(-q^2;q^2\right)_{n-1}\left(1+q^n\right)}
{\left(q;q\right)_{2n}}q^{n^2}
=\frac{\left(q^5,q^7,q^{12};q^{12}\right)_{\infty}}
{\left(q;q\right)_{\infty}},\tag{S.54} \label{S.54}\\
&\sum_{n=0}^{\infty}\frac{\left(-q^4;q^4\right)_{n-1}\left(-q;q^2\right)_nq^{n^2}}
{\left(q^2;q^4\right)_n\left(q^2;q^2\right)_n\left(-q^2;q^2\right)_{n-1}}
=
\frac{\left(-q;q^2\right)_{\infty}\left(q^6,q^{10},q^{16};q^{16}\right)_{\infty}}
{\left(q^2;q^2\right)_{\infty}},\tag{S.71}\label{S.71}\\
&\sum_{n=0}^{\infty}\frac{
\left(-q;q\right)_{n}\left(-q^2;q^2\right)_{n-1}
}{\left(q;q\right)_{2n}}q^{\frac{n\left(n+1\right)}{2}}
\nonumber\\
&\quad
=\frac{\left(-q;q\right)_{\infty}}
{\left(q;q\right)_{\infty}}\left(\left(-q^{16},-q^{16},q^{32};q^{32}\right)_{\infty}
-q\left(-q^{8},-q^{24},q^{32};q^{32}\right)_{\infty}\right).\tag{S.104} \label{S.104}
\end{align}
\end{corollary}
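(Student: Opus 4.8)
I would obtain all three identities by specializing the transformation formula of Theorem~\ref{T4}, exactly as (S.61), (S.98), (S.35), (S.43) and (S.59) were obtained from it above. The first step is to choose $\alpha$ and $\beta$ (and, for (S.71), to pass to the base $q^2$) so that the numerator pair $(\beta,-\beta;q)_n=(\beta^2;q^2)_n$ together with the denominator factors $(\sqrt{\alpha q},-\sqrt{\alpha q};q)_n$ and $(\beta^2;q)_n$ collapse into exactly the $q$-shifted factorials on the left of (S.54), (S.71) and (S.104); guided by the previous corollaries the natural choice is $\alpha=1$ with $\beta=\mathrm{i}$, which turns $(\beta,-\beta;q)_n$ into $(-1;q^2)_n$, $(\beta^2;q)_n$ into $(-1;q)_n$ and $(\sqrt{\alpha q},-\sqrt{\alpha q};q)_n$ into $(q;q^2)_n$, the base-$q^2$ instance being what manufactures the extra $(-q;q^2)_n$ and the modulus $16$ of (S.71). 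Next I would send $(a,b)$ to a degeneration such as $(0,0)$ or $(0,-q)$ (and their base-$q^2$ analogues), so that $(q/a,q/b;q)_n\,a^nb^n$ becomes a pure power of $q$ — possibly with a factor $(-1;q)_n$ absorbed when $q/b$ degenerates to $-1$ — and so that the prefactor $(\alpha q,\alpha ab/q;q)_\infty/(\alpha a,\alpha b;q)_\infty$ collapses to $(q;q)_\infty$, $(q;q)_\infty/(-q;q)_\infty$ or $(q^2;q^2)_\infty/(-q;q^2)_\infty$.

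The first thing to check is then that the left-hand side of the degenerated identity really is the series written in (S.54), (S.71), (S.104). This is a routine but careful rearrangement of $q$-shifted factorials: I would use $(-1;q^2)_n=2(-q^2;q^2)_{n-1}$ and $(-1;q^4)_n=2(-q^4;q^4)_{n-1}$ for $n\ge1$, the factorization $(q;q)_{2n}=(q;q)_n(q;q^2)_n(-q;q)_n$ and its $q^2$-analogue, and — in the (S.54) and (S.104) cases, which carry $(-q^2;q^2)_{n-1}$ — the reconciliation of the $n=0$ term either through the convention $(-q^2;q^2)_{-1}=\tfrac12$ coming from $(x;q)_{-1}=1/(1-x/q)$, or through the ``$\tfrac12+\tfrac12\sum$'' manoeuvre already used above in the proof of (S.127), noting that on the right the factor $(1-\alpha q^{4n})/(1-\alpha)$ contributes $1$ rather than $1+q^{2n}\big|_{n=0}$ at $n=0$, which is the compensation that balances the constant terms.

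On the right-hand side, after the degeneration the series is of theta type; extending it to a bilateral sum over $\mathbb Z$ (the appended terms vanish) and applying the Jacobi triple product identity \eqref{Jacobi} with the appropriate $(q,z)$ produces the product side. For (S.54) this is a single application at modulus $12$, and for (S.71) a single application at modulus $16$ after $q\mapsto q^2$. For (S.104) the theta sum must first be dissected into two pieces — the product side being a difference of two triple products at modulus $32$ — each of which is then summed by \eqref{Jacobi}, which is the same kind of bilateral splitting already carried out in the proofs of (S.94) and (S.99).

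The step I expect to be the main obstacle is this last one for (S.104): pinning down the correct modulus-$32$ dissection of the degenerated right-hand side, checking that the two resulting pieces collapse to the stated triple products $(-q^{16},-q^{16},q^{32};q^{32})_\infty$ and $(-q^{8},-q^{24},q^{32};q^{32})_\infty$, and keeping track of the factors $\tfrac12$ and $2$ introduced both by the substitution $(-q^2;q^2)_{n-1}=\tfrac12(-1;q^2)_n$ and by the prefactor. Once the right specialization and degeneration of $(\alpha,\beta,a,b)$ (and the right base) are identified, (S.54) and (S.71) should follow essentially immediately.
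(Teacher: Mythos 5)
Your proposal is correct and follows essentially the same route as the paper: the paper specializes Theorem~\ref{T4} to $\alpha=1$, $\beta=\mathrm{i}$ (its equation \eqref{C13}), then takes $(a,b)\to(0,0)$ for \eqref{S.54}, replaces $q$ by $q^2$ and takes $(a,b)\to(0,-q)$ for \eqref{S.71}, and uses the same $\tfrac12+\tfrac12\sum$ reconciliation of the $(\,\cdot\,;q^2)_{n-1}$ factors together with the Jacobi triple product. For \eqref{S.104} the paper's bookkeeping is the one you anticipated as the main obstacle: the degenerated right side collapses first to $(q^4,q^4,q^8;q^8)_{\infty}$ (modulus $8$), and the modulus-$32$ difference then comes from dissecting $(q;q)_{\infty}/(-q;q)_{\infty}$ and $(q^4,q^4,q^8;q^8)_{\infty}$ when absorbing the leftover constant $\tfrac12$.
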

\begin{proof}
When $\alpha=1,\beta={\rm i}$, Theorem \ref{T4} reduces to 
\begin{align}
&\frac{\left(q,ab/q;q\right)_{\infty}}{\left(a,b;q\right)_{\infty}}
\sum_{n=0}^{\infty}\frac{\left(q/a,q/b;q\right)_n\left(-1;q^2\right)_n}
{\left(q,-1;q\right)_{n}\left(q;q^2\right)_{n}}\left(ab/q\right)^n
\nonumber\\
&\quad=1+
\sum_{n=1}^{\infty}\left(-1\right)^n\left(1+q^{2n}\right)\frac{\left(q/a,q/b;q\right)_{2n}}
{\left(a,b;q\right)_{2n}}\left(ab\right)^{2n}q^{2n^2-3n}.\label{C13}
\end{align}
Then,  by taking $\left(a,b\right)\to\left(0,0\right)$ into \eqref{C13}, we can prove \eqref{S.54} as follows,
\begin{align*}
&\sum_{n=0}^{\infty}\frac{\left(-q^2;q^2\right)_{n-1}\left(1+q^n\right)}
{\left(q;q\right)_{2n}}q^{n^2}=
\sum_{n=0}^{\infty}\frac{\left(-1;q^2\right)_n}
{\left(q,-1;q\right)_{n}\left(q;q^2\right)_{n}}q^{n^2}=\frac{\left(q^5,q^7,q^{12};q^{12}\right)_{\infty}}{\left(q;q\right)_{\infty}}.
\end{align*}

Similarly, replacing $q$ by $q^2$ and inserting $\left(a,b\right)\to\left(0,-q\right)$ into \eqref{C13}, we arrive at \eqref{S.71}.

In addition, taking $\left(a,b\right)\to\left(0,0\right)$ into \eqref{C13}, we obtain 
\begin{align*}
&\sum_{n=0}^{\infty}\frac{
\left(-q;q\right)_{n}\left(-q^2;q^2\right)_{n-1}
}{\left(q;q\right)_{2n}}q^{\frac{n\left(n+1\right)}{2}}\\
&\quad
=
\frac{1}{2}+\frac{1}{2}\sum_{n=0}^{\infty}\frac{\left(-1;q^2\right)_n}
{\left(q;q\right)_n\left(q;q^2\right)_n}q^{\frac{n\left(n+1\right)}{2}}\\
&\quad=\frac{1}{2}+\frac{1}{2}\frac{\left(-q;q\right)_{\infty}\left(q^4,q^4,q^8;q^8\right)_{\infty}}{\left(q;q\right)_{\infty}}\\
&\quad=\frac{\left(-q;q\right)_{\infty}}
{\left(q;q\right)_{\infty}}\left(\left(-q^{16},-q^{16},q^{32};q^{32}\right)_{\infty}
-q\left(-q^{8},-q^{24},q^{32};q^{32}\right)_{\infty}\right)
\end{align*}
where we have used the following relations,
\begin{align*}
\frac{\left(q;q\right)_{\infty}}{\left(-q;q\right)_{\infty}}
&=\sum_{n=-\infty}^{\infty}\left(-1\right)^nq^{n^2}\\
&=\sum_{n=-\infty}^{\infty}q^{16n^2}
-2q\sum_{n=-\infty}^{\infty}q^{16n^2+8n}+q^4
\sum_{n=-\infty}^{\infty}q^{16n^2+16n}
\\
&=\left(-q^{16},-q^{16},q^{32};q^{32}\right)_{\infty}
-2q\left(-q^{8},-q^{24},q^{32};q^{32}\right)_{\infty}
+q^4\left(-1,-q^{32},q^{32};q^{32}\right)_{\infty}
\end{align*}
and
\begin{align*}
\left(q^4,q^4,q^8;q^8\right)_{\infty}=\left(-q^{16},-q^{16},q^{32};q^{32}\right)_{\infty}
-q^4\left(-1,-q^{32},q^{32};q^{32}\right)_{\infty}.
\end{align*}
Therefore, we prove \eqref{S.104} is true.
\end{proof}
\begin{corollary}
We have
\begin{align}
\sum_{n=0}^{\infty}\frac{\left(-q^2;q^2\right)_n}{\left(q;q\right)_{2n+1}\left(1+q^{n+1}\right)}q^{n^2+2n}
&=\frac{\left(q,q^{11},q^{12};q^{12}\right)_{\infty}}{\left(q;q\right)_{\infty}},
\tag{S.49}
\label{S.49}\\
\sum_{n=0}^{\infty}\frac{\left(-q^2;q^2\right)_n}{\left(q;q\right)_n\left(q;q^2\right)_{n+1}}q^{\frac{n\left(n+1\right)}{2}}
&=\frac{\left(-q;q\right)_{\infty}\left(q^2,q^6,q^8;q^8\right)_{\infty}}{\left(q;q\right)_{\infty}},\tag{\rm{\cite[Entry. 1.7.5]{Ramanujan}}}\label{N11}\\
\sum_{n=0}^{\infty}\frac{\left(-q;q^2\right)_{n+1}\left(-q^4;q^4\right)_n
q^{n^2+2n}}
{\left(-q^2;q^2\right)_{n+1}\left(q^2;q^2\right)_n\left(q^2;q^4\right)_{n+1}}
&=\frac{\left(-q;q^2\right)_{\infty}\left(q^2,q^{14},q^{16};q^{16}\right)_{\infty}}
{\left(q^2;q^2\right)_{\infty}}.\tag{S.68}\label{S.68}
\end{align}
\end{corollary}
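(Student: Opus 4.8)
The plan is to specialize Theorem \ref{T4} once, with $\alpha=q^2$ and $\beta=q\,{\rm i}$ where ${\rm i}^2=-1$, and extract all three identities from the resulting master transformation by choosing $(a,b)$. With this choice one has $\left(\beta,-\beta;q\right)_n=\left(-q^2;q^2\right)_n$, $\left(\beta^2;q\right)_n=\left(-q^2;q\right)_n$, $\left(\sqrt{\alpha q},-\sqrt{\alpha q};q\right)_n=\left(q^3;q^2\right)_n$, while on the right-hand side $\left(q,\alpha q/\beta^2;q^2\right)_n=\left(q^2;q^4\right)_n$, $\left(\alpha q,\beta^2q;q^2\right)_n=\left(q^6;q^4\right)_n$, and $\left(\alpha\beta ab\right)^{2n}=\left(-1\right)^nq^{6n}\left(ab\right)^{2n}$; thus Theorem \ref{T4} becomes a transformation whose series side is $\sum_{n\ge0}\frac{\left(q/a,q/b;q\right)_n\left(-q^2;q^2\right)_n}{\left(q;q\right)_n\left(q^3;q^2\right)_n\left(-q^2;q\right)_n}\left(qab\right)^n$ times the usual infinite-product prefactor, and whose sum side carries the factor $\left(1-q^{4n+2}\right)/\left(1-q^2\right)$ against $\left(q^2;q^4\right)_n/\left(q^6;q^4\right)_n$.

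For \eqref{S.49} I would let $\left(a,b\right)\to\left(0,0\right)$. On the left, $\left(q/a,q/b;q\right)_n\left(qab\right)^n\to q^{n^2+2n}$, the prefactor reduces to $\left(q^3;q\right)_{\infty}=\left(q;q\right)_{\infty}/\bigl(\left(1-q\right)\left(1-q^2\right)\bigr)$, and the elementary relation $\left(q;q\right)_n\left(q^3;q^2\right)_n\left(-q^2;q\right)_n=\left(q;q\right)_{2n+1}\left(1+q^{n+1}\right)/\left(1-q^2\right)$ turns the series into a constant multiple of the target sum. On the right, $\left(1-\alpha q^{4n}\right)/\left(1-\alpha\right)=\left(1-q^{4n+2}\right)/\left(1-q^2\right)$ cancels against $\left(q^2;q^4\right)_n/\left(q^6;q^4\right)_n=\left(1-q^2\right)/\left(1-q^{4n+2}\right)$, leaving only the survivor $\left(q^2;q\right)_{2n}/\left(q;q\right)_{2n}=\left(1-q^{2n+1}\right)/\left(1-q\right)$ times $\left(-1\right)^nq^{6n^2+5n}$; the factor $1-q^{2n+1}$ bilateralizes the sum, and the Jacobi triple product \eqref{Jacobi} with $\left(q,z\right)\to\left(q^{12},-q\right)$ identifies it as $\left(q,q^{11},q^{12};q^{12}\right)_{\infty}$. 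Comparing the two sides gives \eqref{S.49}.

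For \eqref{N11} the substitution is $\left(a,b\right)\to\left(0,-1/q\right)$: now $\left(q/b;q\right)_n=\left(-q^2;q\right)_n$ cancels the $\left(-q^2;q\right)_n$ in the denominator, the prefactor acquires $\left(-q;q\right)_{\infty}=\left(\alpha b;q\right)_{\infty}$ in its denominator, and the left side collapses to the target sum with weight $q^{n\left(n+1\right)/2}$; the same cancellations on the right leave $\left(1-q^{4n+2}\right)\left(-1\right)^nq^{4n^2+2n}/\left(1-q^2\right)$, which bilateralizes and becomes $\left(q^2,q^6,q^8;q^8\right)_{\infty}$ by \eqref{Jacobi} with $\left(q,z\right)\to\left(q^8,-q^2\right)$; moving $\left(-q;q\right)_{\infty}$ to the other side yields \eqref{N11}. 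Finally, \eqref{S.68} comes out of the same master transformation after replacing $q$ by $q^2$ (so effectively $\alpha=q^4$, $\beta=q^2{\rm i}$) and then setting $\left(a,b\right)\to\left(0,-1/q\right)$; the only difference is that $q^2\cdot\left(-1/q\right)=-q$ shifts the linear exponents, producing weight $q^{n^2+2n}$ on the left and the theta series $\sum_{n}\left(-1\right)^nq^{8n^2+6n}=\left(q^2,q^{14},q^{16};q^{16}\right)_{\infty}$ on the right via \eqref{Jacobi} with $\left(q,z\right)\to\left(q^{16},-q^2\right)$, together with a prefactor $\left(-q;q^2\right)_{\infty}$.

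The $q$-Pochhammer rewritings and the Jacobi triple product evaluations are routine. The one delicate point is the sum side of the master transformation: after the limits in $a,b$ one must verify that the product of $\left(1-\alpha q^{4n}\right)/\left(1-\alpha\right)$, the ratio $\left(q^2;q^4\right)_n/\left(q^6;q^4\right)_n$, and the surviving fragments of $\left(\alpha;q\right)_{2n}/\left(q;q\right)_{2n}$ and $\left(q/b;q\right)_{2n}/\left(\alpha b;q\right)_{2n}$ telescopes to exactly the single $\left(1\pm q^{\text{linear}}\right)$ factor needed to promote the one-sided sum to a genuine bilateral theta function; without this cancellation the series side would not be a theta quotient at all.
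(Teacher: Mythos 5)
Your proposal is correct and matches the paper's proof: the paper likewise specializes Theorem \ref{T4} with $\alpha=q^2$, $\beta=q\mathrm{i}$ (its equation \eqref{C14}, which is your master transformation up to the trivial rewriting $(q;q)_n(-q;q)_{n+1}(q;q^2)_{n+1}=(1-q^2)(q;q)_n(-q^2;q)_n(q^3;q^2)_n$), and then takes $(a,b)\to(0,0)$ for \eqref{S.49}, $(a,b)\to(0,-1/q)$ for \eqref{N11}, and $q\to q^2$ followed by $(a,b)\to(0,-1/q)$ for \eqref{S.68}. Your limit computations, the bilateralization via the $(1\pm q^{\mathrm{linear}})$ factors, and the Jacobi triple product evaluations all check out.
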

\begin{proof}
When $\alpha=q^2,\beta=q{\rm i}$, Theorem \ref{T4} reduces to 
\begin{align}
&\frac{\left(q,abq;q\right)_{\infty}}{\left(aq^2,bq^2;q\right)_{\infty}}
\sum_{n=0}^{\infty}\frac{\left(q/a,q/b;q\right)_n\left(-q^2;q^2\right)_n}
{\left(q;q\right)_{n}\left(-q;q\right)_{n+1}\left(q;q^2\right)_{n+1}}\left(abq\right)^n
\nonumber\\
&\quad=
\sum_{n=0}^{\infty}\left(-1\right)^n\left(1-q^{2n+1}\right)\frac{\left(q/a,q/b;q\right)_{2n}}
{\left(aq^2,bq^2;q\right)_{2n}}\left(ab\right)^{2n}q^{2n^2+3n}.\label{C14}
\end{align}

Then, we take $\left(a,b\right)\to\left(0,0\right)$ and  $\left(a,b\right)\to\left(0,-1/q\right)$ into \eqref{C14}, respectively,  to obtain \eqref{S.49} and \ref{N11}.

In addition, replacing $q$ by $q^2$ and  setting  $\left(a,b\right)\to\left(0,-1/q\right)$ to \eqref{C14}, we get \eqref{S.68}.
\end{proof}

\begin{corollary}
We have
\begin{align}
\sum_{n=0}^{\infty}\frac{\left(-q;q^2\right)_{n+1}\left(q^4;q^4\right)_n
q^{n^2+2n}}
{\left(q^2;q^2\right)_{n+1}\left(q^2;q^2\right)_n\left(q^2;q^4\right)_{n+1}}
&=\frac{\left(-q;q^2\right)_{\infty}\left(-q^2,-q^{14},q^{16};q^{16}\right)_{\infty}}
{\left(q^2;q^2\right)_{\infty}}.\tag{S.69}\label{S.69}
\end{align}
\end{corollary}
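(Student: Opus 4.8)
The plan is to imitate the proof of \eqref{S.68} almost verbatim, changing only the choice of $\beta$ in Theorem \ref{T4}. Where \eqref{C14} came from the specialization $\alpha=q^{2}$, $\beta=q\mathrm{i}$, I would instead take $\alpha=q^{2}$, $\beta=q$. Then $\beta^{2}=q^{2}$, so $(\beta,-\beta;q)_n=(q,-q;q)_n=(q^{2};q^{2})_n$ is an ordinary $q$-factorial and the alternating sign $(-1)^{n}$ present in \eqref{C14} disappears. Simplifying the quotients of $q$-shifted factorials exactly as in the derivation of \eqref{C14} — writing $(\sqrt{\alpha q},-\sqrt{\alpha q};q)_n=(q^{3/2},-q^{3/2};q)_n=(q^{3};q^{2})_n$, using $(q^{3};q^{2})_n=(q;q^{2})_{n+1}/(1-q)$ together with $(\beta^{2};q)_n=(q^{2};q)_n=(q;q)_{n+1}/(1-q)$ on the left, and collapsing $\dfrac{(1-q^{4n+2})(q^{2};q)_{2n}(q,q;q^{2})_n}{(1-q^{2})(q;q)_{2n}(q^{3},q^{3};q^{2})_n}$ down to $\dfrac{1+q^{2n+1}}{1+q}$ on the right — Theorem \ref{T4} reduces to the sign-flipped companion of \eqref{C14}:
\[
\frac{(q,abq;q)_{\infty}}{(aq^{2},bq^{2};q)_{\infty}}\sum_{n=0}^{\infty}\frac{(q/a,q/b;q)_n(q^{2};q^{2})_n}{(q;q)_n(q;q)_{n+1}(q;q^{2})_{n+1}}(abq)^{n}=\sum_{n=0}^{\infty}\bigl(1+q^{2n+1}\bigr)\frac{(q/a,q/b;q)_{2n}}{(aq^{2},bq^{2};q)_{2n}}(ab)^{2n}q^{2n^{2}+3n},
\]
valid for $\max\{|ab|,|aq^{2}|,|bq^{2}|\}<1$.

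Next I would replace $q$ by $q^{2}$ and then let $(a,b)\to(0,-1/q)$, precisely the moves used for \eqref{S.68}. On the left, the limits $(q^{2}/a;q^{2})_na^{n}\to(-1)^{n}q^{n^{2}+n}$ and $(q^{2}/b;q^{2})_n\to(-q^{3};q^{2})_n=(-q;q^{2})_{n+1}/(1+q)$ turn the series into the left-hand side of \eqref{S.69}, while the infinite-product prefactor becomes $(q^{2};q^{2})_{\infty}/(-q^{3};q^{2})_{\infty}=(1+q)\,(q^{2};q^{2})_{\infty}/(-q;q^{2})_{\infty}$; the two stray factors $1+q$ cancel, leaving exactly $\dfrac{(q^{2};q^{2})_{\infty}}{(-q;q^{2})_{\infty}}$ times the sum in \eqref{S.69}. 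On the right, $(q^{2}/a;q^{2})_{2n}a^{2n}\to q^{4n^{2}+2n}$ and the $b$-dependent factor collapses to $q^{-2n}$, so the right-hand side reduces to $\displaystyle\sum_{n=0}^{\infty}\bigl(1+q^{4n+2}\bigr)q^{8n^{2}+6n}$.

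Finally I would unfold this last sum: replacing $n$ by $-n-1$ in the part carrying the factor $q^{4n+2}$ converts $\sum_{n\ge0}(1+q^{4n+2})q^{8n^{2}+6n}$ into the bilateral theta series $\sum_{n=-\infty}^{\infty}q^{8n^{2}+6n}$, which by the Jacobi triple product \eqref{Jacobi} with $(q,z)\to(q^{16},q^{14})$ equals $(-q^{2},-q^{14},q^{16};q^{16})_{\infty}$. Equating the two sides of the resulting identity and cancelling the common factor $\dfrac{(q^{2};q^{2})_{\infty}}{(-q;q^{2})_{\infty}}$ gives \eqref{S.69}.

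The $a\to0$ limit and the triple-product evaluation are the same mechanical steps used throughout this part of the paper (in particular in the proof of \eqref{S.68}), so I do not expect any genuine difficulty there. The one place needing care is the bookkeeping of the constants $1\pm q$ and $1\pm q^{2}$ in the first two paragraphs: one must check that every such factor either cancels inside the transformation formula or cancels against the $1+q$ coming from $(-q^{3};q^{2})_n=(-q;q^{2})_{n+1}/(1+q)$, so that \eqref{S.69} emerges with coefficient exactly $1$ and no spurious prefactor survives.
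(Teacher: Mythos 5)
Your proposal is correct and follows the paper's own route exactly: the paper likewise specializes Theorem \ref{T4} at $(\alpha,\beta)\to(q^2,q)$ to obtain the sign-free companion of \eqref{C14} (equation \eqref{C15}), and then gets \eqref{S.69} by replacing $q$ with $q^2$ and letting $(a,b)\to(0,-1/q)$. Your bookkeeping of the $1\pm q$ factors and the triple-product evaluation with $(q,z)\to(q^{16},q^{14})$ check out, so no gap remains.
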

\begin{proof}
Putting $\left(\alpha,\beta\right)\to\left(q^2,q\right)$ in Theorem \ref{T4}, we get
\begin{align}
&\frac{\left(q,abq;q\right)_{\infty}}{\left(aq^2,bq^2;q\right)_{\infty}}
\sum_{n=0}^{\infty}\frac{\left(q/a,q/b;q\right)_n\left(q^2;q^2\right)_n}
{\left(q;q\right)_n\left(q;q\right)_{n+1}\left(q;q^2\right)_{n+1}}\left(abq\right)^n\nonumber\\
&\quad=\sum_{n=0}^{\infty}\left(1+q^{2n+1}\right)\frac{\left(q/a,q/b;q\right)_{2n}}{\left(aq^2,bq^2;q\right)_{2n}}
\left(ab\right)^{2n}q^{2n^2+3n}.\label{C15}
\end{align}
Then, we  can obtain \eqref{S.69} by replacing $q$ by $q^2$ and letting $\left(a,b\right)\to\left(0,-1/q\right)$ in \eqref{C15}.
\end{proof}
\section{Identities from two ${_5\phi_4}$ summations of Andrews}\label{sec7}
In section, we utilize Andrew's  two ${_5\phi_4}$ summation formulas to establish some $q$-series transformations.
\begin{theorem}\label{theorem2}
For $|\alpha ab/q|<1$, we have
\begin{align*}
&\frac{\left(\alpha q,\alpha ab/q;q\right)_{\infty}}{\left(\alpha a,\alpha b;q\right)_{\infty}}
\sum_{n=0}^{\infty}
\frac{\left(q/a,q/b;q\right)_n\left(\alpha^{1/3} q^{1/3};q^{1/3}\right)_{3n}}{\left(q;q\right)_n\left(\alpha q;q\right)_{2n}}\left(\alpha ab/q\right)^n
\\
&\quad=
\sum_{n=0}^{\infty}\left(-1\right)^n
\frac{1-\alpha^{\frac{1}{3}}q^{\frac{2n}{3}}}{1-\alpha^{\frac{1}{3}}}
\frac{\left(q/a,q/b;q\right)_n\left(\alpha^{1/3};q^{1/3}\right)_n}
{\left(\alpha a,\alpha b;q\right)_n\left(q^{1/3};q^{1/3}\right)_n}\alpha^{\frac{4n}{3}}a^nb^nq^{\frac{3n^2-7n}{6}}.
\end{align*}
\end{theorem}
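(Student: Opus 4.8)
The plan is to derive Theorem \ref{theorem2} by specializing the general transformation \eqref{Formula4} (Theorem \ref{thm1}) and then summing the resulting inner series with Andrews' strange $_5\phi_4$ summation formula. Concretely, I would start from \eqref{Formula4}, whose inner $_5\phi_4$ has numerator parameters $q^{-n}, \alpha q^n, \beta, \gamma, \lambda$ and denominator parameters $c, d, e, h$, and choose these six free parameters so that the inner series becomes exactly the left side of Andrews' identity \eqref{Andrews2}. Andrews' sum (on base $q$) has numerator $q^{-n}, \alpha q^n, \alpha^{1/3}q^{1/3}, \alpha^{1/3}q^{2/3}, \alpha^{1/3}q$ and denominator $\alpha^{1/2}q, -\alpha^{1/2}q, \alpha^{1/2}q^{1/2}, -\alpha^{1/2}q^{1/2}$. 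So the matching is $\beta = \alpha^{1/3}q^{1/3}$, $\gamma = \alpha^{1/3}q^{2/3}$, $\lambda = \alpha^{1/3}q$, and $\{c,d,e,h\} = \{\alpha^{1/2}q,\, -\alpha^{1/2}q,\, \alpha^{1/2}q^{1/2},\, -\alpha^{1/2}q^{1/2}\}$.

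Next I would rewrite both sides under this specialization. On the left of \eqref{Formula4}, the $_5\phi_4$ becomes $\sum_n \frac{(q/a,q/b,\alpha^{1/3}q^{1/3},\alpha^{1/3}q^{2/3},\alpha^{1/3}q;q)_n}{(q,\alpha^{1/2}q,-\alpha^{1/2}q,\alpha^{1/2}q^{1/2},-\alpha^{1/2}q^{1/2};q)_n}(\alpha ab/q)^n$, and I would collapse the product of the three numerator factors using the standard identity $(x,xq^{1/3},xq^{2/3};q)_n = (x;q^{1/3})_{3n}$ with $x = \alpha^{1/3}q^{1/3}$, giving $(\alpha^{1/3}q^{1/3};q^{1/3})_{3n}$; likewise the four denominator factors combine via $(\alpha^{1/2}q,-\alpha^{1/2}q;q)_n(\alpha^{1/2}q^{1/2},-\alpha^{1/2}q^{1/2};q)_n = (\alpha q^2;q^2)_n(\alpha q;q^2)_n = (\alpha q;q)_{2n}$. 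This produces exactly the summand $\frac{(q/a,q/b;q)_n(\alpha^{1/3}q^{1/3};q^{1/3})_{3n}}{(q;q)_n(\alpha q;q)_{2n}}(\alpha ab/q)^n$ appearing on the left of Theorem \ref{theorem2}. On the right, I substitute the closed form from \eqref{Andrews2} for the inner $_5\phi_4$, namely $\frac{(1-\alpha)(1-\alpha^{1/3}q^{2n/3})(q;q)_n(\alpha^{1/3};q^{1/3})_n}{(1-\alpha^{1/3})(1-\alpha q^{2n})(\alpha;q)_n(q^{1/3};q^{1/3})_n}(\alpha q)^{n/3}$, and multiply it against the outer factor $\frac{(1-\alpha q^{2n})(\alpha,q/a,q/b;q)_n}{(1-\alpha)(q,\alpha a,\alpha b;q)_n}(-\alpha ab)^n q^{n(n-3)/2}$.

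The remaining work is bookkeeping: in the product of outer and inner factors, the pairs $(1-\alpha)$, $(1-\alpha q^{2n})$, $(\alpha;q)_n$, and $(q;q)_n$ all cancel, leaving $\frac{1-\alpha^{1/3}q^{2n/3}}{1-\alpha^{1/3}}\cdot\frac{(q/a,q/b;q)_n(\alpha^{1/3};q^{1/3})_n}{(\alpha a,\alpha b;q)_n(q^{1/3};q^{1/3})_n}$ times the monomial $(-1)^n(\alpha ab)^n(\alpha q)^{n/3}q^{n(n-3)/2}$. I then just need to verify the exponent of $q$ and the power of $\alpha$: the $\alpha$-power is $\alpha^n \cdot \alpha^{n/3} = \alpha^{4n/3}$, matching the claimed $\alpha^{4n/3}$; and the $q$-exponent is $\frac{n(n-3)}{2} + \frac{n}{3} = \frac{3n(n-3)+2n}{6} = \frac{3n^2-7n}{6}$, matching the claimed $q^{(3n^2-7n)/6}$. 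Thus both sides reduce to the stated identity, completing the proof.

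The only genuine obstacle is the convergence/validity bookkeeping: Andrews' formula \eqref{Andrews2} has a terminating inner sum (finite for each fixed $n$), so there is no issue there, and the outer series converges for $|\alpha ab/q|<1$, which is precisely the stated hypothesis; I should also note that \eqref{Formula4} requires $\max\{|c|,|d|,|e|,|h|,\dots\}<1$, but since in our specialization these parameters are $\pm\alpha^{1/2}q$ and $\pm\alpha^{1/2}q^{1/2}$, the relevant restriction is absorbed into $|\alpha ab/q|<1$ together with $|q|<1$ (more precisely one may first prove the identity for parameters in a suitable range and then extend by analytic continuation in $\alpha,a,b$). The fractional-power factorizations $(x,xq^{1/3},xq^{2/3};q)_n=(x;q^{1/3})_{3n}$ and $(\alpha^{1/2}q,-\alpha^{1/2}q,\alpha^{1/2}q^{1/2},-\alpha^{1/2}q^{1/2};q)_n=(\alpha q;q)_{2n}$ are elementary and deserve a one-line justification, but present no difficulty.
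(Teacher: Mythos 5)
Your proposal is correct and follows essentially the same route as the paper: the paper also specializes \eqref{Formula4} with $\beta=\alpha^{1/3}q^{1/3}$, $\gamma=\alpha^{1/3}q^{2/3}$, $\lambda=\alpha^{1/3}q$, $c=-d=\alpha^{1/2}q$, $e=-h=\alpha^{1/2}q^{1/2}$ and evaluates the inner $_5\phi_4$ by Andrews' formula \eqref{Andrews2}. Your explicit bookkeeping of the factorizations $(\alpha^{1/3}q^{1/3},\alpha^{1/3}q^{2/3},\alpha^{1/3}q;q)_n=(\alpha^{1/3}q^{1/3};q^{1/3})_{3n}$, the denominator collapse to $(\alpha q;q)_{2n}$, and the exponent check $\tfrac{n(n-3)}{2}+\tfrac{n}{3}=\tfrac{3n^2-7n}{6}$ is exactly what the paper leaves implicit.
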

\begin{proof}
We begin by recalling the following strange $q$-series identity due to Andrews
\cite[Eq. (4.5)]{Andrews}:
\begin{align}
&{_5\phi_4}\left(\begin{array}{ccccc}
q^{-n},\,\alpha q^n,\,\alpha^{1/3}q^{1/3},\,\alpha^{1/3}q^{2/3},\,\alpha^{1/3}q\\
{\alpha}^{1/2}q,\,-\alpha^{1/2}q,\,\alpha^{1/2}q^{1/2},\,-\alpha^{1/2}q^{1/2}
\end{array};q,\,q
\right)\nonumber\\
&\quad=\frac{\left(1-\alpha\right)\left(1-\alpha^{1/3}q^{2n/3}\right)
\left(q;q\right)_n\left(\alpha^{1/3};q^{1/3}\right)_n}
{\left(1-\alpha^{1/3}\right)\left(1-\alpha q^{2n}\right)\left(\alpha;q\right)_n\left(q^{1/3};q^{1/3}\right)_n}\left(\alpha q\right)^{n/3}.
\label{Andrews2}
\end{align}
Then, we substitute $\beta=\alpha^{1/3}q^{1/3}, \beta=\alpha^{1/3}q^{2/3}, \lambda=\alpha^{1/3}q, c=-d=\alpha^{1/2}q$ and 
$e=-h=\alpha^{1/2}q^{1/2}$ into \eqref{Formula4} and employ \eqref{Andrews2} to obtain the desired result.
\end{proof}
\begin{corollary}
We have
\begin{align}
&\sum_{n=0}^{\infty}\frac{\left(q;q\right)_{3n+1}}
{\left(q;q^3\right)_n\left(q^3;q^3\right)_{2n+1}}q^{3n^2+3n}
=\frac{\left(q,q^8,q^9;q^9\right)_{\infty}}{\left(q^3;q^3\right)_{\infty}}
,\tag{S.40}\label{S.40}\\
&\sum_{n=0}^{\infty}\frac{\left(q;q\right)_{3n+1}\left(-q^3;q^3\right)_n
}
{\left(q^3;q^3\right)_n\left(q^3;q^3\right)_{2n+1}}
q^{\frac{{3n^2+3n}}{2}}
=\frac{\left(-q^3;q^3\right)_{\infty}
\left(q,q^5,q^6;q^6\right)_{\infty}}{\left(q^3;q^3\right)_{\infty}}.\tag{\cite[Eq. (7.3)]{Bailey}}\label{new5}
\end{align}
\end{corollary}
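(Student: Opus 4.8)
The plan is to specialize Theorem~\ref{theorem2} after replacing the base $q$ by $q^3$ and then setting $\alpha=q^3$. Under $q\mapsto q^3$ every fractional power becomes an integer power: $\bigl(\alpha^{1/3}q^{1/3};q^{1/3}\bigr)_{3n}$ turns into $(\alpha^{1/3}q;q)_{3n}=(\alpha^{1/3}q,\alpha^{1/3}q^2,\alpha^{1/3}q^3;q^3)_n$, the balancing factor $\bigl(1-\alpha^{1/3}q^{2n/3}\bigr)/\bigl(1-\alpha^{1/3}\bigr)$ becomes $\bigl(1-\alpha^{1/3}q^{2n}\bigr)/\bigl(1-\alpha^{1/3}\bigr)$, and $\bigl(\alpha^{1/3};q^{1/3}\bigr)_n$ becomes $(\alpha^{1/3};q)_n$. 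Taking $\alpha=q^3$ (so $\alpha^{1/3}=q$) then produces the clean reductions $(\alpha^{1/3}q;q)_{3n}=(q^2;q)_{3n}=(q;q)_{3n+1}/(1-q)$, $(\alpha q^3;q^3)_{2n}=(q^6;q^3)_{2n}=(q^3;q^3)_{2n+1}/(1-q^3)$ and $\bigl(1-\alpha^{1/3}q^{2n}\bigr)/\bigl(1-\alpha^{1/3}\bigr)=(1-q^{2n+1})/(1-q)$, while $(\alpha^{1/3};q)_n=(q;q)_n$ cancels the $(q;q)_n$ already present in the denominator on the right. This brings both sides into the shape of \eqref{S.40} and \eqref{new5} up to a global constant.

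To obtain \eqref{S.40} I would next let $a,b\to0$, using $(q^3/a;q^3)_na^n\to(-1)^nq^{3n(n+1)/2}$ (and likewise for $b$); the prefactor collapses to $(q^6;q^3)_\infty=(q^3;q^3)_\infty/(1-q^3)$, the left side becomes $(q^3;q^3)_\infty/(1-q)$ times the series on the left of \eqref{S.40}, and the right side becomes $\tfrac{1}{1-q}\sum_{n\ge0}(-1)^n(1-q^{2n+1})q^{(9n^2+7n)/2}$. To obtain \eqref{new5} I would instead keep $b$ free, put $b=-1$, and let only $a\to0$; then $(q^3/b;q^3)_n$ and $(\alpha b;q^3)_n$ both equal $(-q^3;q^3)_n$ and cancel on the right but survive (as a single factor) on the left, which is exactly the extra $(-q^3;q^3)_n$ in the numerator of \eqref{new5}, and the surviving $q^{3n(n+1)/2}$ explains the halved exponent $q^{(3n^2+3n)/2}$; the right side now becomes $\tfrac{1}{1-q}\sum_{n\ge0}(-1)^n(1-q^{2n+1})q^{3n^2+2n}$.

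The key observation is that each of these one-sided ``partial theta'' sums is in fact a full bilateral theta sum: replacing $n$ by $-1-n$ in the $-q^{2n+1}$ contribution reproduces exactly the terms of negative index, so
\begin{equation*}
\sum_{n\ge0}(-1)^n(1-q^{2n+1})q^{f(n)}=\sum_{n\in\Z}(-1)^nq^{f(n)}
\end{equation*}
for $f(n)=(9n^2+7n)/2$ and for $f(n)=3n^2+2n$. Now $(9n^2+7n)/2=9\binom n2+8n$, so the Jacobi triple product identity \eqref{Jacobi} with base $q^9$ and $z=-q^8$ evaluates the first sum as $(q,q^8,q^9;q^9)_\infty$; and $3n^2+2n=6\binom n2+5n$, so \eqref{Jacobi} with base $q^6$ and $z=-q^5$ evaluates the second as $(q,q^5,q^6;q^6)_\infty$. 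Cancelling the common factor $\tfrac1{1-q}$ and the respective prefactors $(q^3;q^3)_\infty$ and $(q^3;q^3)_\infty/(-q^3;q^3)_\infty$ then yields \eqref{S.40} and \eqref{new5}.

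I expect the only delicate point to be the simultaneous limit $a,b\to0$ (respectively $a\to0$ with $b=-1$) together with the routine but fiddly algebra identifying the specialized $q$-shifted factorials with the compact forms appearing in \eqref{S.40}--\eqref{new5}; after that, the folding of the partial theta series into a bilateral sum and the invocation of \eqref{Jacobi} are immediate.
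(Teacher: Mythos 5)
Your proposal is correct and is essentially the paper's own argument: the paper applies Theorem \ref{theorem2} with base $q^6$ and $\alpha=q^6$, takes $(a,b)\to(0,0)$ for \eqref{S.40} and $(a,b)\to(0,-1)$ for \eqref{new5}, evaluates the resulting one-sided theta sums by the Jacobi triple product \eqref{Jacobi}, and only then rescales $q^2\to q$, which is exactly your base-$q^3$, $\alpha=q^3$ computation up to that cosmetic rescaling (your explicit folding of $\sum_{n\ge0}(-1)^n(1-q^{2n+1})q^{f(n)}$ into a bilateral sum is done implicitly in the paper). One harmless remark: like the paper's own proof, what you actually establish is \eqref{S.40} with $(q^3;q^3)_n$ in the denominator rather than the printed $(q;q^3)_n$ — since $(q;q)_{3n+1}=(q;q^3)_{n+1}(q^2;q^3)_n(q^3;q^3)_n$, the printed form appears to be a typo in the statement, so this is not a defect of your argument.
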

\begin{proof}
Taking replacing  $q$ by $q^6$ and  fixing $\alpha=q^6$ in Theorem \ref{theorem2}, we deduce that,
for $\max\left\{|ab|, |aq^6|, |bq^6|\right\}<1$, 
\begin{align}
&\frac{\left(q^{12},ab;q^6\right)_{\infty}}{\left(aq^6,bq^6;q^6\right)_{\infty}}
\sum_{n=0}^{\infty}
\frac{\left(q^6/a,q^6/b,q^4,q^8;q^6\right)_n}
{\left(q^6,-q^6,q^9,-q^9;q^6\right)_n}\left(ab\right)^n\nonumber\\
&\quad=\sum_{n=0}^{\infty}\left(-1\right)^n\frac{1-q^{4n+2}}{1-q^2}
\frac{\left(q^6/a,q^6/b;q^6\right)_n}{\left(aq^6,bq^6;q^6\right)_n}a^nb^nq^{3n^2+n}.\label{C18}
\end{align}
Taking $\left(a,b\right)\to\left(0,0\right)$ into \eqref{C18}, we obtain
\begin{align}
&\left(q^{12};q^6\right)_{\infty}
\sum_{n=0}^{\infty}
\frac{\left(q^4,q^8;q^6\right)_n}{\left(q^{12},q^{18};q^{12}\right)_n}
q^{6n^2+6n}
=\frac{\left(q^2,q^{16},q^{18};q^{18}\right)_{\infty}}{1-q^2}.\label{B2}
\end{align}
Then, replacing $q^2$ by $q$, we get
\begin{align*}
\sum_{n=0}^{\infty}\frac{\left(q;q^3\right)_{n+1}\left(q^2;q^3\right)_n}
{\left(q^6;q^3\right)_{2n}}q^{3n^2+3n}
=\frac{\left(q,q^8,q^9;q^9\right)_{\infty}}{
\left(q^6;q^3\right)_{\infty}},
\end{align*}
which means \eqref{S.40} is true.

Moreover, inserting $\left(a,b\right)\to\left(0,-1\right)$ into \eqref{C18}, we obtain
\begin{align}
&\frac{\left(q^{12};q^6\right)_{\infty}}{\left(-q^6;q^6\right)_{\infty}}
\sum_{n=0}^{\infty}
\frac{\left(q^4,q^8;q^6\right)_n}
{\left(q^6;q^6\right)_n\left(q^{18};q^{12}\right)_n}q^{3n^2+3n}
=\frac{\left(q^2,q^{10},q^{12};q^{12}\right)_{\infty}}{1-q^2}.\label{B3}
\end{align}
Then, letting $q^2\to q$ in \eqref{B3}, we have
\begin{align*}
\sum_{n=0}^{\infty}\frac{\left(q;q^3\right)_{n+1}\left(q^2;q^3\right)_n}
{\left(q^3;q^3\right)_n\left(q^3;q^6\right)_{n+1}}q^{\frac{3n^2+3n}{2}}
=
\frac{\left(-q^3;q^3\right)_n\left(q,q^{5},q^{6};q^{6}\right)_{\infty}}
{\left(q^3;q^3\right)_{\infty}},
\end{align*}
which means \eqref{new5} is true.
\end{proof}

\begin{corollary}
We have
\begin{align}
\sum_{n=0}^{\infty}\frac{\left(q;q\right)_{3n}}{\left(q^3;q^3\right)_n\left(q^3;q^3\right)_{2n}}q^{3n^2}
&=\frac{\left(q^4,q^5,q^9;q^9\right)_{\infty}}{\left(q^3;q^3\right)_{\infty}},\tag{S.42}\label{S.42}\\
\sum_{n=0}^{\infty}\frac{\left(q;q\right)_{3n}\left(-q^3;q^3\right)_n}{\left(q^3;q^3\right)_n\left(q^3;q^3\right)_{2n}}q^{\frac{3n^2-3n}{2}}
&=\frac{\left(-q^3;q^3\right)_{\infty}}{\left(q^3;q^3\right)_{\infty}}
\left(\left(q,q^5,q^6;q^6\right)_{\infty}+\left(q^2,q^4,q^6;q^6\right)_{\infty}\right),\label{N12}\\
\sum_{n=0}^{\infty}\frac{\left(q^2;q^2\right)_{3n}}{\left(q^3;q^3\right)_{2n}\left(q^{12};q^{12}\right)_{n}}q^{3n^2}
&=\frac{\left(-q^3;q^6\right)_{\infty}\left(q^5,q^7,q^{12};q^{12}\right)_{\infty}}{\left(q^6;q^6\right)_{\infty}}
.\tag{\rm{\cite[Eq. (7.5)]{Bailey}}}\label{D1}
\end{align}
\end{corollary}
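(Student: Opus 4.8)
The plan is to deduce all three identities from Theorem \ref{theorem2} by setting $\alpha=1$, replacing $q$ by a suitable power of $q$, and then sending $\left(a,b\right)$ to a convenient pair. The only delicate point is the degeneration at $\alpha=1$: on the right-hand side of Theorem \ref{theorem2} the prefactor $\frac{1-\alpha^{1/3}q^{2n/3}}{1-\alpha^{1/3}}$ is $0/0$ for $n\ge1$, but if we absorb $\frac{1}{1-\alpha^{1/3}}$ into $\left(\alpha^{1/3};q^{1/3}\right)_n$ it becomes $\left(\alpha^{1/3}q^{1/3};q^{1/3}\right)_{n-1}$, whose limit is $\left(q^{1/3};q^{1/3}\right)_{n-1}$; together with the factor $\left(q^{1/3};q^{1/3}\right)_n$ in the denominator the whole coefficient tends to $1+q^{n/3}$ for $n\ge1$ while the $n=0$ term is $1$. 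Carrying this out with $q$ replaced by $q^3$ gives
\begin{align*}
&\frac{\left(q^3,ab/q^3;q^3\right)_\infty}{\left(a,b;q^3\right)_\infty}
\sum_{n=0}^\infty\frac{\left(q^3/a,q^3/b;q^3\right)_n\left(q;q\right)_{3n}}{\left(q^3;q^3\right)_n\left(q^3;q^3\right)_{2n}}\left(ab/q^3\right)^n\\
&\quad=1+\sum_{n=1}^\infty\left(-1\right)^n\left(1+q^n\right)\frac{\left(q^3/a,q^3/b;q^3\right)_n}{\left(a,b;q^3\right)_n}a^nb^nq^{\left(3n^2-7n\right)/2},
\end{align*}
and, similarly, replacing $q$ by $q^6$ produces the companion with $\left(q;q\right)_{3n}$, $\left(q^3;q^3\right)_n$, $\left(q^3;q^3\right)_{2n}$, $1+q^n$, $q^{\left(3n^2-7n\right)/2}$ replaced by $\left(q^2;q^2\right)_{3n}$, $\left(q^6;q^6\right)_n$, $\left(q^6;q^6\right)_{2n}$, $1+q^{2n}$, $q^{3n^2-7n}$, respectively.

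For \eqref{S.42} I would set $\left(a,b\right)\to\left(0,0\right)$ in the first displayed identity. Using $\left(q^3/a;q^3\right)_na^n\to\left(-1\right)^nq^{\left(3n^2+3n\right)/2}$, the $a,b$-factors on both sides become pure powers of $q$: the left side becomes $\left(q^3;q^3\right)_\infty\sum_n\frac{\left(q;q\right)_{3n}}{\left(q^3;q^3\right)_n\left(q^3;q^3\right)_{2n}}q^{3n^2}$, and the right side becomes $1+\sum_{n\ge1}\left(-1\right)^n\left(1+q^n\right)q^{\left(9n^2-n\right)/2}=\sum_{n=-\infty}^{\infty}\left(-1\right)^nq^{\left(9n^2-n\right)/2}$; the Jacobi triple product \eqref{Jacobi} with $\left(q,z\right)\to\left(q^9,-q^4\right)$ evaluates the last series to $\left(q^4,q^5,q^9;q^9\right)_\infty$, and dividing out $\left(q^3;q^3\right)_\infty$ yields \eqref{S.42}. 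For \eqref{N12} I would instead take $\left(a,b\right)\to\left(0,-1\right)$; now an extra factor $\frac{\left(-q^3;q^3\right)_n}{\left(-1;q^3\right)_n}=\frac{1+q^{3n}}{2}$ appears on the right, so after expanding $\left(1+q^n\right)\left(1+q^{3n}\right)=1+q^n+q^{3n}+q^{4n}$ and re-summing over $\mathbb{Z}$ (taking care to split off the $n=0$ term first) the right side equals $\frac12\bigl(\sum_{n\in\mathbb{Z}}\left(-1\right)^nq^{3n^2-2n}+\sum_{n\in\mathbb{Z}}\left(-1\right)^nq^{3n^2-n}\bigr)$; applying \eqref{Jacobi} with $\left(q,z\right)\to\left(q^6,-q\right)$ and $\left(q,z\right)\to\left(q^6,-q^2\right)$ turns this into $\frac12\bigl(\left(q,q^5,q^6;q^6\right)_\infty+\left(q^2,q^4,q^6;q^6\right)_\infty\bigr)$, and since the left prefactor is $\frac{\left(q^3;q^3\right)_\infty}{\left(-1;q^3\right)_\infty}=\frac{\left(q^3;q^3\right)_\infty}{2\left(-q^3;q^3\right)_\infty}$, the two factors of $\frac12$ cancel and \eqref{N12} follows.

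For \eqref{D1} I would use the $q\mapsto q^6$ companion and send $\left(a,b\right)\to\left(0,-q^3\right)$. Then $\frac{\left(q^6/b;q^6\right)_n}{\left(b;q^6\right)_n}=1$ because both sides equal $\left(-q^3;q^6\right)_n$, the $a$-factors contribute $q^{3n^2}$ after the limit, the left prefactor is $\frac{\left(q^6;q^6\right)_\infty}{\left(-q^3;q^6\right)_\infty}$, and the right side collapses to $\sum_{n\in\mathbb{Z}}\left(-1\right)^nq^{6n^2-n}$, which by \eqref{Jacobi} with $\left(q,z\right)\to\left(q^{12},-q^5\right)$ equals $\left(q^5,q^7,q^{12};q^{12}\right)_\infty$. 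It remains to rewrite the left-hand sum: using $\left(q^2;q^2\right)_{3n}=\left(q^2,q^4,q^6;q^6\right)_n$, $\left(q^3;q^3\right)_{2n}=\left(q^3,q^6;q^6\right)_n$, $\left(q^6;q^6\right)_{2n}=\left(q^6,q^{12};q^{12}\right)_n$ and $\left(-q^3;q^6\right)_n\left(q^3;q^6\right)_n=\left(q^6;q^{12}\right)_n$, one checks $\frac{\left(-q^3;q^6\right)_n\left(q^2;q^2\right)_{3n}}{\left(q^6;q^6\right)_n\left(q^6;q^6\right)_{2n}}=\frac{\left(q^2;q^2\right)_{3n}}{\left(q^3;q^3\right)_{2n}\left(q^{12};q^{12}\right)_n}$, which converts the left side into the series of \eqref{D1}. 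I expect the $\alpha\to1$ degeneration — and, for \eqref{N12}, the bookkeeping of separating the $n=0$ term before symmetrizing into two theta series — to be the only genuine subtlety; the remaining steps are the by-now routine pattern of taking $a,b\to0$ (or to a constant) and reading products off the Jacobi triple product identity.
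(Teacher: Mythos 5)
Your proposal is correct and follows essentially the same route as the paper: the paper also sets $\alpha=1$ and replaces $q$ by $q^3$ in Theorem \ref{theorem2} to get its intermediate identity \eqref{C19} (which should read $1+\sum_{n\ge 1}$, as in your version), then takes $\left(a,b\right)\to\left(0,0\right)$ and $\left(0,-1\right)$ for \eqref{S.42} and \eqref{N12}, and replaces $q$ by $q^2$ with $\left(a,b\right)\to\left(0,-q^3\right)$ for \eqref{D1}. Your handling of the $\alpha\to1$ degeneration, the splitting off of the $n=0$ term before symmetrizing, and the Jacobi triple product specializations all check out and simply make explicit the steps the paper leaves to the reader.
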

\begin{proof}
Replacing $q$ by $q^3$ and fixing $\alpha=1$ in Theorem \ref{theorem2}, we have
\begin{align}
&\frac{\left(q^3,ab/q^3;q^3\right)_{\infty}}{\left(a,b;q^3\right)_{\infty}}
\sum_{n=0}^{\infty}\frac{\left(q^3/a,q^3/b;q^3\right)_n\left(q;q\right)_{3n}}
{\left(q^3;q^3\right)_n\left(q^3;q^3\right)_{2n}}\left(ab/q^3\right)^n\nonumber\\
&\quad=
1+\sum_{n=0}^{\infty}\left(-1\right)^n\left(1+q^n\right)\frac{\left(q^3/a,q^3/b;q^3\right)_n}{\left(a,b;q^3\right)_n}
a^nb^nq^{\frac{3n^2-7n}{2}}.\label{C19}
\end{align}

Taking $\left(a,b\right)\to\left(0,0\right)$ and $\left(a,b\right)\to\left(0,-1\right)$  into \eqref{C19}, respectively, we arrive at \eqref{S.42} and \eqref{N12}. Moreover,
Replacing $q$ by $q^2$ and taking $\left(a,b\right)\to\left(0,-q^3\right)$ into \eqref{C19}, we arrive at \ref{D1}.
\end{proof}
\begin{theorem}\label{thm4}
For $\max\left\{|a|, |b|, |ab/q|\right\}<1$ and  $\omega=\mathrm{e^{2\pi\mathrm{i}/3}}$, we have
\begin{align*}
&\frac{\left(q,ab/q;q\right)_{\infty}}{\left(a,b;q\right)_{\infty}}
\sum_{n=0}^{\infty}
\frac{\left(q/a,q/b,\omega,\omega^2;q\right)_n}
{\left(q,-1,q^{1/2},-q^{1/2};q\right)_n}\left(ab/q\right)^n\\
&\quad=1+\sum_{n=1}^{\infty}\left(-1\right)^n\left(1+q^{3n}\right)
\frac{\left(q/a,q/b;q\right)_{3n}}
{\left(a,b;q\right)_{3n}}
a^{3n}b^{3n}q^{\frac{9n^2-9n}{2}}\\
&\qquad+\frac{q^2}{2}\sum_{n=1}^{\infty}\left(-1\right)^n\left(1+q^{3n-1}\right)
\frac{\left(q/a,q/b;q\right)_{3n-1}}
{\left(a,b;q\right)_{3n-1}}
a^{3n-1}b^{3n-1}q^{\frac{9n^2-15n}{2}}\\
&\qquad+\frac{1}{2q}\sum_{n=0}^{\infty}\left(-1\right)^n\left(1+q^{3n+1}\right)
\frac{\left(q/a,q/b;q\right)_{3n+1}}
{\left(a,b;q\right)_{3n+1}}
a^{3n+1}b^{3n+1}q^{\frac{9n^2-3n}{2}}
.
\end{align*}
\end{theorem}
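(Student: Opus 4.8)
The plan is to specialize Liu's transformation \eqref{Formula2} (equivalently the $\lambda=h$ case of \eqref{Formula4}) and then sum the resulting inner series by a second ${_5\phi_4}$ summation of Andrews, the companion of \eqref{Andrews2}. Concretely, I would take $\alpha=1$, $\beta=\omega$, $\gamma=\omega^2$, and lower parameters $c=-1$, $d=q^{1/2}$, $h=-q^{1/2}$, so that $cdh=q$ while the primitive cube roots of unity $\omega,\omega^2$ occupy the numerator. With these choices the quotient $\frac{(\alpha q,\alpha ab/q;q)_\infty}{(\alpha a,\alpha b;q)_\infty}$ becomes $\frac{(q,ab/q;q)_\infty}{(a,b;q)_\infty}$, the convergence condition $|\alpha ab/q|<1$ becomes $|ab/q|<1$, and the left ${_4\phi_3}$ is exactly the one in the statement. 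The $\alpha\to1$ limit on the right-hand side is governed by
\begin{equation*}
\lim_{\alpha\to1}\frac{(1-\alpha q^{2n})(\alpha;q)_n}{1-\alpha}=(1+q^n)(q;q)_n\qquad(n\ge1),
\end{equation*}
while the $n=0$ term equals $1$. After cancelling the surviving $(q;q)_n$ against the denominator $(q,\alpha a,\alpha b;q)_n$, the general term (for $n\ge1$) becomes $(1+q^n)\frac{(q/a,q/b;q)_n}{(a,b;q)_n}(-ab)^nq^{n(n-3)/2}$ times the inner sum, plus the leading constant $1$; this already accounts for the constant term and for the factors $1+q^{3n}$, $1+q^{3n-1}$, $1+q^{3n+1}$ on the right.

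The core of the argument is the evaluation of the inner series
\begin{equation*}
T_n:={_4\phi_3}\!\left(\begin{array}{cccc}q^{-n},\,q^n,\,\omega,\,\omega^2\\-1,\,q^{1/2},\,-q^{1/2}\end{array};q,q\right),
\end{equation*}
which is precisely what Andrews's second ${_5\phi_4}$ summation computes after the same degeneration that turned \eqref{Formula4} into \eqref{Formula2}. Because two of the upper parameters are $\omega$ and $\omega^2$, the closed form depends only on the residue of $n$ modulo $3$: one finds $T_n=1$ when $n\equiv0\pmod 3$ and $T_n=-\tfrac12$ when $n\not\equiv0\pmod3$ (directly, $T_0=1$, and a two-term computation gives $T_1=T_2=-\tfrac12$). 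I expect this step — isolating the correct Andrews summation and extracting from it the factor $\tfrac12$ together with the mod-$3$ dichotomy — to be the \emph{main obstacle}; everything else is routine bookkeeping.

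Granting the value of $T_n$, I would finish by splitting $\sum_{n\ge1}$ into the residue classes $n=3k$ $(k\ge1)$, $n=3k-1$ $(k\ge1)$, $n=3k+1$ $(k\ge0)$, which together exhaust $n\ge1$. Substituting $T_{3k}=1$ and $T_{3k\mp1}=-\tfrac12$, using
\begin{equation*}
\frac{3k(3k-3)}{2}=\frac{9k^2-9k}{2},\qquad \frac{(3k-1)(3k-4)}{2}=2+\frac{9k^2-15k}{2},\qquad \frac{(3k+1)(3k-2)}{2}=-1+\frac{9k^2-3k}{2},
\end{equation*}
together with $(-ab)^{3k}=(-1)^k(ab)^{3k}$ and $(-ab)^{3k\pm1}=-(-1)^k(ab)^{3k\pm1}$, the three pieces collapse exactly to the three sums on the right, with the displayed coefficients $1$, $\tfrac{q^2}{2}$, $\tfrac1{2q}$ and overall signs $(-1)^k$. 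This completes the proof.
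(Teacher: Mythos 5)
Your proposal is correct and follows essentially the same route as the paper: it specializes \eqref{Formula2} (i.e.\ \eqref{Formula4} with $\lambda=e$) at $\alpha=1$, $\beta=\omega$, $\gamma=\omega^2$ with lower parameters $-1,q^{1/2},-q^{1/2}$, and evaluates the inner ${_4\phi_3}$ by Andrews's summation \eqref{Andrews1} in the limit $\alpha\to1$, which is exactly the paper's \eqref{A3} giving $1$ for $n\equiv0\pmod 3$ and $-\tfrac12$ otherwise, before splitting the outer sum into residue classes mod $3$. Your limit $\lim_{\alpha\to1}(1-\alpha q^{2n})(\alpha;q)_n/(1-\alpha)=(1+q^n)(q;q)_n$ and the exponent/sign bookkeeping all check out against the stated right-hand side.
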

\begin{proof}
Recall a $_5\phi_4$ summation formula first proved by Andrews \cite{Andrews}:
\begin{align}
&{_5\phi_4}\left(\begin{array}{ccccc}
q^{-n},\,\alpha q^n,\,\alpha^{1/3},\,\alpha^{1/3}\omega,\,\alpha^{1/3}\omega^2\\
\sqrt{\alpha},\,-\sqrt{\alpha},\,\sqrt{q\alpha},\,-\sqrt{q\alpha}
\end{array};q,\,q
\right)\nonumber\\
&\quad=\left\{
\begin{array}{ll}
0,&\quad n\equiv1,2\pmod3,\\
\frac{\left(q;q\right)_n\left(\alpha;q^3\right)_{n/3}}
{\left(\alpha;q\right)_{n}\left(q^3;q^3\right)_{n/3}}\alpha^{n/3},&\quad n\equiv0\pmod3.
\end{array}
\right.\label{Andrews1}
\end{align}
Letting $\alpha=1$ in \eqref{Andrews1}, we deduce that
\begin{align}
{_4\phi_3}\left(
\begin{array}{cccc}
q^{-n},\,q^n,\,\omega,\,\omega^2\\
-1,\,q^{1/2},\,-q^{1/2}
\end{array};q,\,q\right)
=\left\{
\begin{array}{ll}
-1/2,&\quad n\equiv1,\,2\pmod3,\\
1,&\quad n\equiv0\pmod3.
\end{array}\right.\label{A3}
\end{align}
Then, substituting $\left(\alpha, \beta,\gamma,c,d,h\right)
\to\left(1,\omega,\omega^2,q^{1/2},-q^{1/2},-1\right)$ and $\lambda=e$ into \eqref{Formula4}, we obtain the desired result.
\end{proof}
\begin{corollary}
We have
\begin{align}
&\sum_{n=0}^{\infty}\frac{\left(-q;q\right)_n\left(q^3;q^3\right)_{n-1}
}
{\left(q;q\right)_{n}\left(q;q\right)_{2n-1}}q^{\frac{n(n-1)}{2}}\nonumber\\
&\quad=\frac{\left(-q;q\right)_{\infty}}{\left(q;q\right)_{\infty}}
\left(
\left(q^{6},q^{12},q^{18};q^{18}\right)_{\infty}+
\left(q^{9},q^{9},q^{18};q^{18}\right)_{\infty}\right)
,\tag{S.73}\label{S.73}\\
&\sum_{n=0}^{\infty}\frac{\left(-q;q\right)_n\left(q^3;q^3\right)_{n-1}
}
{\left(q;q\right)_{n-1}\left(q;q\right)_{2n}}q^{\frac{n(n+1)}{2}}\nonumber\\
&\quad=\frac{\left(-q;q\right)_{\infty}}{\left(q;q\right)_{\infty}}
\left(
\left(q^{9},q^{9},q^{18};q^{18}\right)_{\infty}-q
\left(q^{3},q^{15},q^{18};q^{18}\right)_{\infty}\right)
,\tag{S.75}\label{S.75}\\
&1+\sum_{n=1}^{\infty}\frac{\left(-1;q\right)_{n+1}\left(q^3;q^3\right)_{n-1}
}
{\left(q;q\right)_{n-1}\left(q;q\right)_{2n}}q^{\frac{n(n+1)}{2}}
=\frac{\left(-q;q\right)_{\infty}\left(q^{9},q^{9},q^{18};q^{18}\right)_{\infty}}{\left(q;q\right)_{\infty}}
,\tag{S.78}\label{S.78}\\
&\sum_{n=0}^{\infty}\frac{\left(q^3;q^3\right)_{n-1}}
{\left(q;q\right)_n\left(q;q\right)_{2n-1}}q^{n^2}
=\frac{\left(q^{12},q^{15},q^{27};q^{27}\right)_{\infty}}{\left(q;q\right)_{\infty}}
,\tag{S.93}\label{S.93}\\
&\sum_{n=0}^{\infty}\frac{\left(-q;q^2\right)_{n}
\left(q^6;q^6\right)_{n-1}}
{\left(q^2;q^2\right)_{2n-1}\left(q^2;q^2\right)_n}q^{n^2}
=\frac{\left(-q;q^2\right)_{\infty}\left(q^{15},q^{21},q^{36};q^{36}\right)_{\infty}}
{\left(q^2;q^2\right)_{\infty}}.\tag{S.114} \label{S.114}
\end{align}
\end{corollary}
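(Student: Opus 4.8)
\emph{Proof outline.} Each of the five identities arises by specialising the parameters $a,b$ in Theorem \ref{thm4} --- itself already established in the excerpt from Andrews' $_5\phi_4$ evaluation \eqref{Andrews1} at $\alpha=1$ --- and then invoking the Jacobi triple product identity \eqref{Jacobi}. The reductions used repeatedly on the left-hand side are the cube--root--of--unity evaluation
\[
\left(1-\omega q^{k}\right)\left(1-\omega^{2}q^{k}\right)=1+q^{k}+q^{2k},\qquad\text{whence}\qquad\left(\omega,\omega^{2};q\right)_{n}=3\,\frac{\left(q^{3};q^{3}\right)_{n-1}}{\left(q;q\right)_{n-1}}\ \ (n\ge1),
\]
together with $\left(-1;q\right)_{n}=2\left(-q;q\right)_{n-1}$ $(n\ge1)$, the identities $\left(q^{1/2},-q^{1/2};q\right)_{n}=\left(q;q^{2}\right)_{n}$ and $\left(q;q\right)_{n-1}\left(-q;q\right)_{n-1}\left(q;q^{2}\right)_{n}=\left(q;q\right)_{2n-1}$, and the limit $\left(q/a;q\right)_{n}a^{n}\to(-1)^{n}q^{n(n+1)/2}$ as $a\to0$.

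First I would pin down the specialisations: $(a,b)\to(0,0)$ produces (S.93); $(a,b)\to(0,-1)$ produces (S.73), (S.75), (S.78); and, after first replacing $q$ by $q^{2}$ in Theorem \ref{thm4} (so that $\left(q^{1/2},-q^{1/2};q\right)_{n}$ becomes $(q;q^{2})_{n}$ and the factor $(q^{2}/b;q^{2})_{n}b^{n}$ with $b\to-q$ supplies the $(-q;q^{2})_{n}$ appearing there), $(a,b)\to(0,-q)$ produces (S.114). In every case the prefactor $\frac{(q,ab/q;q)_\infty}{(a,b;q)_\infty}$ tends to an explicit infinite product --- for instance $\tfrac12(q;q)_\infty/(-q;q)_\infty$ when $b\to-1$ --- while the series on the left, after inserting the reductions above and handling the $n=0$ term by hand (here the discrepancy between the true values of the $q$-factorials at $n=0$ and the constants $3$, $2$ above, together with the convention $(q;q)_{-1}=1$, must be accounted for), collapses to a constant multiple of the series side of the target Slater identity. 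The triple (S.73), (S.75), (S.78) all come from $(a,b)\to(0,-1)$: (S.75) and (S.78) differ only by the regrouping $\left(-1;q\right)_{n+1}=2\left(-q;q\right)_{n}$ and the ``$1+$'' normalisation, while (S.73) is the companion series obtained by shifting the $q$-power and the index on $\left(q;q\right)_{2n\mp1}$ consistently.

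On the right-hand side, after these same substitutions the three sums in Theorem \ref{thm4} turn into sums of $(-1)^{n}$ times a product of factors of the form $1+q^{\,cn+d}$ and a Gaussian $q^{\,en^{2}+fn}$. Expanding those factors and reindexing collects everything into a short list of bilateral sums $\sum_{n\in\Z}q^{\,e'n^{2}+f'n}$ or $\sum_{n\in\Z}(-1)^{n}q^{\,e'n^{2}+f'n}$, each of which \eqref{Jacobi} evaluates as a theta product with base $q^{18}$, $q^{27}$ or $q^{36}$ according to the target. For (S.93) and (S.114) one product survives and is already in the stated form; for (S.73), (S.75), (S.78) two products survive, and to recover the stated right-hand sides one additionally applies the trisection of $\sum_{n\in\Z}(-1)^{n}q^{n^{2}}=(q;q)_\infty/(-q;q)_\infty$ (respectively of $(q^{2};q^{2})_\infty/(-q;q^{2})_\infty$) by $n\bmod 3$, precisely in the style of the theta manipulations in Sections \ref{sec2}-\ref{sec7}. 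Equating the two sides and solving for the series delivers each identity.

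The conceptual input is confined to Theorem \ref{thm4} and \eqref{Jacobi}; the remaining work is bookkeeping. The main obstacle has three parts: (a) controlling the $n=0$ anomaly just described; (b) deciding, for each factor $1+q^{\,cn+d}$ on the right, which of its two summands folds into the ``$n\mapsto-n$'' half of the bilateral extension --- this choice is exactly what fixes the modulus of the resulting theta product; and (c) for the identities whose right-hand side is a sum of two products, choosing the correct theta trisection so that the surviving products reassemble into the claimed forms. No tool beyond those already in the excerpt is required.
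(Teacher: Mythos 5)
Your overall strategy is the paper's: specialize $(a,b)$ in Theorem \ref{thm4}, reduce the left side with $\left(\omega,\omega^2;q\right)_n=3\left(q^3;q^3\right)_{n-1}/\left(q;q\right)_{n-1}$ (handling the $n=0$ term separately), evaluate the right side by the Jacobi triple product \eqref{Jacobi}, and finish with the mod-$3$ trisection of $\left(q;q\right)_{\infty}/\left(-q;q\right)_{\infty}$ (resp. $\left(q^2;q^2\right)_{\infty}/\left(-q;q^2\right)_{\infty}$). Your assignments for \eqref{S.93} ($(a,b)\to(0,0)$), \eqref{S.73} ($(a,b)\to(0,-1)$) and \eqref{S.114} ($q\to q^2$, then $(a,b)\to(0,-q)$) agree with the paper.

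There is, however, a concrete gap in your treatment of \eqref{S.75} and \eqref{S.78}: you claim all three of \eqref{S.73}, \eqref{S.75}, \eqref{S.78} come from $(a,b)\to(0,-1)$, with \eqref{S.75}/\eqref{S.78} recovered by the regrouping $\left(-1;q\right)_{n+1}=2\left(-q;q\right)_n$ and a shift of the $q$-power. That does not work. The $(a,b)\to(0,-1)$ specialization produces (as in \eqref{A7}) the series $\sum_{n\ge0}\frac{\left(-q,\omega,\omega^2;q\right)_n}{\left(q,-1,q^{1/2},-q^{1/2};q\right)_n}q^{n(n-1)/2}$, i.e. an extra factor $\frac{1+q^n}{2}$ and exponent $\frac{n(n-1)}{2}$, which is exactly what \eqref{S.73} needs; whereas \eqref{S.75} and \eqref{S.78} reduce to $\sum_{n\ge0}\frac{\left(\omega,\omega^2;q\right)_n}{\left(q,q^{1/2},-q^{1/2};q\right)_n}q^{n(n+1)/2}$, which is the $(a,b)\to(0,-q)$ specialization (the paper's \eqref{A4}). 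The identity $\left(-1;q\right)_{n+1}=2\left(-q;q\right)_n$ only relates \eqref{S.78} to \eqref{S.75}; it cannot turn the $b=-1$ series into the $b=-q$ series, and neither can any reindexing, since the $b=-1$ sum equals the average of the two sums with exponents $\frac{n(n-1)}{2}$ and $\frac{n(n+1)}{2}$ and so does not isolate the one you need. The repair is immediate and stays inside your toolkit: apply $(a,b)\to(0,-q)$ in base $q$ for \eqref{S.75} and \eqref{S.78}, exactly the specialization you already invoke (after $q\to q^2$) for \eqref{S.114}; with that correction the rest of your outline matches the paper's proof.
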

\begin{proof}
We observe that
\begin{align}
&\sum_{n=0}^{\infty}\frac{\left(-q;q\right)_n\left(q^3;q^3\right)_{n-1}
}
{\left(q;q\right)_{n}\left(q;q\right)_{2n-1}}q^{\frac{n(n-1)}{2}}\nonumber\\
&\quad=1+\sum_{n=1}^{\infty}\frac{\left(-q;q\right)_n
\left(q\omega,q\omega^2;q\right)_{n-1}}
{\left(q,q^{1/2},-q^{1/2};q\right)_n\left(-q;q\right)_{n-1}}
q^{\frac{n\left(n-1\right)}{2}}\nonumber\\
&\quad=1+\frac{2}{3}\sum_{n=1}^{\infty}\frac{\left(-q,\omega,\omega^2;q\right)_{n}}
{\left(q,-1,q^{1/2},-q^{1/2};q\right)_n}
q^{\frac{n\left(n-1\right)}{2}}\nonumber\\
&\quad=\frac{1}{3}+\frac{2}{3}\sum_{n=0}^{\infty}\frac{\left(-q,\omega,\omega^2;q\right)_{n}}
{\left(q,-1,q^{1/2},-q^{1/2};q\right)_n}
q^{\frac{n\left(n-1\right)}{2}}.\label{A6}
\end{align}
Meanwhile, taking $\left(a,b\right)\to\left(0,-1\right)$ into Theorem \ref{thm4} results in
\begin{align}
&\frac{\left(q;q\right)_{\infty}}{\left(-1;q\right)_{\infty}}
\sum_{n=0}^{\infty}\frac{\left(-q,\omega,\omega^2;q\right)_n}
{\left(q,-1,q^{1/2},-q^{1/2};q\right)_n}q^{\frac{n\left(n-1\right)}{2}}\nonumber\\
&\quad=1+\frac{1}{2}\sum_{n=1}^{\infty}\left(-1\right)^n\left(1+q^{3n}\right)^2
q^{9n^2-3n}
+\frac{q^2}{4}\sum_{n=1}^{\infty}\left(-1\right)^n\left(1+q^{3n-1}\right)^2
q^{9n^2-9n}\nonumber\\
&\qquad+\frac{1}{4}\sum_{n=0}^{\infty}\left(-1\right)^n\left(1+q^{3n+1}\right)^2
q^{9n^2+3n}\nonumber\\
&\quad=\frac{1}{2}\sum_{n=-\infty}^{\infty}\left(-1\right)^nq^{9n^2}
+\frac{q}{2}\sum_{n=-\infty}^{\infty}\left(-1\right)^nq^{9n^2-6n}
+\frac{3}{4}\sum_{n=-\infty}^{\infty}\left(-1\right)^nq^{9n^2-3n}.\label{A7}
\end{align}
Then, by inserting \eqref{A7} into \eqref{A6}, we deduce that
\begin{align}
&\frac{\left(q;q\right)_{\infty}}{\left(-1;q\right)_{\infty}}
\sum_{n=0}^{\infty}\frac{\left(-q;q\right)_n\left(q^3;q^3\right)_{n-1}
}
{\left(q;q\right)_{n}\left(q;q\right)_{2n-1}}q^{\frac{n(n-1)}{2}}\nonumber\\
&\quad=\frac{1}{6}\frac{\left(q;q\right)_{\infty}}{\left(-1;q\right)_{\infty}}
+\frac{1}{3}\left(q^9,q^9,q^{18};q^{18}\right)_{\infty}
+\frac{q}{3}\left(q^3,q^{15},q^{18};q^{18}\right)_{\infty}
+\frac{1}{2}\left(q^6,q^{12},q^{18};q^{18}\right)_{\infty}
\nonumber\\
&\quad=\frac{1}{2}\left(q^9,q^9,q^{18};q^{18}\right)_{\infty}
+\frac{1}{2}\left(q^6,q^{12},q^{18};q^{18}\right)_{\infty},
\end{align}
where we have utilized the following fact:
\begin{align}
\frac{\left(q;q\right)_{\infty}}{\left(-q;q\right)_{\infty}}
&=\sum_{n=-\infty}^{\infty}\left(-1\right)^nq^{n^2}\nonumber\\
&=\sum_{n=-\infty}^{\infty}\left(-1\right)^nq^{9n^2}
-\sum_{n=-\infty}^{\infty}\left(-1\right)^nq^{\left(3n+1\right)^2}
-\sum_{n=-\infty}^{\infty}\left(-1\right)^nq^{\left(3n-1\right)^2}\nonumber\\
&=\left(q^9,q^9,q^{18};q^{18}\right)_{\infty}
-2q\left(q^3,q^{15},q^{18};q^{18}\right)_{\infty}.\label{A8}
\end{align}
Hence, \eqref{S.73} is true.

Next,  we take $\left(a,b\right)\to\left(0,-q\right)$ into Theorem \ref{thm4} to find that 
\begin{align}
&\frac{\left(q;q\right)_{\infty}}{\left(-q;q\right)_{\infty}}
\sum_{n=0}^{\infty}\frac{\left(\omega,\omega^2;q\right)_n}
{\left(q,q^{1/2},-q^{1/2};q\right)_n}
q^{\frac{n\left(n+1\right)}{2}}\nonumber\\
&\quad=1+2\sum_{n=1}^{\infty}\left(-1\right)^nq^{9n^2}
+\sum_{n=-\infty}^{\infty}\left(-1\right)^nq^{\left(3n+1\right)^2}\nonumber\\
&\quad=\left(q^9,q^9,q^{18};q^{18}\right)_{\infty}
+q\left(q^3,q^{15},q^{18};q^{18}\right)_{\infty}.\label{A4}
\end{align}
Also, we observe that
\begin{align}
&\frac{\left(q;q\right)_{\infty}}{\left(-q;q\right)_{\infty}}\sum_{n=0}^{\infty}\frac{\left(-q;q\right)_n\left(q^3;q^3\right)_{n-1}
}
{\left(q;q\right)_{n-1}\left(q;q\right)_{2n}}q^{\frac{n(n+1)}{2}}\nonumber\\
&\quad=\frac{\left(q;q\right)_{\infty}}{\left(-q;q\right)_{\infty}}+\frac{\left(q;q\right)_{\infty}}{\left(-q;q\right)_{\infty}}\sum_{n=1}^{\infty}\frac{\left(q\omega,q\omega^2;q\right)_{n-1}}
{\left(q,q^{1/2},-q^{1/2};q\right)_n}q^{\frac{n\left(n+1\right)}{2}}\nonumber\\
&\quad=\frac{\left(q;q\right)_{\infty}}{\left(-q;q\right)_{\infty}}+\frac{\left(q;q\right)_{\infty}}{3\left(-q;q\right)_{\infty}}\sum_{n=1}^{\infty}\frac{\left(\omega,\omega^2;q\right)_{n}}
{\left(q,q^{1/2},-q^{1/2};q\right)_n}q^{\frac{n\left(n+1\right)}{2}}\nonumber\\
&\quad=\frac{2\left(q;q\right)_{\infty}}{3\left(-q;q\right)_{\infty}}+\frac{\left(q;q\right)_{\infty}}{3\left(-q;q\right)_{\infty}}\sum_{n=0}^{\infty}
\frac{\left(\omega,\omega^2;q\right)_{n}}
{\left(q,q^{1/2},-q^{1/2};q\right)_n}q^{\frac{n\left(n+1\right)}{2}}\nonumber\\
&\quad=\frac{2}{3}\frac{\left(q;q\right)_{\infty}}{\left(-q;q\right)_{\infty}}
+\frac{1}{3}\left(\left(q^9,q^9,q^{18};q^{18}\right)_{\infty}
+q\left(q^3,q^{15},q^{18};q^{18}\right)_{\infty}
\right)\nonumber\\
&\quad=\left(q^9,q^9,q^{18};q^{18}\right)_{\infty}
-q\left(q^3,q^{15},q^{18};q^{18}\right)_{\infty},\nonumber
\end{align}
where in the last step we have used \eqref{A8}.
Therefore, we complete the proof of \eqref{S.75}.

Additionally, we can prove \eqref{S.78} as follows,
\begin{align*}
&1+\sum_{n=1}^{\infty}\frac{\left(q^3;q^3\right)_{n-1}\left(-1;q\right)_{n+1}}
{\left(q;q\right)_{2n}\left(-q;q\right)_{n-1}}q^{\frac{n(n+1)}{2}}\\
&\quad=
\frac{1}{3}+\frac{2}{3}\sum_{n=0}^{\infty}\frac{\left(\omega,\omega^2;q\right)_n}{\left(q,q^{1/2},-q^{1/2};q\right)_n}q^{\frac{n(n+1)}{2}}
\\
&\quad=\frac{\left(-q;q\right)_{\infty}\left(q^9,q^9,q^{18};q^{18}\right)_{\infty}}{\left(q;q\right)_{\infty}}.
\end{align*}

Taking $\left(a,b\right)\to\left(0,0\right)$ into Theorem \ref{thm4}, we immediately obtain
\begin{align}
&\left(q;q\right)_{\infty}\sum_{n=0}^{\infty}
\frac{\left(\omega,\omega^2;q\right)_n}
{\left(q,-1,q^{1/2},-q^{1/2};q\right)_n}q^{n^2}\nonumber\\
&\quad=1+\sum_{n=1}^{\infty}\left(-1\right)^n\left(1+q^{3n}\right)
q^{\frac{27}{2}n^2-\frac{3}{2}n}
+\frac{q}{2}\sum_{n=0}^{\infty}\left(-1\right)^n\left(1+q^{3n+1}\right)
q^{\frac{27}{2}n^2+\frac{15}{2}n}\nonumber\\
&\qquad+\frac{q^2}{2}\sum_{n=1}^{\infty}\left(-1\right)^n\left(1+q^{3n-1}\right)
q^{\frac{27}{2}n^2-\frac{21}{2}n}\nonumber\\
&\quad=\left(q^{12},q^{15},q^{27};q^{27}\right)_{\infty}
+\frac{q}{2}\left(q^{6},q^{21},q^{27};q^{27}\right)_{\infty}
+\frac{q^2}{2}\left(q^{3},q^{24},q^{27};q^{27}\right)_{\infty}.\label{A9}
\end{align}
Therefore, we can prove \eqref{S.93} as follows,
\begin{align}
&\left(q;q\right)_{\infty}\sum_{n=0}^{\infty}
\frac{\left(q^3;q^3\right)_{n-1}}
{\left(q;q\right)_n\left(q;q\right)_{2n-1}}q^{n^2}\nonumber\\
&\quad=\left(q;q\right)_{\infty}\left(1+\sum_{n=1}^{\infty}
\frac{\left(q,q\omega,q\omega^2;q\right)_{n-1}}
{\left(q;q\right)_n\left(q^{1/2},-q^{1/2};q\right)_{n}
\left(-q;q\right)_{n-1}}q^{n^2}\right)\nonumber\\
&\quad=\left(q;q\right)_{\infty}\left(1+\frac{2}{3}\sum_{n=1}^{\infty}
\frac{\left(\omega,\omega^2;q\right)_{n}}
{\left(q;q\right)_n\left(q^{1/2},-q^{1/2};q\right)_{n}
\left(-1;q\right)_{n}}q^{n^2}\right)\nonumber\\
&\quad=\left(q;q\right)_{\infty}\left(\frac{1}{3}+\frac{2}{3}\sum_{n=0}^{\infty}
\frac{\left(\omega,\omega^2;q\right)_{n}}
{\left(q;q\right)_n\left(q^{1/2},-q^{1/2};q\right)_{n}
\left(-1;q\right)_{n}}q^{n^2}\right)\nonumber\\
&\quad=\left(q^{12},q^{15},q^{27};q^{27}\right)_{\infty},
\end{align}
where we have used
\begin{align}
\left(q;q\right)_{\infty}
&=\sum_{n=-\infty}^{\infty}\left(-1\right)^nq^{\frac{3n^2-n}{2}}\nonumber\\
&=\sum_{n=-\infty}^{\infty}\left(-1\right)^nq^{\frac{27n^2-3n}{2}}
-q\sum_{n=-\infty}^{\infty}\left(-1\right)^nq^{\frac{27n^2-15n}{2}}
-q^2\sum_{n=-\infty}^{\infty}\left(-1\right)^nq^{\frac{27n^2-21n}{2}}\nonumber\\
&=\left(q^{12},q^{15},q^{27};q^{27}\right)_{\infty}
-q\left(q^{6},q^{21},q^{27};q^{27}\right)_{\infty}
-q^2\left(q^{3},q^{24},q^{27};q^{27}\right)_{\infty}.\label{A10}
\end{align}

Replacing $q$ by $q^2$ and taking $\left(a,b\right)\to\left(0,-q\right)$ into Theorem \ref{thm4}, we deduce that
\begin{align}
&\frac{\left(q^2;q^2\right)_{\infty}}{\left(-q;q^2\right)_{\infty}}
\sum_{n=0}^{\infty}\frac{\left(-q,\omega,\omega^2;q^2\right)_n}
{\left(q^2,-1,q,-q;q^2\right)_n}q^{n^2}
\nonumber\\
&\quad=1+\sum_{n=0}^{\infty}\left(-1\right)^n\left(1+q^{6n}\right)q^{18n^2-3n}
+\frac{q}{2}\sum_{n=0}^{\infty}\left(-1\right)^n\left(1+q^{6n+2}\right)q^{18n^2+9n}
\nonumber\\
&\qquad+\frac{q^3}{2}\sum_{n=0}^{\infty}\left(-1\right)^n
\left(1+q^{6n-2}\right)q^{18n^2-15n}\nonumber\\
&\quad=\left(q^{15},q^{21},q^{36};q^{36}\right)_{\infty}
+\frac{q}{2}\left(q^{9},q^{27},q^{36};q^{36}\right)_{\infty}
+\frac{q^3}{2}\left(q^{3},q^{33},q^{36};q^{36}\right)_{\infty}.\label{A11}
\end{align}
In terms of \eqref{A11}, we see that
\begin{align*}
&\frac{\left(q^2;q^2\right)_{\infty}}{\left(-q;q^2\right)_{\infty}}
\sum_{n=0}^{\infty}\frac{\left(-q;q^2\right)_n\left(q^6;q^6\right)_{n-1}}
{\left(q^2;q^2\right)_{2n-1}\left(q^2;q^2\right)_n}q^{n^2}\\
&\quad=
\frac{\left(q^2;q^2\right)_{\infty}}{\left(-q;q^2\right)_{\infty}}
\left(1+\sum_{n=1}^{\infty}\frac{\left(-q;q^2\right)_n
\left(q^2\omega,q^2\omega^2;q^6\right)_{n-1}}
{\left(q,-q,q^2;q^2\right)_{n}\left(-q^2;q^2\right)_{n-1}}q^{n^2}
\right)\\
&\quad=
\frac{\left(q^2;q^2\right)_{\infty}}{\left(-q;q^2\right)_{\infty}}
\left(1+\frac{2}{3}\sum_{n=1}^{\infty}\frac{\left(-q;q^2\right)_n
\left(\omega,\omega^2;q^6\right)_{n}}
{\left(-1,q,-q,q^2;q^2\right)_{n}}q^{n^2}
\right)\\
&\quad=
\frac{\left(q^2;q^2\right)_{\infty}}{\left(-q;q^2\right)_{\infty}}
\left(\frac{1}{3}+\frac{2}{3}\sum_{n=0}^{\infty}\frac{\left(-q;q^2\right)_n
\left(\omega,\omega^2;q^6\right)_{n}}
{\left(-1,q,-q,q^2;q^2\right)_{n}}q^{n^2}
\right)\\
&\quad=\frac{\left(q^2;q^2\right)_{\infty}}{3\left(-q;q^2\right)_{\infty}}
+\frac{2}{3}\left(q^{15},q^{21},q^{36};q^{36}\right)_{\infty}
+\frac{q}{3}\left(q^{9},q^{27},q^{36};q^{36}\right)_{\infty}
+\frac{q^3}{3}\left(q^{3},q^{33},q^{36};q^{36}\right)_{\infty}\\
&\quad=\left(q^{15},q^{21},q^{36};q^{36}\right)_{\infty},
\end{align*}
where we have used
\begin{align*}
\frac{\left(q^2;q^2\right)_{\infty}}{\left(-q;q^2\right)_{\infty}}
&=\sum_{n=-\infty}^{\infty}\left(-1\right)^nq^{2n^2-n}\\
&=\sum_{n=-\infty}^{\infty}\left(-1\right)^nq^{18n^2-3n}
-q\sum_{n=-\infty}^{\infty}\left(-1\right)^nq^{18n^2+9n}
-q^3\sum_{n=-\infty}^{\infty}\left(-1\right)^nq^{18n^2+15n}
\\
&=\left(q^{15},q^{21},q^{36};q^{36}\right)_{\infty}
-q\left(q^{9},q^{27},q^{36};q^{36}\right)_{\infty}
-q^3\left(q^{3},q^{33},q^{36};q^{36}\right)_{\infty}.
\end{align*}
Therefore, we complete the proof of \eqref{S.114}.
\end{proof}
\begin{theorem}\label{T1}
For $\max\left\{|aq^3|,|bq^3|,|abq^2|\right\}<1$, we have
\begin{align*}
&\frac{\left(q^3,abq^2;q\right)_{\infty}}{\left(aq^3,bq^3,q\right)_{\infty}}
\sum_{n=0}^{\infty}
\frac{\left(q/a,q/b;q\right)_n\left(q^3;q^3\right)_n}
{\left(q;q\right)_n\left(q^3;q\right)_{2n}}a^nb^nq^{2n}\\
&\quad=
\sum_{n=0}^{\infty}\left(-1\right)^n\left(1-q^{6n+3}\right)
\frac{\left(q/a,q/b;q\right)_{3n}}{\left(aq^3,bq^3;q\right)_{3n}}a^{3n}b^{3n}
q^{\frac{9n^2+15n}{2}}.
\end{align*}
\end{theorem}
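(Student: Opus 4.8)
The plan is to obtain Theorem~\ref{T1} as one more specialization of Liu's transformation \eqref{Formula4}, this time evaluating the inner ${}_5\phi_4$ by Andrews' summation \eqref{Andrews1}. Concretely, I would put $\alpha=q^3$, choose $\beta=q$, $\gamma=q\omega$, $\lambda=q\omega^2$ with $\omega=e^{2\pi\mathrm{i}/3}$, and take $c=-d=q^{3/2}$, $e=-h=q^2$ in \eqref{Formula4}; the hypothesis $\max\{|aq^3|,|bq^3|,|abq^2|\}<1$ is precisely what guarantees convergence of \eqref{Formula4} after this substitution. The one genuine idea here is to take $\alpha^{1/3}=q$ in \eqref{Andrews1}, so that its three cube-root numerator parameters become $q,q\omega,q\omega^2$; everything past that is bookkeeping.

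First I would simplify the left-hand side of \eqref{Formula4}. The factor $(\alpha ab/q)^n=(q^2ab)^n$ produces $a^nb^nq^{2n}$; the identity $(1-x)(1-x\omega)(1-x\omega^2)=1-x^3$ collapses $(\beta,\gamma,\lambda;q)_n=(q,q\omega,q\omega^2;q)_n$ into $(q^3;q^3)_n$; and $(x,-x;q)_m=(x^2;q^2)_m$, applied to $\{q^{3/2},-q^{3/2}\}$ and to $\{q^2,-q^2\}$, collapses $(c,d,e,h;q)_n$ into $(q^3;q^2)_n(q^4;q^2)_n=(q^3;q)_{2n}$. Hence the left side of \eqref{Formula4} equals $\frac{(q^4,q^2ab;q)_\infty}{(q^3a,q^3b;q)_\infty}\sum_{n\ge0}\frac{(q/a,q/b;q)_n(q^3;q^3)_n}{(q;q)_n(q^3;q)_{2n}}a^nb^nq^{2n}$, and writing $(q^4;q)_\infty=(q^3;q)_\infty/(1-q^3)$ exposes the prefactor appearing in the statement together with an extra factor $1/(1-q^3)$.

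Next I would handle the right-hand side. The inner series of \eqref{Formula4} is now exactly the left side of \eqref{Andrews1} with its base parameter equal to $q^3$; therefore it vanishes unless $n\equiv0\pmod3$, and for $n=3m$ it evaluates to $\frac{(q;q)_{3m}(q^3;q^3)_m}{(q^3;q)_{3m}(q^3;q^3)_m}q^{3m}=\frac{(q;q)_{3m}}{(q^3;q)_{3m}}q^{3m}$. Substituting $n=3m$, the quotient $(q;q)_{3m}/(q^3;q)_{3m}$ cancels against the factor $(\alpha;q)_n/(q;q)_n=(q^3;q)_{3m}/(q;q)_{3m}$ already present in \eqref{Formula4}, while $\frac{1-\alpha q^{2n}}{1-\alpha}=\frac{1-q^{6m+3}}{1-q^3}$ supplies $1-q^{6m+3}$ together with the same $1/(1-q^3)$ produced on the left, so those two constants cancel. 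Collecting the remaining $q$-powers $(-q^3ab)^{3m}$, $q^{3m(3m-3)/2}$ and the extra $q^{3m}$ gives $(-1)^m(ab)^{3m}q^{(9m^2+15m)/2}$, and reindexing $m\to n$ yields exactly the right side of Theorem~\ref{T1}.

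I do not expect any serious obstacle: once the specialization above is fixed, the argument amounts to recognizing the two base-change identities on the denominator, tracking the single stray factor $1/(1-q^3)$ through both sides, and restricting the outer sum to indices divisible by $3$. The only place demanding a little vigilance is the arithmetic of the exponent, where the contribution $q^{3m}$ coming from the evaluation of the ${}_5\phi_4$ must be added to the exponent $q^{3m(3m-3)/2+9m}$ already carried by the $n=3m$ term of \eqref{Formula4}.
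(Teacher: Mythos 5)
Your proposal is correct and coincides with the paper's own proof: the paper likewise sets $\left(\alpha,\beta,\gamma,\lambda,c,d,e,h\right)=\left(q^3,q,q\omega,q\omega^2,q^{3/2},-q^{3/2},q^2,-q^2\right)$ in \eqref{Formula4} and evaluates the inner ${}_5\phi_4$ by \eqref{Andrews1} with $\alpha=q^3$, which kills all terms with $n\not\equiv0\pmod 3$. Your bookkeeping (the collapse of $(q,q\omega,q\omega^2;q)_n$ and of the denominator pairs, the cancellation of the stray $1/(1-q^3)$, and the exponent $q^{(9m^2+15m)/2}$) matches the computation the paper leaves implicit.
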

\begin{proof}
Taking $\alpha=q^3$ into \eqref{Andrews1}, we get
\begin{align}
&{_5\phi_4}\left(\begin{array}{ccccc}
q^{-n},\,q^{n+3},\,q,\,q\omega,\,q\omega^2\\
q^{3/2},\,-q^{3/2},\,q^2,\,-q^2
\end{array};q,\,q
\right)\nonumber\\
&\quad=\left\{
\begin{array}{ll}
0,\,&n\equiv1,2\pmod3,\\
\frac{\left(1-q\right)\left(1-q^2\right)}
{\left(1-q^{3n+1}\right)\left(1-q^{3n+2}\right)}q^n,\, &n\equiv0\pmod3.
\end{array}
\right.\label{A12}
\end{align}
Setting $\left(\alpha,\beta,\gamma,\lambda,c,d,e,h\right)
=\left(q^3,q,q\omega,q\omega^2,q^{3/2},-q^{3/2},q^2,-q^2\right)$ in \eqref{Formula4} and using \eqref{A12}, we immediately obtain Theorem \ref{T1}.
\end{proof}
\begin{corollary}
We have
\begin{align}
\sum_{n=0}^{\infty}\frac{\left(-q;q\right)_{n+1}\left(q^3;q^3\right)_{n}}
{\left(q;q\right)_n\left(q;q\right)_{2n+2}}q^{\frac{n^2+3n}{2}}
&=\frac{\left(-q;q\right)_{\infty}\left(q^{3},q^{15},q^{18};q^{18}\right)_{\infty}}{\left(q;q\right)_{\infty}}
,\tag{S.76}\label{S.76}
\\
\sum_{n=0}^{\infty}\frac{\left(q^3;q^3\right)_{n}}
{\left(q;q\right)_n\left(q;q\right)_{2n+2}}q^{n^2+3n}
&=\frac{\left(q^{3},q^{24},q^{27};q^{27}\right)_{\infty}}{\left(q;q\right)_{\infty}}
,\tag{S.90}\label{S.90}
\\
\sum_{n=0}^{\infty}\frac{\left(-q;q^2\right)_{n+1}
\left(q^6;q^6\right)_n}
{\left(q^2;q^2\right)_{2n+2}\left(q^2;q^2\right)_n}q^{n^2+4n}
&=\frac{\left(-q;q^2\right)_{\infty}\left(q^3,q^{33},q^{36};q^{36}\right)_{\infty}}
{\left(q^2;q^2\right)_{\infty}}.\tag{S.116}\label{S.116}
\end{align}
\end{corollary}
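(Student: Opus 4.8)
The plan is to obtain all three identities as specializations of Theorem~\ref{T1}, in exactly the spirit of the corollaries that followed Theorems~\ref{theorem2} and~\ref{thm4}. The only inputs beyond Theorem~\ref{T1} are the elementary limits $a^m(q/a;q)_m\to(-1)^mq^{m(m+1)/2}$ and $(aq^3;q)_m\to1$ as $a\to0$, which govern the $n$-th and $3n$-th terms on the two sides once a variable is sent to $0$, together with the rewritings $(q^3;q)_{2n}=(q;q)_{2n+2}/\big((1-q)(1-q^2)\big)$, $(q^3;q)_\infty=(q;q)_\infty/\big((1-q)(1-q^2)\big)$, and $(-q^2;q)_n=(-q;q)_{n+1}/(1+q)$. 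For \eqref{S.90} I would set $(a,b)\to(0,0)$: the prefactor becomes $(q^3;q)_\infty$, the left-hand series of Theorem~\ref{T1} collapses to $(q;q)_\infty$ times the series in \eqref{S.90}, and the right-hand side becomes $\sum_{n\ge0}(-1)^n(1-q^{6n+3})q^{(27n^2+21n)/2}$.

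The second ingredient is the passage from a one-sided to a bilateral theta sum. Splitting off the $q^{6n+3}$-part and re-indexing it by $n\mapsto-n-1$ turns the right-hand side above into $\sum_{n\in\Z}(-1)^nq^{(27n^2+21n)/2}$, to which \eqref{Jacobi} with $(q,z)\to(q^{27},-q^{24})$ applies and yields $(q^3,q^{24},q^{27};q^{27})_\infty$; dividing by $(q;q)_\infty$ gives \eqref{S.90}. For \eqref{S.76} I would instead set $(a,b)\to(0,-1/q)$, so that $q/b=-q^2$ and $(q/b;q)_n=(-q^2;q)_n=(-q;q)_{n+1}/(1+q)$; a short computation with the rearrangements above identifies the left-hand side of Theorem~\ref{T1} with $\big((q;q)_\infty/(-q;q)_\infty\big)$ times the series in \eqref{S.76}. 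On the right-hand side the key simplification is $\dfrac{(q/b;q)_{3n}\,b^{3n}}{(bq^3;q)_{3n}}=(-1)^nq^{-3n}$, since $bq^3=-q^2$ makes the factor $(-q^2;q)_{3n}$ in the numerator cancel against $(bq^3;q)_{3n}=(-q^2;q)_{3n}$; this leaves $\sum_{n\ge0}(-1)^n(1-q^{6n+3})q^{9n^2+6n}$, the same split-and-reindex step converts it to $\sum_{n\in\Z}(-1)^nq^{9n^2+6n}$, and \eqref{Jacobi} with $(q,z)\to(q^{18},-q^{15})$ produces $(q^3,q^{15},q^{18};q^{18})_\infty$.

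For \eqref{S.116} I would first replace $q$ by $q^2$ throughout Theorem~\ref{T1} and then take $(a,b)\to(0,-1/q)$. The computation runs parallel to that for \eqref{S.76}, except that now $q^2/b=-q^3$, the ratio $(q^2/b;q^2)_{3n}b^{3n}/(bq^6;q^2)_{3n}$ equals $(-1)^nq^{-3n}(1+q^3)/(1+q^{6n+3})$, and the telescoping $(1-q^{12n+6})/(1+q^{6n+3})=1-q^{6n+3}$ reduces the right-hand side to $(1+q^3)\sum_{n\ge0}(-1)^n(1-q^{6n+3})q^{18n^2+15n}=(1+q^3)\sum_{n\in\Z}(-1)^nq^{18n^2+15n}$. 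The same stray factor $1+q^3$ appears on the left after the shifted-factorial rearrangements, so it cancels; applying \eqref{Jacobi} with $(q,z)\to(q^{36},-q^{33})$ then yields \eqref{S.116}.

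A word on where the actual difficulty lies. Conceptually nothing is hard here — the mechanism is always ``specialize, take the $a,b$ limits, telescope, re-index, apply the Jacobi triple product'' — but the bookkeeping is delicate: one must track the many finite prefactors $1\pm q^k$ produced when moving between bases $q$, $q^2$, $q^3$ and between shifted factorials with different offsets, and verify that the apparently-extra factors (most visibly the $1+q^3$ in the \eqref{S.116} case) cancel between the two sides rather than indicating a wrong substitution. The other error-prone point is fixing the Jacobi triple product parameters $(q,z)$ — equivalently, pinning down the modulus ($27$, $18$, $36$) of the target theta product — so I would confirm each theta evaluation by expanding its first few terms.
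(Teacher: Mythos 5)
Your proposal is correct and follows exactly the paper's route: the paper likewise obtains \eqref{S.90} and \eqref{S.76} by taking $\left(a,b\right)\to\left(0,0\right)$ and $\left(a,b\right)\to\left(0,-1/q\right)$ in Theorem \ref{T1}, and \eqref{S.116} by first replacing $q$ by $q^2$ and then taking $\left(a,b\right)\to\left(0,-1/q\right)$, with the same limit evaluations, shifted-factorial rearrangements, split-and-reindex step, and Jacobi triple product specializations that you spell out (the paper compresses all of this into ``after simplification''). Your bookkeeping, including the cancellation of the extra factor $1+q^3$ in the \eqref{S.116} case and the moduli $27$, $18$, $36$, checks out.
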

\begin{proof}
Taking $\left(a,b\right)\to\left(0,-1/q\right)$ and $\left(a,b\right)\to\left(0,0\right)$, respectively,  into Theorem \ref{T1}, we obtain \eqref{S.76} and \eqref{S.90} after simplication.

Replacing $q$ by $q^2$ and taking $\left(a,b\right)\to\left(0,-1/q\right)$ into Theorem \ref{T1}, we get \eqref{S.116}.
\end{proof}
\begin{section}{Conclusion}
In this paper,  by reproving  $75$ identities of Rogers-Ramanujan type proposed by Slater and  uncovering several novel identities within this class, we show  that Liu's formula \eqref{Formula3} can imply many Rogers-Ramanujan type identities.  The readers who have interests can use  Liu's formula \eqref{Formula3} to discover more Rogers-Ramanujan type identities.

Below in Table $\ref{table1}$, we list the theorems and the corresponding  Rogers-Ramanujan type identities proposed by Slater.
\begin{table}[thbp]
\caption{Theorems and corresponding Rogers-Ramanujan type identities}\label{table1}
\centering
\begin{tabularx}{\textwidth}{lX}
  \toprule
  {Theorem \ref{thm9}} & $\eqref{S.2}$, $\eqref{S.3}$, $\eqref{S.5}$, $\eqref{S.7}$, $\eqref{S.8}$, $\eqref{S.9}$, $\eqref{S.12}$, $\eqref{S.13}$, $\eqref{S.14}$, $\eqref{S.18}$, $\eqref{S.24}$, $\eqref{S.25}$, $\eqref{S.28}$, $\eqref{S.29}$, $\eqref{S.30}$, $\eqref{S.34}$, $\eqref{S.36}$, $\eqref{S.38}$, $\eqref{S.39}$, $\eqref{S.47}$, $\eqref{S.50}$, $\eqref{S.51}$, $\eqref{S.52}$, $\eqref{S.64}$, $\eqref{S.84}$, $\eqref{S.86}$ \\
  Theorem \ref{T7} & $\eqref{S.45}$, $\eqref{S.60}$ \\
  Theorem \ref{T9} & $\eqref{S.96}$ \\
  Theorem \ref{thm2} & $\eqref{S.11}$, $\eqref{S.22}$, $\eqref{S.58}$, $\eqref{S.72}$, $\eqref{S.87}$ \\
  Theorem \ref{thm11} & $\eqref{S.16}$, $\eqref{S.17}$, $\eqref{S.20}$, $\eqref{S.31}$, $\eqref{S.32}$, $\eqref{S.94}$, $\eqref{S.99}$ \\
  Theorem \ref{T5} & $\eqref{S.48}$, $\eqref{S.65}$, $\eqref{S.67}$, $\eqref{S.70}$, $\eqref{S.127}$ \\
  Theorem \ref{T3} & $\eqref{S.21}$, $\eqref{S.33}$ \\
  Theorem \ref{T6} & $\eqref{S.53}$, $\eqref{S.55}$, $\eqref{S.57}$ \\
  Theorem \ref{T4} & $\eqref{S.35}$, $\eqref{S.43}$, $\eqref{S.49}$, $\eqref{S.54}$, $\eqref{S.59}$, $\eqref{S.61}$, $\eqref{S.68}$, $\eqref{S.69}$, $\eqref{S.71}$, $\eqref{S.98}$,  $\eqref{S.104}$ \\
  Theorem \ref{theorem2} & $\eqref{S.40}$, $\eqref{S.42}$\\
  Theorem \ref{thm4} & $\eqref{S.73}$, $\eqref{S.75}$, $\eqref{S.78}$, $\eqref{S.93}$, $\eqref{S.114}$\\
  Theorem \ref{T1} & $\eqref{S.76}$, $\eqref{S.90}$, $\eqref{S.116}$\\
  \bottomrule
\end{tabularx}
\end{table}
Astute readers may be 
aware that we have skipped  $55$ identities of Rogers-Ramanujan type proposed by Slater.  We omit the proof of $\left(\mathrm{S}.1\right)$ since it is a special case of  the Jacobi triple product identity \eqref{Jacobi}.  Challenges arise when we attempt to apply the preceding methodology to the remaining $54$ identities. The primary difficulties lie in evaluating certain terminated  $_5\phi_4$, $_4\phi_3$ and $_3\phi_2$ series. For instance, for Slater's identity (S.6), we take $\left(\alpha,\beta,a,b,c,d\right)\to\left(1,-1,0,0,q,0\right)$ into \eqref{Formula1} to deduce that
\begin{align}
\left(q;q\right)_{\infty}\sum_{n=0}^{\infty}
\frac{\left(-1;q\right)_n}{\left(q;q\right)^2_n}q^{n^2}
=1+\sum_{n=1}^{\infty}\left(-1\right)^n\left(1+q^n\right)q^{\frac{3n^2-n}{2}}
{_3\phi_2}\left(\begin{array}{ccc}
q^{-n},q^n,-1\\
q,0
\end{array};q,\,q\right).\label{B1}
\end{align}
However, we cannot find a useful method to evaluate the terminated
 $_3\phi_2$ series on the right hand-side of \eqref{B1}.  Settling the difficulties in calculating these kinds of terminated sums would be an interesting thing.
\end{section}
\section{Acknowledgment}
This work is supported by the National Natural Science Foundation of China (Grant 12371328).
The authors  would like to  express gratitude to Professor Zhiguo Liu for his helpful comments during the writing of this manuscript.


\begin{thebibliography}{99}
\bibitem{Andrews_1973}G.E. Andrews, On the $q$-analog of Kummer's theorem and applications.
Duke Math. J. \textbf{40} (1973), 525--528.
\bibitem{Andrews1}G.E. Andrews, On $q$-analogues of the Watson and Whipple summations, SIAM J. Math. Anal. \textbf{7} (1976), 332--336.

\bibitem{Andrews}G.E. Andrews, Connection coefficient problems and partitions, in: D. Ray-Chaudhuri (Ed.), Proc. Sympos. Pure Math., \textbf{34} (1979), 1--24.
    \bibitem{Andrews2}G.E. Andrews, Bressoud polynomials, Rogers-Ramanujan type identities, and applications, 
    Ramanujan J., \textbf{41} (2016), 287--304.
 \bibitem{Ramanujan}G.E. Andrews and B.C. Berndt, Ramanujan's Lost Notebook, Part \uppercase\expandafter{\romannumeral2}, Springer, 2009.
     \bibitem{Bailey}W.N. Bailey, Identities of the Rogers-Ramanujan type, Proc. London Math. Soc. \textbf{50} (1948), 1--10.
     \bibitem{Bowman}D. Bowman, J. McLaughlin and A.V. Sills, Some more identities of Rogers-Ramanujan type, Ramanujan J., \textbf{18} (2009), 307--325.
\bibitem{Wang_Chen}D. Chen and L. Wang, Representations of mock theta functions, Adv. Math., \textbf{365} (2020), Art. 107307.
\bibitem{Gasper_and_Rahman}
G. Gasper and M. Rahman,
Basic Hypergeometric Series, second edition,
Encyclopedia of Mathematics and Its Applications 96, Cambridge
University Press, Cambridge, 2004.
\bibitem{Gessel}I. Gessel and D. Stanton, Applications of $q$-Lagrange inversion to basic hypergeometric series, Trans. Amer. Math. Soc., \textbf{277} (1983), 173--201.
\bibitem{Harsh}H.V. Harsh, Y.S. Kim, M.A. Rakha and A.K. Rathie, A study of $q$-contiguous function relations, Commun. Korean Math. Soc., \textbf{31} (2016), 65--94.
    \bibitem{Liu2}Z.-G. Liu, A $q$-series expansion formula and the Askey-Wilson polynomials, Ramanujan J., \textbf{30} (2013), 193--210.
\bibitem{Liu_1}Z.-G. Liu, On the $q$-derivative and $q$-series expansions, Int. J. Number Theory, \textbf{9} (2013), 2069--2089.
    \bibitem{Liu_4}Z.-G. Liu, On the $q$-partial differential equations and $q$-series, the Legacy of Srinivasa Ramanujan, Ramanujan Math. Soc. Lect. Notes Ser., vol. 20, Ramanujan Math. Soc., Mysore, 2013, 213--250.
        \bibitem{MacMahon}P.A. MacMahon, Combinatory Analysis, vol. \MakeUppercase{\romannumeral 2}, Cambridge University Press, 1918.
             \bibitem{McLaughlin-2008}J. McLaughlin, A.V. Sills and P. Zimmer, Rogers-Ramanujan-Slater type identities, Electron. J. Combin. \textbf{15} (2008),  \#DS15, 59 pp.
        \bibitem{McLaughlin_2008}J. McLaughlin and A.V. Sills, Ramanujan-Slater type identities related to the moduli $18$ and $24$, J. Math. Anal. Appl., \textbf{244} (2008), 765--777.
        \bibitem{McLaughlin}J. McLaughlin, A.V. Sills and P. Zimmer, Rogers-Ramanujan computer searches, J. Symbolic Comput. \textbf{44} (2009), 5077--5091.
 \bibitem{Rogers}L.J. Rogers, Second memoir on the expansion of certain infinite products, Proc. London Math. Soc., \textbf{25} (1894), 318--343.
     \bibitem{Schur}I. Schur, Ein Beitrag zur additiven Zahlentheorie und zur Theorie der kettenbr\"{u}che, S.-B. Preuss. Akad. Wiss. Phys. Math. Klasse (1917), 302--321.
\bibitem{Slater}L.J. Slater, Further identities of the Rogers-Ramanujan type, Proc. London Math. Soc., \textbf{54} (1952), 147--167.
    \bibitem{Hari}H.M. Srivastava, M.P. Chaudhary and F.K. Wakene, A family of theta-function identities based upon $q$-binomial theorem and Heine's transformations, Montes Taurus J. Pure Appl. Math., \textbf{2} (2020), 1--6.
    \bibitem{Verma}A. Verma and V.K. Jain, Transformations between basic hypergeometric series on different bases and identities of Rogers-Ramanujan type, J. Math Anal. Appl., \textbf{76} (1980), 230--269.
        \bibitem{WangChun}C. Wang and S. Chern, Some $q$-transformation formulas and Hecke type identities, Int. J. Number Theory, \textbf{15} (2019), 1349--1367.
        \bibitem{Wang}L. Wang, New proofs of Ramanujan's identities on false theta functions, Ramanujan J., \textbf{50} (2019), 423--431.
         \bibitem{Wang_Yee}L. Wang and A.J. Yee, Some Hecke-Rogers type identities, Adv. Math., \textbf{349} (2019), 733--748.
\end{thebibliography}
\end{document}